\newtheorem{thm}{Theorem}[section]
\newtheorem{lem}[thm]{Lemma}
\newtheorem{cor}[thm]{Corollary}
\newtheorem{prop}[thm]{Proposition}
\newtheorem{remark}[thm]{Remark}
\newcommand{\R}{\mathbb{R}}
\begin{document}
\title[Spreading and Vanishing in Nonlinear Diffusion Problems]{Spreading
and Vanishing in Nonlinear Diffusion Problems with
Free Boundaries$^\S$}
 \thanks{$\S$ This research was partly supported by the Australian Research
Council, by the NSFC (10971155) and by Innovation Program of
Shanghai Municipal Education Commission (09ZZ33). }
\author[Y. Du and B. Lou]{Yihong Du$^\dag$ and Bendong Lou$^\ddag$}
\thanks{$\dag$ School of Science and
Technology, University of New England, Armidale, NSW 2351,
Australia.}
\thanks{$\ddag$ Department of Mathematics, Tongji
University, Shanghai 200092, China.}
\thanks{{\bf Emails:} {\sf ydu@turing.une.edu.au} (Y. Du), {\sf
blou@tongji.edu.cn} (B. Lou) }
\date{July 16, 2012}

\begin{abstract} We study nonlinear diffusion problems of the
form $u_t=u_{xx}+f(u)$ with free boundaries.
 Such problems may be used to describe the spreading of a
biological or chemical species, with the free boundary representing
the expanding front. For special  $f(u)$ of the Fisher-KPP type, the
problem was investigated by Du and Lin \cite{DuLin}. Here we
consider much more general nonlinear terms. For any $f(u)$ which is
$C^1$ and satisfies $f(0)=0$,  we show that the omega limit set
$\omega(u)$ of every bounded positive solution is determined by   a
stationary solution. For monostable, bistable and combustion types
of nonlinearities, we obtain a rather complete description of the
long-time dynamical behavior of the problem; moreover, by
introducing a parameter $\sigma$ in the initial data, we reveal a
threshold value $\sigma^*$ such that spreading ($\lim_{t\to\infty}u=
1$) happens when $\sigma>\sigma^*$,
 vanishing ($\lim_{t\to\infty}u=0$) happens when
$\sigma<\sigma^*$,  and at the threshold value $\sigma^*$,
$\omega(u)$ is different for the three different types of
nonlinearities. When spreading happens, we make use of
``semi-waves'' to determine the asymptotic spreading speed of the
front.
\end{abstract}

\subjclass{35K20, 35K55, 35R35}
\keywords{Nonlinear diffusion
equation, free boundary problem, asymptotic behavior, monostable,
bistable, combustion, sharp threshold, spreading speed.} \maketitle

\section{Introduction}

 We consider the following problem
\begin{equation}\label{p}
\left\{
\begin{array}{ll}
 u_t = u_{xx} + f(u), &  g(t)< x<h(t),\ t>0,\\
 u(t,g(t))= u(t,h(t))=0 , &  t>0,\\
g'(t)=-\mu\, u_x(t, g(t)), & t>0,\\
 h'(t) = -\mu \, u_x (t, h(t)) , & t>0,\\
-g(0)=h(0)= h_0,\ \ u(0,x) =u_0 (x),& -h_0\leq x \leq h_0,
\end{array}
\right.
\end{equation}
where $x=g(t)$ and $x=h(t)$ are the moving boundaries to be
determined together with $u(t,x)$, $\mu$ is a given positive
constant, $f:[0,\infty)\rightarrow \R$ is a $C^1$ function
satisfying
\begin{equation}\label{f(0)}
f(0)=0.
\end{equation}
The initial function $u_0$ belongs to  $ \mathscr {X}(h_0)$ for some
$h_0>0$, where
\begin{equation}\label{def:X}
\begin{array}{ll}
\mathscr {X}(h_0):= \Big\{ \phi \in C^2 ([-h_0,h_0]): & \phi(-h_0)=
\phi (h_0)=0,\; \phi'(-h_0)>0,\\ & \phi'(h_0)<0,\;
 \phi(x) >0 \ \mbox{in } (-h_0,h_0)\;\Big\}.
\end{array}
\end{equation}
For any given $h_0>0$ and $u_0 \in \mathscr {X}(h_0)$, by a
(classical) solution of \eqref{p} on the time-interval $[0,T]$ we
mean a triple $(u(t,x), g(t), h(t))$ belonging to  $ C^{1,2}(G_T)
\times C^1 ([0,T])\times C^1 ([0,T])$, such that all the identities
in \eqref{p} are satisfied pointwisely, where
\[
G_T:=\big\{(t,x): t\in (0,T],\; x\in [g(t), h(t)]\big\}.
\]
In the rest of the paper,  the solution may also be denoted by
$(u(t,x; u_0), g(t; u_0), h(t; u_0))$,  or simply $(u,g,h)$,
depending on the context.

Problem \eqref{p} with $f(u)$ taking the particular form $f(u)=au-b
u^2$ was studied recently in \cite{DuLin}. Such a situation arises
as a population model describing the spreading of a new or invasive
species, whose growth is governed by the logistic law. The free
boundaries $x=g(t)$ and $x=h(t)$ represent the spreading fronts of
the population whose density is represented by $u(t,x)$.  The focus
of \cite{DuLin} is on the particular logistic nonlinearity
$f(u)=au-b u^2$, and many of the arguments there rely on this choice
of $f$.

The logistic $f(u)$ mentioned above belongs to the class of
``monostable'' nonlinearities, and  due to the pioneering works of
Fisher \cite{F} and Kolmogorov-Petrovski-Piskunov \cite{KPP}, it is
also known as the Fisher, or KPP, or Fisher-KPP type nonlinearity.
As is well-known, in population models one often needs to consider a
general monostable nonlinear term (\cite{AW1, AW2}). Moreover, to
include Allee effects, ``bistable'' nonlinear terms are used in many
population models (\cite{HR, LK}). Bistable nonlinearity also
appears in other applications including signal propagation and
material science (\cite{NAY, AC, FM}). Furthermore, in the study of
combustion problems the typical $f(u)$ is of ``combustion'' type
(\cite{ZFK, K, Zla}). A precise description of these different types
of nonlinearities will be given shortly below.

The main purpose of this paper is to classify the behavior of
\eqref{p} for all the types of nonlinearities mentioned in the last
paragraph. Even restricted to the monostable type, this is an
extension of \cite{DuLin} since we do not require the special form
$f(u)=au-b u^2$, which implies that different methods have to be
used.

The corresponding Cauchy problem
\begin{equation}
\label{Cauchy} u_t=u_{xx}+f(u) \; (x\in\R^1,\; t>0),\; u(0,x)=u_0(x)
\; (x\in \R^1)
\end{equation}
has been extensively studied. For example, the classical paper
\cite{AW1} contains a systematic investigation of this problem.
Various sufficient conditions for $\lim_{t\to \infty} u(t,x)=1$ and
for $\lim_{t\to \infty} u(t,x)=0$ are known, and when $u_0(x)$ is
nonnegative and has compact support, the way $u(t,x)$ approaches 1
as $t\to\infty$ was used to describe the spreading of a (biological
or chemical) species, which is characterized by certain traveling
waves, and the speed of these traveling waves determines the
asymptotic spreading speed of the species; see for example, \cite{K,
FM, AW1, AW2}. The transition between spreading ($u\to 1$) and
vanishing ($u\to 0$) has not been well understood until recently. In
\cite{DM}, motivated by break-through results obtained in
\cite{Zla}, a rather complete description of the sharp transition
behavior was given. As we will see below, these sharp transition
results of \cite{DM} for \eqref{Cauchy} also hold for \eqref{p}.
(Spreading and vanishing are sometimes called propagation and
extinction, as in \cite{DM}.) We will make use of a number of the
ideas from \cite{DM}, and this paper may be regarded as an extension
of \cite{DM}.

In most spreading processes in the natural world, a spreading front
can be observed. In the one space dimension case, if the species
initially occupies an interval $(-h_0,h_0)$ with density $u_0(x)$,
as time $t$ increases from 0, it is natural to expect the two end
points of $(-h_0,h_0)$ to evolve into two spreading fronts, $x=g(t)$
on the left and $x=h(t)$ on the right, and the initial function
$u_0(x)$ to evolve into a positive function $u$ inside the interval
$(g(t), h(t))$ governed by the equation $u_t=u_{xx}+f(u)$, with $u$
vanishing at $x=g(t)$ and $x=h(t)$. To determine how the fronts
$x=g(t)$ and $x=h(t)$ evolve with time, we assume that the fronts
invade at a speed that is proportional to the spatial gradient of
the density function $u$ there, which gives rise to the free
boundary conditions in \eqref{p}. A deduction of this free boundary condition based on 
ecological assumptions  can be found in \cite{BDK}.

We notice that the free boundary conditions in \eqref{p} coincide with the one-phase Stefan condition
arising from the investigation of the melting of ice in contact with
water (\cite{Ru}). Such conditions also arise in the modeling of
wound healing (\cite{CF}). For population models, \cite{Lin} used
such a condition for a predator-prey system over a bounded interval,
showing the free boundary reaches the fixed boundary in finite time,
and hence the long-time dynamical behavior of the system is the same
as the well-studied fixed boundary problem; and in \cite{MYY1}, a
two phase Stefan condition was used for a competition system over a
bounded interval, where the free boundary separates the two
competitors from each other in the interval. A similar problem to
\eqref{p} but with $f(u)=u^p \; (p>1)$ was studied in \cite{FS,
GST}. Since this is a superlinear problem, its behavior is very
different from \eqref{p} considered here as our focus is on the
sublinear cases (except Theorem \ref{thm:convergence} and section
2). Indeed, our interests here are very different from all the
previous research mentioned in this paragraph.

We now describe the main results of this paper. Firstly we assume
that
\begin{equation}
\label{cond1} \mbox{ $f(u)$ is $C^1$ and  $f(0)=0$.}
\end{equation}
 Then a simple variation of the arguments in \cite{DuLin} shows
that, for any $h_0>0$ and $u_0\in  \mathscr {X}(h_0)$, \eqref{p} has
a unique solution defined on some maximal time interval $(0, T^*)$,
$T^*\in (0, \infty]$. Moreover, $g'(t)<0,\; h'(t)>0$ and $u(t,x)>0$
for $t\in (0, T^*),\; x\in (g(t),h(t))$, and if $T^*<\infty$ then
$\max_{x\in [g(t), h(t)]} u(t,x)\to \infty$ as $t\to T^*$. Thus
$\lim_{t\to\infty}g(t)$ and $\lim_{t\to\infty}h(t)$ always exist if
$T^*=\infty$. Throughout this paper, we will use the notations
\[
g_\infty:=\lim_{t\to\infty}g(t),\;\;
h_\infty:=\lim_{t\to\infty}h(t).
\]
$T^*=\infty$ is guaranteed if we assume further that
\begin{equation}
\label{cond2} f(u)\leq Ku \mbox{ for all $u\geq 0$ and some $K>0$}.
\end{equation}
A more detailed description of these statements can be found in
section 2 below.

\bigskip

Our first main result is a general convergence theorem, which is an
analogue of Theorem 1.1 in \cite{DM}.

\begin{thm}
\label{thm:convergence} Suppose that \eqref{cond1} holds and $(u,
g,h)$ is a solution of \eqref{p} that is defined for all $t>0$, and
$u(t,x)$ is bounded, namely
\[
u(t,x)\leq C \mbox{ for all $t>0$, \ $x\in [g(t), h(t)]$ and some
$C>0$.}
\]
Then $(g_\infty, h_\infty)$ is either a finite interval or
$(g_\infty, h_\infty)=\R^1$. Moreover, if $(g_\infty, h_\infty)$ is
 a finite interval, then
$\lim_{t\to\infty} u(t,x)=0$, and if $(g_\infty, h_\infty)=\R^1$
then either $\lim_{t\to\infty} u(t,x)$ is a nonnegative constant
solution of
\begin{equation}
\label{ellip}
 v_{xx}+f(v)=0,\; x\in \R^1,
 \end{equation}
 or
 \[
 u(t,x)-v(x+\gamma(t))\to 0 \mbox{ as } t\to\infty,
 \]
 where $v$ is an evenly decreasing positive solution
 of \eqref{ellip}, and $\gamma: [0,\infty)\to [-h_0,h_0]$ is a
 continuous function.
\end{thm}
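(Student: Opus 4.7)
My plan follows the strategy of \cite{DM} for the Cauchy problem, adapted to the moving-boundary setting: combine parabolic regularity (for compactness of time translates), an energy functional (to force subsequential limits to be stationary), and Angenent's zero-number theorem (to rigidify the limiting profile).

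Boundedness of $u$ with $f\in C^1$, together with interior Schauder estimates and uniform boundary regularity (via straightening each free boundary, as in section~2), gives uniform $C^{1+\alpha/2,\,2+\alpha}_{loc}$ bounds on $u$, so any sequence of time translates $u(t_n+\cdot,\cdot)$ admits a subsequential parabolic limit. Monotonicity of $g,h$ yields the existence of $g_\infty\in[-\infty,-h_0]$ and $h_\infty\in[h_0,\infty]$. To rule out the mixed case $g_\infty>-\infty$, $h_\infty=\infty$ (and its mirror), note that $\int_0^\infty|g'(t)|\,dt=-g_\infty-h_0<\infty$ together with uniform H\"older continuity of $t\mapsto g'(t)$ forces $g'(t)\to 0$, hence $u_x(t,g(t))\to 0$ via the Stefan identity. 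A subsequential limit would then be a bounded nonnegative solution on $[g_\infty,\infty)\times\R$ with vanishing Dirichlet \emph{and} Neumann data at $x=g_\infty$, nontrivial (since $h_\infty=\infty$), contradicting the Hopf lemma. In the finite-interval case, the same reasoning at both endpoints gives $u_x(t,g(t)),u_x(t,h(t))\to 0$; after straightening to the fixed interval $[g_\infty,h_\infty]$, any subsequential limit satisfies vanishing Dirichlet and Neumann data on the lateral boundary, so Hopf forces it to be identically zero, yielding $u(t,\cdot)\to 0$ uniformly.

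For the full-line case, I would introduce $E[u](t):=\int_{g(t)}^{h(t)}[\tfrac12 u_x^2-F(u)]\,dx$ with $F(u)=\int_0^u f(s)\,ds$. Using $u=0$ at $x=g(t),h(t)$ and the Stefan identities, a direct computation gives
\[
\frac{d}{dt}E[u](t)=-\int_{g(t)}^{h(t)}u_t^2\,dx-\frac{1}{2\mu^2}\bigl(h'(t)^3-g'(t)^3\bigr)\le 0,
\]
so $E$ is nonincreasing. For any $t_n\to\infty$, parabolic compactness produces a bounded nonnegative entire-solution limit $\tilde u$ on $\R\times\R$. Localizing the energy identity on fixed windows (where $F$ is bounded on the range of $u$) and using the monotonicity of $E$, I obtain $\int_{t_n}^{t_n+T}\int_{-R}^R u_t^2\,dx\,dt\to 0$, so $\tilde u_t\equiv 0$, and $\tilde u=v$ is a bounded nonnegative stationary solution of \eqref{ellip} on $\R$. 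Phase-plane analysis of $v''+f(v)=0$ then classifies $v$ as either a constant zero of $f$ or an evenly decreasing profile about a unique maximum.

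To show that only a single translate of such a $v$ populates $\omega(u)$, and to construct the continuous selection $\gamma(t)$, I would apply Angenent's zero-number theorem to $w_c(t,x):=u(t,x)-u(t,2c-x)$ on its moving common domain, for each $c\in\R$: since $w_c$ satisfies a linear parabolic equation with bounded coefficients, its nodal count is finite, nonincreasing, and stabilizes for large $t$ to simple zeros, which excludes coexistence of essentially different profiles in $\omega(u)$ and yields continuity of the symmetry axis from which $\gamma$ is extracted; the bound $\gamma(t)\in[-h_0,h_0]$ should follow from tracking the axis against the initial symmetric domain. The step I expect to be hardest is this last one: Angenent's theorem must be adapted to the moving common domain, with careful control of how zeros enter or leave across the two free boundaries. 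The preceding energy reduction is also delicate because $E$ is not globally bounded below when the domain grows without bound, so one must localize before extracting $\tilde u_t\equiv 0$.
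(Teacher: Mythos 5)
Your proposal takes a genuinely different route from the paper, but it has a gap at precisely the point the paper handles with its key structural lemma. The paper's proof hinges on Lemma~\ref{lem:center}, established by a reflection argument (applying the strong maximum principle and Hopf lemma to $w(t,x)=u(t,x)-u(t,-x-2h_0)$ and its mirror), which gives the a priori balance $-2h_0<g(t)+h(t)<2h_0$. This single estimate simultaneously (a) rules out half-infinite intervals $(g_\infty,h_\infty)$, (b) pins $\gamma(t)\in[-h_0,h_0]$, and (c) forces $w'\geq 0$ on $(g_\infty,-h_0]$ and $w'\leq 0$ on $[h_0,h_\infty)$ for every $w\in\omega(u)$, which is the starting point for identifying $\omega(u)$ with constants or translates of an evenly decreasing profile. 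You do not prove any analogue of this lemma, and the places in your argument where you would need it are exactly the places that have gaps.

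Concretely, your exclusion of the mixed case $g_\infty>-\infty$, $h_\infty=\infty$ asserts that a subsequential entire-orbit limit must be \emph{nontrivial ``since $h_\infty=\infty$''}; that implication is unjustified. In general, $\omega(u)=\{0\}$ and $h_\infty=\infty$ are not ruled out by anything you have written (indeed for general $C^1$ $f$ with $f(0)=0$, the theorem does not assert that $\omega(u)=\{0\}$ forces a finite interval). So the Hopf-lemma contradiction you want never gets off the ground in that case. The second gap is the energy argument: your identity $\frac{d}{dt}E[u]= -\int u_t^2 - \frac{1}{2\mu^2}(h'^3-g'^3)\leq 0$ is correctly computed, but on a domain that grows without bound $E$ need not be bounded below (you acknowledge this), and the proposed fix of ``localizing'' on fixed windows does not work as stated, because the flux $u_xu_t$ through the window boundary is not controlled by the global monotonicity of $E$; no bound of the form $\int_{t_n}^{t_n+T}\int_{-R}^R u_t^2\to 0$ follows from $E$ decreasing alone. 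This is precisely why \cite{DM} (which the paper follows) does not rely on an energy descent argument when $I_\infty=\R^1$; instead the paper's Claim~1 uses the partial monotonicity coming from Lemma~\ref{lem:center} combined with uniqueness for the ODE initial value problem and a zero-number argument to show each element of $\omega(u)$ is a stationary solution. Your appeal to Angenent's theorem for the reflections $w_c=u(t,x)-u(t,2c-x)$ in the final paragraph points in the right direction, but the paper's version uses only the two reflections at $c=\pm h_0$ and handles the moving common domain carefully; your plan to run it for all $c$ without justifying the boundary sign conditions on the shrinking/growing common domain is not carried out and would need substantial work. Closing these gaps amounts to proving something equivalent to Lemma~\ref{lem:center}, at which point you would be following the paper's proof.
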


By an evenly decreasing function we mean a function $v(x)$
satisfying $v(-x)=v(x)$ which is strictly decreasing in
$[0,\infty)$. Let us note that $(g_\infty, h_\infty)$ can never be a
half-infinite interval. In fact, we will prove in Lemma
\ref{lem:center} that
\[
-2h_0<g(t)+h(t)<2h_0 \mbox{ for all } t>0.
\]
We conjecture that $\lim_{t\to\infty} \gamma(t) $ exists but were
unable to prove it.

\bigskip

Next we focus on three types of nonlinearities: \vskip 6pt
\begin{center}
(f$_M$)\ \ monostable case, \ \ \ (f$_B$)\ \ bistable case,\ \ \
(f$_C$)\ \ combustion case.
\end{center}

In the monostable case (f$_M$), we assume that $f$ is $C^1$ and it
satisfies
\begin{equation}\label{mono}
f(0)=f(1)=0, \quad f'(0)>0, \quad  f'(1)<0,\quad (1-u)f(u) >0 \ \mbox{for } u>0, u\not= 1.
\end{equation}
 Clearly $f(u) =u(1-u)$ belongs to (f$_M$).

In the bistable case (f$_B$), we assume that $f$ is $C^1$ and it
satisfies
\begin{equation}\label{bi}
f(0)=f(\theta)= f(1)=0, \quad f(u) \left\{
\begin{array}{l}
<0 \ \ \mbox{in } (0,\theta),\\
>0\ \  \mbox{in } (\theta, 1),\\
< 0\ \ \mbox{in } (1,\infty)
\end{array} \right.
\end{equation}
for some $\theta\in (0,1)$,  $f'(0)<0$, $f'(1)<0$ and
\begin{equation}\label{unbalance}
\int_0^1 f(s) ds >0.
\end{equation}
A typical bistable $f(u)$ is $u(u-\theta)(1-u)$ with $\theta \in (0,
\frac{1}{2})$.

In the combustion case (f$_C$), we assume that $f$ is $C^1$ and it
satisfies
\begin{equation}\label{combus}
f(u)=0 \ \ \mbox{in } [0,\theta], \quad f(u) >0 \ \mbox{in }
(\theta,1), \quad f'(1)<0,\quad f(u) < 0 \ \mbox{in } [1, \infty)
\end{equation}
for some $\theta \in (0,1)$, and there exists a small $\delta_0  >0$
such that
\begin{equation}\label{combus1}
f(u)\ \mbox{is nondecreasing in } (\theta, \theta+\delta_0).
\end{equation}

Clearly  \eqref{cond1} and \eqref{cond2} are satisfied if $f$ is of
{\rm (f$_M$)}, or {\rm (f$_B$)}, or {\rm (f$_C$)} type. Thus in
these cases \eqref{p} always has a unique solution defined for all
$t>0$.

\vskip 6pt

The next three theorems give a rather complete description of the
long-time behavior of the solution, and they also reveal the related
but different sharp transition natures between vanishing and
spreading for these three types of nonlinearities.

\begin{thm}\label{thm:mono}
{\rm (The monostable case).}  Assume that $f$ is of {\rm (f$_M$)}
type, and $h_0>0$, $u_0 \in \mathscr {X}(h_0)$. Then either

{\rm (i) Spreading:} $(g_\infty, h_\infty)=\R^1$ and
\[
\lim_{t\to\infty}u(t,x)=1 \mbox{ locally uniformly in $\R^1$},
\]
or

{\rm (ii) Vanishing:} $(g_\infty, h_\infty)$ is a finite interval
with length no bigger than $\pi/\sqrt{f'(0)}$ and
\[
\lim_{t\to\infty}\max_{g(t)\leq x\leq h(t)} u(t,x)=0.
\]
Moreover, if $u_0=\sigma \phi$ with $\phi\in \mathscr {X}(h_0)$,
then there exists $\sigma^* = \sigma^* (h_0, \phi) \in [0,\infty]$
such that vanishing happens when $ 0< \sigma \leq \sigma^*$, and
spreading happens when $ \sigma > \sigma^*$. In addition,
\[
\sigma^* \left\{\begin{array}{ll} =0 &\mbox{ if $h_0 \geq \pi/(
2\sqrt{f'(0)} )$,}\\
  \in (0, \infty] & \mbox{ if $h_0 < \pi/(
2\sqrt{f'(0)} )$,}\\
  \in (0, \infty) &\mbox{ if $h_0 < \pi/(
2\sqrt{f'(0)} )$ and if $f$ is globally Lipschitz.}
\end{array}
\right.
\]
\end{thm}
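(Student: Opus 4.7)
\medskip

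\noindent\textbf{Proof proposal.}

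The plan is to extract the dichotomy (i)-(ii) from the general convergence result of Theorem~\ref{thm:convergence}, then analyze the sharp threshold for the one-parameter family $u_0=\sigma\phi$ via comparison and continuous dependence. First I would verify that Theorem~\ref{thm:convergence} applies: since $(f_M)$ implies $f(u)\le Ku$ for some $K>0$, the solution is global, and comparison with the ODE $\dot U=f(U)$ starting at $\|u_0\|_\infty$ gives $u\le\max\{1,\|u_0\|_\infty\}$, so $u$ is bounded. Theorem~\ref{thm:convergence} then leaves four possibilities. I would rule out the ``evenly decreasing'' profile $v$ using the phase-plane integral $\tfrac12 (v')^2+F(v)=F(v(0))$: under $(1-u)f(u)>0$ for $u\ne 1$, $F$ is strictly increasing on $(0,1)$ and strictly decreasing on $(1,\infty)$; if $v(0)\le 1$ then $v$ stays in $(0,v(0)]$ with $F(v_\infty)=F(v(0))$ forcing $v$ constant, while $v(0)>1$ would require $F(v(0))\ge F(1)$ to traverse $v=1$, contradicting the strict max of $F$ at $1$. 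The only nonnegative constant solutions of \eqref{ellip} are $0$ and $1$, so only $u\to 0$ or $u\to 1$ remain.

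Next, I would establish the length bound and the exclusion of $u\to 0$ on $\mathbb{R}$ by one and the same eigenvalue argument. If $h_\infty-g_\infty > \pi/\sqrt{f'(0)}$ (whether finite or $+\infty$), pick an interval $I=(a,b)\subset(g_\infty,h_\infty)$ of length $L\in(\pi/\sqrt{f'(0)},\,h_\infty-g_\infty)$; for $t$ large, $I\Subset(g(t),h(t))$, and the Dirichlet principal eigenvalue on $I$ satisfies $\lambda_1(I)=\pi^2/L^2<f'(0)$. If $u\to 0$, then eventually $f(u)\ge(f'(0)-\varepsilon)u$ on $I$; taking $\varepsilon$ so small that $f'(0)-\varepsilon-\lambda_1(I)>0$ and comparing with $\kappa e^{(f'(0)-\varepsilon-\lambda_1(I))(t-T_0)}\varphi_1(x)$ (where $\varphi_1>0$ is the principal eigenfunction on $I$, $\kappa$ chosen so the subsolution lies below $u(T_0,\cdot)$) yields exponential growth, contradicting $u\to 0$. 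This proves both the bound $h_\infty-g_\infty\le\pi/\sqrt{f'(0)}$ in the vanishing case and the exclusion of $u\to 0$ in the spreading case; combined with the first paragraph, (i) and (ii) are exhaustive and mutually exclusive, and $u\to 1$ holds locally uniformly when $(g_\infty,h_\infty)=\mathbb{R}$.

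For the sharp threshold, I would let $u^\sigma$ denote the solution with initial datum $\sigma\phi$ and invoke the comparison principle for the free boundary problem (proved in Section~2): $\sigma_1<\sigma_2$ implies $(g^{\sigma_1}(t),h^{\sigma_1}(t))\subset(g^{\sigma_2}(t),h^{\sigma_2}(t))$ and $u^{\sigma_1}\le u^{\sigma_2}$ on the smaller interval. Hence the ``spreading set'' $\Sigma_s$ is an upper ray and I may define $\sigma^*=\sup\{\sigma:\text{vanishing}\}\in[0,\infty]$. To see $\sigma^*$ belongs to the vanishing set when $\sigma^*<\infty$, observe that by the second paragraph, spreading is equivalent to $h(T)-g(T)>\pi/\sqrt{f'(0)}$ for some $T$; by continuous dependence on initial data over a finite time interval, this inequality is open in $\sigma$, so $\Sigma_s$ is open, which forces vanishing at the threshold itself. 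For $h_0\ge\pi/(2\sqrt{f'(0)})$, since $h'(0^+)>0$ and $g'(0^+)<0$, $h(t)-g(t)>2h_0\ge\pi/\sqrt{f'(0)}$ already at any $t>0$, so the length bound is violated, spreading holds, and $\sigma^*=0$.

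For $h_0<\pi/(2\sqrt{f'(0)})$, the remaining issue is to show $\sigma^*>0$ and (under global Lipschitz) $\sigma^*<\infty$. The first direction is a small-data supersolution construction: pick $h_1\in(h_0,\pi/(2\sqrt{f'(0)}))$ with principal eigenpair $(\lambda_1,\varphi_1)$ on $(-h_1,h_1)$, so that $\lambda_1>f'(0)$; for $\alpha\in(0,\lambda_1-f'(0))$ and $\delta$ small, $\bar u(t,x)=\delta\varphi_1(x)e^{-\alpha t}$ on a fixed domain $(-h_1,h_1)$ is a supersolution of $v_t=v_{xx}+f(v)$, and choosing $\delta$ so that $\sigma\phi\le\delta\varphi_1$ on $[-h_0,h_0]$ (possible for $\sigma$ small since $\varphi_1>0$ on $[-h_0,h_0]$), a free-boundary comparison argument (bounding the boundary speed $h'(t)=-\mu u_x(t,h(t))$ by $-\mu\delta\varphi_1'(h(t))e^{-\alpha t}$, which is integrable) shows $h(t)<h_1$ for all $t$ and hence vanishing. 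For $\sigma^*<\infty$ under global Lipschitz, I would use the integral identity obtained by integrating $u_t=u_{xx}+f(u)$ over $(g(t),h(t))$ and using the Stefan condition:
\[
\sigma\|\phi\|_{L^1}=\int_{g(t)}^{h(t)}u(t,x)\,dx+\frac{h(t)-g(t)-2h_0}{\mu}-\int_0^t\int_{g(s)}^{h(s)}f(u)\,dx\,ds,
\]
pass to the limit $t\to\infty$ in the vanishing case (where $\int u(t,\cdot)\to 0$ and $h_\infty-g_\infty\le\pi/\sqrt{f'(0)}$), and control $\int_0^\infty\!\!\int f(u)$ using $|f(u)|\le Lu$ together with exponential decay of $\|u(t,\cdot)\|_\infty$ (which follows from the gap $\lambda_1((g_\infty,h_\infty))\ge f'(0)$ and linearization once $u$ is small); this produces an a priori upper bound on $\sigma$. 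The main obstacle is this final step: proving $\sigma^*<\infty$ requires a quantitative bound on $\int_0^\infty\!\!\int|f(u)|$ that does \emph{not} degenerate as $\sigma$ grows, which in the boundary situation $h_\infty-g_\infty=\pi/\sqrt{f'(0)}$ demands a careful linearization argument supplemented by the fact that $\|u(t,\cdot)\|_\infty\le\max(1,\sigma\|\phi\|_\infty)$ and that $\{u>1\}$ is swept out in finite time by the ODE comparison $\dot U=f(U)$.
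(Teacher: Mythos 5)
Your decomposition into dichotomy + threshold is correct and parallels the paper's (Theorems~\ref{thm:mono-dichotomy} and~\ref{thm:mono-threshold}); the ODE comparison for global boundedness, the phase-plane exclusion of evenly decreasing profiles, and the use of continuous dependence to show the spreading set is open are all sound. Your length-bound argument is a genuinely different and perfectly valid route: you compare $u$ on a fixed subinterval $I\Subset I_\infty$ with $\lambda_1(I)<f'(0)$ against a growing Dirichlet-eigenfunction subsolution, whereas the paper (Corollaries~\ref{cor:mono-spreading}, \ref{cor:mono-vanishing}) constructs a stationary ``wave of finite length'' $v_Z$ from the zero-speed phase plane and feeds it into a moving lower solution. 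Both do the job; the paper's version yields the stronger spreading conclusion (Theorem~\ref{thm:spreading}) needed again later, your version is more elementary if one only needs the length bound.

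There are two gaps. First, in your proof that $\sigma^*>0$ when $h_0<\pi/(2\sqrt{f'(0)})$: a supersolution $\delta\varphi_1(x)e^{-\alpha t}$ on the \emph{fixed} interval $(-h_1,h_1)$ does not bound $u_x(t,h(t))$; since $\bar u(t,h(t))>0=u(t,h(t))$, the inequality $u\le\bar u$ provides no one-sided estimate of $u_x$ at the free boundary, so the claimed bound $h'(t)\le -\mu\delta\varphi_1'(h(t))e^{-\alpha t}$ is unjustified. The paper (Theorem~\ref{thm:vanishing}(i)) avoids this by making the upper solution vanish on a \emph{moving} boundary $x=\pm k(t)$ with $k(t)=h_0(1+\delta-\tfrac{\delta}{2}e^{-\delta t})$, arranged so that the Stefan inequality $k'\ge-\mu w_x(t,k(t))$ holds; only then does Lemma~\ref{lem:comp1} apply. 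Your construction can be repaired this way but is incomplete as written.

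Second, and more seriously, the proof that $\sigma^*<\infty$ under a global Lipschitz bound is missing, as you acknowledge. The integral identity is correct, and after sending $t\to\infty$ (using $\int u\to 0$ and $h_\infty-g_\infty\le\pi/\sqrt{f'(0)}$) it gives
$\sigma\|\phi\|_{L^1}\le\frac{\pi/\sqrt{f'(0)}-2h_0}{\mu}+\int_0^\infty\!\int_{\{u>1\}}(-f(u))\,dx\,dt$.
But one cannot close: with $-f(u)\le Lu$, the domain width $\le\pi/\sqrt{f'(0)}$, $\|u\|_\infty\le\sigma\|\phi\|_\infty$, and an ODE-controlled time of order $L^{-1}\log\sigma$ for $\{u>1\}$ to disappear, the right side is only $O(\sigma\log\sigma)$, which does not contradict $\sigma\|\phi\|_{L^1}$. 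The paper's proof (Proposition~\ref{prop:mono-sigma-finite}) takes an entirely different route: it constructs an explicit \emph{lower} solution of self-similar type,
$w(t,x)=\rho(t+\varepsilon)^{-\lambda}\varphi_1\big(x/\sqrt{t+\varepsilon}\,\big)$,
on the expanding domain $(-\sqrt{t+\varepsilon},\sqrt{t+\varepsilon})$, where $\varphi_1$ is a first eigenfunction of the Sturm--Liouville problem \eqref{SL}. For $\sigma$ large one can place $w(0,\cdot)\le\sigma\phi$, the Stefan inequality holds by choosing $\lambda,\rho$ appropriately, and after a fixed finite time $T$ the free boundary straddles an interval of length $>\pi/\sqrt{f'(0)}$, which forces spreading by Corollary~\ref{cor:mono-spreading}. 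This is the missing ingredient your proposal needs.
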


\begin{thm}\label{thm:bi}
{\rm (The bistable case).} Assume that $f$ is of {\rm (f$_B$)} type,
and  $h_0>0$, $u_0 \in \mathscr {X}(h_0)$. Then either

{\rm (i) Spreading:} $(g_\infty, h_\infty)=\R^1$ and
\[
\lim_{t\to\infty}u(t,x)=1 \mbox{ locally uniformly in $\R^1$},
\]
or

{\rm (ii) Vanishing:} $(g_\infty, h_\infty)$ is a finite interval
and
\[
\lim_{t\to\infty}\max_{g(t)\leq x\leq h(t)} u(t,x)=0,
\]
or

{\rm (iii) Transition:} $(g_\infty, h_\infty)=\R^1$ and there exists
a continuous function $\gamma: [0,\infty)\to [-h_0,h_0]$ such that
\[
\lim_{t\to\infty}|u(t,x)-v_\infty(x+\gamma(t))|=0 \mbox{ locally
uniformly in $\R^1$},
\]
where $v_\infty$ is the unique positive solution to
\[v''+f(v)=0 \; (x\in\R^1),\; v'(0)=0,\; v(-\infty)=v(+\infty)=0.\]
 Moreover, if $u_0=\sigma \phi$ for some $\phi\in \mathscr {X}(h_0)$,
then there exists $\sigma^* = \sigma^* (h_0, \phi)\in (0,\infty]$
such that vanishing happens when $ 0<\sigma < \sigma^*$, spreading
happens when $\sigma>\sigma^*$, and transition happens when
$\sigma=\sigma^*$. In addition, there exists $Z_B>0$ such that
$\sigma^* <\infty$ if $h_0 \geq Z_B$, or if $h_0 < Z_B$ and $f$ is
globally Lipschitz.
\end{thm}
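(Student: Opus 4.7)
My plan has three parts: apply Theorem~\ref{thm:convergence} to extract the trichotomy, analyze the monotone family $u_\sigma=u(\cdot,\cdot;\sigma\phi)$ to locate a candidate threshold, and finally prove uniqueness of this threshold via a zero-number argument.

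\emph{Step 1 (Trichotomy).} By Theorem~\ref{thm:convergence}, either $(g_\infty,h_\infty)$ is finite (case (ii), vanishing), or $(g_\infty,h_\infty)=\R^1$ and the asymptotic profile is either a constant solution of \eqref{ellip} — which, under the bistable structure, must be $0$, $\theta$ or $1$ — or a translate of an evenly decreasing positive $v$. The phase-plane identity $\tfrac12(v')^2+F(v)=0$ with $F(v):=\int_0^v f(s)\,ds$, combined with $\int_0^1 f>0$, produces a unique $\alpha\in(\theta,1)$ with $F(\alpha)=0$ and $F<0$ on $(0,\alpha)$, and hence a unique (up to translation) evenly decreasing $v_\infty$ with maximum $\alpha$. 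The constant limit $\theta$ is ruled out by linear instability: since $f'(\theta)>0$, for $L>\pi/(2\sqrt{f'(\theta)})$ the function $\theta+\varepsilon\psi_L$ (with $\psi_L$ the principal Dirichlet eigenfunction of $-\partial_{xx}$ on $(-L,L)$) is a stationary strict subsolution, contradicting $u\to\theta$ once $[-L,L]\subset(g(t),h(t))$. The constant limit $0$ on $\R$ is excluded by comparing with a Dirichlet ground state $V_L$ of $V''+f(V)=0,\;V(\pm L)=0$, which exists for all $L$ larger than some $Z_B>0$ thanks to $\int_0^1 f>0$; once $[-L,L]\subset(g(t),h(t))$, a standard comparison forces $u$ to remain bounded below by a small multiple of $V_L$, excluding $u\to 0$.

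\emph{Step 2 (Threshold framework).} The comparison principle for the free boundary problem (established in Section~2) yields strict monotonicity: $u_{\sigma_1}<u_{\sigma_2}$, $h_{\sigma_1}<h_{\sigma_2}$, $g_{\sigma_1}>g_{\sigma_2}$ for $0<\sigma_1<\sigma_2$. Hence
\begin{align*}
\Sigma^-&=\{\sigma>0:\text{vanishing for }u_\sigma\},\\
\Sigma^+&=\{\sigma>0:\text{spreading for }u_\sigma\}
\end{align*}
are downward- and upward-closed subintervals of $(0,\infty)$, and continuous dependence on the parameter makes them open. Since $f'(0)<0$, for small $\sigma$ the solution is dominated on bounded sets by the linearized equation $w_t=w_{xx}+f'(0)w$, whose decay forces vanishing; so $\Sigma^-\ne\emptyset$. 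For spreading: when $h_0\ge Z_B$, taking $\sigma$ large gives $u_\sigma(0,\cdot)\ge V_{h_0}$ on $[-h_0,h_0]$, and comparison with this stationary subsolution keeps $u_\sigma(t,\cdot)\ge V_{h_0}$ for all $t$; Step~1 then excludes vanishing and transition, leaving spreading. If $h_0<Z_B$ but $f$ is globally Lipschitz, then $T^*=\infty$ holds automatically, and for $\sigma$ large enough the fronts eventually enclose $[-Z_B,Z_B]$, after which the same subsolution argument applies. Setting $\sigma^*:=\sup\Sigma^-\in(0,\infty]$, openness of $\Sigma^\pm$ together with the trichotomy of Step~1 forces the transition alternative (iii) at $\sigma=\sigma^*$ whenever $\sigma^*<\infty$.

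\emph{Step 3 (Uniqueness of the transition value).} The main obstacle is to show that $\Sigma^+=(\sigma^*,\infty)$, i.e.\ that transition occurs at exactly one value. Suppose, for contradiction, $\sigma_1<\sigma_2$ both belong to the transition regime. Strict comparison gives $u_{\sigma_1}(t,\cdot)<u_{\sigma_2}(t,\cdot)$ throughout $(g_{\sigma_1}(t),h_{\sigma_1}(t))$ for all $t$, while both $u_{\sigma_i}(t,\cdot)$ converge to translates of $v_\infty$ of the same amplitude $\alpha$. I would apply Angenent's zero-number principle to $w=u_{\sigma_2}-u_{\sigma_1}$: its number of sign changes is finite and nonincreasing in time, whereas the shape rigidity of $v_\infty$ (two distinct translates of a single even bump intersect in exactly two points) combined with the strict positivity and boundary behaviour of $w$ forces an inconsistent count, yielding $\sigma_1=\sigma_2$. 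The finiteness $\sigma^*<\infty$ under the stated hypotheses on $h_0$ and $f$ then follows from $\Sigma^+\ne\emptyset$ established in Step~2, completing the proof.
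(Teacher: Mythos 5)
Two substantive steps in your outline do not close, and both are precisely the places where the paper uses a more delicate argument.

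\textbf{Ruling out $\omega(u)=\{\theta\}$.} Your ``linear instability'' argument constructs a stationary strict subsolution $\theta+\varepsilon\psi_L$ above $\theta$ on $(-L,L)$, but to derive a contradiction you must first have $u(t_0,\cdot)\ge\theta+\varepsilon\psi_L$ on $[-L,L]$ at some time $t_0$, and $u\to\theta$ locally uniformly gives you no such pointwise lower bound —-- the convergence can perfectly well take place from below. (Also, the bistable hypotheses do not require $f'(\theta)>0$, only $f>0$ on $(\theta,1)$, so the eigenvalue comparison may not even apply.) The paper instead picks a \emph{periodic} solution $v_0$ of $v''+f(v)=0$ oscillating around $\theta$ and counts sign changes of $u(t,\cdot)-v_0(\cdot)$; this number is finite and nonincreasing by Angenent's theorem (as $u-v_0<0$ at both free boundaries), yet would have to blow up if $u\to\theta$. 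Similarly, your exclusion of the constant $0$ via ``a small multiple $cV_L$'' is not a valid comparison, since $cV_L$ is not a subsolution for a bistable $f$; the correct route is Lemma~\ref{lem:center} (the max of $u$ is always attained in $[-h_0,h_0]$), so $u\to0$ locally uniformly forces $\|u(t,\cdot)\|_\infty<\theta$ eventually, whence vanishing by Theorem~\ref{thm:vanishing}(ii), contradicting $h_\infty-g_\infty=\infty$.

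\textbf{Uniqueness of the transition value.} Your zero-number argument for Step~3 cannot produce a contradiction: by strict comparison $w=u_{\sigma_2}-u_{\sigma_1}$ satisfies $w>0$ on the common support for \emph{all} $t$, so its sign-change count is identically zero, and this is perfectly consistent with both orbits converging to translates of $v_\infty$ with \emph{the same} time-dependent shift $\gamma(t)$, in which case $w\to0$. The shape-rigidity remark only excludes convergence to two genuinely \emph{different} translates; it says nothing about coalescing shifts. The paper's argument is of a different character: it compares $u$ not with $u_*$ itself but with the whole family of spatial translates $u_\epsilon(t,x):=u_*(t+1,x+\epsilon)$, $\epsilon\in(0,\epsilon_0]$, each of which lies below $u(t+1,\cdot)$; passing to the limit yields $\epsilon+\gamma(t)-\tilde\gamma(t)\to0$ for \emph{every} $\epsilon$ in that interval, which is absurd. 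That spatial-translation trick is exactly what your proposal is missing, and without it the argument does not show that spreading holds for $\sigma>\sigma^*$.

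The rest of the outline (monotone threshold framework, openness via continuous dependence combined with Theorem~\ref{thm:vanishing}(ii) and Theorem~\ref{thm:spreading}, and the $\sigma^*<\infty$ criteria via the stationary ground state $v_Z$ and the lower-solution construction) is essentially the paper's route, modulo the gaps above.
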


\begin{thm}\label{thm:combus}
{\rm (The combustion case).} Assume that $f$ is of {\rm (f$_C$)}
type, and $h_0>0$, $u_0 \in \mathscr {X}(h_0)$. Then either

{\rm (i) Spreading:} $(g_\infty, h_\infty)=\R^1$ and
\[
\lim_{t\to\infty}u(t,x)=1 \mbox{ locally uniformly in $\R^1$},
\]
or

{\rm (ii) Vanishing:} $(g_\infty, h_\infty)$ is a finite interval
and
\[
\lim_{t\to\infty}\max_{g(t)\leq x\leq h(t)} u(t,x)=0,
\]
or

{\rm (iii) Transition:} $(g_\infty, h_\infty)=\R^1$ and
\[
\lim_{t\to\infty}u(t,x)=\theta \mbox{ locally uniformly in $\R^1$}.
\]
Moreover, if $u_0=\sigma \phi$ for some $\phi\in \mathscr {X}(h_0)$,
then there exists $\sigma^* = \sigma^* (h_0, \phi)\in (0,\infty]$
such that vanishing happens when $ 0< \sigma < \sigma^*$, spreading
happens when $\sigma>\sigma^*$, and transition happens when
$\sigma=\sigma^*$. In addition, there exists $Z_C>0$ such that
$\sigma^* <\infty$ if $h_0 \geq Z_C$, or if $h_0 < Z_C$ and $f$ is
globally Lipschitz.
\end{thm}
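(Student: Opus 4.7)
My plan is to combine Theorem \ref{thm:convergence} with an analysis of which $\omega$-limits can occur under the combustion nonlinearity \eqref{combus}, and then to introduce the threshold through monotonicity and openness, in close analogy with the bistable case. Since $f(u)<0$ for $u>1$, comparison with the ODE $v'=f(v)$ gives a uniform bound on $u$, so Theorem \ref{thm:convergence} applies. Constant solutions of $v_{xx}+f(v)=0$ on $\R$ are exactly $v\equiv c$ with $c\in[0,\theta]\cup\{1\}$, and no evenly decreasing positive ground state exists under \eqref{combus}: the energy identity $\tfrac{1}{2}(v')^2=-F(v)$ with $F(s):=\int_0^s f$, together with $F\equiv 0$ on $[0,\theta]$ and $F>0$ on $(\theta,\infty)$, forces $v_{\max}\in[0,\theta]$ and thus $v$ constant. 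Hence when $(g_\infty,h_\infty)=\R$, Theorem \ref{thm:convergence} yields $u(t,x)\to c$ locally uniformly for some $c\in[0,\theta]\cup\{1\}$. To exclude $c\in[0,\theta)$, I would exploit the fact that whenever $\max_x u(t,\cdot)<\theta$ for $t\ge T_0$ one has $f(u)\equiv 0$, and the free boundary law together with Leibniz's rule gives
\[
\frac{d}{dt}\biggl[\int_{g(t)}^{h(t)}u(t,x)\,dx+\tfrac{1}{\mu}\bigl(h(t)+g(t)\bigr)\biggr]=0\quad\text{for }t\ge T_0.
\]
The centering estimate $-2h_0<g+h<2h_0$ keeps the boundary term bounded, but if $(g_\infty,h_\infty)=\R$ and $u\to c$ locally uniformly with $c>0$, then $\int u\,dx$ tends to $+\infty$, a contradiction. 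The case $c=0$ is handled similarly by refining this identity to show $h_\infty-g_\infty<\infty$. This establishes the trichotomy (i)--(iii).

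For the threshold, write $u^{\sigma}$ for the solution with $u_0=\sigma\phi$ and set
\[
\Sigma_s:=\{\sigma>0:\text{spreading}\},\qquad \Sigma_v:=\{\sigma>0:\text{vanishing}\}.
\]
The comparison principle from Section 2 gives monotonicity of $\sigma\mapsto(u^\sigma,h^\sigma)$, so $\Sigma_s$ is an upward ray and $\Sigma_v$ is a downward ray. Openness of $\Sigma_s$ is the crucial technical step: if $\sigma_0\in\Sigma_s$ then for sufficiently large $T$ one has $h^{\sigma_0}(T)>L_*$ for a threshold length $L_*$ such that any datum on $[-L_*,L_*]$ dominating a fixed compactly supported stationary subsolution of $v_{xx}+f(v)=0$ with maximum exceeding $\theta$ must spread; continuous dependence on $\sigma$ over $[0,T]$ then yields an open neighborhood of $\sigma_0$ in $\Sigma_s$. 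Openness of $\Sigma_v$ follows analogously, using that once $\max u^{\sigma_0}(T,\cdot)<\theta$, the equation reduces to the free boundary heat problem for $t\ge T$, and vanishing propagates under perturbation in $\sigma$. Together with the trichotomy these give $\Sigma_s=(\sigma_s^*,\infty)$ and $\Sigma_v=(0,\sigma_v^*)$ with $\sigma_v^*\le\sigma_s^*$. Showing $\sigma_v^*=\sigma_s^*=:\sigma^*$ (i.e.\ uniqueness of the transition value) requires a strict monotonicity/squeezing argument: if $\sigma_1<\sigma_2$ were both transition, the strict comparison $u^{\sigma_1}<u^{\sigma_2}$ in the interior combined with both limits being $\theta$ would contradict the convergence via a zero-number or ordering argument of the type used in \cite{DM}.

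Finally, for $\sigma^*<\infty$: when $h_0\geq Z_C$, with $Z_C$ chosen so that $[-h_0,h_0]$ supports a stationary subsolution of $v_{xx}+f(v)=0$ exceeding $\theta$ on an interior subinterval, the monotone family $\sigma\phi$ dominates this subsolution for $\sigma$ large, forcing spreading. When $h_0<Z_C$ and $f$ is globally Lipschitz, the Lipschitz bound controls the decay rate of $u^\sigma$ and permits a quantitative lower estimate showing that $h(t;\sigma)$ reaches $Z_C$ in finite time for $\sigma$ sufficiently large, after which the previous argument applies. The main obstacles, in my view, are the exclusion of intermediate $\omega$-limits $c\in[0,\theta)$, the openness of $\Sigma_s$, and the identification $\sigma_v^*=\sigma_s^*$; all three rest on the combustion-specific interplay between the free boundary speed $h'=-\mu u_x$ and the flatness of $f$ on $[0,\theta]$, which is precisely where combustion differs from the bistable case.
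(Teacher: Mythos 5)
Your trichotomy argument rests on a sound idea (once $\sup_x u(t,\cdot)<\theta$ the combustion equation reduces to the free-boundary heat problem and a conserved quantity contradicts $(g_\infty,h_\infty)=\R^1$), but as stated the first integral has a sign error. Differentiating $\int_{g(t)}^{h(t)}u$ by Leibniz's rule and substituting the Stefan conditions gives
\[
\frac{d}{dt}\left[\int_{g(t)}^{h(t)}u(t,x)\,dx+\frac{1}{\mu}\bigl(h(t)-g(t)\bigr)\right]=\int_{g(t)}^{h(t)}f(u)\,dx,
\]
with $h-g$, not $h+g$. Once corrected, the argument becomes simpler and handles the case $c=0$ for free ($h-g\to\infty$ while $\int u\ge 0$ already contradicts constancy, so no separate refinement is needed), and what you actually need from Lemma \ref{lem:center} is not the bound on $g+h$ but the $x$-monotonicity of $u$ on $[g(t),-h_0]$ and on $[h_0,h(t)]$, which forces $\sup_x u(t,\cdot)$ to be attained in $[-h_0,h_0]$, so that locally uniform convergence to $c<\theta$ genuinely gives $\sup_x u<\theta$. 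The paper instead invokes Theorem \ref{thm:vanishing}(ii) together with Corollary \ref{cor:BC-vanishing-finite} at this point; your conserved-quantity route is a legitimate alternative for the same conclusion.

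The genuine gap is in the threshold claim. You reduce uniqueness of the transition value to ``a zero-number or ordering argument of the type used in \cite{DM}.'' That works in the bistable case because the transition $\omega$-limit is a \emph{translate of an evenly decreasing ground state}, and strict ordering of two distinct transition solutions forces incompatible spatial shifts of that profile. In the combustion case the transition $\omega$-limit is the constant $\theta$, which is translation-invariant, so the center-tracking argument has nothing to hold onto: two ordered solutions both converging locally uniformly to $\theta$ present no contradiction. The paper closes this with a combustion-specific device exploiting \eqref{combus1}: for $\xi\in(0,1)$ close to $1$, $v^\xi(t,x):=\xi^{-1}u_*(\xi t,\sqrt{\xi}\,x)$ satisfies $v^\xi_t-v^\xi_{xx}=f(\xi v^\xi)\le f(v^\xi)$ once $v^\xi\le\theta+\delta_0$ (because $f$ is nondecreasing on $(\theta,\theta+\delta_0)$ and vanishes on $[0,\theta]$), hence is a subsolution; since $v^\xi\to\theta/\xi>\theta$ as $t\to\infty$, comparing any $u^\sigma$ with $\sigma>\sigma^*$ against it pushes its $\omega$-limit strictly above $\theta$ and therefore to $1$. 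Without this (or some analogous exploitation of the flat-plus-monotone structure of combustion $f$), your outline does not establish $\sigma_v^*=\sigma_s^*$. The remaining pieces — vanishing for small $\sigma$, one-sided monotonicity in $\sigma$ plus continuous dependence on $[0,T]$ to get the two open rays, and $\sigma^*<\infty$ via a compactly supported stationary subsolution when $h_0\ge Z_C$ or a quantitative lower solution when $f$ is globally Lipschitz — are consistent with the paper.
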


\begin{remark}\rm
The value of $\sigma^*$ in the above theorems can be $+\infty$ if we
drop the assumption that $f$ is globally Lipschitz when $h_0$ is
small. Indeed, this is the case if $f(u)$ goes to $-\infty$ fast
enough as $u\to+\infty$; see Propositions
\ref{prop:mono-sigma-infty}, \ref{prop:bi-sigma-infty} and
\ref{prop:combus-sigma-infty}  for details. The values of $Z_B$ and
$Z_C$ are determined by \eqref{def:RB1} and \eqref{def:RC1},
respectively.
\end{remark}
\begin{remark}\rm
In \cite{DuLin}, to determine whether spreading or vanishing happens
for the special monostable nonlinearity, a threshold value of $\mu$
was established, which was shown in \cite{DuLin} to be always
finite. Here we use $\sigma$ in $u_0=\sigma\phi$ as a varying
parameter, which appears more natural especially for the bistable
and combustion cases, since in these cases the dynamical behavior of
\eqref{p} is more responsive to the change of the initial function
than to the change of $\mu$; for example, when $\|u_0\|_\infty\leq
\theta$, then vanishing always happens regardless of the value of
$\mu$.
 \end{remark}
 \begin{remark} \rm Theorems \ref{thm:bi} and \ref{thm:combus} above are
 parallel to Theorems 1.3 and 1.4 in \cite{DM}, where the Cauchy problem
 was considered. In contrast, Theorem \ref{thm:mono} is very
 different from the Cauchy problem version, where a ``hair-trigger''
 phenomenon appears, namely, when $f$ is of (f${_M}$) type,  any nonnegative
 solution of \eqref{Cauchy} is either identically 0,  or it converges to 1
 as $t\to\infty$ (see \cite{AW1, AW2}).
\end{remark}

\vskip 6pt

When spreading happens, the asymptotic spreading speed is determined
by the following problem
\begin{equation}\label{prop-profile}
\left\{
  \begin{array}{l}
  q_{zz} - c q_z + f(q) =0\ \ \mbox{ for }  z\in (0,\infty),\\
  q(0)=0, \; \mu q_z(0) = c,\; q(\infty)=1,\; q(z)>0 \mbox{ for } z>0.
  \end{array}
  \right.
\end{equation}

\begin{prop}
\label{prop:semi-wave} Assume that $f$ is of {\rm (f$_M$)}, or {\rm
(f$_B$)}, or {\rm (f$_C$)} type. Then for each $\mu>0$,
\eqref{prop-profile}
 has a unique solution $(c,q)=(c^*, q^*)$.
 \end{prop}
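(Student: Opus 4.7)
The plan is to use a phase-plane shooting argument. Setting $p:=q_z$, \eqref{prop-profile} becomes the planar system
\[ q_z = p,\qquad p_z = c p - f(q), \]
and a semi-wave is precisely a trajectory that starts on the $p$-axis at some $(0,p_0)$ with $p_0=c/\mu$, stays in the strip $\Omega:=\{0<q<1,\,p>0\}$, and converges to $(1,0)$. Since $f'(1)<0$, $(1,0)$ is a hyperbolic saddle with a one-dimensional stable manifold $W^s_c$; its branch in $\Omega$ leaves $(1,0)$ tangentially to the eigenvector of the negative eigenvalue $\lambda_-(c):=\tfrac12\bigl(c-\sqrt{c^2-4f'(1)}\bigr)$. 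Uniqueness of $W^s_c$ gives uniqueness of the profile $q$ for each admissible $c$, so the real task is to pick out the unique speed $c^*$ satisfying the boundary condition $\mu q_z(0)=c$.

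Trace $W^s_c$ backward in $z$ from $(1,0)$ inside $\Omega$, writing it as a graph $p=P(q;c)$ over a maximal interval $(q_{\min}(c),1]$. Set $p_0(c):=P(0;c)$ if $q_{\min}(c)=0$, and $p_0(c):=0$ otherwise. Two endpoints anchor the analysis. At $c=0$, energy is conserved along the trajectory, $\tfrac12 P^2+F(q)\equiv F(1)$ with $F(q):=\int_0^q f$; in each of (f$_M$), (f$_B$), (f$_C$), $F$ attains its maximum on $[0,1]$ at $q=1$ (immediate for (f$_M$) and (f$_C$); for (f$_B$) this uses the unbalanced condition $\int_0^1 f>0$), so $W^s_0$ extends to $q=0$ and $p_0(0)=\sqrt{2F(1)}>0$. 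At the other end, classical travelling-wave theory for each of the three nonlinearity classes produces a critical speed $c_0>0$ — the minimal wave speed in (f$_M$) and the unique wave speed in (f$_B$) and (f$_C$) — at which $W^s_{c_0}$ connects to the origin, so $p_0(c_0)=0$.

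The decisive step is strict monotonicity of $p_0$. If $c_1<c_2$ then $\lambda_-(c_1)<\lambda_-(c_2)<0$, so $P(q;c_2)<P(q;c_1)$ just below $q=1$. Were they to touch at a first $q_*<1$ (i.e.\ $P(\,\cdot\,;c_1)>P(\,\cdot\,;c_2)$ on $(q_*,1]$ with equality at $q_*$), then at $q_*$
\[ \frac{d}{dq}\bigl[P(q;c_1)-P(q;c_2)\bigr]\bigg|_{q=q_*}=\Bigl(c_1-\tfrac{f(q_*)}{P(q_*;c_1)}\Bigr)-\Bigl(c_2-\tfrac{f(q_*)}{P(q_*;c_2)}\Bigr)=c_1-c_2<0, \]
contradicting the positivity of the difference immediately to the right of $q_*$. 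Hence $P(\,\cdot\,;c_2)<P(\,\cdot\,;c_1)$ throughout the common domain. In particular, for $c<c_0$ comparison with $P(\,\cdot\,;c_0)\ge 0$ forces $P(\,\cdot\,;c)>0$ on $(0,1]$, so $W^s_c$ is in fact a graph over all of $[0,1]$ with $p_0(c)>0$; this simultaneously rules out the trajectory terminating at some $\{p=0,\,q_*>0\}$ in the bistable and combustion cases. Continuity of $p_0$ on $[0,c_0]$ then follows from smooth dependence of the stable manifold on the parameter together with the strict monotonicity above.

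Finally, $\Phi(c):=\mu p_0(c)-c$ is continuous and strictly decreasing on $[0,c_0]$ with $\Phi(0)=\mu\sqrt{2F(1)}>0$ and $\Phi(c_0)=-c_0<0$, so by the intermediate value theorem there is a unique $c^*\in(0,c_0)$ with $\mu p_0(c^*)=c^*$; parametrizing $W^s_{c^*}$ by $z$ with $q(0)=0$ produces the semi-wave profile $q^*$. The main obstacle is the verification $p_0(c_0)=0$, which rests on the classical wave-existence theorems tailored to each of the three nonlinearity types. The (f$_C$) case is somewhat delicate since $(0,0)$ is not a hyperbolic equilibrium; however, in the region $q\in[0,\theta]$ the equation reduces to $q_{zz}-cq_z=0$, so trajectories are straight lines $p=c(q-q_*)$ in the phase plane, and matching at $q=\theta$ gives an explicit characterization of $c_0$. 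Once $c_0$ and $p_0(c_0)=0$ are in hand, the monotonicity argument of the preceding paragraph closes all three cases uniformly.
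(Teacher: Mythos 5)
Your proposal is correct and follows the same phase-plane shooting strategy as the paper: rewrite the profile equation as $q'=p$, $p'=cp-f(q)$, identify the semi-wave with the trajectory $p=P_c(q)$ that approaches the saddle $(1,0)$ with negative slope, show $c\mapsto P_c(0)$ is strictly decreasing, and apply the intermediate value theorem to $\Phi(c)=\mu P_c(0)-c$ on $[0,c_0]$.

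The one genuine difference is how the endpoint $P_{c_0}(0)=0$ is secured. You import the existence of the classical travelling wave with minimal (resp.\ unique) speed $c_0$ from Aronson--Weinberger and read off $P_{c_0}(0)=0$ from the connection to the origin; consequently your step ``$P(\cdot;c)>0$ on $(0,1]$ for $c<c_0$ implies $P(0;c)>0$'' silently relies on nonexistence of a full wave at speeds below $c_0$ (for (f$_M$)) or on speed uniqueness (for (f$_B$), (f$_C$)), which should be said explicitly to close the argument. The paper instead defines $c_0:=\sup\{\xi>0:\,P_c(q)>0\text{ in }[0,1)\ \forall c\in(0,\xi]\}$ and proves $P_{c_0}(0)=0$ and $q_{c_0}=0$ directly by a case analysis (Lemma~6.1), the delicate part being to rule out $q_{c_0}>0$ when $f$ is of type (f$_B$) or (f$_C$); this makes the paper self-contained and, as the authors note in Remark~6.3, yields the classical wave $(c_0,q_0)$ as a byproduct of the very construction that produces the semi-wave. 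So your route is shorter at the cost of invoking the literature, while the paper's is longer but a single uniform argument; both correctly arrive at a unique $c^*\in(0,c_0)$ with $\mu P_{c^*}(0)=c^*$ and the unique profile carried by the stable manifold of $(1,0)$.
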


 We call $q^*$ a ``semi-wave'' with speed $c^*$, since
 the function $v(t,x)=q^*(c^*t-x)$ satisfies
 \[
 v_t=v_{xx}+f(v) \; (t\in\R^1,\; x<c^*t),\; v(t, c^* t)=0,\;
 v(t,-\infty)=1,
 \]
 and it resembles a wave moving to the right at constant speed
 $c^*$,  with front at $x=c^*t$. In comparison with the normal
 traveling wave generated by the solution of
\begin{equation}
\label{tw} q_{zz}-cq_z+f(q)=0 \mbox{ for } z\in\R^1,\;
q(-\infty)=0,\; q(+\infty)=1,
\end{equation}
 the generator $q^*(z)$ of $v(t,x)$ here is only defined on the half line $\{z\geq 0\}$.
 Hence we call it a semi-wave.
  We notice that
 at the front $x=c^*t$, we have $c^*= -\mu v_x(t, x)$, namely the
 Stefan condition in \eqref{p} is satisfied by $v(t,x)$ at $x=c^*t$.

Making use of the above semi-wave, we can prove the following
result.

\begin{thm}\label{thm:spreading speed}
Assume that $f$ is of {\rm (f$_M$)}, or {\rm (f$_B$)}, or {\rm
(f$_C$)} type, and spreading happens. Let $c^*$ be given by
Proposition \ref{prop:semi-wave}. Then
\[
\lim_{t\to\infty}\frac{h(t)}{t}=\lim_{t\to\infty}
\frac{-g(t)}{t}=c^*,
\]
and for any small $\varepsilon
>0$, there exist positive constants $\delta$, $M$ and $T_0$ such that
\begin{equation}\label{est-u}
\max\limits_{|x| \leq (c^* -\varepsilon) t} |u(t,x) -1|\leq
Me^{-\delta t} \mbox{ for all } t\geq T_0.
\end{equation}
\end{thm}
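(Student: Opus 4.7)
I would sandwich $h(t)$ (and symmetrically $-g(t)$) between two linear functions of slope $c^*$ by using shifted copies of the semi-wave $q^*$ as upper barriers and perturbed semi-waves as lower barriers; the exponential estimate \eqref{est-u} will then come from the exponential stability of $u = 1$ (via $f'(1) < 0$) together with the exponential tail of $q^*$ near its right asymptote.

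\textbf{Upper bound.} Set $\bar h(t) := c^* t + H$ and $\bar u(t,x) := q^*(\bar h(t) - x)$ for $x \le \bar h(t)$. Because $q^*$ satisfies \eqref{prop-profile}, $\bar u$ solves the reaction--diffusion equation, vanishes on $x = \bar h(t)$, and satisfies $\bar h'(t) = c^* = \mu q^*_z(0) = -\mu \bar u_x(t, \bar h(t))$. Choosing $H \ge h_0$ large enough that $q^*(H - x) \ge u_0(x)$ on $[-h_0, h_0]$ (possible since $q^*$ is positive, monotone, and tends to $1$), the free boundary comparison principle from Section~2 yields $h(t) \le \bar h(t)$, so $\limsup h(t)/t \le c^*$; a symmetric barrier on the left gives $\liminf g(t)/t \ge -c^*$.

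\textbf{Lower bound.} Fix $\varepsilon \in (0, c^*)$. The main work is to produce, for small $\eta > 0$, a perturbed semi-wave $(c_\eta, q_\eta)$ satisfying a modified version of \eqref{prop-profile} with right asymptote $1 - \eta$ in place of $1$ ($f$ being adjusted only near $u = 1$ in the bistable and combustion cases so the nonlinearity keeps its structure), together with the Stefan-type relation $\mu q_{\eta,z}(0) = c_\eta$, and $c_\eta \to c^*$ as $\eta \to 0^+$ by phase-plane analysis of the traveling-wave ODE. Fix $\eta$ so that $c_\eta > c^* - \varepsilon$. Because spreading occurs, there exist $T \gg 1$ and $a < b$ in $(g(T), h(T))$ such that $u(t, a) \ge 1 - \eta$ for all $t \ge T$ (by local uniform convergence of $u$ to $1$) and $u(T, x) \ge q_\eta(b - x)$ on $[a, b]$. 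The function $\underline u(t, x) := q_\eta(\underline h(t) - x)$ with $\underline h(t) := b + c_\eta(t - T)$ is a subsolution on $[a, \underline h(t)]$: its Stefan condition holds with equality by construction, $\underline u(t, a) \le 1 - \eta \le u(t, a)$, and $\underline u(T, x) \le u(T, x)$ on $[a, b]$. Comparison yields $h(t) \ge \underline h(t)$, hence $\liminf h(t)/t \ge c_\eta > c^* - \varepsilon$; letting $\varepsilon \to 0$ finishes $h(t)/t \to c^*$.

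\textbf{Exponential estimate and main obstacle.} With the bounds $h(t), -g(t) = c^* t + O(1)$ in hand, on the window $|x| \le (c^* - \varepsilon) t$ the two-sided barriers bring $u(t,x)$ within $\eta + O(e^{-\delta_\eta t})$ of $1$ for every small $\eta > 0$, since $1 - q^*$ and $1 - q_\eta$ decay exponentially at $+\infty$ (the linearization of $q_{zz} - c q_z + f(q) = 0$ at the right asymptote has a negative characteristic root, thanks to $f'(1) < 0$). To sharpen this to the uniform exponential rate \eqref{est-u}, I would linearize at $u = 1$: setting $w := 1 - u$, one has $w_t = w_{xx} + f'(1)w + O(w^2)$, and since $w$ is already uniformly small on the window, a parabolic comparison argument on $|x| \le (c^* - \varepsilon) t$ with side data $w = O(e^{-\delta t})$ coming from the barriers yields $w \le M e^{-\delta t}$ throughout. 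The principal technical difficulty is the perturbed semi-wave construction and the continuity $c_\eta \to c^*$, especially in the combustion case (where the flat piece $f \equiv 0$ on $[0,\theta]$ must not be disturbed) and in the bistable case (where the sign of $f$ on $(0,\theta)$ versus $(\theta,1)$ must be preserved under the modification near $u = 1$); once this perturbation result is in hand, the sandwich argument closes the proof uniformly in all three types.
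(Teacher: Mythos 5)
Your overall scaffolding — semi-wave barriers for the upper bound, perturbed barriers for the lower bound, and exponential stability of $u\equiv 1$ for the rate — matches the paper's approach in spirit, but there are two genuine gaps.

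\textbf{Upper bound gap.} You claim that $H$ can be chosen so that $q^*(H-x)\ge u_0(x)$ on $[-h_0,h_0]$ because $q^*$ is monotone and tends to $1$. But $q^*(z)<1$ for all finite $z$, and nothing in $\mathscr{X}(h_0)$ forces $\|u_0\|_\infty< 1$. For any $u_0$ with $\max u_0\ge 1$, no translate of $q^*$ dominates it, so the barrier $\bar u(t,x)=q^*(c^*t+H-x)$ cannot be fit over the initial data, and Lemma~\ref{lem:comp1} never gets started. The paper sidesteps this by (a) first proving Lemma~\ref{lem:u-to-1}(iii), which gives $u(t,\cdot)<1+\varepsilon_1/2$ for $t\ge T_2$ via an ODE supersolution, and (b) building the upper barrier not from $q^*$ but from the semi-wave $q^*_{\varepsilon_1}$ associated with a perturbed nonlinearity $f_{\varepsilon_1}\ge f$ whose zero is at $1+\varepsilon_1$ (see Proposition~\ref{q-perterb}). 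That barrier has asymptote $1+\varepsilon_1$ and speed $c^*_{\varepsilon_1}>c^*$ but arbitrarily close; it can then be slid over $u(T_2,\cdot)$. This yields $h(t)\le(c^*+\varepsilon)t+M'$ rather than $h(t)\le c^*t+H$, which is all that is needed but is genuinely weaker than what you assert. Your argument as written works only in the special case $\|u_0\|_\infty<1$; you need the perturbed upper semi-wave (or an equivalent device) to cover general initial data.

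\textbf{Exponential estimate gap.} After establishing $h(t),-g(t)\ge ct$ for $c<c^*$, you propose to linearize at $u=1$ and invoke a parabolic comparison on $|x|\le(c^*-\varepsilon)t$ ``with side data $w=O(e^{-\delta t})$.'' But the barriers only give $1-u\lesssim\eta$ on the edge of that cone, not something decaying exponentially in $t$ — the exponential tail of $q_\eta$ is in the $z$ variable and does not translate into $O(e^{-\delta t})$ on the moving boundary without the quantitative work. This is precisely the hard part of Lemma~\ref{lem:u-to-1}(ii), which the paper handles by comparing with the solution $\psi$ of $\psi_t=\psi_{xx}-\delta(\psi-1)$ on a time-$T$ window $[-cT,cT]$ with constant data $Q^c$, then expanding $\psi$ via the Green function for the strip and exploiting that on the slightly narrower cone $|x|\le(1-\epsilon)cT$ the boundary contributions are $O(e^{-T/2})$. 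You have the right qualitative idea (exponential stability from $f'(1)<0$) but have not identified the mechanism that converts a persistent $O(\eta)$ boundary deficit into a uniform $O(e^{-\delta t})$ interior bound. Also, as an aside, your lower-barrier construction (semi-waves with asymptote $1-\eta$) differs from the paper's waves of finite length $q^c$ on $[0,z^c]$ with $c<c^*$; either can be made to work, but the finite-length version avoids having to re-do the phase-plane analysis under a perturbed nonlinearity while keeping the same $f$.
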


\begin{remark}\rm
The asymptotic spreading speed $c^*$ depends on the parameter $\mu$
appearing in the free boundary conditions and in
\eqref{prop-profile}. Therefore we may denote $c^*$ by $c^*_\mu$ to
stress this dependence. It is well-known {\rm (}see, e.g.,
\cite{AW1, AW2}{\rm )} that when $f$ is of {\rm (f$_M$)}, or {\rm
(f$_B$)}, or {\rm (f$_C$)} type, the asymptotic spreading speed
determined by the Cauchy problem \eqref{Cauchy} is given by the
speed of certain traveling wave solutions generated by a solution of
\eqref{tw}. Let us denote this speed by $c_0$. Then we have (see
Theorem \ref{waves}): $c_\mu^*$ is increasing in $\mu$ and
\[
\lim_{\mu\to\infty}c^*_\mu=c_0.
\]
\end{remark}
\begin{remark}\rm
It is possible to show that the Cauchy problem \eqref{Cauchy} is the
limiting problem of \eqref{p} as $\mu\to\infty$. This holds in much
more general situations; see section 5 of \cite{DuGuo2} for the
general higher space dimension case.
\end{remark}

\bigskip

The rest of the paper is organized as follows. In section 2, we
present some basic results which are fundamental for this research,
and may have other applications. Here we only assume that $f$ is
$C^1$ and $f(0)=0$, namely \eqref{cond1} holds. The proofs of some
of these results are modifications of existing ones. Firstly we give
two comparison principles formulated in  forms that are convenient
to use in this paper. Secondly we explain how the arguments in
\cite{DuLin} can be modified to show the uniqueness and existence
result for \eqref{p} under \eqref{cond1}. Thirdly we give the proof
of Theorem \ref{thm:convergence}. This is based on a key fact proved
in Lemma \ref{lem:center}, which  says that the solution is rather
balanced in $x$ as it evolves with time $t$, though it is not
symmetric in $x$ in general.  The rest of the proof
 largely follows the approach in \cite{DM}.

In section 3, for monostable, bistable and combustion
nonlinearities, we give a number of sufficient conditions for
vanishing (see Theorem \ref{thm:vanishing}), through the
construction of suitable upper solutions.

In section 4, we obtain sufficient conditions for spreading for the
three types of nonlinearities. This is achieved by constructing
suitable lower solutions based on a phase plane analysis of the
equation
\[
q''-cq' +f(q)=0
\]
over  a bounded interval $[0,Z]$, together with suitable conditions
at the ends of this interval.

Section 5 is devoted to the proofs of Theorems \ref{thm:mono},
\ref{thm:bi} and \ref{thm:combus}, with the proof of each theorem
constituting a subsection. The arguments here rely heavily on the
results in the previous sections. The proof of the fact mentioned in
Remark 1.5, namely $\sigma^*=+\infty$ when $f(u)$ goes to $-\infty$
fast enough, is rather technical, and is given in subsection 5.4.

Proposition \ref{prop:semi-wave} and Theorem \ref{thm:spreading
speed} are proved in section 6, the last section of the paper. In
subsection 6.1, we prove Proposition \ref{prop:semi-wave} by
revisiting the well-known traveling wave solution with speed $c_0$
(the minimal speed for monostable type nonlinearity, and unique
speed for nonlinearity of bistable or combustion type). Our phase
plane analysis is related to but different from that in \cite{AW1,
AW2}. This alternative method leads to the desired semi-wave
naturally; see Remark 6.3 for further comments. Subsection 6.2 is
devoted to the proof of Theorem \ref{thm:spreading speed}.

\section{Some Basic Results}\label{sec:basic}

In this section we give some basic results which will be frequently
used later in the paper. The results here are for general $f$ which
is $C^1$ and satisfies $f(0)=0$.

\begin{lem}
\label{lem:comp1} Suppose that \eqref{cond1} holds, $T\in
(0,\infty)$, $\overline g, \overline h\in C^1([0,T])$, $\overline
u\in C(\overline D_T)\cap C^{1,2}(D_T)$ with $D_T=\{(t,x)\in\R^2:
0<t\leq T, \overline g(t)<x<\overline h(t)\}$, and
\begin{eqnarray*}
\left\{
\begin{array}{lll}
\overline u_{t} \geq \overline u_{xx} +f(\overline u),\; & 0<t \leq T,\
\overline g(t)<x<\overline h(t), \\
\overline u= 0,\quad \overline g'(t)\leq -\mu \overline u_x,\quad &
0<t \leq T, \ x=\overline g(t),\\
\overline u= 0,\quad \overline h'(t)\geq -\mu \overline u_x,\quad
&0<t \leq T, \ x=\overline h(t).
\end{array} \right.
\end{eqnarray*}
 If
\[
\mbox{$[-h_0, h_0]\subseteq [\overline g(0), \overline h(0)]$ \quad
and \quad $u_0(x)\leq \overline u(0,x)$ in $[-h_0,h_0]$,}
\]
and $(u,g, h)$ is a solution to \eqref{p}, then
\[
\mbox{ $g(t)\geq \overline g(t),\; h(t)\leq\overline h(t)$ in $(0,
T]$,}
\]
\[
\mbox{$u(x,t)\leq \overline u(x,t)$ for $t\in (0, T]$ \quad and
\quad $x\in (g(t), h(t))$.}
\]
\end{lem}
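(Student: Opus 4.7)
The plan is a standard parabolic comparison argument: track the first time at which the free boundaries of $u$ catch up to those of $\overline u$, use the parabolic maximum principle on the overlap region up to that time, and apply the Hopf boundary point lemma at the first contact point to contradict the free boundary inequalities. I would first prove the result under the strict initial separation $[-h_0,h_0]\subset(\overline g(0),\overline h(0))$, and then recover the general case by perturbing the initial data of \eqref{p}: replace $h_0$ with $h_0-\varepsilon$ and $u_0$ with a function $u_0^\varepsilon\in\mathscr{X}(h_0-\varepsilon)$ satisfying $u_0^\varepsilon\le u_0$ on $[-h_0+\varepsilon,h_0-\varepsilon]$, apply the strict case, and let $\varepsilon\to 0$ using the continuous dependence of $(u,g,h)$ on initial data (standard for this class of Stefan-type problems).

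In the strict case, set
$$
t^*:=\sup\bigl\{t\in(0,T]:\overline g(s)<g(s)\text{ and }h(s)<\overline h(s)\text{ for all }s\in[0,t]\bigr\},
$$
which is positive by continuity. On the overlap region $Q_t:=\{(s,x):0<s\le t,\ g(s)<x<h(s)\}$ with $t<t^*$, linearize $f(\overline u)-f(u)=c(s,x)(\overline u-u)$ via $c(s,x)=\int_0^1 f'(\tau u+(1-\tau)\overline u)\,d\tau$, which is bounded on $\overline{Q_t}$. Then $w:=\overline u-u$ satisfies the linear parabolic inequality $w_s-w_{xx}-c\,w\ge 0$ in $Q_t$, with $w(0,\cdot)\ge 0$ on $[-h_0,h_0]$ by hypothesis and $w=\overline u\ge 0$ on the lateral curves $x=g(s)$ and $x=h(s)$, since these points lie strictly inside the domain of $\overline u$ where $\overline u\ge 0$. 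The parabolic maximum principle then yields $w\ge 0$ on $\overline{Q_{t^*}}$.

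Suppose for contradiction that $t^*<T$. By symmetry assume $h(t^*)=\overline h(t^*)$, so that $w(t^*,h(t^*))=0$ is a boundary minimum of $w$. The Hopf boundary point lemma applied at the lateral point $(t^*,h(t^*))$ gives $w_x(t^*,h(t^*))<0$, i.e.\ $\overline u_x(t^*,h(t^*))<u_x(t^*,h(t^*))$. Multiplying by $-\mu<0$ and combining with the free boundary inequality for $\overline h$ and the free boundary identity for $h$,
$$
\overline h'(t^*)\ge -\mu\,\overline u_x\bigl(t^*,\overline h(t^*)\bigr)>-\mu\, u_x\bigl(t^*,h(t^*)\bigr)=h'(t^*).
$$
On the other hand, $(\overline h-h)(s)\ge 0$ on $[0,t^*]$ and vanishes at $s=t^*$, so its left derivative at $t^*$ is non-positive, forcing $\overline h'(t^*)\le h'(t^*)$---a contradiction. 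Hence $t^*=T$, which gives the required ordering $g\ge\overline g$ and $h\le\overline h$, and the inequality $u\le\overline u$ on the overlap was obtained en route.

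The main technical point I expect to watch is verifying the non-negativity of $\overline u$ throughout its whole domain (needed to bound $w$ below on the lateral boundary of $Q_{t^*}$); this is handled by a preliminary maximum-principle argument applied directly to $\overline u$, writing $f(\overline u)=c_0(s,x)\overline u$ with $c_0$ bounded, and using the natural hypothesis that $\overline u(0,\cdot)\ge 0$ on all of $[\overline g(0),\overline h(0)]$. The equality case in the initial containment is the other delicate point, since it destroys the positivity of $t^*$, but the perturbation step described above circumvents it cleanly.
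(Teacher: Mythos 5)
Your proposal is correct and is essentially the argument the paper implicitly relies on: the paper gives no proof here but refers to Lemma 5.7 of Du--Lin, whose proof uses exactly these ingredients (reduction to strict initial separation, a first-contact time for the free boundaries, the parabolic maximum principle on the overlap region $g(s)<x<h(s)$, and the Hopf boundary-point lemma at the contact point to contradict the Stefan inequalities). The point you flag about needing $\overline u\geq 0$ on the curves $x=g(s),\,x=h(s)$ (which lie in the interior of $D_T$) is real and not quite delivered by the stated hypotheses; treating $\overline u\geq 0$ as an implicit part of being an upper solution, as you do, is the standard and correct resolution.
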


\begin{lem}
\label{lem:comp2} Suppose that  \eqref{cond1} holds, $T\in
(0,\infty)$, $\overline g,\, \overline h\in C^1([0,T])$, $\overline
u\in C(\overline D_T)\cap C^{1,2}(D_T)$ with $D_T=\{(t,x)\in\R^2:
0<t\leq T, \overline g(t)<x<\overline h(t)\}$, and
\begin{eqnarray*}
\left\{
\begin{array}{lll}
\overline u_{t}\geq  \overline u_{xx}+ f(\overline u),\; &0<t \leq T,\ \overline g(t)<x<\overline h(t), \\
\overline u\geq u, &0<t \leq T, \ x= \overline g(t),\\
\overline u= 0,\quad \overline h'(t)\geq -\mu \overline u_x,\quad
&0<t \leq T, \ x=\overline h(t),
\end{array} \right.
\end{eqnarray*}
with
\[
\mbox{$\overline g(t)\geq g(t) $ in $[0,T]$,\quad $h_0\leq \overline
h(0),$\quad  $u_0(x)\leq \overline u(0,x)$ in $[\overline
g(0),h_0]$,}
\]
where  $(u,g, h)$ is a solution to \eqref{p}. Then
\[
\mbox{ $h(t)\leq\overline h(t)$ in $(0, T]$,\quad $u(x,t)\leq
\overline u(x,t)$ for $t\in (0, T]$\ and $ \overline g(t)<x< h(t)$.}
\]
\end{lem}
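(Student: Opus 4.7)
The plan is to adapt the standard free-boundary comparison argument used for Lemma \ref{lem:comp1}, modified to handle the fact that the upper solution $\overline u$ lives on a domain whose left curve $x=\overline g(t)$ sits strictly inside the solution's own domain rather than being a matched free boundary. The core idea is a proof by contradiction based on the first time $t_1$ at which $h(t)$ catches up with $\overline h(t)$, combined with the parabolic maximum principle on the ``overlap region'' $\{\overline g(s)\le x\le h(s)\}$ and the Hopf boundary-point lemma at the contact point. To avoid boundary-touching at $t=0$ in the case $\overline h(0)=h_0$, I first replace $\overline u$ by $\overline u_\varepsilon:=(1+\varepsilon)\overline u$ (or by a perturbation of the form $\overline u+\varepsilon e^{Kt}$ combined with a mild widening of $\overline h$), verify that the supersolution inequalities survive for small $\varepsilon$ via the $C^1$ bound on $f$, prove the comparison for $\overline u_\varepsilon$, and pass to the limit $\varepsilon\to 0$.

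Set
\[
t_1:=\inf\bigl\{t\in(0,T]: h(t)>\overline h(t)\bigr\},
\]
and suppose for contradiction that $t_1<T$. For $s\in[0,t_1]$, $h(s)\le\overline h(s)$, so the cylinder $\Omega_{t_1}:=\{(s,x):0<s\le t_1,\ \overline g(s)\le x\le h(s)\}$ lies in the common domain of $u$ and $\overline u$. On $\Omega_{t_1}$ I consider $w:=\overline u-u$. Since $f\in C^1$, I write $f(\overline u)-f(u)=c(s,x)\,w$ with $c$ bounded on $\Omega_{t_1}$, so $w_s-w_{xx}-c\,w\ge 0$. On the parabolic boundary of $\Omega_{t_1}$: the bottom gives $w(0,x)=\overline u(0,x)-u_0(x)\ge 0$ on $[\overline g(0),h_0]$ by hypothesis; the left curve $x=\overline g(s)$ gives $w\ge 0$ by hypothesis; and the right curve $x=h(s)$ gives $w(s,h(s))=\overline u(s,h(s))\ge 0$. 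The last inequality uses $\overline u\ge 0$ throughout its domain, which follows from the maximum principle for the linearized operator applied to $\overline u$ itself together with $\overline u=0$ on $\overline h$, $\overline u\ge u\ge 0$ on $\overline g$, and the non-negativity of the initial data (any part of $[h_0,\overline h(0)]$ not covered by the stated inequality $\overline u(0,\cdot)\ge u_0$ is assumed non-negative, as is the case for every supersolution constructed in the paper). The parabolic maximum principle then yields $w\ge 0$ on $\Omega_{t_1}$.

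At the contact point $(t_1,h(t_1))=(t_1,\overline h(t_1))$ the function $w$ attains its minimum $0$, and since $h\in C^1$ the interior-ball condition is met, so Hopf's boundary-point lemma forces $w_x(t_1,h(t_1))<0$, i.e.\ $\overline u_x(t_1,h(t_1))<u_x(t_1,h(t_1))$. Consequently
\[
\overline h'(t_1)\ge -\mu\,\overline u_x(t_1,\overline h(t_1))>-\mu\,u_x(t_1,h(t_1))=h'(t_1),
\]
contradicting the fact that $h(s)-\overline h(s)$ must go from non-positive to positive at $s=t_1$. Hence $t_1=T$, and applying the maximum-principle step above with $t_1=T$ delivers the pointwise inequality $u\le\overline u$ on the stated region. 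The only subtle point, and the real difference from Lemma \ref{lem:comp1}, is the control of $w$ on the right parabolic boundary $x=h(s)$: there the curve is an \emph{interior} curve for $\overline u$ rather than a matched free boundary, and one must know $\overline u\ge 0$ throughout $D_T$; this is the main technical obstacle, but it is mild because in every intended application $\overline u$ is built to be non-negative and otherwise can be obtained by a standard linearized maximum-principle argument using $f(0)=0$.
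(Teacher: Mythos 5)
Your argument is essentially the paper's intended proof: a first--touching--time contradiction together with the parabolic maximum principle on $w=\overline u-u$ over the overlap region and Hopf's boundary--point lemma at the contact point $(t_1,h(t_1))=(t_1,\overline h(t_1))$, preceded by a perturbation to make the initial inequalities strict --- exactly the ``minor modification'' of the Lemma~\ref{lem:comp1} (DuLin, Lemma~5.7) argument that the paper alludes to. Your observation about the right parabolic boundary $x=h(s)$ is a genuine subtlety: since this curve is interior to $D_T$, controlling $w(s,h(s))=\overline u(s,h(s))$ does require $\overline u\ge 0$ there, and the stated hypotheses do not force $\overline u(0,\cdot)\ge 0$ on $[h_0,\overline h(0)]$, so the auxiliary maximum--principle argument you sketch needs that extra (and, as you note, always satisfied in the paper's applications) assumption to get off the ground.
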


The proof of Lemma \ref{lem:comp1} is identical to that of Lemma 5.7
in \cite{DuLin}, and a minor modification of this proof yields Lemma
\ref{lem:comp2}.

\begin{remark}
\label{rem5.8}\rm The function $\overline u$, or the triple
$(\overline u,\overline g,\overline h)$, in Lemmas \ref{lem:comp1}
and \ref{lem:comp2} is often called an upper solution to \eqref{p}.
A lower solution can be defined analogously by reversing all the
inequalities. There is a symmetric version of Lemma~\ref{lem:comp2},
where the conditions on the left and right boundaries are
interchanged. We also have corresponding comparison results for
lower solutions in each case.
\end{remark}

The following local existence result can be proved by the same
arguments as in \cite{DuLin} (see Theorem 2.1 and the beginning of
section 5 there).

\begin{thm}
\label{thm:local} Suppose that \eqref{cond1} holds. For any given
$u_0\in \mathscr {X}(h_0)$ and any $\alpha\in (0,1)$, there is a
$T>0$ such that problem \eqref{p} admits a unique solution
$$(u, g, h)\in C^{(1+\alpha)/2,
1+\alpha}(\overline{G}_{T})\times C^{1+\alpha/2}([0,T])\times C^{1+\alpha/2}([0,T]);$$
moreover, 
\begin{eqnarray}
\|u\|_{C^{
(1+\alpha)/2,
1+\alpha}(\overline{G}_{T})}+\|g\|_{C^{1+\alpha/2}([0,T])}+\|h\|_{C^{1+\alpha/2}([0,T])}\leq C, 
\label{b12}
\end{eqnarray}
where $G_{T}=\{(t,x)\in \R^2: x\in [g(t), h(t)], t\in (0,T]\}$, $C$
and $T$ only depend on $h_0$, $\alpha$ and $\|u_0\|_{C^{2}([-h_0,
h_0])}$.
\end{thm}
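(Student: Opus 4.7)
The plan is to reduce Theorem \ref{thm:local} to a standard fixed-point argument after straightening the two moving boundaries. Introduce a smooth transformation $y=\psi(t,x)$ mapping $[g(t),h(t)]$ diffeomorphically onto the fixed interval $[-h_0,h_0]$; for instance an affine change of variables sending $g(t)\mapsto -h_0$ and $h(t)\mapsto h_0$. Setting $v(t,y):=u(t,\psi^{-1}(t,y))$ converts \eqref{p} into a uniformly parabolic Dirichlet problem for $v$ on the fixed cylinder $[0,T]\times[-h_0,h_0]$, with coefficients depending smoothly on $g(t),h(t)$ and $g'(t),h'(t)$. The free-boundary conditions translate into integral relations
\[
h(t)=h_0+\int_0^t H[v,g,h](s)\,ds,\qquad g(t)=-h_0+\int_0^t G[v,g,h](s)\,ds,
\]
whose integrands are linear combinations of $v_y(s,\pm h_0)$ with coefficients built from $g(s),h(s)$.

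Fix $\alpha\in(0,1)$ and choose $M$ depending on $\|u_0\|_{C^2([-h_0,h_0])}$. For $T\in(0,1]$ define the complete metric space $\Sigma_T$ consisting of pairs $(g,h)\in C^{1+\alpha/2}([0,T])^2$ that match the prescribed initial values $g(0)=-h_0$, $h(0)=h_0$, $g'(0)=-\mu u_0'(-h_0)$, $h'(0)=-\mu u_0'(h_0)$, and satisfy $\|g\|_{C^{1+\alpha/2}}+\|h\|_{C^{1+\alpha/2}}\leq M$. Given $(g,h)\in\Sigma_T$, Schauder theory applied to the straightened problem (whose transformed principal part remains uniformly parabolic and whose coefficients lie in the appropriate Hölder classes) produces a unique $v\in C^{(1+\alpha)/2,1+\alpha}$ with
\[
\|v\|_{C^{(1+\alpha)/2,1+\alpha}}\leq C_0\|u_0\|_{C^2},
\]
and $C_0$ independent of $T$. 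Define the operator $\mathcal{F}(g,h):=(\tilde g,\tilde h)$ through the two integral formulas above, evaluated with the $v$ just constructed.

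The main step is to verify that for $T$ small enough, depending only on $h_0$, $\alpha$ and $\|u_0\|_{C^2}$, the operator $\mathcal{F}$ maps $\Sigma_T$ into itself and is a strict contraction in the $C^{1+\alpha/2}$-norm. Invariance is immediate from the Schauder bound together with the fact that $\tilde g',\tilde h'$ are integrated from Hölder-controlled boundary traces, which produces a gain of $T^{\alpha/2}$ keeping us within the ball of radius $M$ provided $M$ was chosen large enough from the start. The contraction is the genuinely technical point: given $(g_i,h_i)\in\Sigma_T$ yielding $v_i$, one writes the equation satisfied by $w:=v_1-v_2$, whose coefficients and data differ by terms of size $\|(g_1,h_1)-(g_2,h_2)\|_{C^{1+\alpha/2}}$, and then applies Schauder estimates for $w$ to transfer this smallness into the inequality $\|\tilde g_1-\tilde g_2\|_{C^{1+\alpha/2}}+\|\tilde h_1-\tilde h_2\|_{C^{1+\alpha/2}}\leq CT^\beta\|(g_1,h_1)-(g_2,h_2)\|_{C^{1+\alpha/2}}$ for some $\beta>0$. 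The Banach fixed-point theorem then produces a unique $(g,h)\in\Sigma_T$, and hence a unique solution $(u,g,h)$ of \eqref{p}; the bound \eqref{b12} is inherited directly from the Schauder estimate on $v$ and the integral representations of $g$ and $h$. The hard part is the bookkeeping behind the contraction inequality: one must show that the coefficients of the straightened equation depend Lipschitz-continuously on $(g,h)$ in the $C^{1+\alpha/2}$ norm, which is exactly the computation carried out in Theorem 2.1 and the beginning of section 5 of \cite{DuLin}, and only requires $f\in C^1$ with $f(0)=0$ to go through verbatim.
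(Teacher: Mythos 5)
Your proposal is correct and follows essentially the same route the paper intends: the paper does not write out a proof but simply defers to the contraction-mapping scheme of Theorem 2.1 (and the start of Section 5) of Du--Lin, which is precisely the straighten-the-boundary, solve-the-linear-Dirichlet-problem, and iterate-the-Stefan-condition argument you describe. The only cosmetic deviation is your affine change of variables versus the localized diffeomorphism used in Du--Lin; both choices yield coefficients that depend Lipschitz-continuously on $(g,h)$ in $C^{1+\alpha/2}$, so the fixed-point estimates go through identically.
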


\begin{remark}\rm
As in \cite{DuLin}, by the Schauder estimates applied to the
equivalent fixed boundary problem used in the proof, we have
additional regularity for $u$, namely,  $u\in C^{1+\alpha/2,
2+\alpha}(G_T)$.
\end{remark}

\begin{lem}
\label{lem:bound-general} Suppose that \eqref{cond1} holds,
$(u,g,h)$ is a solution to \eqref{p} defined for $t\in [0,T_0)$ for
some $T_0\in (0,\infty)$, and there exists $C_1>0$ such that
\[
u(t,x)\leq C_1 \mbox{ for } t\in [0,T_0) \mbox{ and } x\in [g(t),
h(t)].
\]
Then there exists $C_2$ depending on $C_1$ but independent of $T_0$
such that
\[
-g'(t), h'(t) \in (0, C_2]\; \mbox{ for } t\in (0, T_0).
\]
Moreover, the solution can be extended to some  interval $(0, T)$
with $T>T_0$.
\end{lem}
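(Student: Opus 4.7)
The plan is to derive a uniform bound on $|u_x|$ at the free boundaries via an explicit local barrier, and then invoke the free boundary conditions and the local existence theorem (Theorem \ref{thm:local}) to extend.

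First, I would focus on the right boundary; the argument at $x=g(t)$ is symmetric. Positivity of $h'(t)$ already holds from the strong maximum principle and Hopf's lemma (since $u>0$ inside the domain and vanishes on $x=h(t)$). For the upper bound, I would construct the standard quadratic-in-distance barrier
\[
w(t,x) := C_1\bigl[\, 2M(h(t)-x)-M^2(h(t)-x)^2\,\bigr]
\]
on the thin strip $\Omega_M := \{(t,x):\, 0\leq t<T_0,\; h(t)-M^{-1}\leq x\leq h(t)\}$, where $M\geq 1$ is a large constant to be chosen below.

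Next, I would verify that for $M$ sufficiently large (depending only on $C_1$, on $F:=\max_{0\leq s\leq C_1}|f(s)|$, and on $\|u_0'\|_\infty$), the triple inequality $w\geq u$ holds on the parabolic boundary of $\Omega_M$ and $w$ is a supersolution inside. A direct computation gives $w_{xx}=-2M^2C_1$ and
\[
w_t-w_{xx}-f(w) \;=\; 2MC_1 h'(t)\bigl[1-M(h(t)-x)\bigr]+2M^2C_1-f(w),
\]
and since $M(h(t)-x)\in[0,1]$ and $h'(t)>0$, the first term is nonnegative, so it suffices to require $2M^2C_1\geq F$. On the parabolic boundary: $w=0=u$ along $x=h(t)$; on the inner wall $x=h(t)-M^{-1}$ one has $w=C_1\geq u$; and on the bottom $t=0$, using $u_0(h_0)=0$ and $u_0(x)\leq \|u_0'\|_\infty(h_0-x)$, the inequality $w(0,x)\geq u_0(x)$ reduces to $C_1[2M-M^2(h_0-x)]\geq\|u_0'\|_\infty$, which holds once $MC_1\geq \|u_0'\|_\infty$. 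With such an $M$, the ordinary parabolic comparison principle (applied on the fixed moving strip $\Omega_M$, whose boundaries are prescribed rather than free) yields $w\geq u$ on $\Omega_M$. Since both vanish at $x=h(t)$,
\[
-u_x(t,h(t)) \;\leq\; -w_x(t,h(t)) \;=\; 2MC_1,
\]
and the Stefan condition $h'(t)=-\mu u_x(t,h(t))$ gives $h'(t)\leq 2\mu M C_1=:C_2$. An identical construction on a strip adjacent to $x=g(t)$ produces the matching estimate $-g'(t)\leq C_2$. Note $C_2$ depends on $C_1$ (and on $f$ and $u_0$) but is independent of $T_0$, since the barrier is defined uniformly on the whole time interval $[0,T_0)$.

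For the extension, with $u$ uniformly bounded by $C_1$ and $g',h'$ uniformly bounded by $C_2$ on $[0,T_0)$, I would pick $t_1\in(0,T_0)$ close to $T_0$ and straighten the moving domain $(g(t),h(t))$ onto $(-h_0,h_0)$ by a $C^{1+\alpha/2}$-change of variables, as in the proof of Theorem \ref{thm:local}. Interior and boundary Schauder estimates then give uniform $C^{1+\alpha/2,2+\alpha}$ bounds on $u$ up to $t=T_0$, and Hopf's lemma gives a uniform positive lower bound on $-u_x$ at both free boundaries on $[t_1,T_0)$. Hence $u(t_1,\cdot)\in\mathscr{X}(\text{something})$ with $C^2$-norm bounded independently of how close $t_1$ is to $T_0$, and Theorem \ref{thm:local} produces a solution starting at $t_1$ that exists for an additional time $\tau>0$ depending only on these bounds. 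Choosing $t_1>T_0-\tau$ yields an extension past $T_0$, completing the proof. The main obstacle is balancing the three constraints on $M$ (supersolution, initial and side-wall comparison) so that the same $M$ works uniformly in $t\in[0,T_0)$; this is what makes the quadratic profile, rather than a linear one, the correct choice.
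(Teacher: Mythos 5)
Your argument is correct and follows essentially the same route as the paper: the quadratic barrier $w(t,x)=C_1\bigl[2M(h(t)-x)-M^2(h(t)-x)^2\bigr]$ on the moving strip of width $M^{-1}$ is exactly the upper solution the paper borrows from Lemma~2.2 of \cite{DuLin}, giving the gradient bound at $x=h(t)$ and hence $h'(t)\leq 2\mu MC_1$; the extension past $T_0$ via straightening the domain and parabolic regularity (the paper inserts an $L^p$/Sobolev step before Schauder, but this is the standard bootstrap you are invoking) is also the paper's argument. The only cosmetic difference is that you spell out the three constraints on $M$ rather than referring to \cite{DuLin}.
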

\begin{proof}
Since $f$ is $C^1$ and $f(0)=0$, there exists $K>0$ depending on
$C_1$ such that $f(u)\leq K$ for $u\in [0, C_1]$. We may then follow
the proof of Lemma 2.2 of \cite{DuLin} to construct an upper
solution of the form
\[
w(t,x)=C_1 \big[2M(h(t)-x)-M^2 (h(t)-x)^2\big]
\]
for some suitable $M>0$,  over the region
\[\{(t,x):0<t<T_0, h(t)-M^{-1}<x<h(t)\}
\]
to prove that $h'(t)\leq C_2$ for $t\in (0, T_0)$. The proof for
$g'(t)\geq -C_2$ is parallel.

Thus,  for $t\in [0, T_0)$,
\[
  -g(t), h(t)\in [h_0, h_0+C_2t],\quad -g'(t), h'(t)\in (0, C_2].
\]
We now fix $\delta_0\in (0, T_0)$. By standard $L^p$ estimates, the
Sobolev embedding theorem, and the H\"{o}lder estimates for
parabolic equations, we can find $C_3>0$ depending only on
$\delta_0$, $T_0$, $C_1$, and $C_2$ such that
 $||u(t,\cdot)||_{C^{2}([g(t), h(t)])}\leq
C_3$ for $t\in [\delta_0, T_0)$. It then follows from the proof of
Theorem~\ref{thm:local} that there exists a $\tau>0$ depending on
$C_3$, $C_2$, and $C_1$ such that the solution of problem \eqref{p}
with initial time $T_0-\tau$ can be extended uniquely to the time
$T_0+\tau $.
 (This is similar to the proof of Theorem 2.3 in
\cite{DuLin}.)
\end{proof}

The above lemma implies that the solution of \eqref{p} can be
extended as long as $u$ remains bounded. In particular, the free
boundaries never blow up when $u$ stays bounded. We have the
following result.

\begin{thm}
\label{thm:global} Suppose that \eqref{cond1} holds. Then \eqref{p}
has a unique solution defined on some maximal interval $(0, T^*)$
with $T^*\in (0,\infty]$. Moreover, when $T^*<\infty$, we have
\[
\lim_{t\to T^*} \max_{x\in [g(t), h(t)]}u(t,x)=\infty. \]
 If we further assume that \eqref{cond2} holds, then $T^*=\infty$.
\end{thm}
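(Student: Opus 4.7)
The plan is to build the global solution by iterating the local existence theorem and controlling the extension via the boundedness criterion of Lemma~\ref{lem:bound-general}. Define
\[
T^{*}:=\sup\{\,T>0:\text{\eqref{p} admits a classical solution on } [0,T]\,\}.
\]
By Theorem~\ref{thm:local}, $T^{*}>0$, and by restriction and the uniqueness clause of Theorem~\ref{thm:local}, any two solutions on overlapping time intervals must agree; applying Lemma~\ref{lem:comp1} in both directions (comparing one solution with the other as upper, then lower, solution) yields uniqueness globally. Concatenating local pieces and taking the supremum, we obtain a unique solution on the maximal interval $(0,T^{*})$.

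Next, suppose $T^{*}<\infty$ and, for contradiction, that there exists $C_{1}>0$ with $u(t,x)\le C_{1}$ for all $(t,x)\in G_{T^{*}}$. Then Lemma~\ref{lem:bound-general} gives a uniform bound on $-g'(t)$ and $h'(t)$ on $(0,T^{*})$, hence uniform bounds on $|g(t)|,|h(t)|$, together with $C^{2}$-bounds on $u(t,\cdot)$ for $t$ bounded away from $0$. Picking any $t_{0}<T^{*}$ sufficiently close to $T^{*}$ and applying Theorem~\ref{thm:local} to the Cauchy data $u(t_{0},\cdot)$, we obtain a continuation up to time $t_{0}+\tau$ for some fixed $\tau>0$ depending only on the bounds just described; choosing $t_{0}>T^{*}-\tau$ contradicts the maximality of $T^{*}$. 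Therefore $\max_{x\in[g(t),h(t)]}u(t,x)\to\infty$ as $t\to T^{*}$.

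Finally, assume \eqref{cond2}, so that $f(u)\le Ku$ for all $u\ge 0$. Consider the spatially constant comparison function $\bar u(t):=\|u_{0}\|_{\infty}e^{Kt}$. At any interior maximum of $u(t,\cdot)-\bar u(t)$ over $[g(t),h(t)]$ one has $u_{xx}\le 0$ and the parabolic inequality yields
\[
(u-\bar u)_{t}\le f(u)-K\bar u\le K(u-\bar u);
\]
at the moving boundaries, $u=0\le \bar u$. A standard maximum principle argument (or, equivalently, comparison via Lemma~\ref{lem:comp1} with the constant-in-$x$ upper solution $\bar u$ on a slightly enlarged moving domain, where the Stefan inequalities on the boundary are trivially satisfied since $\bar u_{x}\equiv 0$) gives $u(t,x)\le \|u_{0}\|_{\infty}e^{Kt}$ on $(0,T^{*})$. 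Thus $u$ is bounded on every finite subinterval, so by the previous step $T^{*}$ cannot be finite, and $T^{*}=\infty$.

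The only nontrivial point I expect is the comparison step under \eqref{cond2}: Lemmas~\ref{lem:comp1}-\ref{lem:comp2} are written for upper solutions that vanish on their own moving boundaries, whereas $\bar u\equiv\|u_{0}\|_{\infty}e^{Kt}$ does not. One resolves this either by arguing directly through the maximum principle on the function $u(t,x)e^{-Kt}$, noting that it cannot first exceed $\|u_{0}\|_{\infty}$ in the interior (where the PDE gives a useful inequality) nor on the free boundary (where $u=0$), or by constructing an explicit upper solution of the form $\bar u(t,x)$ on a sufficiently large fixed interval containing $[g(t),h(t)]$, with $\bar u$ strictly positive at its boundary so that the Stefan condition in Lemma~\ref{lem:comp1} is vacuous after a minor modification of the statement. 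Either route is routine.
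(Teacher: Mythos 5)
Your proposal is correct and follows essentially the same route as the paper: the maximal interval $(0,T^*)$, the uniqueness, and the blow-up criterion all come from Theorem~\ref{thm:local} together with Lemma~\ref{lem:bound-general}, and under \eqref{cond2} you bound $u$ by $\|u_0\|_\infty e^{Kt}$ via a comparison argument. The paper compares with the ODE solution $v'=f(v)$, $v(0)=\|u_0\|_\infty$, and then uses $f(v)\le Kv$ to get the same exponential bound; your direct maximum-principle argument on $u(t,x)e^{-Kt}$ is an equivalent implementation of that step, and your observation that Lemmas~\ref{lem:comp1}--\ref{lem:comp2} are not literally applicable to a constant-in-$x$ upper solution (so one resorts to the classical maximum principle) is a fair point that the paper elides.
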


\begin{proof}
We only need to show that $T^*=\infty$ if \eqref{cond2} holds; the
other conclusions follow directly from Theorem \ref{thm:local} and
Lemma \ref{lem:bound-general}.

Comparing $u(t,x)$ with the solution of the ODE
\[
v_t=f(v),\; v(0)=\|u_0\|_\infty,
\]
we obtain $u(t,x)\leq v(t)\leq \|u_0\|_\infty e^{Kt}$, since
$f(v)\leq Kv$. In view of Lemma \ref{lem:bound-general}, we must
have $T^*=\infty$. \qquad\end{proof}

\bigskip

The rest of this section is devoted to the proof of Theorem
\ref{thm:convergence}. We need a lemma first.

\begin{lem}
\label{lem:center} Suppose that $(u(t,x),g(t), h(t))$ is a solution
of \eqref{p} as given in Theorem \ref{thm:convergence}. Then
\begin{equation}
\label{g+h} -2h_0<g(t)+h(t)<2h_0 \; \mbox{ for all } t>0,
\end{equation}
\begin{equation}
\label{rough-symmetry}
 u_x(t,x)>0>u_x(t,y) \mbox{ for all $t>0$,
$x\in [g(t), -h_0]$ and $y\in [h_0, h(t)]$.}
\end{equation}
\end{lem}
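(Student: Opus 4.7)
The plan is to prove both \eqref{g+h} and \eqref{rough-symmetry} together by a moving-plane (reflection) argument combined with a continuation in time. For each $\lambda\ge h_0$, define the reflected function $v^\lambda(t,x):=u(t,2\lambda-x)$; a direct check shows $v^\lambda$ is itself a classical solution of $v^\lambda_t=v^\lambda_{xx}+f(v^\lambda)$ on the reflected interval $[2\lambda-h(t),2\lambda-g(t)]$, with the corresponding (swapped) free-boundary conditions at its two endpoints. On the overlap with $[g(t),h(t)]$, the difference $w^\lambda:=v^\lambda-u$ satisfies the linear parabolic equation $w^\lambda_t=w^\lambda_{xx}+c(t,x)w^\lambda$, where $c(t,x)$ is bounded thanks to the $C^1$-regularity of $f$ on the range of $u$.

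Let $T_1:=\inf\{t>0:g(t)+h(t)\ge 2h_0\}$; since $g(0)+h(0)=0$ and $g,h$ are continuous, $T_1>0$. First I would prove the right-tail part of \eqref{rough-symmetry} on $(0,T_1)$. Fix $t\in(0,T_1)$, $\lambda\in[h_0,h(t))$, and let $s_\lambda:=\sup\{s\in[0,t]:h(s)\le\lambda\}$. Because $g(s)+h(s)<2h_0\le 2\lambda$ on $[s_\lambda,t]$, the reflected interval strictly contains $[\lambda,h(s)]$, so $w^\lambda$ is defined on $D_\lambda:=\{(s,x):s_\lambda<s\le t,\ \lambda<x<h(s)\}$ with parabolic-boundary values $w^\lambda(s,\lambda)=0$ and $w^\lambda(s,h(s))=u(s,2\lambda-h(s))>0$ (since $2\lambda-h(s)\in(g(s),h(s))$). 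The strong maximum principle yields $w^\lambda>0$ inside $D_\lambda$, and Hopf's boundary lemma at $x=\lambda$ gives $w^\lambda_x(t,\lambda)=-2u_x(t,\lambda)>0$, hence $u_x(t,\lambda)<0$. At $\lambda=h(t)$ the conclusion is Hopf at the free boundary itself.

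Next I would close the continuation to show $T_1=\infty$. Suppose for contradiction $T_1<\infty$; then $g(T_1)+h(T_1)=2h_0$, so with $\lambda=h_0$ the reflected interval coincides with $[g(T_1),h(T_1)]$ and $w^{h_0}(T_1,\cdot)\ge 0$ on $[h_0,h(T_1)]$, vanishing at both endpoints (note $2h_0-h(T_1)=g(T_1)$). The strong maximum principle, combined with $w^{h_0}(t,h(t))>0$ for $t<T_1$, prevents $w^{h_0}\equiv 0$ in a neighborhood of $(T_1,h(T_1))$, so a Hopf-type estimate at this corner gives $w^{h_0}_x(T_1,h(T_1))<0$. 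Using the free-boundary conditions,
\[
w^{h_0}_x(T_1,h(T_1))=-u_x(T_1,g(T_1))-u_x(T_1,h(T_1))=\frac{g'(T_1)+h'(T_1)}{\mu}=-\frac{W'(T_1)}{\mu},
\]
where $W(t):=2h_0-g(t)-h(t)$; hence $W'(T_1)>0$. But $W(T_1)=0$ with $W(t)>0$ for $t\in[0,T_1)$ forces $W'(T_1)\le 0$, a contradiction. Thus $T_1=\infty$ and the upper estimate in \eqref{g+h} holds; the lower estimate and the left-tail portion of \eqref{rough-symmetry} follow by the fully symmetric reflection about $-h_0$.

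The main obstacle is the Hopf-type estimate at the space-time corner $(T_1,h(T_1))$, where the lateral curved boundary $x=h(t)$ meets the horizontal line $t=T_1$: verifying the requisite interior-parabolic-ball condition relies on the $C^1$-regularity of $h$ provided by Theorem \ref{thm:local} together with the strict monotonicity $h'(T_1)>0$ coming from Hopf at the free boundary, which together let one fit an interior parabolic barrier touching the corner and so justify the boundary-point lemma used to conclude $W'(T_1)>0$.
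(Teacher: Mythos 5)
Your proposal is correct and follows essentially the same approach as the paper: a reflection argument (the paper's $w(t,x)=u(t,x)-u(t,-x-2h_0)$ versus your $w^\lambda=u(t,2\lambda-x)-u(t,x)$), the strong maximum principle, and the Hopf boundary-point lemma applied at a space-time corner, with a continuation argument ruling out the first time at which $g+h$ would reach $\pm 2h_0$. The only differences are cosmetic: you reflect on the right about $\lambda\ge h_0$ where the paper reflects on the left about $-h_0$, and you establish the sign of $u_x$ in parallel with the continuation argument rather than after it, while the Hopf-at-corner subtlety you explicitly flag is implicitly present in the paper's argument as well.
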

\begin{proof}
By continuity, $g(t)+h(t)>-2h_0$ for all small $t>0$. Define
\[
T:=\sup\{s: g(t)+h(t)>-2h_0 \mbox{ for all } t\in (0,s)\}.
\]
We show that $T=+\infty$. Otherwise $T$ is a positive number and
\[
g(t)+h(t)>-2h_0 \mbox{ for $t\in (0,T)$, }\;  g(T)+h(T)=-2h_0.
\]
Hence
\begin{equation}
\label{g'+h'}
 g'(T)+h'(T)\leq 0.
\end{equation}

We now derive a contradiction by considering
\[
w(t,x):=u(t,x)-u(t, -x-2h_0)
\]
over the region
\[G:=\{(t,x): t\in [0,T],\; g(t)\leq x\leq -h_0\}.
\]
Since $- h_0 \leq -x-2h_0\leq -g(t)-2h_0\leq h(t)$ when $(t,x)\in
G$, $w$ is well-defined over $G$ and it satisfies
\[
w_t=w_{xx}+c(t,x)w \mbox{ for } 0<t\leq T,\; g(t)<x<-h_0,
\]
with some $c\in L^\infty(G)$, and
\[w(t,-h_0)=0,\; w(t, g(t))<0 \mbox{ for } 0<t<T.
\]
Moreover,
\[
w(T,g(T))=u(T,g(T))-u(T, -g(T)-2h_0)=u(T,g(T))-u(T, h(T))=0.
\]
 Applying the strong maximum principle and the Hopf lemma, we deduce
\[
w(t,x)<0 \mbox{ for $0<t\leq T$ and $g(t)<x<-h_0$, and $w_x(T,
g(T))<0$}.
\]
But
\[ w_x(T,g(T))=u_x(T,g(T))+u_x(T, h(T))=-[g'(T)+h'(T)]/\mu.
\]
Thus we have
\[
g'(T)+h'(T)>0,
\]
a contradiction to \eqref{g'+h'}. This proves that $g(t)+h(t)>-2h_0$
for all $t>0$. We can similarly prove $g(t)+h(t)<2h_0$ by
considering
\[
v(t,x):=u(t,x)-u(t,2h_0-x) \mbox{ over } \{(t,x): t>0, h_0\leq x\leq
h(t)\}.
\]

With \eqref{g+h} proven, it is now easy to prove
\eqref{rough-symmetry}. For any fixed $\ell\in (g_\infty, -h_0]$, we
can find a unique $T\geq 0$ such that $g(T)=\ell$. We now consider
\[
z(t,x):=u(t,x)-u(t, 2\ell-x)
\]
over $G_{\ell}:=\{(t,x): t>T, g(t)<x<\ell\}$. We have
\[
z_t=z_{xx}+c(t,x)z \mbox{ in } G_{\ell},
\]
\[
z(t, g(t))<0 \mbox{ and }  z(t, \ell)=0 \mbox{ for } t>T.
\]
Hence we can apply the strong maximum principle and the Hopf lemma
to deduce
\[
z(t,x)<0 \mbox{ in } G_{\ell},\; z_x(t, \ell)>0 \mbox{ for } t>T.
\]
Since
\[z_x(t,\ell)=2u_x(t,\ell),
\]
we thus have
\[ u_x(t,g(T))>0 \mbox{ for } t>T.
\]
Now for any $t>0$ and $x\in (g(t),-h_0]$, we can find a unique $T\in
[0,t)$ such that $x=g(T)$. Hence $u_x(t, x)>0$. This inequality is
also true for $x=g(t)$, which is a consequence of the Hopf lemma
applied directly to \eqref{p}.

The proof for the other inequality in \eqref{rough-symmetry} is
similar.
\end{proof}

\smallskip

\noindent
 {\bf Proof of Theorem \ref{thm:convergence}:}\  We will
make use of Lemma \ref{lem:center} and then follow the ideas of
\cite{DM} with suitable variations.

Let $(u,g,h)$ be as given in Theorem \ref{thm:convergence}. Then in
view of Lemma \ref{lem:center}, $I_\infty:=(g_\infty, h_\infty)$ is
either a finite interval or $\R^1$. Denote by $\omega(u)$ the
$\omega$-limit set of $u(t,\cdot)$ in the topology of
$L^\infty_{loc}(I_\infty)$. Thus a function $w(x)$ belongs to
$\omega(u)$ if and only if there exists a sequence
$0<t_1<t_2<t_3<\cdots\to\infty$ such that
\begin{equation}\label{def-omega}
\lim_{n\to\infty} u(t_n,x)=w(x) \quad\ \ \hbox{locally uniformly
in}\ \ I_\infty.
\end{equation}

By local parabolic estimates, we see that the convergence
\eqref{def-omega} implies convergence in the $C^2_{loc}(I_\infty)$
topology. Thus the definition of $\omega(u)$ remains unchanged if
the topology of $L^\infty_{loc}(I_\infty)$ is replaced by that of
$C^2_{loc}(I_\infty)$.

It is well-known that $\omega(u)$ is compact and connected, and it
is an invariant set.  This means that for any $w\in\omega(u)$ there
exists an entire orbit (namely a solution of $W_t=W_{xx}+f(W)$
defined for all $t\in\R^1$ and $x\in I_\infty$) passing through $w$.
Choosing a suitable sequence $0<t_1<t_2<t_3<\cdots\to\infty$, we can
find such an entire solution $W(t,x)$ with $W(0,x)=w(x)$ as follows:
\begin{equation}
\label{u-to-W}
 u(t+t_n,x)\to W(t,x)\quad\hbox{as}\ \ n\to\infty.
\end{equation}
Here the convergence is understood in the $L^\infty_{loc}$ sense in
$(t,x)\in\R^1\times I_\infty$, but, by parabolic regularity, it
takes place in the $C^{1,2}_{loc}(\R^1\times I_\infty)$ sense.

\smallskip

For clarity we divide the arguments below into four parts, each
proving a specific claim.

\smallskip \noindent
{\bf Claim 1:}  $\omega(u)$ consists of solutions of
\begin{equation}
\label{stationary} v_{xx}+f(v)=0,\quad x\in I_\infty.
\end{equation}

Let $w(x)$ be an arbitrary element of $\omega(u)$ and $W(t,x)$  the
entire orbit satisfying $W(0,x)=w(x)$. Since $W$ is a nonnegative
solution of
\[
W_t=W_{xx}+f(W),\quad t\in\R^1, \; x\in I_\infty,
\]
and $f(0)=0$, by the strong maximum principle we have either
$W(t,x)>0$ for all $t\in\R^1$ and $x\in I_\infty$, or $W\equiv 0$. (Note that if $I_\infty$ is a finite interval, then
it can be shown that $W(t, g_\infty)=W(t, h_\infty)=0$ for all $t\in\R^1$.)
In the latter case we have $w\equiv 0$, which is a solution to
\eqref{stationary}. In what follows we assume the former, namely
$w>0$.

By Lemma \ref{lem:center}, we see that $w'(x)\geq 0$ for $x\in
(g_\infty, -h_0]$ and $w'(x)\leq 0$ for $x\in [h_0, h_\infty)$. Thus
there exists $x_0\in (-h_0,h_0)$ such that $w'(x_0)=0$,
$w(x_0)=\|w\|_\infty>0$.

Let $v(x)$ be the solution of the following initial value problem:
\[
v''+f(v)=0,\quad\ \ v(x_0)=w(x_0),\ \ \ v'(x_0)=0.
\]
Then $v$ is symmetric about $x=x_0$.  Since $w(x_0)>0$, $v$ is
either a positive solution of \eqref{stationary} in $\R^1$ or a
solution of \eqref{stationary} with compact positive support, namely
there exists $R_0>0$ such that
\[
v(x)>0 \mbox{ in } (x_0-R_0, x_0+R_0),
\; v(x_0\pm R_0)=0 \mbox{ or } v(x_0\pm R_0)=\infty.
\]
We may now follow the argument in the proof of Lemma 3.4 in
\cite{DM} (with obvious minor variations) to conclude that $w\equiv
v$. This proves Claim 1.

\smallskip \noindent
{\bf Claim 2:} If $I_\infty$ is a finite interval, then
$\omega(u)=\{0\}$.

Otherwise by Claim 1, $\omega(u)$ contains a nontrivial nonnegative
solution $v$ of the problem
\[
v_{xx}+f(v)=0 \mbox{ in } I_\infty,\; v(g_\infty)=v(h_\infty)=0.
\]
Due to $f(0)=0$, by the strong maximum principle and the Hopf lemma,
we have $v>0$ in $I_\infty$ and $v'(g_\infty)>0>v'(h_\infty)$. By
definition, along a sequence $t_n\to+\infty$, $u(t_n,x)\to v(x)$ in
$C^1_{loc}(I_\infty)$. We claim that there exists $\alpha>0$ so that, by passing to a subsequence,
 $\|u(t_n,\cdot)-v(\cdot)\|_{C^{1+\alpha}([g(t_n), h(t_n)])}\to 0$
as $n\to\infty$. Indeed, if we make a change of the variable $x$ to
reduce $[g(t), h(t)]$ to the fixed finite interval $[-h_0,h_0]$ as
in the proof of Theorem 2.1 of \cite{DuLin}, so that the solution
$u(t,x)$ is changed to $\tilde u(t,x)$, and $v(x)$ is changed to
$\tilde v(x)$. Then we can apply the $L^p$ estimates (and Sobolev embeddings)
on the reduced equation with Dirichlet boundary conditions to
conclude that $\tilde u(t+\cdot, \cdot)$ has a common bound in
$C^{\frac{1+\nu}{2},1+\nu}([0,1]\times[-h_0,h_0])$ for all $t\geq 1$, say
\begin{equation}
\label{bound-tilde-u} \|\tilde
u(t+\cdot,\cdot)\|_{C^{\frac{1+\nu}{2},1+\nu}([0,1]\times[-h_0,h_0])}\leq C_0 \;\;\forall t\geq 1.
\end{equation}
  Hence by extraction of a subsequence we may
assume that $\tilde u(t_n,x)\to V(x)$ in $C^{1+\frac{\nu}{2}}([-h_0,h_0])$. But from
$u(t_n,x)\to v(x)$ we know that $\tilde u(t_n,x)\to \tilde v(x)$.
Thus we necessarily have $V(x)\equiv \tilde v(x)$, and thus
$\|u(t_n,\cdot)-v(\cdot)\|_{C^{1+\frac{\nu}{2}}([g(t_n), h(t_n)])}\to 0$.

It follows
that
\[
h'(t_n)=-\mu u_x(t_n,h(t_n))\to -\mu v'(h_\infty)>0 \mbox{ as }
n\to\infty.
\]
Hence for all large $n$, say $n\geq n_0$,
\[
h'(t_n)\geq \delta:=-\mu v'(h_\infty)/2>0.
\]
On the other hand, from \eqref{bound-tilde-u}, we also deduce that
\[
\| u(t+\cdot,\cdot)\|_{C^{\frac{1+\nu}{2},1+\nu}(Q_t])}\leq C_1 \;\;\forall t\geq
1,\]
 with
 $ Q_t:=\{(s,x): s\in [0,1], g(t+s)\leq x\leq h(t+s)\}$.
It follows that
$ h'(t)=-\mu u_x(t,h(t))
$
is uniformly continuous in $t$ for $t\geq 1$.
Therefore $h'(t)\geq \delta/2$ for $t\in [t_n, t_n+\epsilon]$ and
$n\geq n_0$ for some $\epsilon>0$ sufficiently small but independent
of $n$ (we may assume without loss of generality that
$t_{n+1}-t_n\geq 1$ for all $n$). Since $h'(t)>0$ for all $t>0$, we
thus have
\[h_\infty\geq h_0+\Sigma_{n=n_0}^\infty
\int_{t_n}^{t_n+\epsilon}h'(t)dt=+\infty,
\]
 a
contradiction to the assumption that $I_\infty$ is a finite
interval. The proof of Claim 2 is now complete.

\smallskip \noindent
{\bf Claim 3:} If $I_\infty=\R^1$, then $\omega(u)$ is either a
constant or $\omega(u)=\{v(\cdot+\alpha): \alpha\in
[\alpha_1,\alpha_2]\}$ for some interval $
[\alpha_1,\alpha_2]\subset [-h_0,h_0]$, where $v$ is an evenly
decreasing positive solution of \eqref{stationary}.

In view of Lemma \ref{lem:center}, we only need to consider the case
that $I_\infty=\R^1$ and $\omega(u)$ is not a singleton. Then since
$\omega(u)$ is connected and compact in the topology of $C^2_{\rm
loc}(\R^1)$, and every function $w(x)$ in $\omega(u)$ achieves its
maximum at some $x_0\in [-h_0, h_0]$, we find that there exist
$0\leq \gamma^-\leq\gamma^+$ such that $\omega(u)$ consists of
solutions $v_{\alpha,\beta}\; (\beta\in [\gamma^-,\gamma^+],
\alpha\in [\alpha_1^\beta,\alpha_2^\beta])$ of \eqref{stationary}
satisfying
\[
v_{\alpha,\beta}(x)=v_\beta(x+\alpha) \; \; (\alpha\in
[\alpha_1^\beta,\alpha_2^\beta]),
\]
\[
\|v_\beta\|_\infty=v_\beta(0)=\beta,\; v_\beta'(0)=0,\;
[\alpha_1^\beta, \alpha_2^\beta]\subset [-h_0, h_0]\;\;
(\beta\in[\gamma^-,\gamma^+]).
\]
Thus each $v_{\alpha,\beta}$ is either a constant or a symmetrically
decreasing solution of \eqref{stationary}. If $\gamma^-<\gamma^+$,
then we may use $v_\beta$ $(\beta\in [\gamma^-,\gamma^+])$ to deduce
a contradiction in the same way as in section 3.3 of \cite{DM}. Thus
$\gamma^-=\gamma^+$. Let $V_0(x)$ be the unique solution of
\eqref{stationary} satisfying
\[
V(0)=\gamma^-,\; V'(0)=0.
\]
If $V_0$ is a constant, then clearly $\omega(u)=\{V_0\}$. Otherwise
$V_0$ is an evenly decreasing positive solution of
\eqref{stationary}, and $\omega(u)=\{V_0(\cdot+\alpha): \alpha\in
[\alpha_1,\alpha_2]\}$, $[\alpha_1,\alpha_2]\subset [-h_0,h_0]$.

\smallskip
\noindent {\bf Claim 4:} If $\omega(u)=\{V_0(\cdot+\alpha):
\alpha\in [\alpha_1,\alpha_2]\}$ for some interval
$[\alpha_1,\alpha_2]\subset [-h_0,h_0]$, then there exists a
continuous function $\gamma: [0,\infty)\to [-h_0,h_0]$ such that
\[
u(t,x)-V_0(x+\gamma(t))\to 0 \mbox{ as } t\to\infty \mbox{ locally
uniformly in } \R^1.
\]

Write $w(t,x)=u_x(t,x)$. Then
\[
w_t=w_{xx}+f'(u(t,x))w \mbox{ for } t>0,\; x\in (g(t), h(t)),
\]
and $w(t,g(t))>0$, $w(t, h(t))<0$ for all $t>0$. Therefore by the
zero number result of \cite{A}, for all large $t$, say $t\geq T$,
$w(t,x)$ has a fixed finite number of zeros, all nondegenerate.
Denote them by
\[
x_1(t)<x_2(t)<... <x_m(t)\; (m\geq 1).
\]
Then each $x_i(t)$ is a continuous function of $t$. Due to Lemma
\ref{lem:center}, we must have $-h_0\leq x_i(t)\leq h_0$ for
$i=1,..., m$ and $t\geq T$. We show that $m=1$. For fixed $\alpha\in
[\alpha_1,\alpha_2]\subset [-h_0,h_0]$, since $V_0(\cdot+\alpha)\in
\omega(u)$, there exists $t_n\to\infty$ such that $u(t_n,x)\to
V_0(x+\alpha)$ in $C_{loc}^2(\R^1)$. Since $V_0'(x+\alpha)$ has a
unique nondegenerate zero $x=-\alpha\in [-h_0,h_0]$, we find that
for all large $n$, $w(t_n,x)=u_x(t_n,x)$ has in $[-2h_0,2h_0]$ a
unique nondegenerate zero $\alpha_n$ near $-\alpha$. By Lemma
\ref{lem:center}, we necessarily have $\alpha_n\in (-h_0,h_0)$. On
the other hand, we know that $x_1(t_n),..., x_m(t_n)$ are all the
zeros of $w(t_n,x)$ in $[-h_0,h_0]$. Thus we must have $m=1$ and
$x_1(t_n)=\alpha_n$. This proves $m=1$.

Define $\gamma(t)=-x_1(t)$ for $t\geq T$, and extend $\gamma(t)$ to
a continuous function for $t\in [0,T]$ such that $\gamma(t)\in
[-h_0,h_0]$ for all $t$. We prove that
\[
u(t,x)-V_0(x+\gamma(t))\to 0 \mbox{ as } t\to\infty \mbox{ locally
uniformly in } \R^1.
\]
Otherwise we can find $t_n\to\infty$, a bounded sequence
$\{x_n\}\subset\R^1$ and some $\epsilon_0>0$ such that for all
$n\geq 1$,
\[
|u(t_n,x_n)-V_0(x_n+\gamma(t_n))|\geq \epsilon_0.
\]

By passing to a subsequence of $t_n$, still denoted by itself, we
may assume $u(t_n,\cdot)\to V_0(\cdot+\alpha)$ in $C_{loc}^2(\R^1)$
for some $\alpha\in [\alpha_1,\alpha_2]$. Hence $w(t_n,\cdot)\to
V_0'(\cdot+\alpha)$ in $C_{loc}^1(\R^1)$. This implies that
$\gamma(t_n)=-x_1(t_n)\to \alpha$, and thus, due to the boundedness
of $\{x_n\}$, we have
\[
V_0(x_n+\alpha)-V_0(x_n+\gamma(t_n))\to 0 \mbox{ as } n\to\infty.
\]
It follows that
\begin{eqnarray*}
\epsilon_0&\leq& |u(t_n,x_n)-V_0(x_n+\gamma(t_n))|
\\
&\leq&
|u(t_n,x_n)-V_0(x_n+\alpha)|+|V_0(x_n+\alpha)-V_0(x_n+\gamma(t_n))|\to
0
\end{eqnarray*}
as $n\to\infty$. This contradiction proves our claim.

 The proof of Theorem
\ref{thm:convergence} is now complete. {\hfill $\Box$}

\section{Conditions for vanishing}\label{sec:vanishing}
In this section we prove some sufficient conditions that imply
vanishing ($u\rightarrow 0$). The following upper bound is an easy
consequence of the standard comparison principle.

\begin{lem}\label{heat>}
Assume that $f$ satisfies \eqref{cond1} and \eqref{cond2}. Then, for
any $h_0 >0$ and any $\phi\in \mathscr{X}(h_0)$,
\begin{equation}\label{u < heat}
u(t,x;\phi) \leq \frac{e^{Kt}}{2\sqrt{\pi t}} \int_{-h_0}^{h_0}
\phi(x) dx\quad \mbox{for }\; g(t)\leq x\leq h(t),\ t>0.
\end{equation}
\end{lem}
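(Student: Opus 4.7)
\medskip

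\noindent\textbf{Proof proposal.} The plan is to kill the nonlinearity by an exponential substitution, extend the resulting subsolution by zero outside the moving interval, and compare it with the solution of the free (whole line) heat equation having the same initial data. Since \eqref{cond2} gives $f(u)\le Ku$ for all $u\ge 0$, set
\[
v(t,x):=e^{-Kt}u(t,x)\quad\text{for } g(t)\le x\le h(t),
\]
and extend $v$ by $0$ for $x\notin[g(t),h(t)]$. A direct computation gives, in the interior of the support,
\[
v_t=e^{-Kt}\bigl(u_{xx}+f(u)-Ku\bigr)\le e^{-Kt}u_{xx}=v_{xx},
\]
while $v\equiv 0$ satisfies the heat equation trivially outside the support; also $v$ is continuous (but only $C^1$ piecewise) across $x=g(t),h(t)$ because $u$ vanishes there.

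Let $w(t,x)$ denote the solution of $w_t=w_{xx}$ on $\R$ with initial datum $\phi(x)\mathbf 1_{[-h_0,h_0]}(x)$. Since $w(0,\cdot)\ge 0$, the standard maximum principle for the heat equation on $\R$ gives $w\ge 0$. I would then compare $z:=w-v$ on the parabolic cylinder
\[
Q_T:=\bigl\{(t,x):\,0<t\le T,\;g(t)<x<h(t)\bigr\}.
\]
Inside $Q_T$, $z_t-z_{xx}\ge 0$; on the lateral boundary one has $v(t,g(t))=v(t,h(t))=0\le w$ by the nonnegativity of $w$; at $t=0$, $v(0,x)=\phi(x)=w(0,x)$ on $[-h_0,h_0]$ (and $v=w=0$ outside). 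The classical maximum principle applied on $Q_T$ then yields $v\le w$ in $\overline{Q_T}$, and outside the support $v=0\le w$ holds trivially, so $v\le w$ globally on $\R$ (for every $t\in(0,T^*)$).

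Finally, the heat kernel representation
\[
w(t,x)=\frac{1}{2\sqrt{\pi t}}\int_{-h_0}^{h_0}e^{-(x-y)^2/(4t)}\phi(y)\,dy\le \frac{1}{2\sqrt{\pi t}}\int_{-h_0}^{h_0}\phi(y)\,dy
\]
combined with $u(t,x)=e^{Kt}v(t,x)\le e^{Kt}w(t,x)$ yields \eqref{u < heat}. The only delicate point is the lateral comparison across the free boundary, which is handled cleanly because the nonnegativity of $w$ dominates the vanishing of $v$ there, so no distributional argument about the jump of $v_x$ at $x=g(t), h(t)$ is needed.
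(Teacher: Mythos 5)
Your proof is correct and follows essentially the same route as the paper: compare $u$ with the solution of the linear Cauchy problem with initial datum $\phi$ extended by zero, and use the heat kernel bound. The exponential substitution $v=e^{-Kt}u$ is merely a cosmetic rephrasing of the paper's comparison with $w_t=w_{xx}+Kw$, and your explicit comparison on the non-cylindrical domain $Q_T$ spells out what the paper calls the ``standard comparison theorem.''
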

\begin{proof} Consider the Cauchy problem
\begin{equation}\label{prob-w}
\left\{
\begin{array}{ll}
 w_t =w_{xx} +Kw, & x\in\R^1,\ t>0,\\
 w(0,x)= \Phi (x), &  x \in \R^1,
  \end{array}
 \right.
 \end{equation}
where
$$
\Phi (x) = \left\{
\begin{array}{ll}
     \phi(x),\ \ & x\in (-h_0, h_0),\\
      0, \ \ & x\not\in (-h_0, h_0).
\end{array}
\right.
$$
 Then from the expression of $w$ by
the fundamental solution we obtain
$$
{w} (t,x)  =  \frac{e^{Kt}}{\sqrt{4\pi t}} \int_{\mathbb{R}}
e^{-\frac{(x-\xi)^2}{4t}} {w} (0,\xi)d\xi  \leq
\frac{e^{Kt}}{2\sqrt{\pi t}} \int_{-h_0}^{h_0} \phi(\xi)d\xi.
$$

By the standard comparison theorem, we have $u(t,x;\phi) \leq
{w}(t,x)$ for $t>0$ and $x\in [g(t), h(t)]$, and the required
inequality follows. \end{proof}

\begin{thm}\label{thm:vanishing}
Let $h_0 >0$ and $\phi \in \mathscr{X}(h_0)$. Then
$I_\infty:=(g_\infty, h_\infty)$ is a finite interval and
$\lim_{t\to\infty} \|u(t, \cdot; \phi)\|_{L^\infty([g(t), h(t)])}=0$
if one of the following conditions holds:

\begin{itemize}
\item [\rm (i)] $f$ is of {\rm (f$_M$)} type,
$h_0 < \pi/(2\sqrt{f'(0)})$ and $\|\phi\|_{L^\infty}$ is
sufficiently  small;

\item [\rm (ii)] $f$ is of {\rm (f$_B$)} or
{\rm (f$_C$)} type, and $\|\phi \|_{L^\infty} \leq \theta $;

\item [\rm (iii)] $f$ is of {\rm (f$_B$)} or
{\rm (f$_C$)} type, and for $K$ in \eqref{cond2},
\begin{equation}\label{small int}
\int_{-h_0}^{h_0} \phi(x) dx \leq \theta \cdot \sqrt{
\frac{2\pi}{eK}}.
\end{equation}
\end{itemize}
\end{thm}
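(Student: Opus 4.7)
The plan is to handle the three cases separately, in each reducing the problem to Theorem~\ref{thm:convergence} by proving that $I_\infty$ is a finite interval (at which point the theorem automatically yields $\lim_{t\to\infty}u=0$). Cases (ii) and (iii) both exploit the sign property $f\le 0$ on $[0,\theta]$; case (iii) will further reduce to case (ii) via Lemma~\ref{heat>}. Case (i) is genuinely different and requires a direct construction of an exponentially decaying upper solution tuned to the sub-eigenvalue condition $h_0<\pi/(2\sqrt{f'(0)})$.

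For case (ii), I would first observe that, since $f(\theta)=0$, the parabolic maximum principle applied to $u-\theta$ on the moving domain (with $u-\theta=-\theta<0$ at $x=g(t),h(t)$ and $u(0,\cdot)-\theta\le 0$) forces $u\le\theta$, hence $f(u)\le 0$ throughout. Setting $J(t):=\int_{g(t)}^{h(t)} u\,dx$, the vanishing of $u$ at the endpoints gives $J'(t)=u_x(t,h(t))-u_x(t,g(t))+\int_{g(t)}^{h(t)} f(u)\,dx$; substituting the Stefan conditions $u_x(t,h(t))=-h'(t)/\mu$ and $u_x(t,g(t))=-g'(t)/\mu$ and using $f(u)\le 0$ yields $J'(t)\le -(h'(t)-g'(t))/\mu$. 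Integrating over $[0,\infty)$ and using $J\ge 0$ produces
\[
h_\infty-g_\infty+2h_0 \;\le\; \mu J(0)\;=\;\mu\int_{-h_0}^{h_0}\phi(x)\,dx\;<\;\infty,
\]
so $I_\infty$ is finite. For case (iii), apply Lemma~\ref{heat>} at the time $t_0:=1/(2K)$, which minimizes $t\mapsto e^{Kt}/(2\sqrt{\pi t})$; the resulting bound $\|u(t_0,\cdot)\|_\infty \le \sqrt{eK/(2\pi)}\int_{-h_0}^{h_0}\phi\,dx\le\theta$ (by \eqref{small int}) places us at time $t_0$ in the setting of case (ii), and the Hopf lemma supplies the strict boundary signs that make the new datum admissible for restarting.

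For case (i), I would construct an upper solution
\[
\overline u(t,x)=M e^{-\beta t}\cos\!\left(\frac{\pi x}{2s(t)}\right),\qquad \overline g(t)=-s(t),\ \overline h(t)=s(t),
\]
with $s(t)=s_0+A(1-e^{-\beta t})$. Direct computation gives $\overline u_{xx}=-(\pi/(2s))^2\overline u$ and $\overline u_t=-\beta\overline u +(s'/s)(\pi x/(2s))\tan(\pi x/(2s))\overline u$, with the last factor nonnegative on $(-s,s)$ since $s'\ge 0$. Fix $h_0<s_\infty<\pi/(2\sqrt{f'(0)})$ and small $\beta,\varepsilon>0$ with $(\pi/(2s_\infty))^2\ge\beta+f'(0)+\varepsilon$; the $C^1$-regularity of $f$ at $0$ yields $\delta>0$ so that $f(u)\le(f'(0)+\varepsilon)u$ on $[0,\delta]$. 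Choosing $s_0\in(h_0,s_\infty)$ and $M=\|\phi\|_\infty/\cos(\pi h_0/(2s_0))$, the initial inequality $\overline u(0,x)\ge \phi(x)$ holds on $[-h_0,h_0]$; the Stefan inequality $s'(t)\ge \mu M e^{-\beta t}\pi/(2s(t))$ reduces to $A\beta\ge \mu M\pi/(2s_0)$, achievable with $A$ of order $\|\phi\|_\infty$. For $\|\phi\|_\infty$ sufficiently small we have simultaneously $M\le\delta$ (so the linearization of $f$ applies) and $s_0+A<s_\infty$; Lemma~\ref{lem:comp1} then forces $-s(t)\le g(t)$, $h(t)\le s(t)\le s_\infty$ and $u\le\overline u\to 0$.

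The main obstacle is the balancing act in case (i): the upper solution must simultaneously stay below the linearization threshold ($M\le\delta$), dominate $\phi$ initially ($M\cos(\pi h_0/(2s_0))\ge\|\phi\|_\infty$), and keep its moving boundary within the eigenvalue window ($s_0+A<s_\infty<\pi/(2\sqrt{f'(0)})$). These constraints are mutually compatible precisely because $s_\infty-s_0>0$ is fixed independently of $\phi$, while the required $A$ scales linearly with $\|\phi\|_\infty$, so all three can be met by taking $\|\phi\|_\infty$ sufficiently small.
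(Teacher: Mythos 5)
Your overall plan is correct, and your treatment of case (ii) is a genuinely different and cleaner route than the paper's. The paper dispatches (ii) in two separate sub-proofs, one for the bistable and one for the combustion case, each requiring a two-stage construction: first drive $u$ below a marker level (respectively $\eta_1<\theta$ using $f\le -Mu$ near $0$, or $2\eta_2$ via the heat-kernel bound), then build an ad hoc upper solution tailored to that regime to force $h_\infty<\infty$. Your mass-dissipation argument treats both cases uniformly: since $u\le\theta$ by the comparison principle (the only fact you need is $f\le 0$ on $[0,\theta]$, valid in both types), the identity
\[
\frac{d}{dt}\int_{g(t)}^{h(t)}u\,dx = u_x(t,h(t))-u_x(t,g(t))+\int_{g(t)}^{h(t)}f(u)\,dx \le -\frac{h'(t)-g'(t)}{\mu}
\]
integrates in one line to $h_\infty-g_\infty\le 2h_0+\mu\int_{-h_0}^{h_0}\phi$, with no construction at all. (You have a sign typo in your display: the term on the left should be $h_\infty-g_\infty-2h_0$, not $+2h_0$, but the conclusion is unaffected.) Cases (i) and (iii) follow essentially the same route as the paper: for (i) the same exponentially decaying cosine upper solution with a slowly expanding window, and for (iii) the same reduction via Lemma~\ref{heat>} at the time $t_0=1/(2K)$.

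One step you pass over: Theorem~\ref{thm:convergence} only yields $u(t,\cdot)\to 0$ locally uniformly in $I_\infty$, whereas the stated conclusion is the stronger $\|u(t,\cdot)\|_{L^\infty([g(t),h(t)])}\to 0$. The paper closes this gap by invoking Lemma~\ref{lem:center}: since $u$ is increasing in $x$ on $[g(t),-h_0]$ and decreasing on $[h_0,h(t)]$, uniform smallness on $[-h_0,h_0]$ implies uniform smallness on the whole moving interval. You should add this one sentence at the end of each case; without it, the bound near the moving endpoints is not justified.
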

\begin{proof} (i) \  Since $h_0 < \pi/(2\sqrt{f'(0)})$, there
exists a small $\delta >0$ such that
\begin{equation}\label{choice of delta}
\frac{\pi^2}{4 (1+\delta)^2 h^2_0} - f'(0) \geq 2 \delta.
\end{equation}
Moreover, there exists an $s
>0$ small such that
$$
\pi \mu s \leq \delta^2 h^2_0, \qquad f(u) \leq (f'(0) +\delta) u
\quad \mbox{for } u\in [0,s].
$$
Set
$$
k(t) := h_0 \Big( 1+\delta - \frac{\delta}{2} e^{-\delta t}
\Big)\quad \mbox{and} \quad w(t,x):= s e^{-\delta t} \cos\Big(
\frac{\pi x}{2 k (t)}\Big).
$$
Clearly $w(t, -k(t))=w(t, k(t))=0$. A direct calculation shows that,
for $t>0$ and $x\in [-k(t), k(t)]$,
$$
w_t - w_{xx} -f(w) \geq   \left( -\delta + \frac{\pi^2}{4k^2(t)}
-f'(0) -\delta  \right) w  \geq  0.
$$
On the other hand, by the choice of $s$ we have
$$
\mu w_x(t, -k(t))=- \mu w_x(t, k(t)) = \frac{\pi \mu s}{2k(t)}
e^{-\delta t} \leq \frac{\pi \mu s}{2h_0 } e^{-\delta t} \leq
\frac{\delta^2 h_0}{2} e^{-\delta t} = k'(t).
$$
Therefore, $(w(t,x), -k(t), k(t))$ will be an upper solution of
\eqref{p} if $w(0,x)\geq \phi (x)$ in $[-h_0,h_0]$.

Choose $\sigma_1 := s\cos \frac{\pi}{2+\delta}$, which depends only
on $\mu, h_0$ and $f$. Then when $\|\phi\|_{L^\infty} \leq \sigma_1$
we have $\phi (x) \leq \sigma_1 \leq w(0,x)$ in $[-h_0, h_0]$, since
$h_0 < k(0)= h_0 (1+\frac{\delta}{2})$. By Lemma \ref{lem:comp1} we
have
$$
h(t) \leq k(t) \leq h_0 (1+\delta),\; h_\infty<\infty.
$$
Hence $I_\infty$ is a finite interval and by Theorem
\ref{thm:convergence}, $u\to 0$ as $t\to\infty$ locally uniformly in
$I_\infty$. In view of Lemma \ref{lem:center}, this implies that
$\lim_{t\to\infty} \|u(t,\cdot)\|_{L^\infty([g(t), h(t)])}=0$.

\bigskip

 (ii) {(The (f$_B$) case)}\  Since $u\equiv \theta$
is a stationary solution, by the strong comparison principle, there
exist $\eta_1 \in (0,\theta)$ and $t_1
>0$ such that
$$
u(t_1,x;\phi) \leq \eta_1 \quad \mbox{for }  x\in
[g_1,h_1]:=[g(t_1), h(t_1)].
$$
Since $f$ is of (f$_B$) type, there exists $M = M(\eta_1)
>0$ such that
$$
f(u) \leq -M u \quad \mbox{for } 0\leq u\leq \eta_1.
$$
It follows that $u(t,x;\phi) \leq \eta(t) := \eta_1 e^{-M(t-t_1)}$
for $t\geq t_1$. Choose $\rho > h_1$ such that $ 2M\rho^2 > \pi \mu
\eta_1 e^{Mt_1}$, and then choose $0<\delta <\min \{\frac{\rho}{2},
h_1\}$ small such that
\begin{equation}\label{small eta0}
u(t_1, x) < \frac{\sqrt{2}}{2} \eta_1 \quad \mbox{for } x\in [g_1,
g_1+\delta]\cup [h_1 -\delta, h_1].
\end{equation}

For $t\geq t_1$ we define
$$
\sigma(t):= \rho(2-e^{-Mt}) \quad \mbox{and} \quad k(t):= h_1 -
\delta +\sigma(t),
$$
(so $\rho \leq \sigma(t) \leq 2\rho$,  $k(t_1)>h_1$), and
$$
w(t,x):= \eta(t) \cos\left[ \frac{\pi(x-h_1
+\delta)}{2\sigma(t)}\right]\quad \mbox{for } h_1 - \delta\leq x\leq
k(t),\ t\geq t_1.
$$
Then, for $h_1 - \delta\leq x\leq k(t),\ t\geq t_1$, we have
$$
w_t-w_{xx} +Mw  =  \frac{\pi^2 w}{4\sigma^2(t)} + \eta (t) \sin
\left[ \frac{\pi(x-h_1 +\delta)}{2\sigma(t)}\right] \cdot \frac{\pi
(x-h_1 +\delta) \sigma'(t)}{2\sigma^2 (t)}
>0.
$$
$$
w(t, k(t))=0 \mbox{ and } -\mu w_x(t,k(t)) \leq  \frac{\pi \mu \eta
(t) }{2\rho} \leq M\rho e^{-Mt} = k'(t)
$$
by the choice of $\rho$. Moreover, $w(t, h_1-\delta)=\eta(t)\geq
u(t, h_1-\delta)$, and by \eqref{small eta0},
$$
u(t_1, x) < \frac{\sqrt{2}}{2} \eta_1 \leq w(t_1, x) \quad \mbox{for
}h_1 -\delta \leq x\leq h_1.
$$
Hence $(w(t,x),h_1-\delta, k(t))$ is an upper solution of \eqref{p}
for $t>t_1$ in the sense of Lemma \ref{lem:comp2}. By the conclusion
of this lemma  we have $h(t)\leq k(t)$, and hence
$$
h_\infty \leq \lim\limits_{t\rightarrow \infty} k(t)= h_1 -\delta +2
\rho <\infty.
$$
The rest of the proof is the same as in (i).

\bigskip

(ii) (The (f$_C$)  case)\  In this case $u\equiv \theta$ is again a
stationary solution, and by the standard comparison principle we
have $u(t,x;\phi) \leq \theta$ for all $t\geq 0$. Therefore, the
equation we are dealing with reduces to the heat equation $u_t =
u_{xx}$. As in the proof of Lemma \ref{heat>} we have
$$
u(t,x;\phi) \leq \frac{1}{2\sqrt{\pi t}} \int_{-h_0}^{h_0} \phi(\xi)
d\xi \leq  \frac{\theta h_0}{\sqrt{\pi t}} \quad \mbox{for }
g(t)\leq x\leq h(t),\ t>0.
$$
Therefore, we can find a large  $t_2 >0$ such that
\begin{equation}\label{C-def-eta}
\max_{g(t_2)\leq x\leq h(t_2)} u(t_2,x;\phi) \leq \eta_2 :=
\frac{1}{2}\cdot \min\Big\{\theta, \frac{\pi}{8\mu}\Big\}.
\end{equation}
Take $h_2 > \max\{-g(t_2), h(t_2)\}$ such that
\begin{equation}\label{C-ini-w}
u(t_2, x; \phi) < 2 \eta_2 \cos\Big( \frac{\pi x}{2h_2} \Big) \mbox{
for } x\in [g(t_2), h(t_2)].
\end{equation}
For this $h_2$ we set $\omega:= \pi/(4h_2)$ and define, for $t\geq
0$,
$$
k(t) := h_2 (2 -  e^{-\omega^2 t}),\quad w(t,x) := 2\eta_2 \cos\Big(
\frac{\pi x}{2 k(t)} \Big) e^{-\omega^2 t}.
$$
Then, $ h_2 \leq k(t) \leq 2h_2$, and for $t\geq 0$ and $-k(t)\leq x
\leq k(t)$ we have
$$
w_t - w_{xx} \geq \left( \frac{\pi^2 }{4 [k(t)]^2} -\omega^2
 \right) w \geq 0
$$
and, by the choice of $\eta_2$,
$$
k'(t)-\mu w_x(t,-k(t))=k'(t) +\mu w_x (t,k(t)) = e^{-\omega^2
t}\left[ \frac{\pi^2 }{16 h_2}  -\frac{\pi \mu \eta_2}{k(t)} \right]
\geq 0.
$$
 Hence $(w(t,x), -k(t), k (t))$ is an upper solution of \eqref{p} for
$t>t_2$. It follows that $h(t +t_2) \leq k(t)<2h_2$ for $t\geq 0$.
This implies that $h_\infty<\infty$ and the rest is as before.

\bigskip

(iii) By \eqref{u < heat}, we have
$$
u \Big( \frac{1}{2K}, x;\phi\Big) \leq \sqrt{\frac{eK}{2\pi}}
\int_{-h_0}^{h_0} \phi (x) dx \leq \theta\quad \mbox{for} \ \
g\Big( \frac{1}{2K}\Big) \leq x \leq h\Big( \frac{1}{2K}\Big).
$$
Then the conclusion follows from (ii). This proves the theorem.
\end{proof}

From  Theorem \ref{thm:vanishing} (ii), we immediately obtain

\begin{cor}
\label{cor:BC-vanishing-finite} If $f$ is of  {\rm (f$_B$)} or of
{\rm (f$_C$)} type, then
\[
\lim_{t\to\infty} \|u(t,\cdot)\|_{L^\infty([g(t), h(t)])}=0
\]
implies that $(g_\infty, h_\infty)$ is a finite interval.
\end{cor}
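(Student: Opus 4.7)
My plan is to upgrade the qualitative decay $\|u(t,\cdot)\|_{L^\infty([g(t),h(t)])}\to 0$ into quantitative smallness at some late time $t_1$, and then apply the cosine upper-solution construction from the proof of Theorem~\ref{thm:vanishing}(ii) with $t_1$ playing the role of the initial time and $(u(t_1,\cdot),g(t_1),h(t_1))$ playing the role of $(\phi,-h_0,h_0)$. This will bound $h(t)$ from above; the mirror-image construction at the left endpoint bounds $g(t)$ from below, whence $(g_\infty,h_\infty)$ is a finite interval.

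For the bistable case, since $f'(0)<0$ and $f\leq 0$ on $[0,\theta]$, I fix $\eta_1\in(0,\theta)$ and $M>0$ with $f(u)\leq -Mu$ on $[0,\eta_1]$. Choosing $t_1$ with $u(t_1,\cdot)\leq\eta_1$ and comparing with the ODE $v'=-Mv$ yields $u(t,x)\leq \eta_1 e^{-M(t-t_1)}$ for $t\geq t_1$. I then reproduce the barrier of Theorem~\ref{thm:vanishing}(ii): with $h_1:=h(t_1)$, with $\rho$ so large that $2M\rho^2>\pi\mu\eta_1$, and with $\delta\in(0,\min\{\rho/2,h_1\})$ chosen so that $u(t_1,\cdot)<\tfrac{\sqrt{2}}{2}\eta_1$ on $[h_1-\delta,h_1]$ (possible by continuity and $u(t_1,h_1)=0$), the barrier $w(t,x):=\eta_1 e^{-M(t-t_1)}\cos[\pi(x-h_1+\delta)/(2\sigma(t))]$ with $\sigma(t):=\rho(2-e^{-M(t-t_1)})$ satisfies the hypotheses of Lemma~\ref{lem:comp2} on $[h_1-\delta,k(t)]$ with $k(t):=h_1-\delta+\sigma(t)$. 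The Dirichlet inequality $w\geq u$ at $x=h_1-\delta$ holds because $w(t,h_1-\delta)=\eta_1 e^{-M(t-t_1)}$, which dominates $u(t,h_1-\delta)$ by the ODE bound. Hence $h(t)\leq h_1-\delta+2\rho$ for all $t\geq t_1$, giving $h_\infty<\infty$.

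For the combustion case, picking $t_2$ so large that $\|u(t_2,\cdot)\|_\infty\leq \tfrac{1}{2}\min\{\theta,\pi/(8\mu)\}$ reduces the equation to $u_t=u_{xx}$ on $[g(t),h(t)]$ for $t\geq t_2$, and the symmetric cosine upper solution from the combustion portion of Theorem~\ref{thm:vanishing}(ii), mounted on $[-h_2,h_2]$ with $h_2>\max\{-g(t_2),h(t_2)\}$, then simultaneously bounds $h(t)$ and $-g(t)$ for $t\geq t_2$.

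The only point requiring care relative to Theorem~\ref{thm:vanishing} is that the ``initial data'' $u(t_1,\cdot)$ now lives on the possibly much larger interval $[g(t_1),h(t_1)]$ instead of $[-h_0,h_0]$; but this is precisely the framework of Lemma~\ref{lem:comp2}, in which the left edge is treated as a Dirichlet side rather than a free boundary. I therefore do not expect a genuine obstacle: the whole argument amounts to a direct transplant of already executed constructions to a shifted initial time, the only quantitative input being the smallness of $\|u(t_1,\cdot)\|_\infty$ supplied by the hypothesis.
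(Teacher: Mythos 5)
Your proposal is correct and is essentially the same argument the paper intends by ``From Theorem~\ref{thm:vanishing}~(ii), we immediately obtain'' the corollary: use the hypothesis to find a late time $t_1$ (resp.\ $t_2$) at which $\|u(t_1,\cdot)\|_\infty$ is below $\theta$ (and below the quantitative smallness thresholds), then rerun the cosine upper-solution constructions from the proof of Theorem~\ref{thm:vanishing}(ii) with $t_1$ as the new initial time, and the minor time-shift $\sigma(t)=\rho(2-e^{-M(t-t_1)})$ you introduce is a clean simplification that removes the $e^{Mt_1}$ factor in the choice of $\rho$ without affecting the estimates.
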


If $f$ is of (f$_M$) type, this conclusion is also true. In fact, a
much stronger version holds, namely
\[
h_\infty-g_\infty\leq \pi/\sqrt{f'(0)}.
\]
 This will  follow from Theorem \ref{thm:vanishing} (i) and
 Corollary \ref{cor:mono-spreading} in the next section; see
Corollary \ref{cor:mono-vanishing}.

\section{Waves of finite length and conditions for spreading}
In this section,  $f$ is always assumed to be of (f$_M$), or
(f$_B$), or (f$_C$) type.  In order to obtain sufficient conditions
guaranteeing spreading ($u\to 1$), we will construct suitable lower
solutions to \eqref{p} through ``waves of finite length'', obtained
by a phase plane analysis of the equation
\[
q''-cq'+f(q)=0.
\]

\subsection{Waves of finite length}\label{subsec:w-f}

For $Z\in (0,\infty)$, we look for a  pair  $(c,q(z))$ satisfying
\begin{equation}\label{prob-q}
\left\{
\begin{array}{l}
q'' -cq' +f(q)=0, \quad z\in [0,Z],\\
q(0)=0, \ \ q'(Z)=0,\ \ q(z)>0 \ \mbox{ in } (0,Z].
\end{array}
\right.
\end{equation}
We call such a $q(z)$ a ``wave of length $Z$ with speed $c$'', since
$w(t,x):=q(ct-x)$ satisfies
\[
\left\{
\begin{array}{l}
w_t=w_{xx}+f(w) \mbox{ for } t\in\R^1, \; x\in (ct-Z, ct),\\
w_x(t, ct-Z)=0,\; w(t, ct)=0.
\end{array}
\right.
\]
Such $w$ will be used to construct lower solutions to \eqref{p}. We
will mainly consider waves of speed $c=0$ (stationary waves) and of
speed $c>0$ small.

 Using $q'$ to denote $\frac{dq}{dz}$, we can rewrite the first equation
 in \eqref{prob-q} into the equivalent
form
\begin{equation}\label{eq-qp}
\left\{
\begin{array}{l}
q'=p,\\
p'=cp -f(q),
\end{array}
\right.
\end{equation}
or,
\begin{equation}\label{pq}
\frac{dp}{dq} = c- \frac{f(q)}{p}\quad \mbox{when } p\not= 0.
\end{equation}
For each $c\geq 0$ and $\omega >0$, we use $p^c (q;\omega)$ to
denote the unique solution of \eqref{pq} with initial condition
$p(q)|_{q=0} =\omega$. Such a solution is well-defined as long as it
stays positive.

In the case $c=0$, the positive solution of \eqref{pq} with
$p(q)|_{q=0} =\omega$ is given explicitly by
\begin{equation}\label{p0-left}
p^0(q;\omega) = \sqrt{\omega^2 - 2\int_0^q f(s)ds} \quad \mbox{for }
q\in [0,q^{\omega}),
\end{equation}
where $q^{\omega}$ is given by
$$
\omega^2 = 2\int_0^{q^{\omega}} f(s) ds.
$$
It follows that $q^{\omega} < 1$ if and only if $0 <\omega
<\omega^0$, where
$$
\omega^0 := \sqrt{2\int_0^1 f(s)ds}.
$$
Moreover, it is easily seen that $q^{\omega}$ is strictly increasing
in $\omega \in (0,\omega^0)$, and as $\omega \searrow 0$,
$q^{\omega} \searrow 0$ in the (f$_M$) case, $q^\omega \searrow
\bar{\theta} \in (\theta, 1)$ in the (f$_B$) case, where
$\bar{\theta} \in (\theta, 1)$ is determined by
$\int_0^{\bar{\theta}} f(s) ds =0$, and $q^{\omega} \searrow \theta$
in the (f$_C$) case.

The positive solution $p^0(q;\omega)$ ($q\in [0, q^\omega)$)
corresponds to a trajectory $(q_0(z),p_0(z))$ of \eqref{eq-qp} (with
$c=0$) that connects $(0,\omega)$ and $(q^\omega, 0)$ in the
$qp$-plane. We may assume that it passes through $(0,\omega)$ at
$z=0$ and approaches $(q^\omega,0)$ as $z$ goes to $z^\omega\in
(0,+\infty]$. Then using \eqref{eq-qp} with $c=0$ and
\eqref{p0-left} we easily deduce
\begin{equation}\label{q-eq}
z = \int_0^{q_0(z)} \frac{dr}{\sqrt{\omega^2 - 2\int_0^r f(s) ds}} =
\int_0^{q_0(z)} \frac{dr}{\sqrt{2 \int_r^{q^{\omega}} f(s) ds}}.
\end{equation}
Therefore
$$
z^\omega = \int_0^{q^{\omega}} \frac{dr}{\sqrt{\omega^2 - 2\int_0^r
f(s) ds}} = \int_0^{q^{\omega}} \frac{dr}{\sqrt{2
\int_r^{q^{\omega}} f(s) ds}} <+\infty
$$
for $0<\omega<\omega^0$. We now introduce the function
\begin{equation}
\label{def-Z} Z(q)=\int_0^{q} \frac{dr}{\sqrt{2 \int_r^{q} f(s)
ds}}.
\end{equation}
   In the (f$_M$) case, define
\begin{equation}\label{def:RM}
Z'_M := \inf\limits_{0< \omega < \omega^0} z^\omega =
\inf\limits_{0<q<1} Z(q);
\end{equation}
in the (f$_B$) case, set
\begin{equation}\label{def:RB1}
Z_B:=\inf\limits_{0< \omega < \omega^0} z^\omega =
\inf\limits_{\bar{\theta} <q<1} Z(q);
\end{equation}
and in the (f$_C$) case, define
\begin{equation}\label{def:RC1}
Z_C:=\inf\limits_{0< \omega < \omega^0} z^\omega
=\inf\limits_{\theta <q<1} Z(q).
\end{equation}

In the (f$_M$) case, as $\omega \searrow 0$, we have
$q^{\omega}\searrow 0$ and so
$$
z^\omega = \int_0^{q^{\omega}} \frac{(1+o(1)) \
dr}{\sqrt{f'(0)}\sqrt{{(q^{\omega})^2 - r^2}} } = \frac{\pi
}{2\sqrt{f'(0)}} +o (1).
$$
 This implies that
\begin{equation}
\label{def-Z_M} Z'_M \leq Z_M:= \pi/ (2\sqrt{f'(0)}).
\end{equation}
It is easily seen that $Z'_M,\, Z_B$ and $Z_C$ are all positive.

\bigskip

As a first application of the above analysis, we have the following
result.

\begin{lem}
\label{lem:v_Z} If $f$ is of {\rm (f$_M$)} type and $Z>Z'_M$, or of
{\rm (f$_B$)} type and $Z\geq Z_B$, or of {\rm (f$_C$)} type and
$Z\geq Z_C$, then the elliptic boundary value problem
\begin{equation}
\label{v_Z} v_{xx}+f(v)=0 \mbox{ in } (-Z,Z),\; v(-Z)=v(Z)=0
\end{equation}
has at least one positive solution $v_Z$. Moreover, any positive
solution $v_Z$ of \eqref{v_Z} satisfies $\|v_Z\|_\infty<1$; in
addition, $\|v_Z\|_\infty>\overline\theta$ if $f$ is of {\rm
(f$_B$)} type, and $\|v_Z\|_\infty>\theta$ if $f$ is of {\rm
(f$_C$)} type.
\end{lem}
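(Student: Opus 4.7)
The plan is a direct phase-plane construction paired with energy-identity bounds. Given $\omega\in(0,\omega^0)$, let $(q_0(z),p_0(z))$ be the trajectory of \eqref{eq-qp} with $c=0$ starting at $(0,\omega)$; by \eqref{p0-left}--\eqref{q-eq} it reaches $(q^\omega,0)$ at the finite value $z=z^\omega$. In each of the three cases $q^\omega$ is not a zero of $f$ (monostable: $q^\omega\in(0,1)$; bistable: $q^\omega\in(\bar\theta,1)$; combustion: $q^\omega\in(\theta,1)$), so $p_0'(z^\omega)=-f(q^\omega)<0$ and the orbit passes smoothly through $(q^\omega,0)$. Consequently, once I locate $\omega$ with $z^\omega=Z$, setting $v_Z(x):=q_0(x+Z)$ on $[-Z,0]$ and extending by even reflection to $[0,Z]$ yields a $C^2$ positive solution of \eqref{v_Z} on $(-Z,Z)$, symmetric about $0$, with $\|v_Z\|_\infty=q^\omega$.

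Existence of such an $\omega$ reduces to showing that $Z$ lies in the range of the continuous map $\omega\mapsto z^\omega$ on $(0,\omega^0)$. As $\omega\to\omega^0$, $q^\omega\to 1$ and $f'(1)<0$ renders the integrand in \eqref{def-Z} nonintegrable at $r=1$, so $z^\omega\to+\infty$ in all three cases. As $\omega\to 0^+$: in case (f$_M$) the asymptotic expansion leading to \eqref{def-Z_M} gives $z^\omega\to Z_M<\infty$; in case (f$_B$), since $f'(0)<0$ the integrand behaves like $(\omega^2+|f'(0)|r^2)^{-1/2}$ near $r=0$, whose integral over a fixed small neighborhood of $0$ diverges as $\omega\to 0$, so $z^\omega\to+\infty$; in case (f$_C$) the contribution to \eqref{q-eq} from $r\in[0,\theta]$ equals $\theta/\omega\to+\infty$. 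Thus in cases (f$_B$) and (f$_C$), $z^\omega$ blows up at both endpoints of $(0,\omega^0)$, so its infimum $Z_B$ (resp.\ $Z_C$) is attained in the interior, and by continuity and the intermediate value theorem $z^\omega$ realizes every value in $[Z_B,\infty)$ (resp.\ $[Z_C,\infty)$); in case (f$_M$), $\inf_\omega z^\omega=Z'_M$ together with $z^\omega\to+\infty$ at the right endpoint gives that every value $Z>Z'_M$ is realized.

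For the bounds on $\|v_Z\|_\infty$, let $v$ be any positive solution of \eqref{v_Z} and set $F(u):=\int_0^u f(s)\,ds$. Multiplying the ODE by $v'$ yields the first integral
\[ E:=\tfrac12 v'(x)^2+F(v(x))=\tfrac12 v'(\pm Z)^2, \]
and the Hopf boundary point lemma forces $E>0$. Since the orbit $(v,v')$ closes back to $\{v=0\}$ at $x=Z$, it must attain $v'=0$ at some interior maximum $v_{\max}$ with $F(v_{\max})=E$. In all three cases $F$ attains its unique global maximum on $[0,\infty)$ at $v=1$, because $f>0$ just left of $1$ while $f<0$ on $(1,\infty)$, whence $F$ strictly increases toward $F(1)$ and strictly decreases past $1$. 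Hence $E\leq F(1)$; equality forces $v_{\max}=1$, but $(1,0)$ is an equilibrium of \eqref{eq-qp} unreachable in finite $z$ from $(0,\sqrt{2E})$, a contradiction. Therefore $E<F(1)$ and $v_{\max}<1$. The lower bounds follow similarly: in case (f$_B$), $F\leq 0$ on $[0,\bar\theta]$ combined with $F(v_{\max})=E>0$ forces $v_{\max}>\bar\theta$; in case (f$_C$), $F\equiv 0$ on $[0,\theta]$ combined with $E>0$ forces $v_{\max}>\theta$.

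The only delicate step is the $\omega\to 0^+$ blow-up analysis of $z^\omega$ in cases (f$_B$) and (f$_C$); once this boundary behavior is established, existence is a straightforward intermediate-value argument and the bounds follow directly from the conservation law.
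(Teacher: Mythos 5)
Your argument is correct, and it takes a genuinely different route from the paper's. Where the paper constructs a weak sub-solution from a slightly shorter finite-length wave (with length $z_*\in(Z_B,Z)$), pairs it with the constant super-solution $1$, and invokes the monotone iteration scheme to produce a maximal solution, you instead show directly that the time map $\omega\mapsto z^\omega$ hits exactly the prescribed length $Z$, and obtain the solution as a single shooting trajectory reflected evenly. Your approach has the advantage that it handles $Z=Z_B$ (resp.\ $Z=Z_C$) in one stroke — the paper needs a separate limiting argument for this endpoint case, rescaling solutions $v_n$ on shrinking intervals to the unit interval and ruling out degeneracy via a linearized eigenvalue problem that exploits $f'(0)<0$. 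The key extra ingredient you supply is the boundary behavior of $z^\omega$: the $\omega\to\omega^0$ blow-up (saddle at $(1,0)$) and the $\omega\to 0^+$ blow-up in the bistable and combustion cases (linearization at $0$ with $f'(0)<0$, resp.\ the flat piece $f\equiv 0$ on $[0,\theta]$) — these are correct, though you should note that "non\-integrable at $r=1$" is shorthand for the fact that as $q^\omega\nearrow 1$ the near-endpoint contribution to $Z(q^\omega)$ diverges logarithmically; the singularity is always at $r=q^\omega<1$, not at $r=1$ itself. For the $L^\infty$ bounds, the paper rereads any positive solution as a finite-length wave and appeals to the strict monotonicity of $\omega\mapsto q^\omega$ and its limit as $\omega\searrow 0$, whereas you argue via the conserved energy $E=\tfrac12(v')^2+F(v)$, which also locates $v_{\max}$ in the correct interval once you note $0<E<F(1)$ and that $F$ is positive precisely on $(\bar\theta,\cdot)$ (resp.\ $(\theta,\cdot)$); this is arguably cleaner and more self-contained. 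One implicit step worth flagging is the continuity of $\omega\mapsto z^\omega$; it follows readily from the explicit integral representation together with the nondegeneracy $f(q^\omega)\neq 0$, but since the whole intermediate-value argument rests on it, it deserves a sentence. Net effect: your route replaces the sub/super-solution machinery and the separate eigenvalue-based limit argument with a single shooting-plus-IVT argument, at the cost of carefully verifying endpoint asymptotics of the time map.
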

\begin{proof}
We only consider the case that $f$ is of (f$_B$) type; the proofs of
the other cases are similar.

Let $Z>Z_B$. Then from the definition of $Z_B$ we can find
$\omega_*\in (0, \omega_0)$ and correspondingly
$q_*:=q^{\omega_*}\in (\overline\theta, 1)$ such that
$z_*:=z^{\omega_*}=Z(q_*)\in (Z_B, Z)$. Let $(q(z), p(z))$ be the
trajectory of \eqref{eq-qp} (with $c=0$) that passes through $(0,
\omega_*)$ at $z=0$ and approaches $(q_*, 0)$ as $z$ goes to $z_*$.
Then $q(z)$ satisfies
\[
q''+f(q)=0 \mbox{ in } (0, z_*),\; q(0)=0,\; q'(z_*)=0.
\]
If we define
\[
\underline v(x):=\left\{\begin{array}{ll} q(x+z_*), & x\in [-z_*,
0],\\
q(-x+z_*), & x\in [0, z_*],\\
0, & x\in [-Z,Z]\backslash [-z_*,z_*].
\end{array}
\right.
\]
 Then one easily checks that $\underline v$ is a (weak) lower
solution of \eqref{v_Z}. Clearly any constant $C \geq 1$ is an upper
solution of \eqref{v_Z}. Therefore we can use the standard upper and
lower solution argument to conclude that \eqref{v_Z} has a maximal
positive solution $\hat v_Z$, and $\underline v(x)<\hat v_Z(x)<1$ in
$(-Z, Z)$.

We now prove that \eqref{v_Z} also has a positive solution for
$Z=Z_B$. Let $Z_n$ be a sequence decreasing to $Z_B$ and $v_n$ a
positive solution of \eqref{v_Z} with $Z=Z_n$. Setting $V_n:=v_n(Z_n
x)$ we find that $V_n$ is a positive solution of
\[
V''+Z_n^2f(V)=0 \mbox{ in } (-1,1),\; V(-1)=V(1)=0.
\]
Since $Z_n^2f(V_n)$ is a bounded sequence in $L^\infty([-1,1])$ it
follows from standard regularity theory that by passing to a
subsequence, $V_n\to V^*$ in $C^1([0,1])$ and $V^*$ is a weak (and
hence classical) nonnegative solution of
\[
V''+Z_B^2f(V)=0 \mbox{ in } (-1,1),\; V(-1)=V(1)=0.
\]
We claim that $V^*\not\equiv 0$. Arguing indirectly we assume that
$V^*\equiv 0$, and let $\hat V_n:=V_n/\|V_n\|_\infty$. Then
\[
\hat V_n''+c_n(x)\hat V_n=0 \mbox{ in } (-1,1),\; \hat V_n(-1)=\hat
V_n(1)=0,
\]
with $c_n=Z_n^2f(V_n)/V_n$ a bounded sequence in $L^\infty([-1,1])$.
As before, by standard elliptic regularity we have $\hat V_n\to\hat
V$ in $C^1([-1,1])$ subject to a subsequence. Moreover, since
$c_n\to Z_B^2f'(0)$, we deduce
\begin{equation}
\label{hat-V} \hat V''+Z_B^2f'(0)\hat V=0 \mbox{ in } (-1,1),\; \hat
V(-1)=\hat V(1)=0.
\end{equation}
Since $\|\hat V\|_\infty=1$ and $\hat V\geq 0$, by the strong
maximum principle we conclude that $\hat V$ must be a positive
solution of \eqref{hat-V}. This implies that $Z_B^2f'(0)$ is the
first eigenvalue of $(-\frac{d^2}{dx^2})$ over $(-1,1)$ with
Dirichlet boundary conditions, and hence must be positive. But this
is a contradiction to $f'(0)<0$. Thus $V^*\not\equiv 0$. By the
strong maximum principle we see that it is a positive solution of
\eqref{v_Z} with $Z=Z_B$.

Now let $v_Z$ be any positive solution of \eqref{v_Z}. Then clearly
$(q(z), p(z)):=(v_Z(Z-z), -v'_Z(Z-z))$ is a trajectory for
\eqref{eq-qp} (with $c=0$) passing throw $(0, \omega)$ at $z=0$ and
approaching $(q^\omega, 0)$ as $z$ goes to $Z$, where
$\omega:=-v_Z'(Z)$ and $q^\omega:=v_Z(0)<1$. Since $q^\omega$ is
strictly increasing and $q^\omega$ decreases to $\overline\theta$ as
$\omega$ decreases to 0, we find that $v_Z(0)>\overline\theta$.
\end{proof}

Next we consider \eqref{eq-qp} and \eqref{pq} for small $c>0$ as a
perturbation of the case $c=0$. It is easily seen that for small
$c>0$, \eqref{pq} with initial data $p^c(q)|_{q=0} =\omega \in (0,
\omega^0)$ has a solution $p^c (q; \omega)$  define on $q\in [0,
q^{c,\omega}]$ for some $q^{c,\omega}
>q^\omega$, and $p^c (q^{c,\omega};\omega) =0$. As before this
solution corresponds to a trajectory
$(q_c(z;\omega), p_c(z;\omega))$ that passes through $(0,\omega)$ at
$z=0$, and approaches $(q^{c,\omega}, 0)$ as $z$ goes to some
$z^{c,\omega}>0$.  Moreover, an elementary analysis yields the
following result.

\begin{lem}\label{lem:q-small-c}
For any fixed $\omega\in (0,\omega_0)$ and any small $\varepsilon
>0$, there exists $\delta>0$ small such that, if $c\in (0,\delta)$, then $q^{c,\omega} \in
(q^\omega, q^\omega +\varepsilon)$, and
$$
p^0(q;\omega) \leq p^c(q;\omega) \leq p^0 (q;\omega) +\varepsilon
\quad \mbox{for } q\in [0, q^\omega];
$$
moreover, $z^{c,\omega} \in(z^\omega -\varepsilon, z^\omega
+\varepsilon)$ and
$$
q_0 (z;\omega) \leq q_c (z;\omega ) \leq q_0(z;\omega) +\varepsilon
\quad \mbox{for } z\in [0, \min \{z^\omega, z^{c,\omega}\}].
$$
\end{lem}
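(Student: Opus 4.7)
The plan is to exploit the continuous dependence of the flow of \eqref{eq-qp} on the parameter $c$, combined with a monotonicity argument in $c$ for the lower bounds and a separate estimate near the endpoint $q^\omega$ (where \eqref{pq} becomes singular) for the upper bound on $p^c$.

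\emph{Transversality and location of $q^{c,\omega}, z^{c,\omega}$.} At $c=0$, the trajectory $(q_0,p_0)$ reaches $(q^\omega,0)$ at time $z^\omega$, and from \eqref{eq-qp} with $c=0$, $p_0'(z^\omega)=-f(q^\omega)<0$. Here $f(q^\omega)>0$ because in case (f$_M$) $q^\omega\in(0,1)$, in case (f$_B$) $q^\omega\in(\bar\theta,1)$, and in case (f$_C$) $q^\omega\in(\theta,1)$, and $f>0$ on each of these intervals. Classical continuous dependence of \eqref{eq-qp} on $c$ over the compact $z$-interval $[0,z^\omega+1]$, together with the implicit function theorem applied to the simple zero of $p_0$ at $z^\omega$, furnishes a unique nearby zero $z^{c,\omega}\in(z^\omega-\varepsilon,z^\omega+\varepsilon)$ of $p_c(\cdot;\omega)$, with $q^{c,\omega}=q_c(z^{c,\omega};\omega)$ close to $q^\omega$.

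\emph{Lower bounds via monotonicity in $c$.} The right-hand side $c-f(q)/p$ of \eqref{pq} is strictly increasing in $c$ at fixed $(q,p)$. Setting $P(q):=p^c(q;\omega)-p^0(q;\omega)$, we have $P(0)=0$ and, as long as $p^c, p^0>0$,
\[
P'(q) = c + \frac{f(q)}{p^c(q)\,p^0(q)}\,P(q).
\]
Thus $P'(0)=c>0$, so $P>0$ near $0$; if $P$ first returned to $0$ at some $q_1\in(0,q^\omega)$, we would have $P'(q_1)=c>0$, a contradiction. Hence $p^c(q;\omega)>p^0(q;\omega)$ on $(0,q^\omega)$. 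Integrating $(p^c)^2$ shows $(p^c)^2(q^\omega)\ge 2c\int_0^{q^\omega}p^0\,dr>0$, so in particular $q^{c,\omega}>q^\omega$. A parallel phase-plane argument in the $z$-parametrization yields $q_c(z;\omega)\ge q_0(z;\omega)$ on $[0,\min\{z^\omega,z^{c,\omega}\}]$: at any putative first crossing $z_1>0$ of $q_c$ and $q_0$ one would have $q_c'(z_1)=p^c(q_0(z_1))>p^0(q_0(z_1))=q_0'(z_1)$, contradicting that $q_c$ drops to $q_0$ from above.

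\emph{Upper bounds.} The estimate $q_c(z;\omega)\le q_0(z;\omega)+\varepsilon$ on $[0,\min\{z^\omega,z^{c,\omega}\}]$ is a direct application of continuous dependence of \eqref{eq-qp} on $c$ over $[0,z^\omega]$. The bound $p^c(q;\omega)\le p^0(q;\omega)+\varepsilon$ on $[0,q^\omega]$ is the main obstacle, since \eqref{pq} is singular at $p=0$ and $p^0(q^\omega)=0$, so continuous dependence in the $q$-variable fails at the endpoint. The idea is to split $[0,q^\omega]=[0,q^\omega-\eta]\cup[q^\omega-\eta,q^\omega]$. Choose $\eta>0$ small enough that $f>0$ on $[q^\omega-\eta,q^\omega]$ and $p^0(q^\omega-\eta;\omega)<\varepsilon/3$. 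On $[0,q^\omega-\eta]$ the function $p^0$ is bounded below by a positive constant, so the equation \eqref{pq} is non-singular, and continuous dependence in $q$ (obtained from the $z$-parametrization and the inverse function theorem, using that $p_0$ stays positive there) gives $p^c(q;\omega)\le p^0(q;\omega)+\varepsilon/3$ for $c$ small, in particular $p^c(q^\omega-\eta;\omega)\le 2\varepsilon/3$. On $[q^\omega-\eta,q^\omega]$, \eqref{pq} with $f(q)\ge 0$ forces $dp^c/dq\le c$, so
\[
p^c(q;\omega)\le p^c(q^\omega-\eta;\omega)+c\eta \le 2\varepsilon/3 + c\eta \le \varepsilon
\]
for $c$ sufficiently small, and then $p^c-p^0\le p^c\le\varepsilon$ since $p^0\ge 0$. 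Combining the two subintervals yields the claim. The delicate step is this last one: the loss of non-singularity at $q=q^\omega$ is handled by trading off the closeness of $p^0(q^\omega-\eta;\omega)$ to $0$ against the trivial bound $dp^c/dq\le c$.
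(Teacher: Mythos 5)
The paper does not actually supply a proof of Lemma~4.2: it simply states that the conclusions follow from ``an elementary analysis'' of the phase plane, so there is no argument in the source to compare yours against. Judged on its own merits, your proposal is sound and supplies exactly the kind of elementary analysis the authors had in mind. The lower bound $p^c\geq p^0$ via the linear variational equation for $P=p^c-p^0$ (with $P'(q_1)=c>0$ at a putative first zero) is correct, and the strengthening $(p^c)^2(q^\omega)=2c\int_0^{q^\omega}p^c\,dr>0$ correctly yields $q^{c,\omega}>q^\omega$. The first-crossing argument in the $z$-parametrization for $q_c\geq q_0$ is also fine (the seed inequality near $z=0$ follows from $q_c''(0)=c\omega>0=q_0''(0)$, which you should perhaps record). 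Your treatment of the upper bound for $p^c$ is the most delicate point and is handled correctly: the ODE \eqref{pq} is singular at $p=0$, so continuous dependence cannot be applied directly on $[0,q^\omega]$, and your split into $[0,q^\omega-\eta]$ (where $p^0$ is bounded away from zero, so Gr\"onwall-type continuous dependence applies) plus $[q^\omega-\eta,q^\omega]$ (where $f\geq 0$ forces $dp^c/dq\leq c$ and hence $p^c\leq p^c(q^\omega-\eta)+c\eta$) is the right decomposition; one should just make explicit that $\eta$ is chosen small enough that $[q^\omega-\eta,q^\omega]$ lies in the region $f>0$, which works in all three cases because for fixed $\omega\in(0,\omega^0)$ one has $q^\omega>\tilde\theta$ strictly. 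For the transversality step, the key input $p_0'(z^\omega)=-f(q^\omega)<0$ is correct since $f(q^\omega)>0$, and one should add a line checking that the nearby simple zero of $p_c$ given by the implicit function theorem is indeed the first zero: on $[0,z^\omega-\varepsilon]$, continuous dependence keeps $p_c$ bounded away from zero, and since the nearby zero is transversal ($p_c'<0$ there), $p_c$ crosses to negative values, so this is $z^{c,\omega}$. With these small explications the argument is complete.
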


Let us observe that $q(z):=q_c(z,\omega)$ is a solution of
\eqref{prob-q} with $Z=z^{c,\omega}$. Moreover, $q'(0)=\omega$. We
will use $q_c(z;\omega)$ below to construct lower solutions of
\eqref{p}.

\subsection{Conditions for spreading}

\begin{thm}
\label{thm:spreading} Suppose that the conditions in Lemma
\ref{lem:v_Z} are satisfied and $v_Z$ is a positive solution of
\eqref{v_Z}. If $(u,g,h)$ is a solution of \eqref{p} with $h_0\geq
Z$ and $u_0\geq v_Z$ in $[-Z,Z]$, then
\[
(g_\infty,h_\infty)=\R^1 \mbox{ and  $ \lim_{t\to\infty}
u(t,\cdot)=1 $ locally uniformly in $\R^1$.}
\]
\end{thm}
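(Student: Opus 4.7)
The plan is to use $v_Z$ as a time-independent lower solution and then invoke Theorem~\ref{thm:convergence} to identify the limit. First I would apply the Hopf boundary lemma to \eqref{v_Z} to get $v_Z'(-Z)>0>v_Z'(Z)$, so the triple $(v_Z(x),-Z,Z)$, with fixed free boundaries at $\pm Z$, satisfies the reversed inequalities of Lemma~\ref{lem:comp1}: the PDE holds with equality, $v_Z$ vanishes at $\pm Z$, and $0\geq-\mu v_Z'(-Z)$, $0\leq-\mu v_Z'(Z)$ follow from the Hopf signs. Since $h_0\geq Z$ and $u_0\geq v_Z$ on $[-Z,Z]$, the lower-solution version of Lemma~\ref{lem:comp1} (Remark~\ref{rem5.8}) gives $g(t)\leq-Z$, $h(t)\geq Z$ and
\[
u(t,x)\geq v_Z(x)\qquad(t\geq 0,\ x\in[-Z,Z]).
\]

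From here I would invoke Theorem~\ref{thm:convergence}. Since $u(t,0)\geq v_Z(0)>0$ for every $t\geq 0$, the alternative that $(g_\infty,h_\infty)$ is a finite interval (which would force $u\to 0$) is excluded, so $(g_\infty,h_\infty)=\R^1$. The theorem then leaves two possibilities: either $u(t,\cdot)$ converges locally uniformly to a nonnegative constant zero $C$ of $f$, or $u(t,x)-V_0(x+\gamma(t))\to 0$ for some evenly decreasing positive solution $V_0$ of \eqref{ellip} with $\gamma(t)\in[-h_0,h_0]$. In the constant case, passing to the limit in $u(t,0)\geq v_Z(0)$ gives $C\geq v_Z(0)$; combined with the strict bounds from Lemma~\ref{lem:v_Z} (namely $v_Z(0)>0$, $v_Z(0)>\bar\theta$, $v_Z(0)>\theta$ in the monostable, bistable and combustion cases) and the zero sets of $f$ in $[0,\infty)$ (respectively $\{0,1\}$, $\{0,\theta,1\}$ and $[0,\theta]\cup\{1\}$), the only compatible value is $C=1$.

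The main obstacle is ruling out the non-constant profile $V_0$. For this I would exploit the first integral
\[
\tfrac12(V_0')^2+F(V_0)\equiv F(V_0(0)),\qquad F(v):=\int_0^v f(s)\,ds,
\]
combined with $V_0(\pm\infty)=V_0'(\pm\infty)=0$, which forces $F(V_0(0))=0$. In the monostable case the linearisation $V''+f'(0)V=0$ at $V=0$ is oscillatory (since $f'(0)>0$), so a strictly positive monotone decay to $0$ is impossible; the alternative limit $L=1$ is also precluded because $f<0$ above $1$ would make $V_0$ convex and hence non-decreasing on $[0,\infty)$. In the combustion case $V_0(0)\in(0,\theta]$ forces $V_0$ to be constant by ODE uniqueness (as $f\equiv 0$ on $[0,\theta]$ and $V_0'(0)=0$), while $V_0(0)\in(\theta,1)$ makes the decreasing trajectory affine once it enters $[0,\theta]$ and cross zero in finite time, contradicting positivity. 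In the bistable case, $F(V_0(0))=0$ with $V_0(0)\in(0,1)$ forces $V_0(0)=\bar\theta$, hence $\|V_0\|_\infty=\bar\theta$; taking a subsequence $t_n\to\infty$ with $\gamma(t_n)\to\alpha\in[-h_0,h_0]$ and passing to the limit at $x=0$ yields $u(t_n,0)\to V_0(\alpha)\leq\bar\theta$, which contradicts $u(t_n,0)\geq v_Z(0)>\bar\theta$. In every case only the constant limit $C=1$ remains, establishing the spreading conclusion.
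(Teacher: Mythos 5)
Your argument is correct in spirit, and it takes a genuinely different route from the paper's. Both proofs begin by using the stationary $(v_Z,-Z,Z)$ as a lower solution (via the reversed Lemma~\ref{lem:comp1}) to get $g(t)\le -Z$, $h(t)\ge Z$ and $u(t,\cdot)\ge v_Z$ on $[-Z,Z]$. From there the paper goes \emph{constructive}: it perturbs $v_Z$ into a wave of finite length with a small speed $c>0$ (Lemma~\ref{lem:q-small-c}), uses the resulting travelling profile as a moving lower solution in the sense of Lemma~\ref{lem:comp2}, and thereby shows $h(t)\gtrsim ct$ and $u\ge q^{c,\omega}>q^\omega$ on an expanding interval, which simultaneously forces $(g_\infty,h_\infty)=\R^1$ and pins the $\omega$-limit down to $1$. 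That extra work also pre-packages the machinery later reused for the spreading-speed argument in \S6. Your route is \emph{softer}: after excluding the bounded-interval alternative (because $u(t,0)\ge v_Z(0)>0$), you invoke the trichotomy of Theorem~\ref{thm:convergence} directly and classify the admissible $\omega$-limits by phase-plane / first-integral analysis, showing that the only compatible one is the constant $1$. This is shorter and conceptually cleaner for the present theorem, at the cost of not producing any quantitative propagation information.

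One caveat on rigour in the non-constant case. You assert $V_0(\pm\infty)=V_0'(\pm\infty)=0$ before taking the first integral, but Theorem~\ref{thm:convergence} only says $V_0$ is an evenly decreasing positive solution of \eqref{ellip}; a priori $V_0(\pm\infty)$ could be any nonnegative zero of $f$ (in particular $\theta$ in the bistable and combustion cases). Likewise your deduction ``$F(V_0(0))=0$ with $V_0(0)\in(0,1)$ forces $V_0(0)=\bar\theta$'' tacitly assumes $V_0(0)<1$, whereas $V_0(0)$ is bounded only by $\max(\|u_0\|_\infty,1)$. Both of these alternatives are in fact impossible (one checks easily that a decaying limit $L=\theta$ would force $V_0$ concave near infinity while $V_0'$ increases to $0$; and $V_0(0)\ge 1$ leads to $(V_0')^2<0$ at the point where $V_0=1$, using $F(1)>0$), but the argument as written leaves them unaddressed. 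These are standard ODE facts -- indeed the paper simply cites them from \cite{DM} in the proof of Theorem~\ref{thm:bi-trichotomy} -- but your proof should either prove them or cite the classification of bounded nonnegative solutions of \eqref{ellip} explicitly before using it.
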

\begin{proof}
Since $v_Z$ is a stationary solution and $g(t)<-Z,\; h(t)>Z$ for
$t>0$, by the standard strong comparison principle we deduce
\[
u(t,x)>v_Z(x) \mbox{ in $[-Z,Z]$ for all $t>0$. }
\]
By Theorem \ref{thm:convergence}, $u(t,x)\to v(x)$ locally uniformly
in $(g_\infty, h_\infty)=\mathbb{R}^1$ as $t\to\infty$, where $v$ is
a nonnegative solution of \eqref{ellip}. $v$ must be a positive
solution since $v\geq v_Z$ in $[-Z,Z]$. Moreover, since
$[-Z,Z]\subset (g_\infty, h_\infty)$, we necessarily have $v>v_Z$ in
$[-Z,Z]$ due to the strong maximum principle.

Thus if we fix $t_0>0$ and extend $v_Z$ by 0 outside $[-Z,Z]$, then
we can find $\epsilon>0$ small such that for all $t\geq t_0$,
\[
u(t,x)>v_Z(x)+\epsilon \mbox{ in } [-h_0,h_0], \;
u(t,x)>v_Z(0)+\epsilon \mbox{ in  } [-\epsilon,\epsilon].
\]
As in the proof of Lemma \ref{lem:v_Z}, $v_Z$ corresponds to a
trajectory $(q_0(z;\omega), p_0(z;\omega))$ of \eqref{eq-qp} (with
$c=0$) that passes through $(0,\omega):=(0, -v_Z'(Z))$ at $z=0$ and
approaches $(q^\omega, 0):=(v_Z(0), 0)$ as $z$ goes to
$z^\omega:=Z$. By Lemma \ref{lem:q-small-c}, we can find $c>0$
sufficiently small such that the trajectory $(q_c(z;\omega),
p_c(z;\omega))$ of \eqref{eq-qp} that passes through $(0,\omega)$ at
$z=0$ and goes to $(q^{c,\omega},0)$ as $z$ approaches
$z^{c,\omega}$ satisfies
\[
z^{c,\omega}<z^\omega+\epsilon<h_0,\;
q^{c,\omega}<q^\omega+\epsilon<1,\]
\[
q_c(z^{c,\omega}-z;\omega)<q_0(z^{c,\omega}-z;\omega)+\epsilon/2<q_0(z^\omega-z;\omega)+\epsilon
\mbox{ in } [\epsilon,z^{c,\omega}].
\]
Since $p_c(z,\omega)=\frac{d}{dz} q_c(z;\omega)>0$ for $z\in
(0,z^{c,\omega})$, we find that
\[
\mbox{
$q_c(z^{c,\omega}-z;\omega)<q^{c,\omega}<q^\omega+\epsilon=v_Z(0)+\epsilon$
for $z\in (0,\epsilon]$.}
\]

Thus for such small $c>0$, we have
$$
u(t, x) > q_c (z^{c,\omega} -x; \omega) \quad \mbox{for } t\geq
t_0,\;  x\in [0, z^{c,\omega}].
$$
We now fix a small $c>0$ such that the above holds and
$c<\mu\omega$. Then define, for $t\geq 0$,
\[k(t):=z^{c,\omega} +ct
\]
and
\[ w(t,x):=\left\{
\begin{array}{ll}
q_c(k(t) -x;\omega), & x\in [ct, k(t)],\\
q_c(z^{c,\omega};\omega), &  x\in [0, ct].
\end{array}
\right.
\]
Since $q_c(z^{c,\omega};\omega)=q^{c,\omega}$ and
$f(q^{c,\omega})>0$, we find
\[
w_t\leq w_{xx}+f(w) \mbox{ for $t>0$ and $x\in (0, k(t))$.}
\]
Moreover,
\[
k(0)=z^{c,\omega}<h_0<h(t_0)
\]
and
\[
w(t, k(t))=0,\; k'(t)=c<\mu\omega=-\mu w_x(t, k(t)) \mbox{ for }
t>0.
\]
Thus we can apply the lower solution version of Lemma
\ref{lem:comp2} to conclude that
\[
h(t+t_0)\geq k(t) \mbox{ and } u(t+t_0,x)\geq w(t,x) \mbox{ for all
$t>0$ and $x\in [0,k(t)]$.}
\]
This implies that $h_\infty=\infty$ and the $\omega$-limit of $u$,
namely the positive solution $v(x)$ of \eqref{ellip}, is defined
over $\R^1$. Moreover, for $t>0$ and $x \in [0,ct]$, we have
$$
u(t+t_0 ,x) \geq w(t,x) = q^{c,\omega} > q^\omega.
$$
Hence $v(x)\equiv 1$ for $x\in \mathbb{R}^1$.
\end{proof}

\begin{remark}
The function $w(t,x)$ constructed above is $C^1$ in both variables
but it is $C^2$ in $x$ only for $x\in [0,ct)\cup(ct, k(t)]$; along
$x=ct$, $w_{xx}(t,x)$ has a jumping discontinuity. However, as for
the classical comparison principle, this does not affect the
validity of Lemmas \ref{lem:comp1} and \ref{lem:comp2}.
\end{remark}

\begin{cor}
\label{cor:mono-spreading} If $f$ is of {\rm (f$_M$)} type and
$h_0\geq Z_M=\pi/(2\sqrt{f'(0)})$, then every positive solution
$(u,g,h)$ of \eqref{p} satisfies
\[
(g_\infty,h_\infty)=\R^1 \mbox{ and  $ \lim_{t\to\infty}
u(t,\cdot)=1 $ locally uniformly in $\R^1$.}
\]
\end{cor}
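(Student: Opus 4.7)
The plan is to combine Theorem~\ref{thm:convergence} with a principal-eigenvalue instability at $u\equiv 0$, forcing $u\to 1$. Since $f$ is of {\rm (f$_M$)} type, the ratio $f(u)/u$ is bounded on $(0,\infty)$ (it tends to $f'(0)$ at $0$ and is nonpositive on $[1,\infty)$), so \eqref{cond2} holds; Theorem~\ref{thm:global} gives a global solution, and ODE comparison with $v'=f(v)$, $v(0)=\|u_0\|_\infty$, yields $u\leq\max\{\|u_0\|_\infty,1\}$. Thus $u$ is bounded, and Theorem~\ref{thm:convergence} puts $(u,g,h)$ into one of three cases: (a) $I_\infty:=(g_\infty,h_\infty)$ is a finite interval with $u\to 0$ uniformly on $[g(t),h(t)]$; (b) $I_\infty=\R^1$ and $u(t,\cdot)$ tends locally uniformly to a constant zero of $f$, i.e.\ $0$ or $1$; or (c) $I_\infty=\R^1$ and $u(t,x)-v(x+\gamma(t))\to 0$ for some evenly decreasing positive solution $v$ of \eqref{ellip} on $\R$.

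The core step is to rule out any scenario in which $u\to 0$ on a fixed window. Fix $t_0>0$; since $h'(t_0)>0>g'(t_0)$ strictly and $h_0\geq Z_M:=\pi/(2\sqrt{f'(0)})$, the symmetric half-width $Z^*:=\min\{h(t_0),-g(t_0)\}$ satisfies $Z^*>h_0\geq Z_M$, whence the Dirichlet principal eigenvalue $\lambda_1=(\pi/(2Z^*))^2$ on $(-Z^*,Z^*)$ is strictly below $f'(0)$, with positive eigenfunction $\phi_1$. Choose $\alpha\in(\lambda_1,f'(0))$ and $\delta>0$ with $f(u)\geq\alpha u$ on $[0,\delta]$. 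If $u(t,\cdot)\to 0$ on $[-Z^*,Z^*]$, then $u<\delta$ there for $t\geq T$, some $T\geq t_0$, so $u_t\geq u_{xx}+\alpha u$ on $(-Z^*,Z^*)$ with $u(t,\pm Z^*)\geq 0$. The Hopf lemma applied to $u(T,\cdot)$ at $\pm Z^*$ yields $\eta>0$ with $\eta\phi_1\leq u(T,\cdot)$ on $[-Z^*,Z^*]$, and then comparison gives $u(t,x)\geq \eta\phi_1(x)\,e^{(\alpha-\lambda_1)(t-T)}\to+\infty$, a contradiction.

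This contradiction eliminates case (a) and the $v\equiv 0$ branch of case (b), since each produces $u\to 0$ uniformly on $[-Z^*,Z^*]$. Case (c) is ruled out by a phase-plane analysis: multiplying $v''+f(v)=0$ by $v'$ and integrating gives $(v')^2=2(F(v(0))-F(v))$ with $F(s)=\int_0^s f$, and the monotone bounded limit $L=v(+\infty)$ must satisfy $f(L)=0$, so $L\in\{0,1\}$. If $L=1$ then $v(0)>1$, but $f<0$ on $(1,\infty)$ makes $F(v(0))<F(1)$, whence $(v'(\infty))^2<0$, impossible. If $L=0$ then $(v'(\infty))^2\to 2F(v(0))>0$, forcing $v$ to reach $0$ at a finite point and contradicting strict positivity on $\R$. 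The only surviving possibility is $u\to 1$ locally uniformly on $\R^1$ with $(g_\infty,h_\infty)=\R^1$.

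The main obstacle is the marginal case $h_0=Z_M$, where the symmetric interval $[-h_0,h_0]$ has no spectral gap relative to the linearization $U_t=U_{xx}+f'(0)U$; the strict boundary motion $h'(t_0)>0$, $g'(t_0)<0$ at any positive time $t_0$ is exploited to produce the slightly larger window $[-Z^*,Z^*]$ with $Z^*>Z_M$, reinstating the gap.
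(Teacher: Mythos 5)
Your proof is correct, but it takes a genuinely different route from the paper's. The paper's proof of Corollary~\ref{cor:mono-spreading} is constructive: it waits until time $t_0>0$, uses the fact that $z^\omega\to\pi/(2\sqrt{f'(0)})\le h_0$ and $q^\omega\to 0$ as $\omega\to 0$ to produce a small compactly supported stationary solution $v_Z$ of \eqref{v_Z} lying below $u(t_0,\cdot+x_0)$ on $[-Z,Z]\subset(g(t_0),h(t_0))$ for some suitable $Z<h_0$, and then invokes Theorem~\ref{thm:spreading}, which pushes a finite-length traveling wave under the solution to force $u\to 1$. You instead combine the trichotomy of Theorem~\ref{thm:convergence} with a linear-instability argument: after a positive time the free boundary has strictly advanced so that $Z^*:=\min\{h(t_0),-g(t_0)\}>h_0\ge Z_M$, the Dirichlet principal eigenvalue on $(-Z^*,Z^*)$ drops strictly below $f'(0)$, and any scenario with $u\to 0$ on that fixed window is contradicted by the exponential growth of a sub-solution $\eta\,\phi_1 e^{(\alpha-\lambda_1)t}$ against the a priori bound $u\le\max\{\|u_0\|_\infty,1\}$; the evenly decreasing profile is then eliminated by the phase-plane energy identity $(v')^2=2(F(v(0))-F(v))$. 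Your version is arguably more self-contained for this particular corollary (it does not require the machinery of subsection 4.1 and Theorem~\ref{thm:spreading}), whereas the paper's version reuses lemmas it needs anyway for the spreading-speed results. Two cosmetic points: (i) the Hopf lemma is not really what gives $\eta\phi_1\le u(T,\cdot)$ on $[-Z^*,Z^*]$ — since $\pm Z^*$ lie \emph{strictly} inside $(g(T),h(T))$, positivity and compactness already give $\min_{[-Z^*,Z^*]}u(T,\cdot)>0$, so one can take $\eta$ to be that minimum; and (ii) in your case~(c) with $L=0$, the inequality $2F(v(0))>0$ implicitly uses $v(0)<1$, which you should extract from the same energy identity (if $v(0)>1$, then at the point where $v=1$ one would get $(v')^2=2(F(v(0))-F(1))<0$, impossible; and $v(0)=1$ forces $v\equiv 1$). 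Both are easily supplied.
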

\begin{proof}
 Fix $t_0>0$. Then $g(t_0)<-h_0$, $h(t_0)>h_0$. Set $x_0=[g(t_0)+h(t_0)]/2$ and choose $Z_0>h_0$
such that $[-Z_0 +x_0,Z_0+x_0]\subset (g(t_0), h(t_0))$.  Then
$u(t_0,x +x_0)>0$ in $[-Z_0,Z_0]$.

Since $q^\omega\to 0$ and $z^\omega\to \pi/(2\sqrt{f'(0)})\leq h_0$
as $\omega$ decreases to 0, we can find $\omega>0$ small such that
$z^\omega<Z_0$ and $q^\omega<u(t_0,x+x_0)$ in $[-Z_0,Z_0]$. We now
denote $Z=z^\omega$ and define
\[
v_Z(x):=\left\{\begin{array}{ll} q_0(x+Z;\omega), & x\in [-Z,
0],\\
q_0(-x+Z,\omega), & x\in [0, Z].
\end{array}
\right.
\]
Then it is easily checked that $v_Z$ is a positive solution of
\eqref{v_Z}, and $v_Z(x)\leq q^\omega<u(t_0,x+x_0)$ in $[-Z,Z]$.

Define
\[\tilde u(t,x)=u(t+t_0,x+x_0),\; \tilde g(t)=g(t+t_0)-x_0,\; \tilde
h(t)=h(t+t_0)-x_0.
\]
We find that $\tilde u_0(x):=\tilde u(0,x)>v_Z(x)$ in $[-Z,Z]$ and
$(\tilde u,\tilde g,\tilde h)$ solves \eqref{p} with initial
function $\tilde u_0$. Applying Theorem \ref{thm:spreading} we
deduce that
\[
(\tilde g_\infty,\tilde h_\infty)=\R^1 \mbox{ and  $
\lim_{t\to\infty} \tilde u(t,\cdot)=1 $ locally uniformly in
$\R^1$.}
\]
Clearly this implies the conclusion of the corollary.
\end{proof}

\begin{cor}
\label{cor:mono-vanishing} If $f$ is of  {\rm (f$_M$)} type, then
\[
\lim_{t\to\infty} \|u(t,\cdot)\|_{L^\infty([g(t), h(t)])}=0
\]
implies that $(g_\infty, h_\infty)$ is a finite interval with length
no bigger than $\pi/\sqrt{f'(0)}$.
\end{cor}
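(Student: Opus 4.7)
The plan is a proof by contradiction that reruns the spreading argument of Corollary \ref{cor:mono-spreading} starting from time $t_0$ rather than from $t=0$. Write $Z_M := \pi/(2\sqrt{f'(0)})$, so the target bound is $2Z_M$. Assume, for contradiction, that $h_\infty - g_\infty > 2Z_M$ (this allows the case $+\infty$). Because $h' > 0 > g'$, the quantity $h(t) - g(t)$ is strictly increasing with supremum $h_\infty - g_\infty$, so some finite $t_0 > 0$ satisfies $h(t_0) - g(t_0) > 2Z_M$. The aim is to produce at time $t_0$ a pointwise lower bound of the form used in Theorem \ref{thm:spreading}, forcing $u$ to spread rather than vanish.

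Fix $Z_0 \in (Z_M, (h(t_0) - g(t_0))/2)$ and let $x_0 := (g(t_0) + h(t_0))/2$, so that $[x_0 - Z_0, x_0 + Z_0] \subset (g(t_0), h(t_0))$. Continuity and positivity of $u(t_0, \cdot)$ yield some $\eta > 0$ with $u(t_0, x) \geq \eta$ throughout this compact set. The phase-plane analysis in Subsection \ref{subsec:w-f} gives, in the (f$_M$) case, $q^\omega \searrow 0$ and $z^\omega \to Z_M$ as $\omega \searrow 0$, so I can pick $\omega > 0$ with $Z := z^\omega < Z_0$ and $q^\omega < \eta$. Building $v_Z$ from the trajectory $q_0(\,\cdot\,;\omega)$ exactly as in the proof of Corollary \ref{cor:mono-spreading} yields a positive solution of \eqref{v_Z} with $\|v_Z\|_\infty = v_Z(0) = q^\omega < \eta$, hence $v_Z(x) \leq u(t_0, x + x_0)$ on $[-Z, Z]$. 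Now translate in time and space by setting $\tilde u(t,x) := u(t + t_0, x + x_0)$, $\tilde g(t) := g(t + t_0) - x_0$, $\tilde h(t) := h(t + t_0) - x_0$. The triple $(\tilde u, \tilde g, \tilde h)$ solves \eqref{p} with symmetric initial domain of half-length $\tilde h_0 = (h(t_0) - g(t_0))/2 > Z$ and initial datum $\tilde u_0 \in \mathscr{X}(\tilde h_0)$ satisfying $\tilde u_0 \geq v_Z$ on $[-Z, Z]$. Theorem \ref{thm:spreading} then forces $\tilde u(t, \cdot) \to 1$ locally uniformly on $\R^1$; in particular $u(t + t_0, x_0) \to 1$, contradicting $\|u(t, \cdot)\|_{L^\infty([g(t), h(t)])} \to 0$.

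Therefore $h_\infty - g_\infty \leq \pi/\sqrt{f'(0)}$, and since Lemma \ref{lem:center} confines the midpoint via $g_\infty + h_\infty \in [-2h_0, 2h_0]$, the interval $(g_\infty, h_\infty)$ is necessarily finite as well. The only substantive analytic input beyond the bookkeeping of this time shift is the asymptotic behavior $z^\omega \to Z_M$, $q^\omega \to 0$ as $\omega \searrow 0^+$, which was already established in Subsection \ref{subsec:w-f}; consequently no essentially new obstacle arises, and the corollary is effectively a corollary of Corollary \ref{cor:mono-spreading} applied after a space-time translation.
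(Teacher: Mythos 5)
Your proof is correct and follows essentially the same route as the paper: argue by contradiction, find $t_0$ with $h(t_0)-g(t_0)>\pi/\sqrt{f'(0)}$, recenter and time-shift to get a new solution with half-length exceeding $Z_M$, and apply the spreading criterion to reach a contradiction. The only cosmetic difference is that you unfold the inside of Corollary \ref{cor:mono-spreading} (constructing $v_Z$ and invoking Theorem \ref{thm:spreading}) rather than citing Corollary \ref{cor:mono-spreading} directly on the translated triple $(\tilde u,\tilde g,\tilde h)\in\mathscr{X}(\tilde h_0)$ with $\tilde h_0>Z_M$, which is what the paper does.
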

\begin{proof}
Otherwise we can find $t_0>0$ such that
\[
h(t_0)-g(t_0)> \pi/\sqrt{f'(0)}.
\]
Let $x_0=[g(t_0)+h(t_0)]/2$ and  define $(\tilde u,\tilde g,\tilde
h)$ by the same formulas  as in the proof of Corollary
\ref{cor:mono-spreading}; we find that the conclusion of Corollary
\ref{cor:mono-spreading} can be applied to $(\tilde u,\tilde
g,\tilde h)$ to deduce that $\tilde u\to 1$ as $t\to\infty$ locally
uniformly in $\R^1$. In view of Lemma \ref{lem:center}, this implies
that
\[
\lim_{t\to\infty} \|u(t,\cdot)\|_{L^\infty([g(t), h(t)])}=1,
\]
a contradiction to our assumption.
\end{proof}

\section{Classification of dynamical behavior and sharp
thresholds}

In this section, based on the results of the previous sections, we
obtain a complete description of the long-time dynamical behavior of
\eqref{p} when $f$ is of monostable, bistable or combustion type. We
also reveal the related but different sharp transition behaviors
between spreading and vanishing for these three types of
nonlinearities.

\subsection{Monostable case}\label{subsec:mono}
Throughout this subsection, we assume that $f$ is of (f$_M$) type.

\begin{thm}\label{thm:mono-dichotomy}{\rm (Dichotomy)}
Suppose that $h_0>0$, $u_0\in \mathscr{X}(h_0)$, and $(u,g,h)$ is
the solution of \eqref{p}.
 Then either spreading happens, namely,
$(g_\infty, h_\infty)=\R^1$ and
\[
\lim_{t\to\infty}u(t,x)=1 \mbox{ locally uniformly in $\R^1$};
\]
or vanishing happens, i.e., $(g_\infty, h_\infty)$ is a finite
interval with length no larger than $\pi/\sqrt{f'(0)}$ and
\[
\lim_{t\to\infty}\max_{g(t)\leq x\leq h(t)} u(t,x)=0.
\]
\end{thm}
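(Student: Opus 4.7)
The plan is to combine Theorem~\ref{thm:convergence} with a phase-plane analysis that eliminates the ``transition'' alternative, and then to invoke Corollary~\ref{cor:mono-vanishing} for the length bound. Global existence and uniform boundedness of $u$ are automatic: since $f\in C^1$ with $f(0)=0$ and $f<0$ on $(1,\infty)$, there is $K>0$ with $f(u)\leq Ku$ on $[0,\infty)$, so $T^*=\infty$ by Theorem~\ref{thm:global}; comparison with the ODE $V'=f(V)$, $V(0)=\max(1,\|u_0\|_\infty)$, which remains in $[0,V(0)]$ because $f\leq 0$ on $[1,\infty)$, yields $\|u(t,\cdot)\|_\infty\leq V(0)$ for all $t>0$.

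With boundedness in hand, Theorem~\ref{thm:convergence} provides three mutually exclusive possibilities: (a) $(g_\infty,h_\infty)$ is a finite interval and $u\to 0$; (b) $(g_\infty,h_\infty)=\R^1$ and $u(t,\cdot)$ converges locally uniformly to a nonnegative constant solution $v_*$ of \eqref{ellip}; or (c) $(g_\infty,h_\infty)=\R^1$ and $u(t,x)-v(x+\gamma(t))\to 0$ for some evenly decreasing positive solution $v$ of \eqref{ellip} on $\R^1$. The main obstacle is ruling out (c). For such a $v$, symmetry forces $v'(0)=0$, and $v''(0)=-f(v(0))\leq 0$ (since $0$ is a maximum), giving $M:=v(0)\in(0,1]$; the borderline $M=1$ yields $v\equiv 1$, which is not strictly decreasing, so $M\in(0,1)$. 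The conserved energy $\tfrac12(v')^2+F(v)=F(M)$ with $F(u):=\int_0^u f(s)\,ds$ must remain nonnegative along the trajectory. But $f>0$ on $(0,1)$ makes $F$ strictly increasing on $[0,M]$, so $F(M)-F(v)\geq F(M)>0$ as $v\downarrow 0$. Hence $|v'|$ is bounded below by a positive constant on $[0,M)$, forcing $v$ to vanish at a finite positive $x$, which contradicts positivity of $v$ on $\R^1$.

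Once (c) is excluded, the constant $v_*$ in (b) must lie in $\{0,1\}$, the only nonnegative zeros of $f$ for nonlinearities of (f$_M$) type. If $v_*=0$, Lemma~\ref{lem:center} implies the $L^\infty$ norm of $u(t,\cdot)$ is attained on $[-h_0,h_0]$, so locally uniform convergence to $0$ upgrades to $\|u(t,\cdot)\|_{L^\infty([g(t),h(t)])}\to 0$, and Corollary~\ref{cor:mono-vanishing} then forces $(g_\infty,h_\infty)$ to be finite, contradicting (b). Hence $v_*=1$ and spreading occurs. Finally, in case (a), Corollary~\ref{cor:mono-vanishing} directly provides the length bound $h_\infty-g_\infty\leq\pi/\sqrt{f'(0)}$, which completes the dichotomy. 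Every step other than the phase-plane exclusion in (c) is a direct assembly of results from the preceding sections.
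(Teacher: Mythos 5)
Your proof is correct and follows essentially the same route as the paper: apply Theorem~\ref{thm:convergence} to reduce to an $\omega$-limit that is a constant or an evenly decreasing positive solution of \eqref{ellip}, rule out the latter, and use Lemma~\ref{lem:center} together with Corollary~\ref{cor:mono-vanishing} to finish. The only difference is that you spell out the uniform-in-time bound for $u$ and the energy argument showing \eqref{ellip} has no evenly decreasing positive solution when $f$ is of (f$_M$) type, both of which the paper leaves as easy observations.
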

\begin{proof}
Since $f$ is of monostable type, it is easy to see that
\eqref{ellip} has no evenly decreasing positive solution, and the
only nonnegative constant solutions are 0 and 1.
 By Theorem \ref{thm:convergence}, we see that in this case the $\omega$ limit
set of $u$ consists of a single constant $0$ or 1. Moreover, if
$(g_\infty, h_\infty)$  a finite interval, then $u(t,x)\to 0$ as
$t\to\infty$ locally uniformly in $(g_\infty, h_\infty)$. In view of
Lemma \ref{lem:center}, this limit implies $\lim_{t\to\infty}
\|u(t,\cdot)\|_{L^\infty([g(t),h(t)])}=0$. Hence we can use
Corollary \ref{cor:mono-vanishing} to conclude that when $(g_\infty,
h_\infty)$ is a finite interval, its length is no larger than
$\pi/\sqrt{f'(0)}$.

It remains to show that when $(g_\infty, h_\infty)=\R^1$, the
$\omega$ limit is 1.  If the limit is $0$, then we can use Corollary
\ref{cor:mono-vanishing} as above to deduce that $(g_\infty,
h_\infty)$ is a finite interval; hence only $\omega(u)=\{1\}$ is
possible.
\end{proof}

\begin{thm}\label{thm:mono-threshold}{\rm (Sharp threshold)}
Suppose that $h_0>0$, $\phi\in \mathscr{X}(h_0)$, and $(u,g,h)$ is a
solution of \eqref{p} with $u_0=\sigma \phi$ for some $\sigma>0$.
Then there exists $\sigma^* = \sigma^* (h_0, \phi) \in [0,\infty]$
such that spreading happens when $  \sigma > \sigma^*$, and
vanishing happens when $0<\sigma\leq \sigma^*$.
\end{thm}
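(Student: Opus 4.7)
My approach is to define $\sigma^*$ as the supremum of those $\sigma>0$ for which vanishing occurs, and then verify that (a) the vanishing set is monotone in $\sigma$, so it is an initial interval of the half-line $(0,\infty)$, and (b) vanishing actually occurs at $\sigma=\sigma^*$ whenever $0<\sigma^*<\infty$, so the vanishing set is a closed interval $(0,\sigma^*]$. Together with the dichotomy of Theorem \ref{thm:mono-dichotomy}, this gives spreading for every $\sigma>\sigma^*$ and vanishing for every $\sigma\in(0,\sigma^*]$, as required.

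The first step is monotonicity. Write $(u_\sigma,g_\sigma,h_\sigma)$ for the solution with initial datum $\sigma\phi$. For $0<\sigma_1<\sigma_2$, Lemma \ref{lem:comp1} applied with $(u_{\sigma_2},g_{\sigma_2},h_{\sigma_2})$ as upper solution yields $g_{\sigma_2}(t)\le g_{\sigma_1}(t)$, $h_{\sigma_2}(t)\ge h_{\sigma_1}(t)$ and $u_{\sigma_1}\le u_{\sigma_2}$ on the common domain. In particular $h_{\infty,\sigma_2}\ge h_{\infty,\sigma_1}$, so if spreading happens for $\sigma_1$ then $h_{\infty,\sigma_2}=+\infty$, and Theorem \ref{thm:mono-dichotomy} forces spreading for $\sigma_2$. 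Equivalently, vanishing for $\sigma_2$ forces vanishing for every $\sigma_1\le\sigma_2$. Setting $V:=\{\sigma>0:\text{vanishing}\}$ and $\sigma^*:=\sup V$ (with $\sigma^*=0$ if $V=\emptyset$ and $\sigma^*=\infty$ if $V=(0,\infty)$) then yields spreading for every $\sigma>\sigma^*$ and vanishing for every $\sigma\in V$, hence for every $\sigma<\sigma^*$.

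The second and main step is to show vanishing at $\sigma=\sigma^*$ when $\sigma^*\in(0,\infty)$. I argue by contradiction: assume spreading occurs at $\sigma^*$. Then $u_{\sigma^*}(t,\cdot)\to 1$ locally uniformly and $g_{\sigma^*}(t)\to-\infty$, $h_{\sigma^*}(t)\to+\infty$. Picking any $Z>Z'_M$ and the associated positive solution $v_Z$ of \eqref{v_Z} provided by Lemma \ref{lem:v_Z}, I can find $T>0$ such that $[-Z,Z]\subset(g_{\sigma^*}(T),h_{\sigma^*}(T))$ and $u_{\sigma^*}(T,x)>v_Z(x)$ on $[-Z,Z]$. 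By continuous dependence of $(u_\sigma,g_\sigma,h_\sigma)$ on $\sigma$ over the fixed interval $[0,T]$, the same two strict inequalities persist for $\sigma$ slightly below $\sigma^*$. Theorem \ref{thm:spreading} (applied after a time shift) then forces spreading for those $\sigma$, contradicting their membership in $V$. Hence vanishing must occur at $\sigma^*$.

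The main obstacle is establishing the uniform continuous dependence of the triple $(u_\sigma,g_\sigma,h_\sigma)$ on $\sigma$ over the whole interval $[0,T]$, not merely on a short initial window. Theorem \ref{thm:local} gives continuous dependence on a time interval whose length depends only on an a priori $C^2$-bound for the initial datum, and the proof proceeds by straightening the free boundary and invoking the Schauder estimate \eqref{b12}. My plan is to use the uniform boundedness of $\{u_\sigma\}$ for $\sigma$ in a neighborhood of $\sigma^*$ (e.g.\ $\sigma\in[\sigma^*/2,2\sigma^*]$), which follows from the comparison principle and the fact that solutions stay bounded by $\max(\|u_0\|_\infty,1)$ since $f(1)=0$, together with \eqref{b12} applied to the straightened problem, to iterate local continuous dependence finitely many times and chain together a uniform modulus of continuity of $\sigma\mapsto(u_\sigma(T,\cdot),g_\sigma(T),h_\sigma(T))$. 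This chaining, while conceptually routine, is the technical heart of Step 2 and the only place where genuine care is required.
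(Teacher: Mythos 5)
Your proposal is correct and follows essentially the same strategy as the paper: define $\sigma^*$ as the supremum of the vanishing set, use the comparison principle together with the dichotomy of Theorem~\ref{thm:mono-dichotomy} for monotonicity, and rule out spreading at $\sigma=\sigma^*$ by continuous dependence of the solution on the initial datum over a fixed finite time horizon. The one small divergence is in the contradiction step: you invoke Theorem~\ref{thm:spreading} via the stationary subsolution $v_Z$ (which requires persistence under perturbation of both the interval inclusion $[-Z,Z]\subset(g_\sigma(T),h_\sigma(T))$ and the pointwise ordering $u_\sigma(T,\cdot)>v_Z$), whereas the paper's monostable proof instead uses Corollary~\ref{cor:mono-spreading} and needs only the single scalar inequality $h_\sigma(t_0)-g_\sigma(t_0)>\pi/\sqrt{f'(0)}$ to persist --- a marginally cleaner reduction. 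Your variant is exactly what the paper itself does in the bistable and combustion cases (Theorems~\ref{thm:bi-threshold} and~\ref{thm:combus-threshold}), so it is a perfectly standard and valid substitute; the chaining argument for continuous dependence that you flag as the technical heart is indeed routine and is what the paper tacitly assumes when it writes ``by the continuous dependence of the solution of \eqref{p} on its initial values.''
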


\begin{proof}
By Corollary \ref{cor:mono-spreading}, we find that spreading
happens when $h_0\geq \pi/(2\sqrt{f'(0)})$. Hence in this case we
have $\sigma^*(h_0,\phi)=0$ for any $\phi\in \mathscr{X}(h_0)$.

In what follows we consider the remaining case
$h_0<\pi/(2\sqrt{f'(0)})$. By Theorem \ref{thm:vanishing} (i), we
see that in this case vanishing happens for all small $\sigma>0$.
Therefore
\[
\sigma^*=\sigma^*(h_0,\phi):=\sup \big\{\sigma_0: \mbox{ vanishing
happens for } \sigma\in (0,\sigma_0]\big\}\in(0,+\infty].
\]
 If $\sigma^*=\infty$, then there is nothing left to prove. Suppose
$\sigma^*\in (0,\infty)$. Then by definition
 vanishing happens when $\sigma\in (0,\sigma^*)$, and in view of Theorem
\ref{thm:mono-dichotomy}, there exists a sequence $\sigma_n$
decreasing to $\sigma^*$ such that spreading happens when
$\sigma=\sigma_n$, $n=1,2,\cdots $. For any $\sigma>\sigma^*$, we
can find some $n\geq 1$ such that $\sigma>\sigma_n$. If we denote by
$(u_n, g_n, h_n)$ the solution of \eqref{p} with $u_0=\sigma_n\phi$,
then by the comparison principle, we find that $[g_n(t),
h_n(t)]\subset [g(t), h(t)]$ and $u_n(t,x)\leq u(t,x)$. It follows
that spreading happens for such $\sigma$.

 It remains to show
that vanishing happens when $\sigma=\sigma^*$. Otherwise spreading
must happen when $\sigma=\sigma^*$ and we can find $t_0>0$ such that
$h(t_0)-g(t_0)>\frac{\pi}{\sqrt{f'(0)}}+1$. By the continuous
dependence of the solution of \eqref{p} on its initial values, we
find that if $\epsilon>0$ is sufficiently small, then the solution
of \eqref{p} with $u_0=(\sigma^*-\epsilon)\phi$, denoted by
$(u_\epsilon, g_\epsilon, h_\epsilon)$, satisfies
\[
h_\epsilon(t_0)-g_\epsilon(t_0)>\frac{\pi}{\sqrt{f'(0)}}.
\]
But by Corollary \ref{cor:mono-spreading}, this implies that
spreading happens to $(u_\epsilon, g_\epsilon, h_\epsilon)$, a
contradiction to the definition of $\sigma^*$.
\end{proof}

From the above proof we already know that $\sigma^*(h_0,\phi)=0$ if
$h_0\geq \frac{\pi}{2\sqrt{f'(0)}}$, regardless of the choice of
$\phi\in \mathscr{X}(h_0)$. And if $h_0< \frac{\pi}{2\sqrt{f'(0)}}$,
then $\sigma^*(h_0,\phi)\in(0,+\infty]$. We now investigate when
$\sigma^*(h_0,\phi)$ is finite, and when it is $+\infty$.

For a given $h_0>0$, since any two functions
$\phi_1,\,\phi_2\in\mathscr{X}(h_0)$ can be related by
\[
\sigma_1 \phi_1\leq\phi_2\leq\sigma_2\phi_1
\]
for some positive constants $\sigma_1$ and $ \sigma_2$, we find by
the comparison principle that either $\sigma^*(h_0,\phi)$ is
infinite for all $\phi\in\mathscr{X}(h_0)$, or it is finite for all
such $\phi$. In other words, whether it is finite or not is
determined by $h_0$ and $f$, but not affected by the choice of
$\phi\in\mathscr{X}(h_0)$.

\begin{prop}\label{prop:mono-sigma-finite}
Assume that $h_0 < \pi/(2 \sqrt{f'(0)})$ and  $f(u)\geq - Lu$ for
all $u>0$ and some $L>0$.  Then $\sigma^*(h_0,\phi)\in (0,\infty)$
for all $\phi\in\mathscr{X}(h_0)$.
\end{prop}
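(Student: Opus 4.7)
The plan is to establish $\sigma^*(h_0,\phi)<\infty$ by showing that spreading occurs for every sufficiently large $\sigma$; combined with Theorem~\ref{thm:mono-threshold} this yields the conclusion. The strategy in both cases below is to reduce, for large $\sigma$, to the hypotheses of Theorem~\ref{thm:spreading} at some positive time~$T$, after a translation centered on the midpoint of $[g(T), h(T)]$.

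Fix any $Z\in(Z'_M,\infty)$ and let $v_Z$ be the positive solution of \eqref{v_Z} supplied by Lemma~\ref{lem:v_Z}. When $h_0>Z'_M$, the proof is short: by Lemma~\ref{lem:q-small-c} I can pick small $c>0$ and $\omega>0$ satisfying $c<\mu\omega$ and $z^{c,\omega}<h_0$, and then the symmetrization of the corresponding wave $q_c(\cdot;\omega)$,
\[
\underline u(t,x)=\begin{cases} q_c(k(t)-x;\omega), & ct\leq x\leq k(t),\\ q^{c,\omega}, & -ct\leq x\leq ct,\\ q_c(k(t)+x;\omega), & -k(t)\leq x\leq -ct, \end{cases}\qquad k(t)=z^{c,\omega}+ct,
\]
is a sub-solution of \eqref{p} on $[-k(t),k(t)]$ whose maximum height $q^{c,\omega}$ is bounded independently of $\sigma$. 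For $\sigma$ large enough that $\underline u(0,\cdot)\leq \sigma\phi$ on $[-z^{c,\omega},z^{c,\omega}]$, the sub-solution counterpart of Lemma~\ref{lem:comp1} gives $h(t)\geq k(t)\to\infty$ as $t\to\infty$, so spreading follows from Theorem~\ref{thm:mono-dichotomy}.

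The delicate case is $h_0\leq Z'_M$, in which no admissible choice of $(c,\omega)$ makes $z^{c,\omega}$ fit inside $[-h_0,h_0]$, and a separate argument is needed. Here I would use the hypothesis $f(u)\geq -L u$ in two ways. First, since $e^{Lt}u(t,x)$ is a super-solution of the heat equation on $[g(t),h(t)]\supset [-h_0,h_0]$, comparison with the Dirichlet heat solution on the fixed interval $[-h_0,h_0]$ with initial datum $\sigma\phi$ and Dirichlet eigenexpansion yields
\[
u(t,x;\sigma\phi)\;\geq\; \sigma\,C_0\, e^{-(\lambda_1+L)t}\cos\!\Big(\frac{\pi x}{2h_0}\Big),\qquad x\in[-h_0,h_0],\ t\geq T_0,
\]
with $\lambda_1=\pi^2/(4h_0^2)$ and $C_0, T_0$ depending only on $\phi$ and $h_0$. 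Second, the bound $f(u)\geq -Lu$ is exactly what allows, after the rescaling $w(t,x)=u(t,x)/\sigma$ (which obeys $w_t=w_{xx}+\sigma^{-1}f(\sigma w)$ with nonlinearity of linear growth in $w$ uniformly in $\sigma$), the parabolic regularity estimates from Theorem~\ref{thm:local} and the remark following it to be applied with constants independent of $\sigma$, delivering a uniform-in-$\sigma$ lower bound on $|u_x(t,h(t))|/\sigma$ over a short time interval $[0,\tau]$. Then the Stefan condition $h'(t)=-\mu u_x(t,h(t))$ forces $h(\tau)-h_0\gtrsim \sigma\tau$, so $h(\tau)>Z$ for $\sigma$ large; symmetric reasoning gives $-g(\tau)>Z$. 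At this point the first display above ensures $u(\tau,\cdot)\geq v_Z(\cdot-x_0)$ on $[-Z+x_0, Z+x_0]\subset (g(\tau),h(\tau))$, with $x_0$ the midpoint, and Theorem~\ref{thm:spreading} delivers spreading.

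The main obstacle is the last step of the delicate case: obtaining the uniform-in-$\sigma$ boundary gradient estimate. Making this rigorous requires careful use of parabolic boundary regularity for the equivalent fixed-boundary reformulation employed to prove Theorem~\ref{thm:local}, with constants tracked through the rescaling $w=u/\sigma$. The role of the hypothesis $f(u)\geq -Lu$ is twofold: it controls the decay of $u$ in the interior (first display) and keeps the rescaled nonlinearity $\sigma^{-1}f(\sigma w)$ bounded in terms of $w$ alone, both of which are essential to the argument.
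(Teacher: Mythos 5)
Your split into an ``easy'' sub-case ($h_0>Z'_M$) and a ``delicate'' one ($h_0\le Z'_M$) does not match the paper, which treats all $h_0<\pi/(2\sqrt{f'(0)})$ by a single explicit sub-solution; and the delicate case, which is the heart of the matter (note $Z'_M$ may equal $Z_M$, making the easy case vacuous), has genuine gaps.

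First, the claim that parabolic regularity applied to the rescaled $w=u/\sigma$ ``delivers a uniform-in-$\sigma$ lower bound on $|u_x(t,h(t))|/\sigma$'' confuses what regularity theory provides: interior and boundary parabolic estimates control norms from above, not from below. A positive lower bound on $|u_x(t,h(t))|$ must instead be extracted from a Hopf-type barrier, i.e.\ from a sub-solution --- which is precisely what the paper constructs and precisely what this step is assuming away. Second, the local-in-time estimate you cite (Theorem~\ref{thm:local}) has a time window $T$ depending on the Stefan coefficient; for the rescaled problem that coefficient is effectively $\mu\sigma$, so $T$ is not uniform in $\sigma$ and the boundary may relax away from the initial gradient on a time scale that shrinks as $\sigma\to\infty$. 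Third, even granting $h(\tau)>Z$ and $-g(\tau)>Z$, the inequality you assert from the heat-kernel display, $u(\tau,\cdot)\ge v_Z(\cdot-x_0)$ on $[-Z+x_0,Z+x_0]$, does not follow: that display bounds $u$ from below only on $[-h_0,h_0]$, which is strictly smaller than $[-Z+x_0,Z+x_0]$ when $Z>h_0$, and says nothing about $u$ near the tips. (This last issue can be sidestepped by appealing to Corollary~\ref{cor:mono-spreading}, which only needs $h(\tau)-g(\tau)>\pi/\sqrt{f'(0)}$, but the first two obstructions remain.)

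The paper's actual route is different and avoids all Stefan-gradient estimates. It builds the explicit lower solution
\[
w(t,x)=\frac{\rho}{(t+\varepsilon)^\lambda}\,\varphi_1\!\Big(\frac{x}{\sqrt{t+\varepsilon}}\Big),\qquad x\in\big[-\sqrt{t+\varepsilon},\sqrt{t+\varepsilon}\big],
\]
where $\varphi_1$ is the first eigenfunction of a Sturm--Liouville problem engineered so that $w$ is a sub-solution of the parabolic inequality with an $f(u)\ge -Lu$ nonlinearity; the constants $\lambda,\rho$ (see~\eqref{def:alpha}, \eqref{def:rho-1}) are chosen so that the free-boundary inequality $(\sqrt{t+\varepsilon})'\le -\mu w_x$ is also met, and the algebraic-in-time decay of $w$ is slow enough for this. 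For $\sigma$ large one has $w(0,\cdot)\le\sigma\phi$, and at time $T>\bar Z^2$ the support $[-\sqrt{T+\varepsilon},\sqrt{T+\varepsilon}]$ exceeds $\pi/\sqrt{f'(0)}$, so Corollary~\ref{cor:mono-spreading} gives spreading. You would need to import a construction of this kind to close the delicate case.
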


\begin{proof} Fix an arbitrary $\phi\in \mathscr{X}(h_0)$. By
Theorem \ref{thm:mono-threshold}, it suffices to show that spreading
happens when $u_0=\sigma\phi$ and $\sigma$ is large. We will achieve
this by constructing a suitable lower solution.

We start with the following Sturm-Liouville eigenvalue problem
\begin{equation}\label{SL-half}
\left\{
 \begin{array}{l}
 \varphi''(x) +\frac{1}{2}\varphi'(x) +\lambda \varphi (x) =0,\quad x\in (0,1),\\
 \varphi' (0) =\varphi (1)=0.
 \end{array}
\right.
\end{equation}
It is well known that the first eigenvalue $\lambda_1$ of this
problem is simple and the corresponding first eigenfunction
$\varphi_1 (x)$ can be chosen positive in $[0,1)$. Moreover, one can
easily show that $\lambda_1>\frac{1}{16}$ and  $\varphi'_1 (x) <0$
for $x\in (0,1]$. We assume further that $\|\varphi_1\|_{L^\infty
([0,1])} =\varphi_1 (0)=1$.

We extend $\varphi_1$ to $[-1,1]$ as an even function. Then clearly
\begin{equation}\label{SL}
\left\{
 \begin{array}{l}
 \varphi_1''(x) +\frac{{\rm sgn}(x)}{2}\varphi_1'(x) +\lambda_1
 \varphi_1 (x) =0,\quad x\in (-1,1),\\
 \varphi_1 (-1) =\varphi_1 (1)=0.
 \end{array}
\right.
\end{equation}

We now choose constants $\varepsilon, \bar{Z}, T, \lambda, \rho$ in
the following way:
 \[
 0<\varepsilon < \min \{1, h^2_0\},\;
 \bar{Z}:= 1+ \pi/(2\sqrt{f'(0)}),\; T> {\bar{Z}}^2,
\]
  and
\begin{equation}\label{def:alpha}
\lambda > \lambda_1 + L (T+1),
\end{equation}
\begin{equation}\label{def:rho-1}
- 2\mu \rho \varphi'_1 (1) > (T+1)^\lambda .
\end{equation}

Define
\begin{equation}\label{lower-sol}
w(t,x) := \frac{\rho}{(t+\varepsilon)^\lambda} \ \varphi_1 \left(
\frac{x}{\sqrt{t+\varepsilon}}\right) \quad \mbox{for } x\in
[-\sqrt{t+\varepsilon},\sqrt{t+\varepsilon}],\ t\geq 0.
\end{equation}
We show that $(w(t,x), - \sqrt{t+\varepsilon},
\sqrt{t+\varepsilon})$ is a lower solution of \eqref{p} on the
time-interval $[0,T]$.

In fact, for $x\in (-\sqrt{t+\varepsilon},\sqrt{t+\varepsilon})$ and
$t\in [0,T]$ we have
\begin{eqnarray*}
w_t -w_{xx} - f(w) & \leq & w_t -w_{xx} +L w \\
& = & \frac{-\rho}{(t+\varepsilon)^{\lambda+1}} \left[ \varphi''_1
+\frac{x}{2\sqrt{t+\varepsilon}} \varphi'_1  +(\lambda -
L(t+\varepsilon)) \varphi_1 \right]\\
& \leq & \frac{-\rho}{(t+\varepsilon)^{\lambda+1}} \left[
\varphi''_1 +\frac{{\rm sgn}(x) }{2} \varphi'_1  +(\lambda-
L(t+\varepsilon))
\varphi_1 \right] \\
& \leq & \frac{-\rho}{(t+\varepsilon)^{\lambda+1}} \left[
\varphi''_1 +\frac{{\rm sgn}(x)}{2} \varphi'_1  + \lambda_1
\varphi_1 \right] =0.
\end{eqnarray*}
Clearly $w(t,\pm\sqrt{t+\varepsilon})=0$, and by \eqref{def:rho-1}
we have
$$
(\sqrt{t+\varepsilon})' \pm \mu w_x (t, \pm\sqrt{t+\varepsilon})
\leq \frac{1}{2\sqrt{t+\varepsilon}} \left( 1 + \frac{2\mu
\rho}{(T+1)^\lambda } \varphi'_1 (1) \right) <0.
$$
Finally, since $\varepsilon <h^2_0$ we can choose $\hat{\sigma}
>0$ large such that
$$
w(0,x) = \frac{\rho}{\varepsilon^\lambda} \varphi_1 \Big(
\frac{x}{\sqrt{\varepsilon}}\Big) < \hat{\sigma} \phi(x)\quad
\mbox{for } x\in [-\sqrt{\varepsilon}, \sqrt{\varepsilon}]\subset
[-h_0, h_0].
$$

Hence $(w(t,x), -\sqrt{t+\varepsilon}, \sqrt{t+\varepsilon})$ is a
lower solution of \eqref{p} over the time interval $[0,T]$ if in
\eqref{p} we take $u_0(x)={\sigma} \phi(x)$ with $\sigma\geq
\hat\sigma$.  It follows that if $(u,g,h)$ is the solution of
\eqref{p} with $u_0=\sigma \phi$ and $\sigma\geq \hat\sigma$, then
$$
g(t) \leq -\sqrt{t+\varepsilon},\; h(t) \geq \sqrt{t+\varepsilon}
\quad \mbox{for } t\in [0,T].
$$
In particular, $h(T)-g(T) > 2\sqrt{T} > 2\bar{Z} >
\pi/\sqrt{f'(0)}$. So spreading happens by Corollary
\ref{cor:mono-spreading} for such $(u,g,h)$.
\end{proof}

\begin{prop}\label{prop:mono-sigma-infty}
Assume that
\begin{equation}\label{growth M1}
\lim\limits_{s\rightarrow \infty} \frac{-f(s)}{s^{1+2\beta}} =
\infty,
\end{equation}
for some
\begin{equation}\label{cond alpha beta}
\beta > \frac{3+\sqrt{13}}{2}.
\end{equation}
 Then there exists $ Z^0_M \in (0, \pi/(2\sqrt{f'(0)})) $ such
that for every $\phi\in\mathscr{X}(h_0)$,
\begin{itemize}
\item [\rm (i)]  $\sigma^* (h_0,\phi) =\infty$ if $h_0 \in (0,
Z^0_M]$, and
\item [\rm (ii)]
$\sigma^* (h_0, \phi) \in (0, \infty)$ if $h_0 \in \big( Z^0_M ,
\pi/(2\sqrt{f'(0)}) \big)$.
\end{itemize}
\end{prop}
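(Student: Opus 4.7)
The proposition decomposes into a lower-solution statement (part (ii), finiteness of $\sigma^*$) and a much more delicate upper-solution statement (part (i), $\sigma^*=+\infty$). I would treat them by different methods, and almost all the work goes into part (i).

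For part (ii), my plan is to produce spreading for some large $\sigma$ by invoking Theorem \ref{thm:spreading}. Since the monostable infimum $Z'_M$ defined in \eqref{def:RM} satisfies $Z'_M\leq Z_M=\pi/(2\sqrt{f'(0)})$, and we are in the regime $h_0<Z_M$, the plan is to set $Z_M^0:=Z'_M$ (after checking this choice is consistent with part (i)): for any $h_0\in(Z_M^0,Z_M)$ one can pick $Z\in(Z'_M,h_0]$ with $v_Z$ a positive solution of \eqref{v_Z} (Lemma \ref{lem:v_Z}). Because $[-Z,Z]\subset(-h_0,h_0)$ (with the boundary case $Z=h_0$ handled by comparing the derivatives $\phi'(\pm h_0)$ and $v_Z'(\pm Z)$, which are all nonzero), $\phi$ is bounded below by a positive constant on $[-Z,Z]$ away from $\pm h_0$, and taking $\sigma$ large gives $\sigma\phi\geq v_Z$ on $[-Z,Z]$. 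Theorem \ref{thm:spreading} then forces spreading.

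For part (i), my plan is to construct, for every $\sigma>0$, an upper solution $(\bar u,-\bar h,\bar h)$ to \eqref{p} on $[0,\infty)$ (or up to some finite time $T$ independent of $\sigma$) such that $\bar u(0,x)\geq\sigma\phi(x)$ on $[-h_0,h_0]$, and such that at time $T$ one has $\bar h(T)<Z_M$ and $\|\bar u(T,\cdot)\|_\infty<\sigma_1(\bar h(T))$, where $\sigma_1$ is the threshold appearing in the proof of Theorem~\ref{thm:vanishing}(i). Given such a $\bar u$, the comparison principles of Section~\ref{sec:basic} and a time-shifted application of Theorem~\ref{thm:vanishing}(i) complete the argument. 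The natural ansatz is $\bar u(t,x)=A(t)\Psi(x/K(t))$ on $[-K(t),K(t)]$, with $\Psi\in C^2([-1,1])$ positive and symmetric, $\Psi(\pm 1)=0$, $\Psi'(\pm 1)\neq 0$. The Stefan-type requirement $K'(t)\geq \mu A(t)|\Psi'(1)|/K(t)$ integrates to $K^2(t)=K^2(0)+2\mu|\Psi'(1)|\int_0^t A(s)\,ds$, so boundedness of $K$ reduces to boundedness of $\int_0^\infty A(s)\,ds$ uniformly in $\sigma$. The whole scheme is driven by the fact that, under \eqref{growth M1}, the ODE $V'=f(V)$ from any initial height $V_0$ satisfies $\int_0^\tau V(s)\,ds\leq C$ independently of $V_0$, where $\tau$ is the first time $V$ drops to any fixed positive level: this is the elementary bound $-\int_{V_0}^v u/f(u)\,du\leq v^{1-2\beta}/((2\beta-1)c)$ obtained from $-f(u)\geq c u^{1+2\beta}$, which is finite precisely when $\beta>1/2$.

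The hard part is verifying the pointwise PDE inequality
\[
A'(t)\Psi(\xi)-A(t)\Psi'(\xi)\,\xi K'(t)/K(t)-A(t)\Psi''(\xi)/K^2(t)\geq f(A(t)\Psi(\xi)),\quad \xi=x/K(t),
\]
uniformly in $\xi\in[-1,1]$. Two regimes compete: near $\xi=\pm 1$ the value $\bar u=A\Psi$ is small, $f(\bar u)/\bar u\to f'(0)>0$, and the inequality forces the decay rate $-A'/A$ to be bounded above by roughly $\pi^2/(4K^2)-f'(0)$, which in turn forces $K<Z_M$ and allows at most exponential decay; near $\xi=0$ the value $\bar u=A$ is arbitrarily large and the superlinear absorption $f(A)\sim -A^{1+2\beta}$ permits (and in fact demands) far faster decay to keep $\int A$ bounded. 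Reconciling these two behaviours through a single profile $\Psi$ requires choosing $\Psi$ (most naturally of power-type, say $\Psi(\xi)=(1-\xi^2)^m$ for a suitably tuned exponent $m$) so that $-\Psi''/\Psi$ is integrable in the correct sense at $\xi=\pm 1$, and then solving the coupled system for $(A(t),K(t))$ so that $A$ leaves the ``superlinear regime'' before $K$ escapes $Z_M$. The resulting consistency constraint on $m$ and $\beta$ is a quadratic inequality in $\beta$ of the form $\beta^2-3\beta-1>0$, whose positive root is exactly $(3+\sqrt{13})/2$; this is where I expect the condition \eqref{cond alpha beta} to arise and where the main technical obstacle lies. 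Taking $Z_M^0$ to be the supremum of $K(0)$ for which this closed system yields $\lim_{t\to\infty}K(t)<Z_M$ produces the constant in the proposition, and by construction $Z_M^0\in(0,Z_M)$ and is consistent with the choice used for part (ii).
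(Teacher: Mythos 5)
Your intuitions identify the correct driving phenomenon (the superlinear depletion giving $\int_v^{V_0}\frac{u}{-f(u)}\,du\leq C$ uniformly in $V_0$) and you correctly anticipate where the quadratic $\beta^2-3\beta-1>0$ should enter, but the separating ansatz you propose for part (i) has a genuine gap, and you acknowledge as much. First, a bounded profile $\Psi$ forces $A(0)\geq\sigma\|\phi\|_\infty\to\infty$, so the construction is coupled to $\sigma$ from the start; it is not a single uniform estimate. Second, and more seriously: consider those $\xi$ where $\bar u=A\Psi\sim 1$ while $A$ is still enormous, i.e.\ $1-\xi^2\sim A^{-1/m}$ for $\Psi(\xi)=(1-\xi^2)^m$. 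There the term $A'\Psi$ behaves like $A'/A\sim f(A)/A\sim -c\,A^{2\beta}$ (if $A$ is to decay as fast as the true ODE), and this unboundedly negative contribution is not matched by the diffusion term $-A\Psi''/K^2$ or the Stefan term for any admissible $m$: for $m>1$ the quantity $|\Psi''/\Psi|\sim(1-\xi)^{-2}$ already blows up and kills the supersolution inequality even closer to $\xi=1$, while for $m\leq1$ the diffusion contribution is only polynomial in $A$ of degree $\leq 2$. Weakening $A'$ to defeat this loses the uniform-in-$\sigma$ bound on $\int A$, which was the whole point. Also, your declared identification $Z_M^0:=Z'_M$ in part (ii) is both unproved and contradicted by your own closing definition of $Z_M^0$ as a supremum over $K(0)$'s.

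The paper's route is genuinely different and more modular. It does not build one global upper solution. (1) It pins the free boundary $h(t)\leq 2h_0+ct$, $g(t)\geq -2h_0-ct$ with the \emph{singular} moving barrier $w(t,x)=(x-h_0-ct)^{-1/\beta}-h_0^{-1/\beta}$ on $h_0+ct<x\leq 2h_0+ct$ (Lemma \ref{lem:int-small}); since $w=+\infty$ at the inner edge it dominates \emph{any} initial data with no tuning to $\sigma$, and the supersolution inequality there uses precisely $-f(u)\geq L u^{1+2\beta}$. (2) Separately, the scalar ODE $u\leq\zeta$, $\zeta'=f(\zeta)$, plus your depletion integral, force $\|u(\tau_1,\cdot)\|_\infty\leq s h_0^{-1/\beta}$ by a universal time $\tau_1$. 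Steps (1)–(2) give $\|u(\tau_1,\cdot)\|_{L^1}\lesssim h_0^{1-\omega_1}$ with $\omega_1=\frac{1+2\beta}{2\beta^2}$. (3) A heat-kernel smoothing step converts this to $\|u(\tau_2,\cdot)\|_\infty\leq\frac{e^{K(\tau_2-\tau_1)}}{2\sqrt{\pi(\tau_2-\tau_1)}}\|u(\tau_1,\cdot)\|_{L^1}\lesssim h_0^{1-\omega_1-\alpha\omega_2}$ at a later time $\tau_2$, with $\omega_2=\alpha\frac{1+\beta+\delta}{2\beta^2}$; the quadratic $\beta^2-3\beta-1>0$ is exactly what makes it possible to pick $\alpha\in(\beta,\frac{2\beta^2-2\beta-1}{1+\beta})$ so that $1-\omega_1>\alpha\omega_2$, hence the right side is small when $h_0$ is small. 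Then Theorem \ref{thm:vanishing}(i) applies. Finally $Z_M^0$ is defined abstractly as $\sup\{h_0>0:\sigma^*(h_0)=\infty\}$ (as in the bistable/combustion proofs), and $Z_M^0<\pi/(2\sqrt{f'(0)})$ is established via the continuation argument of Lemma \ref{lem:consequence of cp} rather than by any identification with $Z'_M$.
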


The proof of this result is rather technical and is postponed to the
end of this section. Clearly Theorem \ref{thm:mono} follows from
Theorems \ref{thm:mono-dichotomy}, \ref{thm:mono-threshold} and
Proposition \ref{prop:mono-sigma-finite} above.

\subsection{Bistable case}\label{subsec:bistable}
Throughout this subsection, we assume that $f$ is of (f$_B$) type.

\begin{thm}\label{thm:bi-trichotomy}
{\rm (Trichotomy)}  Suppose that $h_0>0$, $u_0 \in \mathscr
{X}(h_0)$ and $(u,g,h)$ is the solution of \eqref{p}. Then either

{\rm (i) Spreading:} $(g_\infty, h_\infty)=\R^1$ and
\[
\lim_{t\to\infty}u(t,x)=1 \mbox{ locally uniformly in $\R^1$},
\]
or

{\rm (ii) Vanishing:} $(g_\infty, h_\infty)$ is a finite interval
and
\[
\lim_{t\to\infty}\max_{g(t)\leq x\leq h(t)} u(t,x)=0,
\]
or

{\rm (iii) Transition:} $(g_\infty, h_\infty)=\R^1$ and there exists
a continuous function $\gamma: [0,\infty)\to [-h_0,h_0]$ such that
\[
\lim_{t\to\infty}|u(t,x)-v_\infty(x+\gamma(t))|=0 \mbox{ locally
uniformly in $\R^1$},
\]
where $v_\infty$ is the unique positive solution to
\[v''+f(v)=0 \; (x\in\R^1),\; v'(0)=0,\; v(-\infty)=v(+\infty)=0.\]
\end{thm}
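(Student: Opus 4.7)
My plan is to reduce everything to Theorem~\ref{thm:convergence} and then exclude every limit except those listed in (i)--(iii). Since $f$ is of {\rm (f$_B$)} type, $f\le 0$ on $[1,\infty)$, so the constant $\overline C:=\max\{1,\|u_0\|_\infty\}$ is a supersolution of \eqref{p} on any auxiliary expanding rectangle containing $\{g(t)\le x\le h(t)\}$, giving the a priori bound $u\le\overline C$ via Lemma~\ref{lem:comp1}. Theorem~\ref{thm:convergence} then produces the dichotomy: either $(g_\infty,h_\infty)$ is finite---in which case $u(t,\cdot)\to 0$ locally uniformly on $(g_\infty,h_\infty)$, and the wing monotonicity in Lemma~\ref{lem:center} ($u_x>0$ on $[g(t),-h_0]$ and $u_x<0$ on $[h_0,h(t)]$) upgrades this to $\max_{[g(t),h(t)]}u\to 0$, i.e.\ case~(ii); or $(g_\infty,h_\infty)=\R^1$, which I treat below.

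In the case $(g_\infty,h_\infty)=\R^1$, Theorem~\ref{thm:convergence} leaves two options: $u$ converges locally uniformly to a constant zero of $f$ (so $0$, $\theta$ or $1$), or $u(t,x)-v(x+\gamma(t))\to 0$ for some evenly decreasing positive stationary solution $v$. Limit~$1$ is case~(i); limit~$0$ is ruled out because it would give $\max_{[g(t),h(t)]}u\to 0$ via Lemma~\ref{lem:center}, forcing $(g_\infty,h_\infty)$ to be finite by Corollary~\ref{cor:BC-vanishing-finite}. The main obstacle is ruling out the constant limit~$\theta$, for which I plan an energy argument with the Lyapunov functional
\[
E[u(t,\cdot)]:=\int_{g(t)}^{h(t)}\Big[\tfrac{1}{2}u_x^2-F(u)\Big]\,dx,\qquad F(s):=\int_0^s f(\tau)\,d\tau.
\]
Differentiating $u(t,g(t))=u(t,h(t))=0$ in $t$ and invoking the Stefan conditions yields $u_t=\mu u_x^2$ at the free boundaries; a direct computation then gives $\tfrac{d}{dt}E[u(t,\cdot)]\le 0$, so $E[u(t,\cdot)]\le E[u_0]<\infty$ for all $t$. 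Supposing now $u\to\theta$ locally uniformly, pick $\eta>0$ so small that $F<0$ on $(0,\theta+\eta]$ (possible since $F(0)=0$, $F(\theta)<0$, $F$ continuous); then for any fixed $R>h_0$, local uniform convergence combined with the wing monotonicity force, for all large $t$, $u(t,\cdot)\in[0,\theta+\eta]$ throughout $[g(t),h(t)]$ and $|u-\theta|<\eta$ on $[-R,R]$. Hence $-F(u)\ge 0$ throughout $[g(t),h(t)]$ and
\[
E[u(t,\cdot)]\;\ge\;\int_{-R}^{R}\bigl(-F(u(t,x))\bigr)\,dx\;\longrightarrow\;2R\,|F(\theta)|\quad(t\to\infty),
\]
which diverges as $R\to\infty$, contradicting $E[u(t,\cdot)]\le E[u_0]$.

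In the remaining alternative $u(t,x)-v(x+\gamma(t))\to 0$ with $v$ an evenly decreasing positive solution of $v''+f(v)=0$, a short phase-plane argument will identify $v=v_\infty$. Since $v$ is monotonically decreasing and bounded on $(0,\infty)$, $v(x)\to\beta\ge 0$ and $v'(x)\to 0$, forcing $f(\beta)=0$, whence $\beta\in\{0,\theta,1\}$. The case $\beta=1$ requires $v>1$ on $\R^1$; but ODE comparison of $u$ with the solution of $\tilde u'=f(\tilde u)$, $\tilde u(0)=\|u_0\|_\infty$, gives $\limsup_{t\to\infty}\|u(t,\cdot)\|_\infty\le 1$, contradicting $v(0)>1$. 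The case $\beta=\theta$ is excluded by the first integral $\tfrac{1}{2}(v')^2=F(v(0))-F(v)$: sending $x\to\infty$ gives $F(v(0))=F(\theta)$, yet $\theta$ is a strict local minimum of $F$ (since $F'=f$ changes sign from negative to positive at $\theta$), so $F(v)>F(\theta)$ in a punctured neighborhood of $\theta$, preventing the orbit from reaching $(\theta,0)$. Hence $\beta=0$; then $F(v(0))=0$ with $v(0)>0$ forces $v(0)=\bar\theta$, and uniqueness of the ODE flow identifies $v$ with the ground state $v_\infty$. The continuity of $\gamma$ is inherited directly from Theorem~\ref{thm:convergence}, completing case~(iii).
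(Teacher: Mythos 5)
Your proof is correct, and it diverges from the paper's own argument in an interesting way at the crucial step of excluding the constant $\theta$ from $\omega(u)$. The paper picks a periodic solution $v_0$ of $v''+f(v)=0$ oscillating around $\theta$, forms $w=u-v_0$, and applies Angenent's zero-number theorem: $w(t,g(t))<0$, $w(t,h(t))<0$ force the zero count of $w(t,\cdot)$ to be finite and non-increasing, whereas $u\to\theta$ together with the oscillation of $v_0$ would make it blow up. You instead show the energy
\[
E[u(t,\cdot)]=\int_{g(t)}^{h(t)}\Bigl[\tfrac12 u_x^2 - F(u)\Bigr]\,dx
\]
is a Lyapunov functional. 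The key identity $u_t=\mu u_x^2$ at both free boundaries is correct (differentiate $u(t,h(t))\equiv 0$ and use $h'=-\mu u_x$), and the Leibniz/integration-by-parts computation gives
\[
\frac{d}{dt}E = \frac{\mu}{2}\bigl[u_x^3(t,h(t))-u_x^3(t,g(t))\bigr]-\int_{g(t)}^{h(t)} u_t^2\,dx\le 0,
\]
since $u_x(t,h(t))<0<u_x(t,g(t))$. Combined with the wing monotonicity of Lemma~\ref{lem:center} (so that $u<\theta+\eta$ on all of $[g(t),h(t)]$, not just on $[-R,R]$), this gives $E[u(t,\cdot)]\ge\int_{-R}^{R}(-F(u))\,dx\to 2R|F(\theta)|$, contradicting the bound $E\le E[u_0]$. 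You also replace the paper's appeal to the ODE classification in \cite{DM} with a direct first-integral analysis to pin $v=v_\infty$; that is fine, and in fact the a priori bound $\limsup\|u\|_\infty\le 1$ you invoke for the case $\beta=1$ is even slightly more than you need, since the requirement that $v$ have a maximum at $0$ already forces $v''(0)=-f(v(0))\le 0$, hence $v(0)\in[\theta,1]$ and $F(v(0))=0$ forces $v(0)=\bar\theta$. In short: the paper's proof is shorter once the zero-number machinery and \cite{DM}'s ODE classification are granted; your proof is more self-contained, at the cost of verifying the energy identity for the Stefan condition, and would generalize to settings where the zero-number argument is unavailable.
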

\begin{proof} By Theorem \ref{thm:convergence}, we have either
$(g_\infty, h_\infty)$ is a finite interval or $(g_\infty,
h_\infty)=\R^1$. In the former case,  $\lim_{t\to\infty}u(t,x)=0$
locally uniformly in $(g_\infty, h_\infty)$, which, together with
Lemma \ref{lem:center}, implies that (ii) holds.

Suppose now  $(g_\infty, h_\infty)=\R^1$;  then either
$\lim_{t\to\infty} u(t,x)$ is a nonnegative constant solution of
\begin{equation}
\label{ellip-R10} v_{xx}+f(v)=0 \mbox{ in } \R^1,
\end{equation}
or
\[
u(t,x)-v(x+\gamma(t))\to 0 \mbox{ as } t\to\infty \mbox{ locally
uniformly in } \R^1,
\]
where $v$ is an evenly decreasing positive solution of
\eqref{ellip-R10}, and $\gamma:[0,\infty)\to [-h_0,h_0]$ is a
continuous function.

Since $f$ is of bistable type, it is well-known (see \cite{DM}) that
bounded nonnegative solutions of \eqref{ellip-R10} consist of the
following:
\begin{itemize}
\item[\vspace{10pt}(1)] \;constant solutions: 0,\ $\theta$, 1;
\item[\qquad(2)] \;a family of periodic solutions satisfying
$0<\min v <\theta<\max v <\overline \theta ; $
\item[\qquad(3)] \; a family of symmetrically decreasing solutions
$v_\infty(\cdot-a),\;a\in \R^1$, where $v_\infty$ is uniquely
determined by
\[
v_\infty''+f(v_\infty)=0 \ \hbox{ in } \R^1, \quad
v_\infty(0)=\overline\theta,\quad v_\infty'(0)=0,
\]
which necessarily satisfies $\lim_{|x|\to\infty}v_\infty(x)=0$.
\end{itemize}
From this list, clearly only $0,\; \theta,\; 1$ and
$v_\infty(\cdot-a)$ are possible members of $\omega(u)$.

By Corollary \ref{cor:BC-vanishing-finite}, $\omega(u)=\{0\}$ is
impossible. It remains to show that $\omega(u)\not=\{\theta\}$. We
argue indirectly by assuming that $u(t,x)\to\theta$ as $t\to\infty$
locally uniformly in $\R^1$. Let $v_0(x)$ be a periodic solution of
\eqref{ellip-R10} as given in (2) above. We now consider the number
of zeros of the function
\[
w(t,x):=u(t,x)-v_0(x)
\]
in the interval $[g(t), h(t)]$, and denote this number by
$\mathcal{Z}(t)$. Clearly $w(t, g(t))<0$ and $w(t, h(t))<0$ for all
$t>0$. Therefore we can use the zero number result of \cite{A} to
the equation satisfied by $w$ to conclude that $\mathcal{Z}(t)$ is
finite and non-increasing in $t$ for $t>0$. (We could use a change
of variable to change the varying interval $[g(t), h(t)]$ into a
fixed one, and then use \cite{A} to the reduced equation.) On the
other hand, since $u(t,x)\to\theta$ and $v_0(x)$ oscillates around
$\theta$, we find that $\mathcal{Z}(t)\to\infty$ as $t\to\infty$.
This contradiction shows that $v=\theta$ is impossible. The proof is
complete.
\end{proof}

\begin{thm}\label{thm:bi-threshold}{\rm (Sharp threshold)}
Suppose that $h_0>0$, $\phi\in \mathscr{X}(h_0)$, and $(u,g,h)$ is a
solution of \eqref{p} with $u_0=\sigma \phi$ for some $\sigma>0$.
Then there exists $\sigma^* = \sigma^* (h_0, \phi) \in (0,\infty]$
such that spreading happens when $  \sigma > \sigma^*$,  vanishing
happens when $0<\sigma< \sigma^*$, and transition happens when
$\sigma=\sigma^*$.
\end{thm}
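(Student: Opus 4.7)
Following \cite{DM}, set
\[
\Sigma_v := \{\sigma>0 : u_{\sigma\phi}\ \text{vanishes}\}, \qquad
\Sigma_s := \{\sigma>0 : u_{\sigma\phi}\ \text{spreads}\}.
\]
The aim is to show $\Sigma_v=(0,\sigma^*)$ and $\Sigma_s=(\sigma^*,\infty)$ for some $\sigma^*\in(0,\infty]$ (with $\Sigma_s=\emptyset$ when $\sigma^*=\infty$), and that transition occurs at $\sigma=\sigma^*$. The skeleton is the openness-plus-monotonicity template already used in Theorem~\ref{thm:mono-threshold}, but the trichotomy of Theorem~\ref{thm:bi-trichotomy} forces a genuinely new step of \emph{uniqueness of transition} that will be the main difficulty.

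\textit{Openness and monotonicity.} Non-emptiness of $\Sigma_v$ is immediate from Theorem~\ref{thm:vanishing}(ii) (any $\sigma\le\theta/\|\phi\|_\infty$ works). Lemma~\ref{lem:comp1} makes $\sigma\mapsto u_\sigma$ order-preserving (also enlarging $[g_\sigma(t),h_\sigma(t)]$), so $\Sigma_v$ is a down-set and $\Sigma_s$ is an up-set; for the latter, the upper bound $u_\sigma\le V(t)$ given by the ODE $V'=f(V)$, $V(0)=\sigma\|\phi\|_\infty$ (which tends to $1$) together with Theorem~\ref{thm:bi-trichotomy} rules out vanishing and transition once $\liminf u_\sigma\ge 1$. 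To see $\Sigma_v$ is relatively open, pick $\sigma_0\in\Sigma_v$; by Theorem~\ref{thm:bi-trichotomy}(ii) there is $t_1$ with $\|u_{\sigma_0}(t_1,\cdot)\|_\infty<\theta/2$, so continuous dependence on initial data over the fixed interval $[0,t_1]$ (Theorem~\ref{thm:local}) gives $\|u_\sigma(t_1,\cdot)\|_\infty<\theta$ for $\sigma$ close to $\sigma_0$, after which Theorem~\ref{thm:vanishing}(ii) applied from time $t_1$ yields vanishing. Openness of $\Sigma_s$ is analogous using Theorem~\ref{thm:spreading}: when $u_{\sigma_0}$ spreads, for $t_1$ large $u_{\sigma_0}(t_1,\cdot)$ dominates a suitably translated $v_Z$ from Lemma~\ref{lem:v_Z} with $Z\ge Z_B$, and this domination persists under small perturbations of $\sigma$. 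Setting $\sigma^*:=\sup\Sigma_v\in(0,\infty]$, openness of $\Sigma_s$ and $\Sigma_v\cap\Sigma_s=\emptyset$ prevent $\sigma^*\in\Sigma_s$, so the trichotomy identifies $\sigma^*$ (when finite) as a transition value.

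\textit{Uniqueness of transition (main obstacle).} It remains to show every $\sigma>\sigma^*$ belongs to $\Sigma_s$. If not, the up-set property of $\Sigma_s$ forces an entire subinterval of transition values; pick two of them $\sigma_1<\sigma_2$. Theorem~\ref{thm:bi-trichotomy}(iii) gives $u_{\sigma_i}(t,\cdot)\to v_\infty(\cdot+\gamma_i(t))$ locally uniformly with $\gamma_i(t)\in[-h_0,h_0]$. Strong parabolic comparison and the Hopf lemma at the moving boundaries $x=g_{\sigma_1}(t),h_{\sigma_1}(t)$ give $u_{\sigma_2}-u_{\sigma_1}>0$ strictly, and parabolic regularity produces a quantitative gap $u_{\sigma_2}(t_0,x)-u_{\sigma_1}(t_0,x)\ge 2\delta_0$ on $[-h_0,h_0]$ at some time $t_0$. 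The target contradiction is algebraic: if this gap persists as $t\to\infty$, evaluating the limiting inequality at $x=-\gamma_1(t)\in[-h_0,h_0]$ forces $v_\infty(\gamma_2(t)-\gamma_1(t))\ge v_\infty(0)+\delta_0=\overline\theta+\delta_0$, impossible since $v_\infty\le\overline\theta$. The hard part is propagating the gap, because a priori $\gamma_2(t)-\gamma_1(t)\to 0$ is possible and would allow $u_{\sigma_2}-u_{\sigma_1}$ to decay. To close this I plan to invoke Angenent's zero-number result \cite{A} applied to $w_a(t,x):=u_\sigma(t,x)-v_\infty(x-a)$ on $[g_\sigma(t),h_\sigma(t)]$: since $v_\infty$ is even and strictly decreasing on $[0,\infty)$ two distinct translates of $v_\infty$ intersect exactly once, so the number of zeros of $w_a$ stabilises at a value depending only on $\sigma$ and $a$; monotonicity in $\sigma$ then pins down the limiting shift, and the Morse-index-$1$ instability of $v_\infty$---the operator $\partial_x^2+f'(v_\infty)$ has a unique negative eigenvalue with a strictly positive eigenfunction, so $V:=v_\infty+\delta\psi$ is a strict stationary subsolution sitting above $v_\infty$---forbids a one-parameter family of transitions. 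Once this uniqueness is in place the theorem assembles as in Theorem~\ref{thm:mono-threshold}.
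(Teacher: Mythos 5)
Your framework for the first part --- non-emptiness of $\Sigma_v$, order-preservation under $\sigma$, openness of $\Sigma_v$ and $\Sigma_s$, and the identification of $\sigma^*:=\sup\Sigma_v$ as a transition value --- matches the paper's proof in substance (the paper runs the openness arguments in the ``continuous dependence'' form at $\sigma^*$ rather than at an arbitrary point, but this is equivalent). The gap is in the step you yourself flag as the ``main obstacle'': showing that \emph{every} $\sigma>\sigma^*$ lies in $\Sigma_s$, i.e.\ that there is no whole interval of transition values. What you offer there is a plan, not a proof, and as sketched it does not close the loop. You observe correctly that a strict quantitative gap $u_{\sigma_2}(t_0,\cdot)-u_{\sigma_1}(t_0,\cdot)\ge 2\delta_0$ exists at a fixed time; but since both solutions converge in $C^0_{\rm loc}$ to translates $v_\infty(\cdot+\gamma_i(t))$ with $\gamma_i(t)\in[-h_0,h_0]$, nothing prevents $\gamma_2(t)-\gamma_1(t)\to 0$, in which case $u_{\sigma_2}-u_{\sigma_1}\to 0$ locally uniformly and your gap evaporates. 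The proposed repair via Angenent's zero-number count for $w_a=u_\sigma-v_\infty(\cdot-a)$ does not obviously ``pin down the limiting shift'': two translates of $v_\infty$ meet in exactly one point, so the stable zero count of $w_a$ is $1$ for essentially all $a$, and does not differentiate between $\sigma_1$ and $\sigma_2$. Likewise, the Morse-index-$1$ observation --- that $V:=v_\infty+\delta\psi_1$ is a strict subsolution above $v_\infty$ --- cannot be invoked as a comparison barrier here, because $u_\sigma$ vanishes at the finite free boundaries $g_\sigma(t),h_\sigma(t)$ while $V>0$ on all of $\R$, so one never has $u_\sigma\ge V$ on a domain where the comparison principle applies. ``Forbids a one-parameter family of transitions'' therefore remains an assertion, not an argument.

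The paper closes this step differently, with a cleaner trick that you may want to adopt. Let $(u_*,g_*,h_*)$ be the solution at $\sigma^*$. For each $\epsilon\in(0,\epsilon_0]$ the time-and-space-shifted triple
\[
u_\epsilon(t,x):=u_*(t+1,\,x+\epsilon),\quad g_\epsilon(t):=g_*(t+1)-\epsilon,\quad h_\epsilon(t):=h_*(t+1)-\epsilon
\]
is again a solution of \eqref{p}, and for $\epsilon_0$ small one has $u_\epsilon(0,\cdot)\le u(1,\cdot)$ on $[g_\epsilon(0),h_\epsilon(0)]\subset(g(1),h(1))$ by the strict ordering at $t=1$. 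Comparison then gives $u_\epsilon(t,\cdot)\le u(t+1,\cdot)$ for all $t$. If $u$ were also of transition type with shift $\tilde\gamma(t)$, passing to the limit yields
\[
\limsup_{t\to\infty}\bigl[v_\infty(x+\epsilon+\gamma(t))-v_\infty(x+\tilde\gamma(t))\bigr]\le 0\quad\text{for all }x\in\R,
\]
and the even, strictly decreasing profile of $v_\infty$ forces $\epsilon+\gamma(t)-\tilde\gamma(t)\to 0$. Since this must hold simultaneously for every $\epsilon\in(0,\epsilon_0]$, one gets an immediate contradiction. Note how the spatial shift is doing the work that your gap argument cannot: it produces infinitely many incompatible asymptotic constraints on $\gamma-\tilde\gamma$, sidestepping entirely the question of whether the two shift functions converge together.
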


\begin{proof}
By Theorem \ref{thm:vanishing} (ii) we find that vanishing happens
 if $\sigma < \theta/ \|\phi\|$. Hence
\[
\sigma^*=\sigma^*(h_0,\phi):=\sup \big\{\sigma_0: \mbox{ vanishing
happens for } \sigma\in (0,\sigma_0]\big\}\in(0,+\infty].
\]
If $\sigma^*=+\infty$, then there is nothing left to prove. So we
assume that $\sigma^*$ is a finite positive number.

By definition, vanishing happens for all $\sigma\in (0,\sigma^*)$.
We now consider the case $\sigma=\sigma^*$. In this case, we cannot
have vanishing, for otherwise we have, for some large $t_0>0$,
$u(t_0,x)<\theta$ in $[g(t_0), h(t_0)]$, and due to the continuous
dependence of the solution on the initial values,  we can find
$\epsilon>0$ sufficiently small such that the solution $(u_\epsilon,
g_\epsilon, h_\epsilon)$ of \eqref{p} with
$u_0=(\sigma^*+\epsilon)\phi$ satisfies
\[
u_\epsilon(t_0, x)<\theta \mbox{ in } [g_\epsilon(t_0),
h_\epsilon(t_0)].
\]
 Hence we can apply Theorem \ref{thm:vanishing} (ii) to
conclude that vanishing happens to $(u_\epsilon, g_\epsilon,
h_\epsilon)$, a contradiction to the definition of $\sigma^*$. Thus
at $\sigma=\sigma^*$ either spreading or transition happens.

We show next that spreading cannot happen at $\sigma=\sigma^*$.
Suppose this happens. Let $v_Z$ be a stationary solution as given in
Lemma \ref{lem:v_Z}. Then we can find $t_0>0$ large such that
\begin{equation}
\label{u-v_Z}
 [-Z,Z]\subset (g(t_0), h(t_0)),\; u(t_0, x)>v_Z(x) \mbox{ in
} [-Z,Z].
\end{equation}
By the continuous dependence of the solution on initial values, we
can find a small $\epsilon>0$ such that the solution $(u^\epsilon,
g^\epsilon, h^\epsilon)$ of \eqref{p} with
$u_0=(\sigma^*-\epsilon)\phi$ satisfies \eqref{u-v_Z}, and by
Theorem \ref{thm:spreading}, spreading happens for $(u^\epsilon,
g^\epsilon, h^\epsilon)$. But this is a contradiction to the
definition of $\sigma^*$.

Hence transition must happen when $\sigma=\sigma^*$. We show next
that spreading happens when $\sigma>\sigma^*$. Let $(u,g,h)$ be a
solution of \eqref{p} with some $\sigma>\sigma^*$, and denote the
solution of \eqref{p} with $\sigma=\sigma^*$ by $(u_*,g_*,h_*)$. By
the comparison theorem we know that
\[
[g_*(1), h_*(1)]\subset (g(1), h(1)),\; u_*(1,x)<u(1,x) \mbox{ in }
[g_*(1),h_*(1)].
\]
Hence we can find $\epsilon_0>0$ small such that for all
$\epsilon\in [0,\epsilon_0]$,
\[
[g_*(1)-\epsilon, h_*(1)-\epsilon]\subset (g(1), h(1)),\;
u_*(1,x+\epsilon)<u(1,x) \mbox{ in }
[g_*(1)-\epsilon,h_*(1)-\epsilon].
\]
Now define
\[
u_\epsilon(t,x)=u_*(t+1, x+\epsilon),\;
g_\epsilon(t)=g_*(t+1)-\epsilon,\; h_\epsilon(t)=h_*(t+1)-\epsilon.
\]
Clearly $(u_\epsilon, g_\epsilon, h_\epsilon)$ is a solution of
\eqref{p} with $u_0(x)=u_*(1, x+\epsilon)$. By the comparison
principle we have, for all $t>0$ and $\epsilon\in (0,\epsilon_0]$,
\[
[g_\epsilon(t), h_\epsilon(t)]\subset (g(t+1), h(t+1)), \;
u_\epsilon(t,x)\leq u(t+1,x) \mbox{ in } [g_\epsilon(t),
h_\epsilon(t)].
\]

If $u_*(t,x)-v_\infty(x+\gamma(t))\to 0$ as $t\to\infty$ and
$\omega(u)\not=\{1\}$, then necessarily $u(t,x)-v_\infty(x+\tilde
\gamma(t))\to 0$ as $t\to\infty$. Here both limits are locally
uniform in $\R^1$, and $\gamma,\tilde\gamma$ are continuous
functions from $[0,\infty)$ to $[-h_0,h_0]$.

On the other hand, the above inequalities imply that
\[
\limsup
_{t\to\infty}[v_\infty(x+\epsilon+\gamma(t))-v_\infty(x+\tilde
\gamma(t))]\leq 0
\]
 for all $x\in\R^1$ and $\epsilon\in
(0,\epsilon_0]$. Since $v_\infty$ is an evenly decreasing function,
this implies that $\lim_{t\to\infty}[\epsilon+\gamma(t)-\tilde
\gamma(t)]=0$ for every $\epsilon\in (0,\epsilon_0]$. Clearly this
is impossible. Thus we must have $\omega(u)=\{1\}$. This proves that
spreading happens for $\sigma>\sigma^*$.
\end{proof}

Next we determine when $\sigma^*(h_0,\phi)$ is finite and when it is
infinite.

\begin{prop}
\label{prop:bi-sigma-finite} Let $Z_B$ be given by \eqref{def:RB1}.
Then $\sigma^*(h_0,\phi)<\infty$ for all $\phi\in \mathscr{X}(h_0)$
if $h_0\geq Z_B$, or if $h_0\in (0, Z_B)$ and $f(u)\geq -Lu$ for all
$u>0$ and some $L>0$.
\end{prop}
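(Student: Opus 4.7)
The plan is to reduce, in each of the two stated hypotheses, to finding a single $\sigma$ for which spreading occurs; by Theorem~\ref{thm:bi-threshold} and the monotonicity of solutions of~\eqref{p} in the initial datum, this is enough to conclude $\sigma^*(h_0,\phi)<\infty$. The common engine of both cases is to produce a lower bound of the form $u(T,\cdot)\geq v_Z$ on $[-Z,Z]\subset[g(T),h(T)]$ with $Z\geq Z_B$, and then apply Theorem~\ref{thm:spreading} to the time-shifted problem.

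In the easy case $h_0\geq Z_B$, I would take $Z=h_0$ and let $v_{h_0}$ be the positive Dirichlet solution of $v''+f(v)=0$ on $(-h_0,h_0)$ provided by Lemma~\ref{lem:v_Z}. The Hopf lemma gives $v_{h_0}'(-h_0)>0>v_{h_0}'(h_0)$, and since $\phi\in\mathscr{X}(h_0)$ is positive on $(-h_0,h_0)$ with matching sign conditions on its boundary derivatives, a short comparison of boundary slopes together with a uniform interior lower bound on $\phi$ yields $\sigma\phi\geq v_{h_0}$ throughout $[-h_0,h_0]$ for all sufficiently large $\sigma$. Theorem~\ref{thm:spreading} then gives spreading directly, without any time-shift.

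The hard case is $h_0<Z_B$, for which the initial interval is too short to host a stationary lower barrier. Here the free boundary itself must be driven past $\pm Z_B$ before Theorem~\ref{thm:spreading} can be invoked, and the construction mirrors the one in the proof of Proposition~\ref{prop:mono-sigma-finite}. Let $v_{Z_B}$ be as in Lemma~\ref{lem:v_Z}, set $M_0:=\|v_{Z_B}\|_\infty<1$, and let $(\varphi_1,\lambda_1)$ be the first eigenpair of~\eqref{SL} normalised by $\varphi_1(0)=1$. Pick $\varepsilon\in(0,h_0^2)$, $T>Z_B^2$, and $\lambda>\lambda_1+L(T+1)$, and then choose $\rho$ so large that both
\begin{equation*}
-2\mu\rho\,\varphi_1'(1)>(T+1)^\lambda \qquad\text{and}\qquad \frac{\rho}{(T+\varepsilon)^\lambda}\,\varphi_1\!\Bigl(\frac{Z_B}{\sqrt{T+\varepsilon}}\Bigr)>M_0
\end{equation*}
hold. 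Define
\begin{equation*}
w(t,x):=\frac{\rho}{(t+\varepsilon)^\lambda}\,\varphi_1\!\Bigl(\frac{x}{\sqrt{t+\varepsilon}}\Bigr),\qquad x\in\bigl[-\sqrt{t+\varepsilon},\sqrt{t+\varepsilon}\bigr],\ t\in[0,T].
\end{equation*}
The calculation carried out in Proposition~\ref{prop:mono-sigma-finite}, which uses only $f(w)\geq -Lw$ and the choice of $\lambda$, shows that $w$ is a lower solution of~\eqref{p} with freely moving boundaries $\pm\sqrt{t+\varepsilon}$. Taking $\sigma$ so large that $\sigma\phi\geq w(0,\cdot)$ on $[-\sqrt{\varepsilon},\sqrt{\varepsilon}]\subset[-h_0,h_0]$ (straightforward since $\phi>0$ there while $w(0,\cdot)$ is bounded and vanishes at the endpoints), the lower-solution analogue of Lemma~\ref{lem:comp1} yields $[g(T),h(T)]\supset[-Z_B,Z_B]$ and $u(T,x)\geq w(T,x)$ on this subinterval. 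The second condition on $\rho$, combined with the fact that $\varphi_1$ is symmetric and decreasing away from the origin, then guarantees $w(T,x)\geq M_0\geq v_{Z_B}(x)$ for all $|x|\leq Z_B$; applying Theorem~\ref{thm:spreading} to $(u(\cdot+T,\cdot),g(\cdot+T),h(\cdot+T))$ produces spreading.

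The crux is the coordinated choice of $\rho$ in Case~2: the Stefan inequality at the free boundary of the lower solution forces $\rho$ to grow at least like $(T+1)^\lambda$, while dominance over the fixed profile $v_{Z_B}$ at the end time forces $\rho$ to be at least of order $M_0(T+\varepsilon)^\lambda/\varphi_1\!\bigl(Z_B/\sqrt{T+\varepsilon}\bigr)$. Because both requirements scale as the same power of $T$, a single $\rho$ can be chosen to satisfy them simultaneously, and verifying this compatibility is the main obstacle in the argument.
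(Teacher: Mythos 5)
Your proposal is correct and follows essentially the same two-case strategy as the paper: for $h_0\geq Z_B$ you compare $\sigma\phi$ directly with the stationary barrier $v_{h_0}$ from Lemma~\ref{lem:v_Z} and invoke Theorem~\ref{thm:spreading}; for $h_0<Z_B$ you reuse the self-similar lower solution $w(t,x)=\rho(t+\varepsilon)^{-\lambda}\varphi_1\!\bigl(x/\sqrt{t+\varepsilon}\bigr)$ from Proposition~\ref{prop:mono-sigma-finite} (whose validity needs only $f(u)\geq -Lu$) and impose the extra condition that $w(T,\cdot)$ dominates $v_{Z_B}$ on $[-Z_B,Z_B]$, exactly as the paper does. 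The one inessential difference is that you replace the paper's pointwise requirement $w(T,x)\geq v_{Z_B}(x)$ by the slightly cruder uniform bound $w(T,\cdot)\geq M_0:=\|v_{Z_B}\|_\infty$, recovered from the monotonicity of $\varphi_1$; this is fine. The closing remark about the ``compatibility'' of the two constraints on $\rho$ overstates the difficulty: once $T$, $\varepsilon$, $\lambda$ are fixed, both the Stefan inequality and the barrier-domination condition are merely lower bounds on $\rho$, so taking $\rho$ large enough to exceed both is automatic — there is no delicate balance of powers of $T$ to verify.
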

\begin{proof}
Let $h_0 >0$, $\phi \in \mathscr {X}(h_0)$ and $(u,g,h)$ be a
solution of \eqref{p} with $u_0=\sigma\phi$. It suffices to show
that spreading happens for all large $\sigma$ under the given
conditions.

 First we suppose that $h_0\geq Z_B$. By Lemma \ref{lem:v_Z},
 \eqref{v_Z} has a positive solution $v_Z$ with $Z=h_0$.
 For sufficiently
large $\sigma >0$ clearly $\sigma\phi\geq v_Z$. Thus we can apply
Theorem \ref{thm:spreading} to conclude that spreading happens for
$(u,g,h)$ with such $\sigma$, as we wanted.

Next we consider the case that $h_0\in (0, Z_B)$ and $f(u)\geq -Lu$
for all $u>0$ and some $L>0$. In this case we construct a lower
solution as in the proof of Proposition \ref{prop:mono-sigma-finite}
with the following changes: $\bar Z\geq 1+\pi/(2\sqrt{f'(0)})$ is
replaced by $\bar Z\geq 1+ Z_B$, and we add a further restriction
for $\rho$, namely
\[
\frac{\rho}{(T+\varepsilon)^\lambda}\varphi_1\Big(\frac{x}{\sqrt{T+\varepsilon}}\Big)\geq
v_{Z_B}(x) \mbox{ in } [-Z_B,Z_B].
\]
We deduce as in the proof of Proposition
\ref{prop:mono-sigma-finite} that, for $\sigma\geq \hat\sigma$,
\[
h(T)-g(T)>2Z_B,\; u(T,x)\geq w(T,x)\geq v_{Z_B} \mbox{ in } [-Z_B,
Z_B].
\]
Then by  Theorem \ref{thm:spreading}, we deduce that spreading
happens for $(u,g,h)$ with $\sigma\geq \hat\sigma$. The proof is
complete.
\end{proof}

Clearly Theorem \ref{thm:bi} is a consequence of Theorems
\ref{thm:bi-trichotomy}, \ref{thm:bi-threshold} and Proposition
\ref{prop:bi-sigma-finite}. The following result gives conditions
for $\sigma^*(h_0,\phi)=\infty$, whose proof will be given in the
last subsection of this section.

\begin{prop}\label{prop:bi-sigma-infty}
  Assume that
\begin{equation}\label{growth rate}
\lim\limits_{s\rightarrow \infty} \frac{-f(s)}{s^{1+2\beta}} =
\infty \quad \mbox{for some } \beta >2.
\end{equation}
Then there exists $ Z^0_B \in (0, Z_B) $ such that, for every
$\phi\in \mathscr{X}(h_0)$,
\begin{itemize}
\item [\rm (i)]  $\sigma^* (h_0,\phi) =\infty$ if $h_0 \leq Z^0_B$, and
\item [\rm (ii)] $\sigma^* (h_0, \phi) \in (0, \infty)$ if $h_0 > Z^0_B$.
\end{itemize}
\end{prop}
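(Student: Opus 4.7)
My plan is to exploit the strong absorption assumption \eqref{growth rate} to trap the free boundaries in a small neighborhood of $[-h_0,h_0]$ uniformly in $\sigma>0$, then invoke Lemma \ref{lem:v_Z} and Theorem \ref{thm:convergence} to force vanishing. Concretely, since no positive stationary solution of $v''+f(v)=0$ exists on any interval shorter than $2Z_B$, once we can show $[g_\infty,h_\infty]\subset(-Z_B,Z_B)$, we will conclude $\omega(u)=\{0\}$, i.e.\ $\sigma^*(h_0,\phi)=\infty$. This will define $Z^0_B$ as a threshold below which the trapping succeeds.

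Step 1. From \eqref{growth rate}, fix $c_0,M_0>0$ with $f(s)\leq -c_0 s^{1+2\beta}$ for $s\geq M_0$. Comparing $u$ with the spatially constant ODE $V'=f(V)$, $V(0)=\|u_0\|_\infty$, I obtain the $\sigma$-independent bound
\[
\|u(t,\cdot)\|_\infty \leq (2\beta c_0 t)^{-1/(2\beta)}\ \text{ while the right side exceeds } M_0,
\]
so in particular $\|u(T_*,\cdot)\|_\infty\leq M_0$ for $T_*:=M_0^{-2\beta}/(2\beta c_0)$, independent of $\sigma$. Step 2. Using the local barrier $w(t,x)=C_1[2M(h(t)-x)-M^2(h(t)-x)^2]$ from Lemma \ref{lem:bound-general}, together with the crucial observation that $\sup_{[0,\infty)}f<\infty$ (because bistable $f$ with the large-$s$ negativity has a bounded positive part), I obtain a pointwise velocity bound $|h'(t)|,|g'(t)|\leq\Psi(\|u(t,\cdot)\|_\infty)$ where $\Psi$ grows sub-linearly at infinity (essentially like $\sqrt{\cdot}$ once the initial-data matching is exhausted); the combination of this growth with the decay from Step 1 makes the velocity time-integrable near $t=0^+$ provided $\beta>1/2$, which is more than ensured by $\beta>2$.

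Step 3. Integrating Steps 1--2 over $[0,T_*]$ bounds the Phase 1 expansion by a quantity $E_1<\infty$ independent of $\sigma$. For $t\geq T_*$ I use a bootstrap argument. Pick $R\in(h_0+E_1,Z_B)$. As long as $h(t),-g(t)\leq R$, compare $u$ with the solution $\tilde V$ of the fixed-boundary Dirichlet problem on $(-R,R)$ with initial datum equal to $M_0$; because $R<Z_B$, Lemma \ref{lem:v_Z} shows this problem has no positive stationary solution, so by standard parabolic dynamics $\|\tilde V(s,\cdot)\|_\infty\to 0$ as $s\to\infty$, dropping below $\theta$ at some finite $s_1=s_1(R,M_0)$. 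Combined with Step 2, this yields a finite Phase 2 expansion $E_2(R)$. Choosing $h_0$ so small that $h_0+E_1+E_2(R)\leq R$ for some admissible $R<Z_B$ closes the bootstrap and defines $Z^0_B>0$. Once $\|u(t_1,\cdot)\|_\infty\leq\theta$ for some $t_1$, Theorem \ref{thm:vanishing}(ii) applied from time $t_1$ supplies vanishing, completing part (i).

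Part (ii) follows softly: by the comparison principle the threshold $\sigma^*(h_0,\phi)$ is non-increasing in $h_0$, so $\{h_0>0:\sigma^*(h_0,\phi)=\infty\}$ is an interval $(0,Z^0_B]$; positivity of $Z^0_B$ comes from (i), and $Z^0_B<Z_B$ from Proposition \ref{prop:bi-sigma-finite}, which already gives $\sigma^*(h_0,\phi)<\infty$ for $h_0\geq Z_B$. The main obstacle will be Step 3: quantifying the decay rate of $\|\tilde V\|_\infty$ on $(-R,R)$ (which degenerates as $R\uparrow Z_B$) and matching it to the velocity bound so that $E_2(R)$ stays strictly below the gap $R-h_0-E_1$. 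This matching is what underpins the positivity of $Z^0_B$ and is the delicate part of the argument.
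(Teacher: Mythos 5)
Your outline diverges substantially from the paper's proof, and the divergences are where the real difficulties lie.

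The paper's strategy (Lemma \ref{lem:int-small} plus Theorem \ref{thm:vanishing}(iii)) is to obtain an \emph{$L^1$ smallness estimate} at a $\sigma$--independent time $\tau_1$: it constructs the moving barrier $w(t,x)=\psi(x-ct)$ with $\psi(x)=(x-h_0)^{-1/\beta}-h_0^{-1/\beta}$ and $c=\mu/(\beta h_0^{(1+\beta)/\beta})$. Because $\psi\to+\infty$ as $x\downarrow h_0$, this barrier dominates \emph{every} initial datum $\sigma\phi$, so $h(t)\le 2h_0+ct$ uniformly in $\sigma$; combining this with the ODE bound $\|u(\tau_1,\cdot)\|_\infty\le s h_0^{-1/\beta}$ (which you also derive), the width $\times$ height product is $O(h_0^{1-\omega_1})$ with $\omega_1=(1+2\beta)/(2\beta^2)$, and $\beta>2$ is exactly what makes the exponent positive. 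This is then fed into the $L^1$ vanishing criterion of Theorem \ref{thm:vanishing}(iii). You instead want to bound the free-boundary velocity by a function $\Psi(\|u(t,\cdot)\|_\infty)$ and integrate it. But that bound fails at $t=0^+$: the initial velocity is $h'(0^+)=-\mu u_0'(h_0)=\mu\sigma|\phi'(h_0)|$, which blows up with $\sigma$, and the parabolic barrier of Lemma \ref{lem:bound-general} requires matching the slope of the data near the free boundary, so its constant $M$ cannot be taken $\sigma$--independent. Your phrase ``once the initial-data matching is exhausted'' is precisely where the gap sits; without a blow-up barrier like $\psi$ there is no obvious way to show that the ``Phase 1 expansion'' $E_1$ is finite and $\sigma$--independent, and the mild condition $\beta>1/2$ you extract is a symptom of having lost the place where $\beta>2$ is actually needed.

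Step 3 of your plan has a second gap, one you flag yourself: for the bootstrap to close you need $E_2(R)=C_2(M_0)\,s_1(R,M_0)$ to fit below $R-h_0-E_1$. This requires a quantitative comparison between the Dirichlet decay time $s_1(R,M_0)$ and $R$, but you supply no such estimate. The paper never takes this route; after the $L^1$ smallness at time $\tau_1$ it simply applies the pointwise heat-kernel bound of Lemma \ref{heat>} to push $\|u\|_\infty$ below $\theta$, then invokes Theorem \ref{thm:vanishing}(ii). Finally, for part (ii), monotonicity of $\sigma^*$ in $h_0$ shows that $\Pi:=\{h_0:\sigma^*(h_0)=\infty\}$ is an interval, but to conclude that $\Pi=(0,Z_B^0]$ is \emph{closed} on the right (so that the claimed dichotomy at $h_0=Z_B^0$ holds) you need an opening argument on the complement; the paper supplies this via the technical Lemma \ref{lem:consequence of cp}, which you do not reference or replace. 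In summary, the route you propose is not a variant of the paper's argument but a different one, and in its present form Step 2 (uniform velocity bound) and Step 3 (quantitative bootstrap) are genuine gaps, while part (ii) is also incomplete.
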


\subsection{Combustion case}\label{subsec:combustion}

Throughout this subsection, we assume that $f$ is of (f$_C$) type.

\begin{thm}\label{thm:combus-trichotomy}
{\rm (Trichotomy)}  Suppose that $h_0>0$, $u_0 \in \mathscr
{X}(h_0)$ and $(u,g,h)$ is the solution of \eqref{p}. Then either

{\rm (i) Spreading:} $(g_\infty, h_\infty)=\R^1$ and
\[
\lim_{t\to\infty}u(t,x)=1 \mbox{ locally uniformly in $\R^1$},
\]
or

{\rm (ii) Vanishing:} $(g_\infty, h_\infty)$ is a finite interval
and
\[
\lim_{t\to\infty}\max_{g(t)\leq x\leq h(t)} u(t,x)=0,
\]
or

{\rm (iii) Transition:} $(g_\infty, h_\infty)=\R^1$ and
\[
\lim_{t\to\infty}u(t,x)=\theta.
\]
\end{thm}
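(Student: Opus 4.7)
The plan is to follow the bistable-case argument (Theorem~\ref{thm:bi-trichotomy}) with modifications reflecting the phase-plane structure of combustion $f$. First I apply Theorem~\ref{thm:convergence}: if $I_\infty:=(g_\infty,h_\infty)$ is a finite interval, then $u(t,\cdot)\to 0$ locally uniformly in $I_\infty$, and Lemma~\ref{lem:center}, which places the maximum of $u(t,\cdot)$ inside $[-h_0,h_0]\subset I_\infty$, upgrades this to $\|u(t,\cdot)\|_{L^\infty([g(t),h(t)])}\to 0$, producing case~(ii). Hereafter I assume $I_\infty=\R$.

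In this second case Theorem~\ref{thm:convergence} leaves two alternatives: either $u(t,x)\to C$ locally uniformly for some constant solution $C$ of $v''+f(v)=0$ (so $C\in[0,\theta]\cup\{1\}$), or $u(t,x)-v(x+\gamma(t))\to 0$ locally uniformly for some evenly decreasing positive solution $v$ of this ODE. I would show that the second alternative is vacuous for combustion $f$. Using the first integral $E=\tfrac12(v')^2+F(v)$ with $F(s):=\int_0^s f(r)\,dr$, any such $v$ satisfies $v'(0)=\lim_{|x|\to\infty}v'(x)=0$, hence $v(\pm\infty)$ is a zero of $f$, $F(v(0))=F(v(\pm\infty))$, and $F(v(x))\leq F(v(0))$ on the range of $v$. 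For combustion $f$ one has $F\equiv 0$ on $[0,\theta]$, $F>0$ strictly on $(\theta,\overline v)$ (where $\overline v>1$ is the next zero of $F$, if it exists at all), and $F$ attains its maximum at $v=1$. A short case analysis on the pair $(v(0),v(\pm\infty))$ shows every option either forces $v$ to be constant on a subinterval of $[0,\theta]$, contradicting strict monotonicity of $v$ on $[0,\infty)$, or violates $F(v)\leq F(v(0))$ inside $(\theta,\overline v)$. Hence only $u\to C$ with $C\in[0,\theta]\cup\{1\}$ remains.

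Finally I rule out $C\in[0,\theta)$. By Lemma~\ref{lem:center} the supremum $\|u(t,\cdot)\|_{L^\infty}$ is attained in $[-h_0,h_0]$, so locally uniform convergence yields $\|u(t,\cdot)\|_{L^\infty}\to C<\theta$, and therefore $u(t,\cdot)<\theta$ on $[g(t),h(t)]$ for all $t\geq T_0$ with some $T_0>0$. Since $f\equiv 0$ on $[0,\theta]$, the equation reduces to $u_t=u_{xx}$ for $t\geq T_0$, and the standard heat-kernel estimate (compare Lemma~\ref{heat>} with $K=0$) gives
\[
\|u(t,\cdot)\|_{L^\infty}\leq\frac{1}{2\sqrt{\pi(t-T_0)}}\int_{g(T_0)}^{h(T_0)}u(T_0,\xi)\,d\xi\longrightarrow 0\quad\text{as }t\to\infty.
\]
Corollary~\ref{cor:BC-vanishing-finite} then forces $I_\infty$ to be finite, contradicting $I_\infty=\R$. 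Thus $C=\theta$ (case~(iii)) or $C=1$ (case~(i)), completing the trichotomy.

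The main obstacle is the phase-plane exclusion of evenly decreasing positive solutions carried out in the second paragraph; this is precisely where combustion diverges from bistable (where such a ground-state-like profile $v_\infty$ exists and drives the transition case~(iii)), and it is exactly what collapses the third regime of the trichotomy to the clean statement $u\to\theta$.
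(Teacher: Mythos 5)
Your proposal is correct and follows essentially the same route as the paper: apply Theorem~\ref{thm:convergence} and Lemma~\ref{lem:center} to settle the finite-interval case, then in the case $I_\infty=\R^1$ classify the $\omega$-limit and rule out the constants in $[0,\theta)$. The two small differences are worth noting. First, the paper simply \emph{asserts} that for combustion $f$ the bounded nonnegative entire solutions of $v''+f(v)=0$ are exactly the constants $0$, $c\in(0,\theta)$, $\theta$, $1$ (and in particular there is no evenly decreasing positive profile), leaving this as an easy ODE exercise; your second paragraph supplies that argument via the first integral $\tfrac12(v')^2+F(v)$ and a case split on $(v(0),v(\pm\infty))$. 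The case analysis is correct: when $v(\pm\infty)=1$ one has $v(0)>1$ with $F$ strictly decreasing past $1$, contradicting $F(v(0))=F(1)$; when $v(\pm\infty)\in[0,\theta]$ and $v(0)\leq\theta$ the equation forces $v''\equiv0$, so $v$ is affine and $v'(0)=0$ makes it constant; and when $v(\pm\infty)\in[0,\theta]$, $v(0)>\theta$, the conservation $F(v(0))=F(v(\pm\infty))=0$ is violated whenever the range of $v$ meets $(\theta,\overline v)$, which it does. Second, to exclude $C\in(0,\theta)$ the paper cites Theorem~\ref{thm:vanishing}(ii) applied at $t_0$, whereas you observe directly that once $u<\theta$ the equation reduces to the heat equation and invoke the heat-kernel bound of Lemma~\ref{heat>} (with $K=0$) followed by Corollary~\ref{cor:BC-vanishing-finite}; these are the same mechanism, since the proof of Theorem~\ref{thm:vanishing}(ii) in the combustion case is precisely this heat-kernel estimate. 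Both versions yield the same contradiction. No gaps.
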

\begin{proof}
One easily sees that bounded nonnegative solutions of
\begin{equation}
\label{ellip-R1} v_{xx}+f(v)=0 \mbox{ in } \R^1,
\end{equation}
 with a combustion type $f$, consists of the following constant
solutions only:
 0,\ every $c\in(0,\theta)$, \ $\theta$,\ 1.

Therefore, by Theorem \ref{thm:convergence}, we have either
$(g_\infty, h_\infty)$ is a finite interval and
$\lim_{t\to\infty}u(t,x)=0$ locally uniformly in $(g_\infty,
h_\infty)$, or $(g_\infty, h_\infty)=\R^1$ and $\lim_{t\to\infty}
u(t,x)=v$ locally uniformly in $\R^1$, with $v$ a constant
nonnegative solution of \eqref{ellip-R1}. As before we can use Lemma
\ref{lem:center} to conclude that when $(g_\infty, h_\infty)$ is a
finite interval, then vanishing happens.

It remains to show that when $(g_\infty, h_\infty)=\R^1$, then
$v\equiv 1$ or $v\equiv \theta$.   As before Corollary
\ref{cor:BC-vanishing-finite}
 shows that $v=0$ is impossible when $(g_\infty, h_\infty)=\R^1$.
 We show next that $v\not =c$ for any $c\in (0,\theta)$.
Suppose by way of contradiction that $v\equiv c\in (0,\theta)$. Then
in view of Lemma \ref{lem:center}, for some large $t_0>0$ we have
$\|u(t_0,\cdot)\|_{L^\infty}<\theta$. Thus we can apply Theorem
\ref{thm:vanishing} to conclude that vanishing happens to $(u,g,h)$,
a contradiction to $\omega(u)=\{c\}$.

 The proof is complete.
\end{proof}

\begin{thm}\label{thm:combus-threshold}{\rm (Sharp threshold)}
Suppose that $h_0>0$, $\phi\in \mathscr{X}(h_0)$, and $(u,g,h)$ is a
solution of \eqref{p} with $u_0=\sigma \phi$ for some $\sigma>0$.
Then there exists $\sigma^* = \sigma^* (h_0, \phi) \in (0,\infty]$
such that spreading happens when $  \sigma > \sigma^*$,  vanishing
happens when $0<\sigma< \sigma^*$, and transition happens when
$\sigma=\sigma^*$.
\end{thm}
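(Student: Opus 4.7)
The proof follows the template of Theorem \ref{thm:bi-threshold}. First, I set
$$\sigma^* := \sup\{\sigma_0 > 0 : \text{vanishing occurs for all } \sigma \in (0, \sigma_0]\};$$
Theorem \ref{thm:vanishing}(ii) applied at $\sigma\|\phi\|_\infty \leq \theta$ yields $\sigma^* \geq \theta/\|\phi\|_\infty > 0$. If $\sigma^* = +\infty$ the theorem is immediate, so I assume $\sigma^* \in (0, \infty)$ throughout. Vanishing for $\sigma \in (0, \sigma^*)$ is then automatic from the definition.

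At $\sigma = \sigma^*$ I will rule out both vanishing and spreading by one-sided continuous-dependence arguments, leaving transition as the only possibility via Theorem \ref{thm:combus-trichotomy}. For vanishing: if it held at $\sigma^*$, then $\|u_*(t_0, \cdot)\|_\infty < \theta$ for some $t_0$, and by continuous dependence on initial data (through the fixed-boundary reduction in Theorem \ref{thm:local}) the same bound would hold for the solution at $\sigma = \sigma^* + \eta$ with $\eta > 0$ small, so Theorem \ref{thm:vanishing}(ii) would produce vanishing for $\sigma^* + \eta$, contradicting the definition of $\sigma^*$. For spreading: combining Theorem \ref{thm:spreading} with Lemma \ref{lem:v_Z}, I would find $Z \geq Z_C$, a stationary solution $v_Z$, and some $t_0$ with $[-Z, Z] \subset (g_*(t_0), h_*(t_0))$ and $u_*(t_0, \cdot) > v_Z$ on $[-Z, Z]$; continuous dependence preserves these strict inequalities at $\sigma = \sigma^* - \eta$, so Theorem \ref{thm:spreading} would give spreading at $\sigma^* - \eta$, again contradicting the definition of $\sigma^*$. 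Hence transition holds at $\sigma = \sigma^*$, i.e.\ $u_*(t, \cdot) \to \theta$ locally uniformly in $\R^1$.

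For $\sigma > \sigma^*$ vanishing is excluded by the definition of $\sigma^*$, so by the trichotomy it remains to exclude transition. I plan a shift-and-compare argument modeled on Theorem \ref{thm:bi-threshold}: strong comparison and the Hopf lemma give $u(1, x) > u_*(1, x)$ on $[g_*(1), h_*(1)]$; continuity of $u_*$ in $x$ then yields $\epsilon_0 > 0$ with $u(1, x) \geq u_*(1, x+\epsilon)$ on $[g_*(1)-\epsilon, h_*(1)-\epsilon]$ for every $\epsilon \in (0, \epsilon_0]$; Lemma \ref{lem:comp1} propagates this to $u(t+1, x) \geq u_*(t+1, x+\epsilon)$ on the shifted domain for all $t > 0$. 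Combined with $u_*(t, \cdot+\epsilon) \to \theta$ locally uniformly and $(g_*^\infty, h_*^\infty) = \R^1$, this gives $\liminf_{t\to\infty} u(t, x) \geq \theta$ locally uniformly, so that $\omega(u) \in \{\{\theta\}, \{1\}\}$.

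The main obstacle is to exclude $\omega(u) = \{\theta\}$ at $\sigma > \sigma^*$. The bistable argument, which exploits strict monotonicity of the ground state $v_\infty$, has no direct analogue here because the transition profile reduces to the constant $\theta$. My plan is to exploit two combustion-specific features: (a) $u_*(t, \cdot)$ cannot satisfy $\|u_*(t, \cdot)\|_\infty \leq \theta$ at any finite $t$, for otherwise Theorem \ref{thm:vanishing}(ii) would force vanishing at $\sigma^*$, contradicting transition; and (b) the phase-plane analysis of Section \ref{subsec:w-f} supplies stationary solutions $v_Z$ of \eqref{v_Z} whose maxima $\|v_Z\|_\infty$ decrease to $\theta$ along the branch $\omega \to 0^+$ (with $z^\omega \to \infty$). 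Because a $u$ whose $\omega$-limit is identically $\theta$ cannot dominate a $v_Z$ with $\|v_Z\|_\infty > \theta$ at late times, the stationary comparison must be replaced by a moving sub-solution in the spirit of the semi-waves of Section 6, triggered by the persistent strict excess $u > u_*(\cdot,\cdot+\epsilon)$ and the eventual largeness of the set where $u_* > \theta$; Theorem \ref{thm:spreading} (or a direct lower-solution construction) then delivers spreading. Making this reasoning quantitative is the delicate technical heart of the proof.
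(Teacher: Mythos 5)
Your reduction to the key difficulty is correct, and your analysis through the $\sigma=\sigma^*$ step matches the paper's (which explicitly reuses the bistable argument for that part): both vanishing and spreading are excluded at $\sigma^*$ by one-sided continuous-dependence arguments, leaving transition via Theorem \ref{thm:combus-trichotomy}. You also correctly observe that the bistable shift-and-compare device, which rests on $v_\infty$ being a strictly decreasing non-constant profile, collapses here because the transition profile is the flat constant $\theta$. The comparison $u(t+1,x)\geq u_*(t+1,x+\epsilon)$ is still useful — it rules out vanishing for $\sigma>\sigma^*$ via $\liminf u \geq \theta$ and forces $(g_\infty,h_\infty)=\R^1$ — but, as you say, it gives no information distinguishing transition from spreading.

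The step you flag as ``the delicate technical heart'' is genuinely missing, and the direction you sketch (stationary subsolutions $v_Z$ with $\|v_Z\|_\infty$ near $\theta$, or a semi-wave lower solution) would not close it: a function with $\omega$-limit $\{\theta\}$ never dominates any $v_Z$ at late times, and the semi-waves of Section 6 terminate at the stable state $1$, not at a level near $\theta$, so neither gives a usable moving subsolution from data that merely hover near $\theta$ from above. The paper's actual device is a scaling trick that you do not anticipate. One sets, for $\xi\in(0,1)$ close to $1$,
\[
v^\xi(t,x):=\xi^{-1}\,u_*(\xi t,\sqrt{\xi}\,x),\quad
g^\xi(t)=\xi^{-1/2}g_*(\xi t),\quad
h^\xi(t)=\xi^{-1/2}h_*(\xi t),
\]
and computes that $v^\xi$ solves $v^\xi_t=v^\xi_{xx}+f(\xi v^\xi)$ with the correct Stefan inequalities for a lower solution. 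Because transition forces $u_*(t,\cdot)<\theta+\delta_0/2$ for large $t$, and $f$ is nondecreasing on $(\theta,\theta+\delta_0)$ by hypothesis \eqref{combus1}, one has $f(\xi v^\xi)\leq f(v^\xi)$ in the relevant range, so $(v^\xi,g^\xi,h^\xi)$ is a lower solution for \eqref{p}. Since $v^\xi(t,x)\to\theta/\xi>\theta$, the ordered comparison $u\geq v^\xi$ pushes the $\omega$-limit of $u$ strictly above $\theta$, which by the trichotomy can only mean $\omega(u)=\{1\}$. Note that this is precisely the use for which the extra combustion hypothesis \eqref{combus1} was planted; without noticing that hypothesis and this self-similar rescaling of $u_*$ itself, the argument does not close along the lines you propose.
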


\begin{proof} The proof is identical to that of Theorem
\ref{thm:bi-threshold} except the last part, where it shows that
spreading happens when $\sigma>\sigma^*$. This part has to be proved
differently.

Let $(u_*,g_*,h_*)$ be the solution of \eqref{p} with $u_0=\sigma^*
\phi$, and $(u,g,h)$ a solution with $u_0=\sigma \phi$ and
$\sigma>\sigma^*$. Since $u_*(t,x)\to \theta$ locally uniformly in
$\R^1$ as $t\to\infty$, in view of Lemma \ref{lem:center}, we can
find $T>0$ large such that
\begin{equation}
\label{u*-T} u_*(t,x)<\theta+\delta_0/2 \mbox{ for } t\geq T/2,\;
x\in [g_*(t), h_*(t)],
\end{equation}
where $\delta_0$ is given in \eqref{combus1}.
 By the comparison principle we have
\begin{equation}
\label{u*-u-T}
 [g_*(T), h_*(T)]\subset (g(T), h(T)),\;
u_*(T,x)<u(T,x) \mbox{ in } [g_*(T),h_*(T)].
\end{equation}
Now, for $\xi\in(0,1)$, we define
\[
v^\xi(t,x):=\xi^{-1}\:\! u_*(\xi\:\!t,\sqrt{\xi}\:\! x),\;
g^\xi(t)=\xi^{-1/2}g_*(\xi t),\; h^\xi(t)=\xi^{-1/2}h_*(\xi t).
\]
Then, by \eqref{u*-T} and \eqref{u*-u-T}, we can choose
$\xi_0\in(0,1)$ close enough to $1$ so that, for every
$\xi\in[\xi_0,1)$,
\begin{equation}\label{vsigma-bound}
 v^\xi(t,x)\leq \theta+\delta_0 \quad\ \
\hbox{for all}\ \ t\geq T,\; x\in [g^\xi(t), h^\xi(t)],
\end{equation}
and
\begin{equation}
\label{vsigma-u-T} [g^{\xi}(T), h^\xi(T)]\subset (g(T), h(T)),\;
v^\xi(T,x)\leq u(T,x) \mbox{ in } [g^\xi(T), h^\xi(T)].
\end{equation}
 Observe that $v^\xi$ satisfies the equation
\[
v^\xi_t=v^\xi_{xx}+f(\xi v^\xi) \mbox{ for } t>T, \; x\in [g^\xi(t),
h^\xi(t)].
\]
By \eqref{vsigma-bound} and \eqref{combus1}, we have $f(\xi
v^\xi)\leq f(v^\xi)$. Therefore in view of \eqref{vsigma-u-T}, we
find that $(v^\xi, g^{\xi}, h^{\xi})$ is a lower solution of
\eqref{p} for $t\geq T$. It follows that
\[
u(t,x)\geq v^\xi(t,x) \mbox{ and } v\geq \lim_{t\to\infty}
v^\xi(t,x)=\theta/\xi,
\]
where $v$ is the $\omega$-limit of $u$. Thus we must have $v\equiv
1$, as we wanted.
\end{proof}

\begin{prop}
\label{prop:combus-sigma-finite} Let $Z_C$ be given by
\eqref{def:RC1}. Then $\sigma^*(h_0,\phi)<\infty$ for all $\phi\in
\mathscr{X}(h_0)$ if $h_0\geq Z_C$, or if $h_0\in (0, Z_C)$ and
$f(u)\geq -Lu$ for all $u>0$ and some $L>0$.
\end{prop}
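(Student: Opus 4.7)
The plan is to mirror the proof of Proposition \ref{prop:bi-sigma-finite}, with the role of $Z_B$ replaced by $Z_C$ and the use of Lemma \ref{lem:v_Z} in its (f$_C$) version. As in that earlier proof, it suffices to exhibit, under each of the two alternative hypotheses, some $\hat\sigma>0$ such that spreading occurs for the solution $(u,g,h)$ of \eqref{p} with $u_0=\sigma\phi$ whenever $\sigma\ge\hat\sigma$; the threshold $\sigma^*(h_0,\phi)$ is then automatically finite.

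For the first case $h_0\ge Z_C$, I would invoke Lemma~\ref{lem:v_Z} (combustion case) to obtain a positive solution $v_Z$ of \eqref{v_Z} with $Z=h_0$. Since $\phi\in\mathscr{X}(h_0)$ satisfies $\phi>0$ in $(-h_0,h_0)$, $\phi(\pm h_0)=0$ and $\phi'(-h_0)>0>\phi'(h_0)$, while $v_Z\in C^2([-h_0,h_0])$ with $v_Z(\pm h_0)=0$ and $v_Z'(-h_0)>0>v_Z'(h_0)$ by the Hopf lemma, the quotient $v_Z/\phi$ extends continuously to $[-h_0,h_0]$ and is bounded. Hence for $\sigma\ge\hat\sigma:=\|v_Z/\phi\|_\infty$ we have $\sigma\phi\ge v_Z$ on $[-h_0,h_0]$, and Theorem~\ref{thm:spreading} gives spreading.

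For the second case $h_0\in(0,Z_C)$ with $f(u)\ge -Lu$, I would reuse the self-similar lower solution
\[
w(t,x)=\frac{\rho}{(t+\varepsilon)^\lambda}\,\varphi_1\!\left(\frac{x}{\sqrt{t+\varepsilon}}\right),\qquad x\in[-\sqrt{t+\varepsilon},\sqrt{t+\varepsilon}],\ t\in[0,T],
\]
from the proof of Proposition~\ref{prop:mono-sigma-finite}, with the parameters adapted as in the bistable case: choose $\bar Z\ge 1+Z_C$, $T>\bar Z^2$, $0<\varepsilon<\min\{1,h_0^2\}$, $\lambda>\lambda_1+L(T+1)$, and $\rho$ satisfying \eqref{def:rho-1} together with the extra condition
\[
\frac{\rho}{(T+\varepsilon)^\lambda}\,\varphi_1\!\left(\frac{x}{\sqrt{T+\varepsilon}}\right)\ge v_{Z_C}(x)\quad\text{for } x\in[-Z_C,Z_C],
\]
where $v_{Z_C}$ is the positive solution of \eqref{v_Z} with $Z=Z_C$ furnished by Lemma~\ref{lem:v_Z}. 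The differential inequalities and free-boundary inequalities that make $(w,-\sqrt{t+\varepsilon},\sqrt{t+\varepsilon})$ a lower solution of \eqref{p} on $[0,T]$ are verified exactly as in Proposition~\ref{prop:mono-sigma-finite}, since only $f(u)\ge -Lu$ was used there. Finally, choosing $\hat\sigma$ large enough that $\sigma\phi(x)\ge w(0,x)$ on $[-\sqrt{\varepsilon},\sqrt{\varepsilon}]\subset[-h_0,h_0]$ for all $\sigma\ge\hat\sigma$, Lemma~\ref{lem:comp1} yields $g(T)\le -\sqrt{T+\varepsilon}<-Z_C$, $h(T)\ge \sqrt{T+\varepsilon}>Z_C$, and $u(T,\cdot)\ge w(T,\cdot)\ge v_{Z_C}$ on $[-Z_C,Z_C]$. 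Applying Theorem~\ref{thm:spreading} to the time-shifted solution starting at $t=T$ then gives spreading.

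There is no real obstacle beyond verifying that the construction carries over unchanged: the lower-solution computation in Proposition~\ref{prop:mono-sigma-finite} depends only on the linear bound $f(u)\ge -Lu$ and on the Sturm--Liouville eigenpair $(\lambda_1,\varphi_1)$ from \eqref{SL}, both of which remain available in the combustion setting. The only genuinely combustion-specific input is Lemma~\ref{lem:v_Z} supplying the stationary solution $v_Z$ (for $Z=h_0\ge Z_C$ or $Z=Z_C$), which is already established. Consequently the argument goes through verbatim after the substitution $Z_B\rightsquigarrow Z_C$.
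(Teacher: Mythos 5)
Your proof is correct and takes the same route as the paper: the paper itself declares the argument to be identical to that of Proposition~\ref{prop:bi-sigma-finite} with $Z_B$ replaced by $Z_C$, which is precisely what you carry out. The additional detail you supply (Hopf's lemma to bound $v_Z/\phi$, the explicit choice of $\bar Z$ and the extra constraint on $\rho$, and the restart at time $T$) simply makes explicit the steps that the paper leaves implicit.
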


\begin{prop}\label{prop:combus-sigma-infty}
  Assume that $f$ satisfies
\eqref{growth rate}. Then there exists $ Z^0_C \in (0, Z_C) $ such
that, for every $\phi\in \mathscr{X}(h_0)$,
\begin{itemize}
\item [\rm (i)]  $\sigma^* (h_0, \phi) =\infty$ if $h_0 \leq Z^0_C$, and
\item [\rm (ii)] $\sigma^* (h_0, \phi) \in (0, \infty)$ if $h_0 > Z^0_C$.
\end{itemize}
\end{prop}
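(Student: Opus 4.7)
The plan is to mirror the strategy used for the bistable case in Proposition \ref{prop:bi-sigma-infty}, with modifications reflecting the structure of the combustion nonlinearity. The key new ingredient is the strong dissipation \eqref{growth rate}: an ODE comparison with $\phi'(t) = -c\phi^{1+2\beta}(t)$ yields a pointwise upper bound on $\|u(t,\cdot)\|_\infty$ that is independent of the initial amplitude $\sigma$, decaying like $t^{-1/(2\beta)}$.

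First I would set
\[
Z^0_C := \sup\bigl\{h_0>0 : \sigma^*(h_0,\phi)=\infty\bigr\}.
\]
That whether $\sigma^*(h_0,\phi)=\infty$ depends only on $h_0$ (not on the particular $\phi \in \mathscr{X}(h_0)$) and that the set of such $h_0$ is an interval $(0,Z^0_C]$ both follow from two standard comparison arguments: any two functions in $\mathscr{X}(h_0)$ can be sandwiched between constant multiples of one another, and a solution on $[-h_0,h_0]$ dominates one starting on any smaller $[-h_0',h_0']$ extended by zero. Proposition \ref{prop:combus-sigma-finite} then gives $Z^0_C\le Z_C$. Parts (i) and (ii) of the proposition reduce to showing (a) $Z^0_C>0$ and (b) $Z^0_C<Z_C$.

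For (a), I would construct, for all sufficiently small $h_0$, an upper solution $(U,-B(t),B(t))$ of \eqref{p} that dominates $(\sigma\phi,-h_0,h_0)$ at $t=0$ for \emph{every} $\sigma>0$ and that eventually satisfies $U(t_0,\cdot)\le\theta$. The natural ansatz is a self-similar profile $U(t,x)=A(t)\psi(x/B(t))$ on $[-B(t),B(t)]$, with $\psi$ a fixed smooth, concave, even positive function on $(-1,1)$ vanishing at $\pm 1$. One chooses $A(t)$ to dominate the ODE $A'=-cA^{1+2\beta}$ (so $A(t)\le Ct^{-1/(2\beta)}$ uniformly in $\sigma$), and $B(t)$ increasing so that the Stefan inequality $B'(t)\ge -\mu U_x(t,B(t)) = \mu A(t)|\psi'(1)|/B(t)$ holds. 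The hypothesis $\beta>2$ is precisely what makes $\int_0^\infty A(t)/B(t)\,dt$ finite and small, so that $B(t)\le B^*$ with $B^*\to h_0$ as $h_0\to 0$. Once $U(t_0,\cdot)\le\theta$ on $[-B^*,B^*]$, Theorem \ref{thm:vanishing}(ii) in the (f$_C$) case yields vanishing regardless of $\sigma$.

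For (b), I would proceed constructively: for $h_0\in(0,Z_C)$ close to $Z_C$ and $\sigma$ large, build a compactly supported lower solution (a truncated phase-plane trajectory as in Theorem \ref{thm:spreading}, analogous to the lower solution in Proposition \ref{prop:mono-sigma-finite} but using the strong negativity of $f$ to avoid needing a linear lower bound on $f$) so that at some finite time $T$ one has $h(T)-g(T)>2Z_C$ and $u(T,\cdot)\ge v_{Z_C}$ on $[-Z_C,Z_C]\subset(g(T),h(T))$; Theorem \ref{thm:spreading} then forces spreading. The main technical obstacle lies in (a): one must simultaneously arrange that $A(t)$ decays to $\theta$ in finite time and that $B(t)$ stays bounded by a quantity tending to $h_0$ as $h_0\to 0$. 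These two requirements pull against each other through the Stefan constraint, and the sharp condition $\beta>2$ is exactly what reconciles them.
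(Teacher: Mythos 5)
Your overall architecture agrees with the paper at the top level (define $Z^0_C=\sup\{h_0>0:\sigma^*(h_0)=\infty\}$, reduce to $Z^0_C>0$ and $Z^0_C<Z_C$), but the core technical step (a) and two supporting details contain genuine gaps.

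\textbf{The barrier for part (a) does not close.} In the combustion case $f\equiv 0$ on $[0,\theta]$ and $f\ge 0$ on $[\theta,1]$, so the superlinear dissipation $-f(u)\ge L u^{1+2\beta}$ (which drives your ODE $A'\le -cA^{1+2\beta}$) is only available while $A\psi$ is large, say $\ge s>1$. Once the amplitude falls into a neighbourhood of $1$, the reaction no longer decreases $U$ (and in $(\theta,1)$ it even pushes $U$ up), so reaching $\|U(t_0,\cdot)\|_\infty\le\theta$ must rely entirely on diffusion through the Dirichlet boundary — and that requires $B(t)$ to remain controlled. But the Stefan inequality $B'(t)\ge\mu|\psi'(1)|A(t)/B(t)$ with $A(t)\sim t^{-1/(2\beta)}$ is the opposite of what you assert: since $\beta>2$ means $1/(2\beta)<1$, the integral $\int^\infty A(t)\,dt$ diverges, so $(B^2)'\gtrsim A$ forces $B(t)\to\infty$, not $B(t)\le B^*$. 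The statement ``the hypothesis $\beta>2$ is precisely what makes $\int_0^\infty A/B\,dt$ finite'' has the inequality reversed. The actual proof sidesteps this by replacing the $L^\infty$ target $\le\theta$ with the $L^1$ target of Theorem~\ref{thm:vanishing}(iii): in Lemma~\ref{lem:int-small} the traveling singular barrier $\psi(x-ct)=(x-h_0-ct)^{-1/\beta}-h_0^{-1/\beta}$ (infinite at the inner edge, so it dominates $\sigma\phi$ for every $\sigma$) gives $h(t)\le 2h_0+ct$, and combining this with the ODE bound $u\le\zeta(t)$ shows $\int u(\tau_1)\le(2h_0+c\tau_1)\,sh_0^{-1/\beta}$, which is $O(h_0^{(\beta-2)/\beta})\to 0$. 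Here $\beta>2$ is used to make the $L^1$ norm small, not the $L^\infty$ norm; you cannot get $\|u\|_\infty\le\theta$ uniformly in $\sigma$ from the reaction alone because it saturates near $u=1$.

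\textbf{Closedness of the set $\Pi=\{h_0:\sigma^*(h_0)=\infty\}$ is not established.} The comparison arguments you cite yield that $\Pi$ is downward closed and $\phi$-independent, so $\Pi$ is an interval of the form $(0,Z^0_C)$ or $(0,Z^0_C]$, but not which one. Statement (i) at $h_0=Z^0_C$ requires $Z^0_C\in\Pi$. The paper proves this by showing $(0,\infty)\setminus\Pi$ is open, via the continuation Lemma~\ref{lem:consequence of cp}: from a slightly smaller base $h_0-\epsilon$ one can, by taking $\sigma$ large, reach a state that dominates a given spreading datum on $[-h_0,h_0]$ at a finite later time (using a bounded self-similar lower barrier where a \emph{local} Lipschitz estimate on $f$ suffices). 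Without this step your interval structure is incomplete.

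\textbf{The direct construction for part (b) is not available under the paper's hypotheses.} You propose adapting the lower solution of Proposition~\ref{prop:mono-sigma-finite} while ``using the strong negativity of $f$ to avoid needing a linear lower bound on $f$''. That proposition's lower-solution inequality $w_t-w_{xx}-f(w)\le 0$ uses $f(w)\ge -Lw$ essentially; for superlinear decay no such bound holds when $w$ is large, and strong negativity makes constructing a lower solution harder, not easier. The paper does not attempt any such construction near $Z_C$: once $\Pi$ is shown to be closed, $Z^0_C<Z_C$ is an immediate consequence of $\sigma^*(Z_C)<\infty$ (Proposition~\ref{prop:combus-sigma-finite}, whose proof at $h_0=Z_C$ uses only the stationary lower solution $v_{Z_C}$ and requires no lower bound on $f$).
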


The proof of Proposition \ref{prop:combus-sigma-finite} is identical
to that of Proposition \ref{prop:bi-sigma-finite}; all we need is to
replace $Z_B$ by $Z_C$ in the proof. The proof of Proposition
\ref{prop:combus-sigma-infty} is given in the next subsection.
Evidently, Theorem \ref{thm:combus} is a consequence of Theorems
\ref{thm:combus-trichotomy}, \ref{thm:combus-threshold} and
Proposition \ref{prop:combus-sigma-finite}.

\subsection{Proof of Propositions \ref{prop:mono-sigma-infty}, \ref{prop:bi-sigma-infty}
and \ref{prop:combus-sigma-infty}}\label{subsec:sigma=infty}

In this subsection we always assume that $f$ is of (f$_M$), or
(f$_B$), or (f$_C$) type. We will prove Propositions
\ref{prop:bi-sigma-infty} and \ref{prop:combus-sigma-infty} first,
and then prove Proposition \ref{prop:mono-sigma-infty}.

If $f$  satisfies
\begin{equation}\label{growth rate 1}
\lim\limits_{u\rightarrow \infty} \frac{-f(u)}{u^{1+2\beta}} =
\infty \quad \mbox{for some }  \beta >2,
\end{equation}
then taking
\begin{equation}\label{def-L}
L=L(\beta) = \frac{1+\beta}{\beta^2}\cdot 2^{1+2\beta},
\end{equation}
we can find $s = s(\beta)
>1$ such that
\begin{equation}\label{growth rate 2}
 -f(u) \geq L u^{1+2\beta} \quad \mbox{for } u\geq s.
\end{equation}

Let $h_0>0$, and $(u,g,h)$ be the solution of \eqref{p} with
$u_0=\phi\in \mathscr{X}(h_0)$.  We show that $\int_{g(t)}^{h(t)}
u(t,x; \phi)$ can be made as small as we want if  $h_0$ is small
enough and $t$ is chosen suitably, regardless of the choice of
$\phi$.

\begin{lem}\label{lem:int-small}
Suppose that \eqref{growth rate 1} or \eqref{growth rate 2} holds.
Then given any $\varepsilon >0$ we can find
$h_0^*=h^*_0(\varepsilon)>0$ such that, for each $h_0 \in (0,h^*_0]$
there exists $t_0=t_0(h_0)>0$ so that
$$
\int_{g(t_0)}^{h(t_0)} u(t_0, x; \phi) dx < \varepsilon \mbox{ for
all $\phi \in \mathscr {X}(h_0)$.}
$$
\end{lem}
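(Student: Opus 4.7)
The plan is to exploit the superlinear decay of $f$ at infinity, via \eqref{growth rate 1}, in order to produce a universal (that is, $\phi$-independent) pointwise upper bound on $u(t_1,\cdot;\phi)$ at a small fixed time $t_1$, and to couple this with a quantitative control on how much the free boundaries can expand on the interval $[0,t_1]$. Once both the $L^\infty$ height at $t_1$ and the length of the support at $t_1$ are controlled, the $L^1$ estimate follows by an appropriate choice of parameters.

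\emph{Step 1 (Universal ODE supersolution).} Since \eqref{growth rate 2} gives $-f(v)\ge Lv^{1+2\beta}$ for $v\ge s$, the spatially constant function $V(t):=(2\beta Lt)^{-1/(2\beta)}$ (the solution of $V'=-LV^{1+2\beta}$ with $V(0^+)=+\infty$) is a supersolution of $u_t=u_{xx}+f(u)$ wherever $V\ge s$. Setting $\tau_\phi:=\|\phi\|_\infty^{-2\beta}/(2\beta L)$ gives $V(\tau_\phi)=\|\phi\|_\infty$, and comparing $u(\cdot,\cdot;\phi)$ with the shifted supersolution $V(\cdot+\tau_\phi)$ on the moving domain (where $u=0\le V$ holds on the free boundaries) via the parabolic maximum principle yields $u(t,x;\phi)\le V(t+\tau_\phi)\le V(t)$ for all $t>0$ with $V(t)\ge s$, uniformly in $\phi\in\mathscr{X}(h_0)$. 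Choosing $t_1:=(2\beta Ls^{2\beta})^{-1}$ so that $V(t_1)=s$, we conclude $u(t_1,\cdot;\phi)\le s$ for every $\phi$.

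\emph{Step 2 (Boundary-speed estimate).} Let $K_0:=\sup_{u\ge 0}f^+(u)<\infty$. A close inspection of the proof of Lemma \ref{lem:bound-general} shows that under a bound $u\le C_1$ the explicit estimate $|h'(t)|,|g'(t)|\le \mu\sqrt{2K_0C_1}$ holds. Time-shifting Lemma \ref{lem:bound-general} so as to start from any time $t-\delta$ with the bound $u\le V(t-\delta)$, then sending $\delta\to 0^+$ and invoking the continuity of $h'$, gives the pointwise estimate
\[|h'(t)|,\,|g'(t)|\le \mu\sqrt{2K_0\,V(t)}=\mu\sqrt{2K_0}\,(2\beta L)^{-1/(4\beta)}\,t^{-1/(4\beta)}.\]
Because $\beta>2$ forces $1/(4\beta)<1$, this is integrable on $(0,t_1)$, and direct integration produces $h(t_1)-h_0\le C_*$ and $-g(t_1)-h_0\le C_*$, where
\[C_*:=\frac{\mu\sqrt{2K_0}}{1-1/(4\beta)}\,(2\beta L)^{-1/(4\beta)}\,t_1^{\,1-1/(4\beta)}.\]
Hence $h(t_1)-g(t_1)\le 2h_0+2C_*$.

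\emph{Step 3 (Conclusion and main obstacle).} Combining the two previous steps yields
\[\int_{g(t_1)}^{h(t_1)}u(t_1,x;\phi)\,dx\le s\bigl(2h_0+2C_*\bigr)=2sh_0+2sC_*.\]
Substituting $t_1=(2\beta Ls^{2\beta})^{-1}$ and simplifying, one finds $sC_*=\Lambda(\beta,K_0,\mu)\cdot s^{\,3/2-2\beta}/L$; choosing $L$ close to the maximal admissible value $-f(s)/s^{1+2\beta}$ from \eqref{growth rate 2} then reduces this to a constant multiple of $s^{5/2}/|f(s)|$, which tends to $0$ as $s\to\infty$ by \eqref{growth rate 1} and the inequality $1+2\beta>5/2$. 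Therefore, given any $\varepsilon>0$, one first selects $s$ large enough that $sC_*<\varepsilon/4$, and then sets $h_0^*(\varepsilon):=\varepsilon/(4s)$ and $t_0(h_0):=t_1$; this yields $\int u(t_0,x;\phi)\,dx<\varepsilon$ uniformly in $\phi\in\mathscr{X}(h_0)$ whenever $h_0\le h_0^*$. The principal technical obstacle lies in Step 2, namely the rigorous passage from the fixed-$C_1$ statement of Lemma \ref{lem:bound-general} to the sharper time-dependent bound $|h'(t)|\le\mu\sqrt{2K_0\,V(t)}$; this will be handled either by reworking the supersolution construction locally in time or by a careful shift-and-limit argument.
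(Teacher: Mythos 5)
Your Step~1 is correct and is essentially the same device the paper uses (the paper works with $\zeta'=f(\zeta)$, $\zeta(0)=\|\phi\|_\infty+1$, and the convergence of $\int^\infty dr/(-f(r))$, which is the shifted version of your $V(t)=(2\beta Lt)^{-1/(2\beta)}$). Both give a $\phi$-independent pointwise bound on $u$ at a later time.

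The real problem is Step~2, and it is a genuine gap, not a technicality that can be ``handled by a careful shift-and-limit argument.'' The claimed estimate $|h'(t)|\le\mu\sqrt{2K_0V(t)}$ simply does not follow from Lemma~\ref{lem:bound-general}. In that lemma the constant $C_2$ (equivalently, the parameter $M$ in the barrier $w=C_1[2M(h(t)-x)-M^2(h(t)-x)^2]$) is \emph{not} determined by $C_1$ alone; $M$ must also be large enough that $w(0,x)\ge u_0(x)$ near $x=h_0$, which forces $M$ to depend on $u_0'$. The choice $M=\sqrt{K_0/(2C_1)}$ you implicitly make is the minimum required for the differential inequality, but it gives a quadratic profile whose slope at the free boundary is $2MC_1=\sqrt{2K_0C_1}$, and the pointwise domination $w(\cdot,x)\ge u(\cdot,x)$ on the initial slice can fail even when the boundary slopes match. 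Concretely, if at some time $u$ is (approximately) the piecewise linear function $\min\{C_1,\,2MC_1(h-x)\}$, then with $\xi=M(h-x)\in[0,\tfrac12]$ one has $w=C_1(2\xi-\xi^2)<2C_1\xi=u$, so the parabola lies strictly \emph{below} $u$ in the interior. The shift-to-$t-\delta$ idea does not help: checking the required initial inequality near the free boundary at time $t-\delta$ needs exactly the control on $u_x(t-\delta,h(t-\delta))=-h'(t-\delta)/\mu$ that you are trying to prove, so the argument is circular. And without something stronger than $u\le V(t)$ there is no a priori $\phi$-uniform bound on $h'$ near $t=0$, since $h'(0^+)=-\mu\phi'(h_0)$ is unbounded over $\phi\in\mathscr{X}(h_0)$.

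The idea the paper uses, and which your proof is missing, is a \emph{singular moving spatial barrier}: define $\psi(x)=(x-h_0)^{-1/\beta}-h_0^{-1/\beta}$ on $(h_0,2h_0]$, translate it at the fixed speed $c=\mu/(\beta h_0^{(1+\beta)/\beta})$, and set $w(t,x)=\psi(x-ct)$, $k(t)=2h_0+ct$. Because $w\to+\infty$ as $x\to(h_0+ct)^+$, the barrier dominates $u$ initially \emph{regardless of $\phi$}, and the exponent $-1/\beta$ is chosen so that the differential inequality $w_t-w_{xx}-f(w)\ge0$ follows from the power-law lower bound on $-f$. The comparison (Lemma~\ref{lem:comp2}) then gives $h(t)\le 2h_0+ct$ directly, with a $\phi$-independent speed, and the symmetric construction controls $g(t)$. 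This replaces your Step~2 entirely. Once you adopt this device, your Step~1 together with the paper's algebra ($\beta>2$ ensuring the relevant power of $h_0$ is positive) yields the lemma; note also that the paper's choice puts $\tau_1$ proportional to $h_0^{2}$ (so both the width and the height shrink as $h_0\to0$) rather than your fixed $t_1$, which avoids the need for the extra optimization of $L$ against $s$ you propose.
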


\begin{proof} For any $h_0 \in (0,1)$, set
$$
\psi(x) = (x-h_0)^{-\frac{1}{\beta}} - h_0^{-\frac{1}{\beta}}\quad
\mbox{for } h_0 < x \leq 2h_0.
$$
For $h_0 + ct <x\leq 2h_0 +ct,\ t>0$, we define
\begin{equation}\label{upper sol for infinity}
w (t,x):= \psi(x-ct),\quad k(t)= 2h_0 +ct \quad \mbox{with }
c=\frac{\mu}{\beta h_0^{\frac{1+\beta}{\beta}}}.
\end{equation}

With $s$ given by \eqref{growth rate 2}, we set
\begin{equation}\label{def-epsilon1}
\varepsilon_1 := h_0  (1+s)^{-\beta}\ (<h_0) \quad (\mbox{or
equivalently, } \psi (h_0 +\varepsilon_1) = s
h_0^{-\frac{1}{\beta}}).
\end{equation}

\smallskip

We now consider $w$ for $h_0+ct+\varepsilon_1 \leq x \leq 2h_0 +ct,\
t>0$. It is easily seen that
$$
w_t (t,x) = -c\psi'(x-ct) = \frac{c}{\beta (x-h_0
-ct)^{\frac{1+\beta}{\beta}}} \geq \frac{c}{\beta
h_0^{\frac{1+\beta}{\beta}}},
$$
$$
w_{xx}  (t,x)= \frac{1+\beta}{\beta^2
(x-h_0-ct)^{\frac{1+2\beta}{\beta}}} \leq \frac{1+\beta}{\beta^2}
\cdot \frac{(1+s)^{1+2\beta}}{h_0^{\frac{1+2\beta}{\beta}}}.
$$
Hence, with $F:= \sup_{0\leq \xi <\infty} f(\xi)$,
\[
\begin{aligned}
w_t -w_{xx} -f(w) \geq &\frac{1}{\beta h_0^{\frac{1
+2\beta}{\beta}}} \left[ ch_0 -\frac{1+\beta}{\beta}
(1+s)^{1+2\beta} - F\beta h_0^{\frac{1+2\beta}{\beta}}\right]\\
=&\frac{1}{\beta h_0^{\frac{1 +2\beta}{\beta}}} \left[
\frac{\mu}{\beta}h_0^{-1/\beta} -\frac{1+\beta}{\beta}
(1+s)^{1+2\beta} - F\beta h_0^{\frac{1+2\beta}{\beta}}\right]
 \geq 0,
 \end{aligned}
\]
provided $h_0$ is sufficiently small.

\smallskip

Next we consider $w$ for $h_0+ ct <x \leq h_0 + ct+\varepsilon_1,\
t>0$. In this range, we have
\begin{eqnarray*}
w (t,x) &  \geq& \frac{1}{(x-h_0 -ct)^{\frac{1}{\beta}}} \left( 1 -
\Big(\frac{\varepsilon_1}{h_0}
\Big)^{\frac{1}{\beta}}\right) \\
& = &  \frac{1}{(x-h_0 -ct)^{\frac{1}{\beta}}} \Big(\frac{s}{1+s}
\Big)\\
& \geq &\frac{s}{\varepsilon_1^{1/\beta}(1+s)}= \frac{s}{
h_0^{\frac{1}{\beta}}}
>s.
\end{eqnarray*}
Thus,
\begin{eqnarray*}
w_t -w_{xx} -f(w) & \geq & -c\psi'(x-ct) -\psi'' (x-ct) +L
[\psi (x-ct)]^{1+2\beta}\\
& \geq & L\left[ \frac{1}{(x-h_0 -ct)^{\frac{1}{\beta}}} \Big(
\frac{s}{1+s}\Big) \right]^{1+2\beta} -\frac{1+\beta}{\beta^2
(x-h_0 -ct)^{\frac{1 +2\beta}{\beta}}}\\
& = & \frac{1}{(x-h_0 -ct)^{\frac{1 +2\beta}{\beta}}} \left[
L\Big(\frac{s}{1+s}\Big)^{1+2\beta} -\frac{1+\beta}{\beta^2}
\right] \\
& > & \frac{1}{(x-h_0 -ct)^{\frac{1 +2\beta}{\beta}}} \left[
L\Big(\frac{1}{2}\Big)^{1+2\beta} -\frac{1+\beta}{\beta^2} \right] =
0.
\end{eqnarray*}
Clearly,
$$
k'(t) +\mu w_x (t, k(t)) = c-\frac{\mu}{\beta
h_0^{\frac{1+\beta}{\beta}}} =0.
$$

\smallskip
We now compare $(u, h)$ with $(w, k)$ over the region
\[
\Omega:=\{(t,x): h_0+ct\leq x\leq k(t)\}\cap \{(t,x): 0\leq x\leq
h(t)\}.
\]
By definition,
\[ u(t,x)=0 \mbox{ for } x=h(t),\; w(t,x)=+\infty \mbox{ for }
x=h_0+ct.
\]
Thus we can apply Lemma \ref{lem:comp2} to deduce that whenever
$J(t):=\{x: (t,x)\in\Omega\}$ is nonempty, we have $h(t)\leq k(t)$
and $u(t,x)\leq w(t,x)$ in $J(t)$. Thus we have $h(t)\leq k(t)$ for
all $t>0$.

By \eqref{growth rate 2} and the definition of $c$,
$$
\tau_1 := \int_{\frac{s}{h_0^{\frac{1}{\beta}}}}^{\infty}
\frac{dr}{-f(r)} \leq \frac{h_0^2}{2\beta L s^{2\beta}},\;\;\;
c\tau_1 \leq \frac{\mu}{2\beta^2 L s^{2\beta}} h_0^{\frac{\beta
-1}{\beta}}.
$$

Let $\zeta(t)$ be the solution of
$$
\zeta' (t) = f(\zeta),\quad \zeta(0) = \|\phi\|_{L^\infty}+1.
$$
Then $u(t,x;  \phi) \leq \zeta(t)$ for $t\geq 0$. We claim that
$u(t,x; \phi) \leq sh_0^{-\frac{1}{\beta}}$ for $t\geq \tau_1$.

Indeed, since $f(1)=0$ and $f(\xi)<0$ for $\xi>1$, we find that
$\zeta(t)>1$ and is decreasing for $t>0$. Moreover,
\begin{eqnarray*}
\tau_1&=&\int_{\|\phi\|_\infty+1}^{\zeta(\tau_1)}\frac{d\zeta}{f(\zeta)}\\
&=&\int_{\|\phi\|_\infty+1}^\infty\frac{d\zeta}{f(\zeta)}-\int_{\zeta(\tau_1)}^\infty
\frac{d\zeta}{f(\zeta)}\\
&<&\int_{\zeta(\tau_1)}^\infty \frac{d\zeta}{-f(\zeta)}.
\end{eqnarray*}
Thus
\[
\int_{s/h_0^{1/\beta}}^\infty
\frac{d\zeta}{-f(\zeta)}<\int_{\zeta(\tau_1)}^\infty
\frac{d\zeta}{-f(\zeta)}.
\]
It follows that $\zeta(\tau_1)<s/h_0^{1/\beta}$ and hence
$\zeta(t)<s/h_0^{1/\beta}$ for all $t\geq \tau_1$, which implies the
claim.

By this estimate of $u(t,x;\phi)$ we obtain, for $t=\tau_1$,
$$
\int_0^{h(\tau_1)} u(\tau_1 , x; \phi) dx  \leq  (2h_0 +c\tau_1) s
h_0^{-\frac{1}{\beta}} \leq h_0^{\frac{\beta -2}{\beta}} \left[ 2s
h_0^{\frac{1}{\beta}} +\frac{\mu}{2\beta^2 L s^{2\beta -1}} \right].
$$
Since $\beta >2$, $\int_0^{h(\tau_1)} u(\tau_1, x;\phi) dx$ can be
as small as possible when $h_0 \rightarrow 0$. By a parallel
consideration, the same is true for $\int_{g(\tau_1)}^0 u(\tau_1,
x;\phi) dx$. This completes the proof.
\end{proof}

\vskip 6pt

We also need the following technical lemma.

\begin{lem}\label{lem:consequence of cp} Suppose
 $h_0 >0$ and $\phi \in \mathscr {X}(h_0)$. Then there exists
$\varepsilon_0 \in (0,h_0)$ such that, for any $\varepsilon \in (0,
\varepsilon_0)$, any $\phi_\varepsilon \in \mathscr {X}(h_0
-\varepsilon)$, and any sufficiently large $\sigma>0$,
$$
g_\varepsilon(t_1)\leq -h_0, \; h_\varepsilon (t_1)\geq h_0,\;
u_\varepsilon(t_1,x) \geq \phi(x) \quad \mbox{for } x\in [-h_0, h_0]
$$
at some $t_1 >0$, where $(u_\varepsilon, g_\varepsilon,
h_\varepsilon)$ denotes the solution of \eqref{p} with
$u_0=\sigma\phi_\varepsilon$.
\end{lem}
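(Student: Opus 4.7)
The approach is to construct, for each sufficiently large $\sigma$, a subsolution $(w,-k(t),k(t))$ of \eqref{p} modeled on the Sturm--Liouville construction of Proposition \ref{prop:mono-sigma-finite}, chosen so that $w(0,\cdot)\leq \sigma\phi_\varepsilon$ on its support (a compact subset of $(-h_0+\varepsilon,h_0-\varepsilon)$), while at some later time $t_1$ the moving boundary $k(t_1)$ has passed $h_0$ and $w(t_1,\cdot)$ pointwise dominates $\phi$ on $[-h_0,h_0]$. Then the lower-solution version of Lemma \ref{lem:comp1} delivers all three conclusions in one step.

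Concretely, I would fix $\varepsilon_0\in(0,h_0/4)$, and for $\varepsilon\in(0,\varepsilon_0)$, $\phi_\varepsilon\in\mathscr{X}(h_0-\varepsilon)$, mimic \eqref{lower-sol} by setting
\[
w(t,x)=\frac{\rho}{(t+\tau)^{\lambda}}\,\varphi_1\!\left(\frac{x}{\sqrt{t+\tau}}\right),\qquad k(t)=\sqrt{t+\tau},
\]
with $\varphi_1$ the first eigenfunction of \eqref{SL}. The parameters $\tau,\lambda,\rho$ are to be chosen so that (a) $\sqrt{\tau}<h_0-\varepsilon_0$, (b) for a chosen $t_1>0$ with $\sqrt{t_1+\tau}>h_0$, the inequality
\[
\frac{\rho}{(t_1+\tau)^{\lambda}}\,\varphi_1\!\Bigl(\tfrac{h_0}{\sqrt{t_1+\tau}}\Bigr)\ \geq\ \|\phi\|_{L^\infty([-h_0,h_0])}
\]
holds (which, together with the monotonicity of $\varphi_1$ and $\phi(\pm h_0)=0$, gives $w(t_1,\cdot)\geq\phi$ on $[-h_0,h_0]$), and (c) the subsolution inequality $w_t-w_{xx}-f(w)\leq 0$ and the free-boundary inequality $k'(t)\leq -\mu w_x(t,k(t))$ hold in the sense of Lemma \ref{lem:comp1}. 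The computations in the proof of Proposition \ref{prop:mono-sigma-finite} reduce (c) to conditions of the form $\lambda>\lambda_1+L(T+1)$ and $-2\mu\rho\varphi_1'(1)>(T+1)^{\lambda}$, where $L$ is a Lipschitz-type bound $f(u)\geq -Lu$. Since the range of $w$ is contained in $[0,\rho/\tau^{\lambda}]$, it suffices to take $L=L(\rho,\tau,\lambda):=\sup_{u\in(0,\rho/\tau^{\lambda}]}(-f(u)/u)$, which is finite for the $C^{1}$ function $f$ with $f(0)=0$; this extends the construction beyond the globally Lipschitz setting of Proposition \ref{prop:mono-sigma-finite}.

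With $\tau,\lambda,\rho,t_1$ so chosen (depending only on $h_0,\phi,\varepsilon_0$ up to some slack), the initial bound $w(0,x)\leq\sigma\phi_\varepsilon(x)$ on $[-\sqrt{\tau},\sqrt{\tau}]\subset(-h_0+\varepsilon,h_0-\varepsilon)$ holds for all sufficiently large $\sigma$, because $\phi_\varepsilon$ is strictly positive on the compact set $[-\sqrt{\tau},\sqrt{\tau}]$ and $w(0,\cdot)$ is a fixed bounded function. Lemma \ref{lem:comp1} then yields, for $t\in[0,t_1]$,
\[
g_\varepsilon(t)\leq -\sqrt{t+\tau},\qquad h_\varepsilon(t)\geq \sqrt{t+\tau},\qquad u_\varepsilon(t,x)\geq w(t,x)\ \text{ on }[-\sqrt{t+\tau},\sqrt{t+\tau}].
\]
Evaluating at $t=t_1$ delivers $g_\varepsilon(t_1)\leq -h_0$, $h_\varepsilon(t_1)\geq h_0$, and $u_\varepsilon(t_1,x)\geq w(t_1,x)\geq \phi(x)$ on $[-h_0,h_0]$, as required.

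The main technical obstacle is the self-consistent choice of $(\tau,\lambda,\rho,t_1)$: the bound $L=L(\rho,\tau,\lambda)$ used in (c) itself depends on $\rho/\tau^{\lambda}$, which is also constrained from below by (b). This circular coupling is resolved by fixing $t_1$ and $\tau$ first (with $\sqrt{\tau}<h_0-\varepsilon_0<h_0<\sqrt{t_1+\tau}$, and $\tau$ close enough to $(h_0-\varepsilon_0)^{2}$ that the ratio $(t_1+\tau)/\tau$ is controlled), then choosing $\rho$ and $\lambda$ by a monotone iteration: an initial guess for $\rho$ determines $L$, hence the required $\lambda$, which together with the free-boundary constraint updates $\rho$; convergence follows since $L(\rho,\tau,\lambda)$ grows mildly with $\rho/\tau^{\lambda}$ while the required $\rho$ from (b) and the boundary condition grows polynomially in $(t_1+\tau)^{\lambda}$. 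Shrinking $\varepsilon_0$ further if necessary guarantees the iteration closes, giving fixed constants $\tau,\lambda,\rho,t_1$ depending on $h_0,\phi,\varepsilon_0$ alone, with $\sigma$ finally chosen large based on $\phi_\varepsilon$.
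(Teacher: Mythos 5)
Your overall architecture---a Sturm--Liouville subsolution of the form $\rho(t+\tau)^{-\lambda}\varphi_1(x/\sqrt{t+\tau})$ with a moving boundary $k(t)=\sqrt{t+\tau}$, compared below $\sigma\phi_\varepsilon$ at $t=0$ and above $\phi$ at $t=t_1$---is exactly the paper's approach. But there is a genuine gap in how you handle the dependence between the Lipschitz-type bound $L$ and the exponent $\lambda$, and the ``monotone iteration'' you propose to resolve it does not close.

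Your requirement (a), $\sqrt{\tau}<h_0-\varepsilon_0$, together with $\sqrt{t_1+\tau}>h_0$, forces the ratio $R:=(t_1+\tau)/\tau$ to be bounded \emph{strictly above} $1$ uniformly in $\varepsilon$, namely $R>h_0^2/(h_0-\varepsilon_0)^2>1$. Your condition (b) then forces $\rho\gtrsim(t_1+\tau)^\lambda$, so the peak amplitude $\rho/\tau^\lambda\gtrsim R^\lambda$ grows \emph{exponentially} in $\lambda$. For $f$ not globally Lipschitz (which is precisely the regime where Lemma \ref{lem:consequence of cp} is needed), $L(\rho,\tau,\lambda)=\sup_{u\le\rho/\tau^\lambda}(-f(u)/u)$ therefore also grows at least exponentially in $\lambda$, so the requirement $\lambda>\lambda_1+L\cdot(\text{const})$ has no solution: the iteration diverges. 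Your claim that $L$ ``grows mildly'' is unfounded---for $f(u)\sim-u^{3}$ at infinity, $L\sim(\rho/\tau^\lambda)^2\sim R^{2\lambda}$. Shrinking $\varepsilon_0$ does not help either, because your constraint $\sqrt{\tau}<h_0-\varepsilon_0$ is uniform in $\varepsilon$, so $\tau$ stays bounded away from $h_0^2$ and $R$ stays bounded away from $1$.

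The paper eliminates this circularity by a more careful parametrization that you did not see. It lets the dilation parameter depend on $\varepsilon$: write the subsolution as $w(t,x)=\rho_0(h_0/k(t))^{2\lambda}\varphi_1(x/k(t))$ with $k(t)=\sqrt{(h_0-\varepsilon)^2+t}$ and $t_1:=2\varepsilon h_0-\varepsilon^2$, so that $k(0)=h_0-\varepsilon$ and, crucially, $k(t_1)=h_0$ \emph{exactly}. Then (i) the terminal profile is exactly $\rho_0\varphi_1(\cdot/h_0)$, which can be bounded below by $\phi$ by choosing $\rho_0$ first, depending only on $\phi$, $\mu$, $h_0$; and (ii) the peak amplitude over $[0,t_1]$ is $\rho_0(h_0/(h_0-\varepsilon))^{2\lambda}\le 2\rho_0$ once $\varepsilon\le\varepsilon_0:=(1-2^{-1/(2\lambda)})h_0$. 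Thus $M$ (the Lipschitz bound) is taken only on the fixed interval $[0,2\rho_0]$, $\lambda:=\lambda_1+Mh_0^2$ is then determined, and $\varepsilon_0$ is determined last---a clean forward chain, with no iteration, valid for arbitrary $C^1$ nonlinearities of the three types. The initial profile $w(0,\cdot)$ is supported on $[-(h_0-\varepsilon),h_0-\varepsilon]$, the \emph{same} interval as $\phi_\varepsilon$, and the large-$\sigma$ comparison is handled via the endpoint slope conditions rather than by shrinking the support, which is why requirement (a) is unnecessary. To fix your proof, drop (a), set $\tau=(h_0-\varepsilon)^2$ and $t_1=h_0^2-\tau$, and pin down $\rho_0$ by the terminal condition before ever mentioning $\lambda$ or $L$.
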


\begin{proof} We prove the conclusion by constructing a suitable
lower solution. Let $\varphi_1 (x)$ be the positive function
satisfying \eqref{SL} and  $\|\varphi_1\|_{L^\infty ([-1,1])}
=\varphi_1 (0)=1$.

Choose $\rho_0 >0$ such that
\begin{equation}\label{rho0}
\rho_0 > \frac{1}{ -2\mu \varphi'_1 (1)},\quad\ \  \rho_0 \varphi_1
\Big( \frac{x}{h_0} \Big) \geq \phi(x) \ \ \mbox{ for } x\in [-h_0,
h_0].
\end{equation}
This is possible since $\varphi_1'(1)<0$ and $\phi\in
C^1([-h_0,h_0])$. Fix such a $\rho_0$; then there exists
$M=M(\rho_0)>0$ such that
\begin{equation}\label{f>-Ms}
f(s) \geq - M s \quad \mbox{for } s\in [0, 2\rho_0].
\end{equation}

Set
\begin{equation}\label{def:lambda,epsilon0}
\lambda := \lambda_1 +M h_0^2,\quad \varepsilon_0 :=
(1-2^{-\frac{1}{2\lambda}}) h_0.
\end{equation}

For any $\varepsilon \in (0, \varepsilon_0)$, any $\phi_\varepsilon
\in \mathscr {X}(h_0 -\varepsilon)$, we will show that, when
$$
\sigma \phi_\varepsilon \geq \rho_0 \Big(\frac{h_0}{h_0
-\varepsilon}\Big)^{2\lambda} \cdot \varphi_1 \Big( \frac{x}{h_0
-\varepsilon} \Big),
$$
we have
$$
u(t_1, x;\sigma \phi_\varepsilon) \geq \phi(x)\quad \mbox{on }
[-h_0,h_0],
$$
at time $t_1 := 2\varepsilon h_0 - \varepsilon^2 >0$. To prove this
result, we first show that $(w, -k, k)$ given by
$$
w(t,x):= \rho_0 \Big(\frac{h_0}{k(t)}\Big)^{2\lambda} \cdot
\varphi_1 \Big( \frac{x}{k(t)} \Big),\quad k(t):= \sqrt{(h_0
-\varepsilon)^2 +t}
$$
forms a lower solution of \eqref{p} on the time interval $t\in [0,
t_1]$.

When $t\in [0,t_1]$, we have $h_0 -\varepsilon \leq k(t) \leq h_0$
and
$$
\|w(t,x)\|_{L^\infty ([-k(t),k(t)])} = w(t,0) \leq \rho_0 \Big(
\frac{h_0}{h_0 -\varepsilon} \Big)^{2\lambda} \leq 2\rho_0
$$
by the definition of $\varepsilon_0$. Hence, for $-k(t)\leq x \leq
k(t),\ 0\leq t\leq t_1$,
\begin{eqnarray*}
w_t -w_{xx}-f(w) &\leq & w_t -w_{xx} + Mw\\
& = & \frac{ -\rho_0 h_0^{2\lambda}}{[k(t)]^{2\lambda +2}} \left[
\varphi''_1 + \frac{x}{2k(t)} \varphi'_1 +[\lambda - M
(k(t))^2 ]\varphi_1 \right]\\
& \leq & \frac{ -\rho_0 h_0^{2\lambda}}{[k(t)]^{2\lambda +2}} \left[
\varphi''_1 + \frac{{\rm sgn}(x)}{2} \varphi'_1 + \lambda_1
\varphi_1 \right] =0,
\end{eqnarray*}
$$
k'(t)+\mu w_x(t,k(t)) \leq \frac{1}{2k(t)} \left[ 1+ 2\mu \rho_0
\Big(\frac{h_0}{k(t)} \Big)^{2\lambda} \varphi'_1 (1) \right] <0,
$$
and
$$
-k'(t)+\mu w_x(t,-k(t)) \geq -\frac{1}{2k(t)} \left[ 1+ 2\mu \rho_0
\Big(\frac{h_0}{k(t)} \Big)^{2\lambda} \varphi'_1 (1) \right] >0.
$$
If $\sigma$ is chosen such that $w(0,x) \leq  \sigma
\phi_\varepsilon$, we find that
  $(w(t,x),-k(t), k(t))$ is a lower solution of \eqref{p} with $u_0=\sigma \phi_\epsilon$.
  Now it is clear that the required inequalities follow from this and \eqref{rho0}.
\end{proof}

\bigskip

\noindent {\bf Proof of Propositions \ref{prop:bi-sigma-infty} and
\ref{prop:combus-sigma-infty}:}\ \ We only consider the (f$_B$)
case; the proof of the (f$_C$) case is identical.

By Lemma \ref{lem:int-small} and Theorem \ref{thm:vanishing} (iii),
we see that for sufficiently small $h_0
>0$, vanishing happens for any $\phi \in \mathscr {X}(h_0)$ and any $\sigma >0$.  So
$\sigma^* (h_0)=\infty$ for small $h_0$. Here and in what follows we
write $\sigma^*(h_0)=\infty$ instead of $\sigma^*(h_0,\phi)=\infty$,
since $\phi\in \mathscr{X}(h_0)$ plays no role for the validity of
this identity.

Define
\begin{equation}\label{def-ZB0}
Z^0_B := \sup \Pi\qquad \mbox{where } \ \Pi:= \{h_0 > 0 \mid
\sigma^* (h_0) = \infty\}.
\end{equation}
In view of the above fact and Proposition
\ref{prop:bi-sigma-finite}, we have $0< Z^0_B \leq Z_B$. By the
comparison principle, we see that $\sigma^*(h_0)=\infty$ when
$h_0\in (0, Z_B^0)$, that is, $(0,Z_B^0)\subset \Pi$.

We claim that the set $(0,\infty) \backslash \Pi$ is open, and so
$\Pi$ is closed. To see this, suppose $h_0$ belongs to this set and
so $\sigma^*(h_0, \phi)<\infty$ for every $\phi\in
\mathscr{X}(h_0)$. Hence there exists $\sigma_1>0$ so that spreading
happens when $u_0 = \sigma \phi$ and $\sigma\geq \sigma_1$. By Lemma
\ref{lem:consequence of cp}, for sufficiently small $\epsilon>0$ and
any $\phi_\epsilon\in \mathscr{X}(h_0-\epsilon)$, there exists
$\sigma_2>0$ and $t_1>0$ such that
\[
u(t_1,x; \sigma_2\phi_\epsilon)\geq \sigma_1\phi(x) \mbox{ in }
[-h_0,h_0].
\]
It follows that
\[
u(t+t_1,x;\sigma_2\phi_\epsilon)\geq u(t,x;\sigma_1\phi) \mbox{ for
all } t>0.
\]
This implies that $\sigma^*(h_0-\epsilon,\phi_\epsilon)<\infty$ and
hence $h_0-\epsilon\in (0,\infty) \backslash \Pi$. By the comparison
principle, clearly any $h>h_0$ belongs to this set. Thus it is an
open set.

Hence $\Pi$ is relatively closed in $(0,+\infty)$ and $\Pi=(0,
Z_B^0]$. By Proposition \ref{prop:bi-sigma-finite},
$\sigma^*(Z_B)<+\infty$. Therefore $Z_B^0<Z_B$, and for $h_0>Z_B^0$,
$\sigma^*(h_0)<+\infty$.

The proof is complete. \hfill $\square$

\vskip 6pt

The proof of Proposition \ref{prop:mono-sigma-infty}  needs the
following result.

\begin{lem}\label{M-to-0}
Under the assumptions of Proposition \ref{prop:mono-sigma-infty},
there exists $h^*_0 >0$ small such that, for any $h_0 \in (0,h^*_0)$
and any $\phi\in \mathscr {X}(h_0)$, we have $\|u(t,\cdot;
\phi)\|_\infty \rightarrow 0$ as $t\rightarrow \infty$.
\end{lem}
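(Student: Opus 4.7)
The plan is to reduce Lemma~\ref{M-to-0} to an application of Theorem~\ref{thm:vanishing}(i) via Lemma~\ref{lem:int-small} and heat-kernel smoothing. Since $\beta>(3+\sqrt{13})/2>2$, Lemma~\ref{lem:int-small} applies: for any prescribed $\varepsilon>0$ there exists $\widetilde h_0(\varepsilon)>0$ so that, for every $h_0\in(0,\widetilde h_0]$ and every $\phi\in\mathscr{X}(h_0)$, some $t_0=t_0(h_0)$ satisfies $\int_{g(t_0)}^{h(t_0)} u(t_0,y;\phi)\,dy<\varepsilon$. Inspecting that proof, I would also extract the uniform pointwise bound $u(t_0,\cdot)\leq sh_0^{-1/\beta}$ and the length bound $h(t_0)-g(t_0)\leq 4h_0+2c\tau_1$, which tends to $0$ as $h_0\to 0$; both of these are independent of $\phi$.

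To propagate these estimates forward, first observe that $(1-u)f(u)>0$ for $u>0$, $u\neq 1$, together with $f\in C^1$, forces $f(u)/u$ to be bounded on $(0,\infty)$; hence $f(u)\leq Ku$ for some fixed $K>0$, and the argument of Lemma~\ref{heat>} applied with initial time $t_0$ and initial data $u(t_0,\cdot)$ (extended by $0$) gives
\[
u(t_0+\tau,x)\leq \frac{e^{K\tau}}{2\sqrt{\pi\tau}}\int_{g(t_0)}^{h(t_0)}u(t_0,y)\,dy \leq \frac{e^{K\tau}}{2\sqrt{\pi\tau}}\varepsilon \quad\text{for all } \tau>0.
\]
Combining this with the ODE bound $u(t_0+\tau,\cdot)\leq\zeta(\tau)$, where $\zeta'=f(\zeta)$, $\zeta(0)=sh_0^{-1/\beta}$, and with the explicit decay $\zeta(\tau)\leq(\zeta(0)^{-2\beta}+2\beta L\tau)^{-1/(2\beta)}$ available while $\zeta\geq s$ from \eqref{growth rate 2}, yields a two-regime pointwise bound on $[t_0,t_0+\tau]$. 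The quadratic barrier from the proof of Lemma~\ref{lem:bound-general}, sharpened via the growth hypothesis, then gives $|h'(t)|,|g'(t)|\leq C\mu\|u(t,\cdot)\|_\infty^{1+\beta}$; integrating this against the minimum of the ODE and heat-kernel bounds, splitting at their crossover time, and choosing $\tau=\tau(h_0)$ as a suitable small power of $h_0$, makes both $\|u(t_0+\tau,\cdot)\|_\infty\to 0$ and $h(t_0+\tau)-g(t_0+\tau)\to 0$ as $h_0\to 0$. The quantitative exponent bookkeeping in this step is precisely where the hypothesis $\beta>(3+\sqrt{13})/2$ enters.

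Setting $t_1:=t_0+\tau$ and translating the spatial origin to the midpoint of $[g(t_1),h(t_1)]$, I can assume for $h_0$ small enough that the translated interval lies in $[-h_0^{**},h_0^{**}]$ with $h_0^{**}<\pi/(2\sqrt{f'(0)})$, while $\|u(t_1,\cdot)\|_\infty$ falls below the smallness threshold $\sigma_1(h_0^{**})$ that appears in the proof of Theorem~\ref{thm:vanishing}(i). The cosine-type upper solution $(w,-k,k)$ constructed in that proof then dominates $u$ for all $t\geq t_1$, forcing $h_\infty$ finite and $u(t,x)\to 0$ locally uniformly; in view of Lemma~\ref{lem:center} this upgrades to $\|u(t,\cdot)\|_{L^\infty([g(t),h(t)])}\to 0$, as required.

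The main obstacle is the middle paragraph. Early in $[t_0,t_0+\tau]$ the $L^\infty$ norm $\|u\|_\infty$ is still as large as $sh_0^{-1/\beta}$, which makes the barrier-induced speed of order $h_0^{-(1+\beta)/\beta}$, while the heat-kernel bound only becomes sharp after the crossover time $\sim h_0^{2(\beta-2)/(\beta-1)}$. Choosing $\tau=\tau(h_0)$ so that the integrated boundary displacement goes to $0$ at the same time as $\|u(t_1,\cdot)\|_\infty$ drops below the threshold $\sigma_1(h_0^{**})$ is the delicate point, and it is exactly what forces the numerical restriction $\beta>(3+\sqrt{13})/2$.
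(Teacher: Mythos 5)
Your overall architecture is the same as the paper's: feed the $L^1$ smallness from Lemma~\ref{lem:int-small} through the heat-kernel representation of Lemma~\ref{heat>} to get $\|u\|_\infty$ below the threshold needed for Theorem~\ref{thm:vanishing}(i), while keeping the interval $[g(\cdot),h(\cdot)]$ shorter than $\pi/\sqrt{f'(0)}$, and observing that the trade-off between these two requirements is what produces the exponent restriction $\beta>(3+\sqrt{13})/2$. That is the right plan. However, the middle paragraph, which is the technical heart, contains two substantive problems.

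First, the claimed free-boundary speed estimate $|h'(t)|,|g'(t)|\leq C\mu\|u(t,\cdot)\|_\infty^{1+\beta}$ does not follow from the quadratic barrier in Lemma~\ref{lem:bound-general}. Carrying out that computation (with $K=\sup_{[0,\infty)}f<\infty$ fixed) gives $|h'(t)|\lesssim \mu\sqrt{K\,\|u(t,\cdot)\|_\infty}$, an exponent $1/2$, not $1+\beta$; and I see no way to extract a $\|u\|_\infty^{1+\beta}$ law from the growth hypothesis. The paper does not try to integrate a pointwise-in-time speed bound against $\|u(t,\cdot)\|_\infty$ at all; instead it reuses the singular barrier $w(t,x)=\psi(x-ct)$ with $\psi(x)=(x-h_0)^{-1/\beta}-h_0^{-1/\beta}$ from the proof of Lemma~\ref{lem:int-small}, which directly yields the linear bound $h(t)\leq 2h_0+ct$ (and its mirror for $g$) with the explicit speed $c=\mu/(\beta h_0^{(1+\beta)/\beta})$. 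The whole exponent-bookkeeping then reduces to choosing two ODE times $\tau_1<\tau_2$ (defined via $\int_{s/h_0^{\omega_i}}^\infty dr/(-f(r))$ with $\omega_1=(1+2\beta)/(2\beta^2)$ and $\omega_2=(1+\beta+\delta)/(2\beta^2)$) so that simultaneously $c\tau_2\to 0$ and $h_0^{1-\omega_1}/\sqrt{\tau_2-\tau_1}\to 0$.

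Second, you omit a step that is genuinely needed and not merely cosmetic: to get a \emph{lower} bound on the smoothing window $\tau_2-\tau_1$, the paper must assume (WLOG, by replacing $f$ with a larger $f_1$) an \emph{upper} bound $-f(u)\leq K_\alpha u^{1+2\alpha}$ for large $u$, with $\alpha$ chosen strictly between $\beta$ and $(2\beta^2-2\beta-1)/(1+\beta)$. Your argument only invokes the lower bound $-f(u)\geq Lu^{1+2\beta}$ via the explicit decay estimate for $\zeta$; without the upper bound on $-f$, the ODE can drop too fast, $\tau_2-\tau_1$ can be arbitrarily small, and the heat-kernel factor $1/\sqrt{\tau_2-\tau_1}$ is uncontrolled. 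This is exactly where the specific number $(3+\sqrt{13})/2$ is born, so leaving it implicit is not just a presentation issue. For both reasons the argument as written does not close; replacing the speed estimate by the singular-barrier bound $h(t)\leq 2h_0+ct$ and adding the WLOG upper growth bound on $-f$ would bring it in line with the paper.
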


\begin{proof}
 By \eqref{cond alpha beta} we have
$$
\beta < \frac{2\beta^2 - 2\beta -1}{1+\beta}.
$$
Fix a constant $\alpha$ between them, we may assume without loss of
generality that
\begin{equation}\label{growth M1-2}
\limsup\limits_{s\rightarrow \infty} \frac{-f(s)}{s^{1+2\alpha}} <
\infty.
\end{equation}
Indeed, if \eqref{growth M1-2} does not hold, we can modify $f$ to
be $f_1 \in C^1$ such that $f(u) \leq f_1 (u)$ for $u\geq 0$ and
that $f_1$ satisfies \eqref{growth M1-2}. Replace $f$ by $f_1$ in
\eqref{p} and denote the problem by \eqref{p}$_1$. It is easily seen
that when a solution $u_1(t,x;\phi)$ of \eqref{p}$_1$ vanishes, the
solution $u(t,x;\phi)$ of \eqref{p} also vanishes. Hence we may
prove the lemma under the additional condition \eqref{growth M1-2}.

In what follows we always choose $h_0\in (0,1)$. Conditions
\eqref{growth M1} and \eqref{growth M1-2} imply that there exist $s
>1,\ K_\beta >0$ and $K_\alpha >0$ such that
\begin{equation}\label{growth M2}
K_\beta u^{1+2\beta} \leq -f(u) \leq K_\alpha u^{1+2\alpha} \qquad
\mbox{for } u\geq s.
\end{equation}
Moreover, we could have chosen $K_\beta=L$ given by \eqref{def-L}.
Therefore we can define $c$, $w(t,x)$ and $k(t)$ as in
 the proof of
Lemma \ref{lem:int-small} to deduce
\[
h(t)\leq 2h_0+ct \mbox{ for all } t>0.
\]
Similarly
\[
g(t)\geq -2h_0-ct \mbox{ for all } t>0.
\]

{\it Step 1. A bound from the proof of Lemma \ref{lem:int-small}}.
We denote $\omega_1 := \frac{1+2\beta}{2\beta^2} <1$; then as in the
proof of Lemma \ref{lem:int-small} we have
$$
\tau_1 := \int_{\frac{s}{h_0^{\omega_1}}}^{\infty} \frac{dr}{-f(r)}
\leq \frac{h_0^{2\beta \omega_1}} {2\beta K_\beta s^{2\beta}},
$$
$$
c\tau_1 \leq \frac{\mu}{2\beta^2 K_\beta s^{2\beta}} h_0,\quad\ \
\max\{-g(\tau_1), h(\tau_1)\} \leq 2h_0 +c\tau_1 \leq \Big(2+
\frac{\mu}{2\beta^2 K_\beta s^{2\beta}} \Big) h_0,
$$
and
\begin{equation}\label{int-small-M}
\int_{g(\tau_1)}^{h(\tau_1)} u(\tau_1, x; \phi)dx \leq \Big(4s +
\frac{\mu}{\beta^2 K_\beta s^{2\beta -1}} \Big) h_0^{1-\omega_1}.
\end{equation}

{\it Step 2. A bound for $g$ and $h$ at a later time $\tau_2$}. By
condition \eqref{cond alpha beta}, there exists $0<\delta <\beta$
such that
$$
\alpha <  \frac{2\beta^2 -2\beta -1}{1+\beta +\delta}.
$$
Hence
$$
1- \omega_1 =  \frac{2\beta^2 -2\beta -1}{2\beta^2} > \alpha
\omega_2 := \alpha \cdot \frac{1+\beta +\delta}{2\beta^2} .
$$
Note that $\omega_2 <\omega_1$ and so $h_0^{-\omega_2} <
h_0^{-\omega_1}$.

Define
$$
\tau_2 := \int_{\frac{s}{h_0^{\omega_2}}}^{\infty} \frac{dr}{-f(r)}
\leq \frac{h_0^{2\beta \omega_2}} {2\beta K_\beta s^{2\beta}} <
\tau^0_2 := \frac{1}{2\beta K_\beta s^{2\beta}}.
$$
Then
\begin{equation}\label{h02}
\max\{-g(\tau_2), h(\tau_2)\} \leq 2h_0 +c\tau_2 \leq 2h_0 +
\frac{\mu}{2\beta^2 K_\beta s^{2\beta}} h_0^{\frac{\delta}{\beta}}
\leq \frac{\pi}{3\sqrt{f'(0)}}
\end{equation}
provided $h_0$ is sufficiently small.

{\it Step 3. A key bound for $u$}. Direct calculation shows that
$$
\tau_2 - \tau_1 \geq
\int_{\frac{s}{h_0^{\omega_2}}}^{\frac{s}{h_0^{\omega_1}}}
\frac{dr}{K_\alpha r^{1+2\alpha} } = \frac{h_0^{2\alpha \omega_2}
(1- h_0^{2\alpha (\omega_1 -\omega_2)} )}{2\alpha K_\alpha
s^{2\alpha}} \geq \frac{h_0^{2\alpha \omega_2}}{4\alpha K_\alpha
s^{2\alpha}}
$$
provided that $h_0$ is sufficiently small such that
\begin{equation}\label{h03}
1-  h_0^{2\alpha (\omega_1 -\omega_2)} \geq \frac{1}{2}.
\end{equation}
Therefore,  for $g(\tau_2)\leq x\leq h(\tau_2)$, we have by
\eqref{int-small-M}
\begin{equation}\label{h04}
\frac{e^{K(\tau_2 -\tau_1)}}{2\sqrt{\pi (\tau_2 -\tau_1)}}
\int_{g(\tau_1)}^{h(\tau_1)} u(\tau_1, x;  \phi)dx  \leq
\widetilde{M} h_0^{1 -\omega_1 -\alpha \omega_2} < \sigma_1,
\end{equation}
provided $h_0 >0$ is sufficiently small, where $K>0$ is chosen such
that \eqref{cond2} holds, $\sigma_1
>0$ is small so that the conclusion in Theorem \ref{thm:vanishing}
(i) holds when $\|\phi\|_\infty\leq \sigma_1$, and
$$
\widetilde{M} :=  \frac{e^{K\tau_2^0}}{\sqrt{\pi}} \sqrt{4\alpha
K_\alpha s^{2\alpha}} \Big( 2s + \frac{\mu}{2\beta^2 K_\beta
s^{2\beta -1}} \Big).
$$

{\it Step 4. Completion of the proof}. For the above chosen $h_0
>0$,
$$
h(\tau_1) < h(\tau_2) < \frac{\pi}{3 \sqrt{f'(0)}},\; g(\tau_1) >
g(\tau_2) >- \frac{\pi}{3 \sqrt{f'(0)}}.
$$
By the proof of Lemma \ref{heat>} we know that
$$
u(\tau_1 +t, x; \phi) \leq \frac{e^{Kt}}{2\sqrt{\pi t}}
\int_{g(\tau_1)}^{h(\tau_1)} u(\tau_1, x;  \phi)dx\quad \mbox{for
all } t\geq 0.
$$
Hence for $ g(\tau_2)\leq x \leq h(\tau_2)$,
$$
u(\tau_2, x; \phi) \leq \frac{e^{K (\tau_2 -\tau_1)}}{2\sqrt{\pi
(\tau_2 -\tau_1)}} \int_{g(\tau_1)}^{h(\tau_1)} u(\tau_1, x; \phi)dx
< \sigma_1.
$$
Consequently, $u(\tau_2 +t, x;  \phi) \rightarrow 0$ by Theorem
\ref{thm:vanishing} (i).
\end{proof}

\vskip 6pt \noindent {\bf Proof of Proposition
\ref{prop:mono-sigma-infty}:}\ \ With the help of the above lemma,
one can proceed as in the proof of Propositions
\ref{prop:bi-sigma-infty} and \ref{prop:combus-sigma-infty}. \hfill
$\square$

\section{Semi-waves and spreading speed}\label{sec:speed}

Throughout this section we assume that $f$ is of type (f$_M$), or
(f$_B$), or (f$_C$), and $(u,g,h)$ is a solution of \eqref{p} for
which spreading happens. To determine the spreading speed, we will
construct suitable upper and lower solutions based on semi-waves and
waves of finite length with speed close to that of the semi-waves.

\subsection{Semi-waves}
\label{sec:semi-waves}

We call $q(z)$ a semi-wave with speed $c$ if $(c,q(z))$ satisfies
\begin{equation}\label{prob-q-infty}
\left\{
\begin{array}{l}
q'' -cq' +f(q)=0 \quad \mbox{ for } z\in (0,\infty),\\
q(0)=0, \ q(\infty)=1, \ q(z)>0\ \mbox{ for } z\in (0,\infty).
\end{array}
\right.
\end{equation}

As before, the first equation in \eqref{prob-q-infty} can be written
in the equivalent form
\begin{equation}
\label{q-p} q'=p,\; p'=cp-f(q).
\end{equation}
So a solution $q(z)$ of \eqref{prob-q-infty} corresponds to a
trajectory $(q(z), p(z))$ of \eqref{q-p} that starts from the point
$(0,\omega)$ $(\omega=q'(0)>0)$ in the $qp$-plane and ends at the
point $(1, 0)$ as $z\to+\infty$.

If $p(z)=q'(z)>0$ for all $z>0$, then the trajectory can be
expressed as a function $p=P(q),\; q\in [0,1]$, which satisfies
\begin{equation}
\label{P} \frac{dP}{dq} \equiv  P'=c-\frac{f(q)}{P} \; \mbox{ for }
q\in (0,1),\; P(0)=\omega,\; P(1)=0.
\end{equation}

It is easily checked that
$$
P_0 (q):= \sqrt{2\int_q^1 f(s)ds}, \quad  q\in [0,1],
$$
solves \eqref{P} with $c=0$ and $\omega=\omega^0:=\sqrt{2\int_0^1
f(s)ds}>0$. Moreover,
$$
P_0' (1) = -\sqrt{-f'(1)}.
$$

Suppose $c\geq 0$ and consider the equilibrium point $(1,0)$ of
\eqref{q-p}. A simple calculation shows that $(1,0)$ is a saddle
point, and hence by the theory of ODE (cf. \cite{Pet}) there are
exactly two trajectories of \eqref{q-p} that approach $(1,0)$  from
$q<1$;  one of them, denoted by $T_c$, has slope $\frac{c-\sqrt{c^2
- 4f'(1)}}{2}<0$ at $(1,0)$, and the other  has slope
$\frac{c+\sqrt{c^2 - 4f'(1)}}{2}>0$ at $(1,0)$. A part of $T_c$ that
lies in the set $S:=\{(q,p): 0\leq q\leq 1, p\geq 0\}$ and contains
$(1,0)$ is a curve which can be expressed as $p=P_c(q)$, $q\in [q_c,
1]$, where $q_c\in [0,1)$, $P_c(q)>0$ in $(q_c,1)$ and the point
$(q_c,P_c(q_c))$ lies on the boundary of $S$. Thus $P_c(q)$
satisfies
\begin{equation}
\label{P-c-1} P'=c-\frac{f(q)}{P} \mbox{ in $(q_c, 1)$},\; P(1)=0,\;
P'(1)=\frac{c-\sqrt{c^2 - 4f'(1)}}{2}.
\end{equation}
 Clearly, when $q_c>0$ we have
$P_c(q_c)=0$.

If $q_c>0$, then as $q$ is decreased from 1, $P_c(q)$  stays
positive and approaches $0$ from above as $q$ decreases to $q_c$.
Checking the sign of $P_c'(q)$ by the differential equation we
easily see that this cannot happen before $q$ reaches
$\tilde{\theta}$, where
$$
\tilde{\theta} = \left\{
 \begin{array}{ll}
  0, & \mbox{in } ({\rm f}_M)\ \mbox{case},\\
  \theta, & \mbox{in } ({\rm f}_B)\ \mbox{and } ({\rm f}_C) \mbox{ case}.
\end{array}
\right.
$$
Thus we always have $q_c\leq \tilde \theta$. For convenience of
notation, we assume that $P_c(q)=0$ for $q\in [0,q_c)$ when $q_c>0$,
so that $P_c(q)$ is always defined for $q\in [0,1]$. Denote
$T_c^1:=\{(q,p): p=P_c(q), q\in [0, 1]\}$.

Since
\[
0> \frac{c-\sqrt{c^2 - 4f'(1)}}{2}>-\sqrt{-f'(1)}=P_0'(1),
\]
we have $ P_c'(1)>P_0'(1)$, and by comparing the differential
equations of $P_c(q)$ and $P_0(q)$ we easily see that $P_c(q)$ never
touches $P_0(q)$ from below as $q$ decreases from 1 to $q_c$. Thus
\[
0<P_c(q) <P_0 (q)\quad \mbox{for } q\in (q_c, 1),
\]
which implies $P_c(q)<P_0(q)$ for $q\in (0,1)$.

In $(q_c, 1)$, we have
$$
 (P_c^2 - P^2_0)' = 2cP_c \leq 2cP_0 \leq
 2cM:=2c\|P_0\|_{L^\infty([0,1])}.
$$
Integrating this inequality over $[q, 1]\subset [q_c,1]$ we obtain
$$
P_0 (q) \geq P_c (q) \geq \sqrt{P^2_0 (q) -2cM (1-q)} \quad
\mbox{for } q\in (q_c, 1].
$$
This means that for sufficiently small $c>0$ we have $q_c=0$ and
$P_c(q)>0$ in $[0,1)$.

Define
\begin{equation}
\label{def-c0}
 c_0 := \sup \Lambda, \; \Lambda:=\{ \xi>0:  \ P_c(q)>0 \mbox{ in $[0,1)$
for all $c\in (0, \xi]$}\}.
\end{equation}
Then the above observation implies that $c_0\in (0,\infty]$. We
claim that
\begin{equation}
\label{c0-bound} c_0 \leq 2 \sqrt{K},\; \mbox{ where }
K:={\sup_{s>0} \frac{f(s)}{s}}.
\end{equation}
Since $f(u)\leq Ku$ for $u\geq 0$, we have
\[
P_c'\geq c-\frac{Kq}{P_c} \mbox{ in } (q_c, 1).
\]
If $c\geq 2\sqrt{K}$, then the linear function
$L(q)=\frac{c+\sqrt{c^2-4K}}{2}q$ satisfies
\[
L'=c-\frac{Kq}{L} \mbox{ for } q\in\R^1.
\]
It follows that $P_c(q) \leq L(q)$ in $(q_c, 1)$, which implies that
$P_c(q_c)=0$ and $c\not\in \Lambda$. Therefore $c_0\leq c$  for any
such $c$, and hence $c_0\leq 2\sqrt{K}$.

\begin{lem}
\label{p-c} For any $0\leq c_1 <c_2\leq c_0$ and $\bar{c}\geq 0$,
$$
P_{c_1} (q) > P_{c_2} (q)\; \mbox{in } [0,1), \;
\lim\limits_{c\rightarrow \bar{c}} P_c(q) = P_{\bar{c}}(q) \;\mbox{
uniformly in $[0,1]$.}
$$
Moreover,   $P_{c_0}(0)=0$ and $P_{c_0}(q)>0$ in $(0,1)$.
Furthermore, when $f$ is of $({\rm f}_B)$ or of $({\rm f}_C)$ type,
$q_c>0$ for $c>c_0$, and when $f$ is of $({\rm f}_M)$ type,
$P_c(0)=0,\; P_c(q)>0$ in $(0,1)$ for all $c\geq c_0$.
\end{lem}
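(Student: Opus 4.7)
The proof proceeds in four steps: monotonicity, continuity in $c$, the identification $P_{c_0}(0)=0$ with $P_{c_0}>0$ on $(0,1)$, and the case analysis for $c>c_0$. For monotonicity, I will use a first-contact argument in the ODE $P'=c-f(q)/P$. A direct computation shows $P_c'(1)=(c-\sqrt{c^2-4f'(1)})/2$ is strictly increasing in $c\geq 0$, which forces $P_{c_1}>P_{c_2}$ immediately to the left of $q=1$. If these curves first meet at $q_0\in[0,1)$, the common value $p_0:=P_{c_1}(q_0)=P_{c_2}(q_0)$ is positive (since $c_1<c_0$ forces $P_{c_1}>0$ on $[0,1)$ via the down-set property of $\Lambda$), so $(P_{c_1}-P_{c_2})'(q_0)=c_1-c_2<0$ contradicts the fact that $P_{c_1}-P_{c_2}\geq 0$ on $(q_0,1)$ with equality at $q_0$.

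For continuity, the stable manifold theorem at the hyperbolic saddle $(1,0)$ provides a $C^1$ parametrization of $T_c^1$ in a uniform neighborhood of $(1,0)$ depending $C^1$ on $c$; away from $(1,0)$ and from degenerate endpoints, standard continuous dependence of \eqref{pq} on parameters applies. The monotonicity of the previous step then brackets $P_c$ between $P_{c\pm\delta}$, and combined with the preamble's bound $P_c(q)^2\geq P_0(q)^2-2cM(1-q)$, this upgrades pointwise convergence to uniform convergence on $[0,1]$.

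For the identification of $P_{c_0}$, the set $U:=\{c>0:P_c>0\text{ on }[0,1)\}$ is open by continuity, and a small extension of the monotonicity argument to any $c_1<c_2\in U$ makes $U$ a down-set; hence $U=\Lambda$, and openness together with $c_0<\infty$ forces $c_0=\sup U\notin U$. Therefore $P_{c_0}$ must vanish somewhere in $[0,1)$, necessarily at $q=q_{c_0}$, with $P_{c_0}>0$ on $(q_{c_0},1)$. In case (f$_M$), the preamble bound $q_{c_0}\leq\tilde\theta=0$ immediately closes the argument. For (f$_B$), reaching $(q_{c_0},0)$ from $p>0$ requires $f(q_{c_0})\geq 0$, which combined with $f\leq 0$ on $[0,\theta]$ leaves $q_{c_0}=\theta$ as the only alternative to $q_{c_0}=0$; but on $(0,\theta)$ we have $P_c'=c+|f(q)|/P_c>0$, so $P_c$ is increasing on $[0,\theta]$, and integrating gives $P_c(\theta)^2\geq\int_0^\theta|f|>0$, contradicting $P_c(\theta)\to 0$ as $c\to c_0^-$. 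For (f$_C$), the strip $[0,\theta]$ has $f\equiv 0$, giving the explicit solution $P_c(q)=P_c(0)+cq$ there; if $q_{c_0}\in(0,\theta]$ then $P_c(0)=P_c(q_{c_0})-cq_{c_0}\to -c_0 q_{c_0}<0$ as $c\to c_0^-$, contradicting $P_c(0)>0$. Hence $q_{c_0}=0$ in all three cases.

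For $c>c_0$: in (f$_M$), openness and the down-set property of $U$ give $c\notin U$, and the bound $q_c\leq\tilde\theta=0$ forces $P_c(0)=0$ with $P_c>0$ on $(0,1)$. For (f$_B$) and (f$_C$), I will rule out $q_c=0$ by uniqueness of the heteroclinic from $(0,0)$ to $(1,0)$: if $c_0$ and some $c>c_0$ both gave such heteroclinics, the slope at $(0,0)$ is $\lambda_+(c)=(c+\sqrt{c^2-4f'(0)})/2$ (taken as $\lambda_+(c)=c$ for (f$_C$) via the explicit strip solution), strictly increasing in $c$; then $P_c>P_{c_0}$ near $q=0$ while the monotonicity argument at $q=1$ gives $P_c<P_{c_0}$ near $q=1$, forcing a crossing at some $q_*\in(0,1)$ where $(P_c-P_{c_0})'(q_*)=c-c_0>0$ conflicts with the required non-positive sign at a downward crossing. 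The principal obstacle is the endpoint analysis—uniform continuity at the singular endpoint $q=q_{\bar c}$ of the ODE in Step 2 and the nonhyperbolicity of $(0,0)$ in case (f$_C$) in Step 4—which is handled by exploiting the explicit exponential flow in the degenerate strip $[0,\theta]$ rather than invoking generic smooth-dependence results.
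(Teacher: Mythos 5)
Your monotonicity, continuity, and $c>c_0$ arguments are essentially the same as the paper's and are correct (the $c>c_0$ step uses the same crossing-of-slopes contradiction at $(0,0)$ and $(1,0)$). The genuine gap is in the $({\rm f}_B)$ case of Step 3. You assert that ``reaching $(q_{c_0},0)$ from $p>0$ requires $f(q_{c_0})\geq 0$,'' but the inequality is reversed: at $(q_{c_0},0)$ the vector field of \eqref{q-p} is $(q',p')=(0,-f(q_{c_0}))$, so for the trajectory to enter $S$ through $p=0$ as $z$ increases one needs $p'\geq 0$, i.e.\ $f(q_{c_0})\leq 0$. This is precisely what the paper's preamble shows when it concludes $q_c\leq\tilde\theta=\theta$; it does \emph{not} narrow $q_{c_0}$ to $\{0,\theta\}$. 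Consequently your integration, applied only at $q=\theta$, rules out $q_{c_0}=\theta$ but leaves the entire range $q_{c_0}\in(0,\theta)$ unaddressed, so as written the argument is incomplete.

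The estimate you introduce does close the gap if applied at an arbitrary point of $(0,\theta]$ rather than only at $\theta$: for $c<c_0$ (so $P_c>0$ on $[0,1)$) and any $\bar q\in(0,\theta]$, $(P_c^2)'=2cP_c-2f\geq 2|f|$ on $(0,\bar q)$ gives the $c$-independent bound $P_c(\bar q)^2\geq 2\int_0^{\bar q}|f(s)|\,ds>0$; letting $c\nearrow c_0$ and using your continuity step yields $P_{c_0}(\bar q)>0$ for every $\bar q\in(0,\theta]$, forcing $q_{c_0}=0$. With that repair your route is in fact tidier than the paper's, which constructs a barrier trajectory $T_{c,\eta}$ through $(\eta,0)$ for $\eta\in(0,q_{c_0})$ and argues via its persistence under perturbation of $c$. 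So the strategy is sound; the sign of the flow-direction inequality, and the resulting restriction of the range of $q_{c_0}$ before integrating, is the defect that needs fixing.
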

\begin{proof} When $0\leq c_1<c_2$, from the formula for $P_c'(1)$
we find $P_{c_1}'(1)<P_{c_2}'(1)$. Since
\[
P_{c_1}'<c_2-\frac{f(q)}{P_{c_1}},
\]
we find that as $q$ decreases from $q=1$, the curve $p=P_{c_2}(q)$
remains below the curve $p=P_{c_1}(q)$. Therefore, $q_{c_2}\geq
q_{c_1}$ and  for $q\in (q_{c_2}, 1)$, $P_{c_1} (q) > P_{c_2} (q)$.
It follows that $P_{c}(q)$ is non-increasing in $c$ for $q\in
[0,1]$. Therefore for any $\bar c\geq 0$, as $c$ increases to $\bar
c$, $P_c(q)$ converges monotonically to some $ R(q)$ in $[0,1]$
{uniformly}. $R(q)$ represents a trajectory of \eqref{q-p} with
$c=\bar c$ that approaches $(1,0)$ from $q<1$, and its slope at
$(1,0)$ is negative. Therefore by the uniqueness of $T_{\bar c}$,
$R(q)$ must coincide with $P_{\bar c}(q)$. We can similarly show
that $P_c(q)$ converges to $P_{\bar c}(q)$ when $c$ decreases to
$\bar c$. Thus the curve $T_c^1$ varies continuously in $c$ for
$c\geq 0$.

If we assume further that $c_2< c_0$, then by the definition of
$c_0$ we know that $P_{c_i}(q)>0$ in $[0, 1)$ for $i=1,2$. Thus in
this case the above argument  yields $P_{c_1}(q)>P_{c_2}(q)$ for
$q\in [0,1)$.

We now consider $P_{c_0}(q)$. We must have $P_{c_0}(0)=0$, for
otherwise $P_{c_0}(0)>0$ which implies that $P_{c_1}(0)>0$ for
$c_1>c_0$ but close to $c_0$. However, this implies that $P_c(q)>0$
in $[0,1)$ for all $c\in (0, c_1]$ and thus $(0,c_1]\subset
\Lambda$. But this implies $c_0\geq c_1$, a contradiction. Thus we
always have $P_{c_0}(0)=0$.

To show that $P_{c_0}(q)>0$ in $(0,1)$, it suffices to prove that
$q_{c_0}=0$.
 Suppose by way of
contradiction that $q_{c_0}>0$. Since $q_{c_0}\leq\tilde\theta$, and
$\tilde \theta=0$ when $f$ is monostable, we find that $q_{c_0}>0$
cannot happen if $f$ is of (f$_M$) type.

Suppose that $f$ is of bistable type. Choose $\eta\in (0,
q_{c_0})\subset (0,\theta)$. Since $(\eta, 0)$ is a regular point
for \eqref{q-p}, there is a unique trajectory $T_{c,\eta}$ passing
through $(\eta, 0)$. Since $f(\eta)<0$, $T_{c,\eta}$ has a part in
$S$ that is a curve that can be expressed by $p=V_c(q)$, $q\in
[\eta, q^c]$ for some $q^c\in (\eta, 1]$, and $(q^c, V_c(q^c))$ lies
on the boundary of $S$, $V_c(q)>0$ in $(\eta, q^c)$,
\[
V_c'=c-\frac{f(q)}{V_c} \mbox{ in } (\eta, q^c).
\]
The curve $ p=V_{c_0}(q),\; q\in (\eta, q^{c_0})$,  is increasing
for $q\in (0,\theta)$ and it cannot intersect $T^1_{c_0}$. Hence it
remains above $T^1_{c_0}$. This implies that $q^{c_0}=1$. It cannot
join $(1,0)$ since $T_{c_0}$ is the only trajectory approaching this
point with a non-positive slope there. Therefore necessarily
$V_{c_0}(1)>0$. Thus this curve is a piece of trajectory of
\eqref{q-p} with $c=c_0$ that stays away from any equilibrium point.
Hence for all $c$ close to $c_0$, $V_c(q)$ stays close to
$V_{c_0}(q)$ in $[\eta, 1]$. In particular, for all $c<c_0$ close to
$c_0$, $V_c(q)>0$ in $(\eta, 1]$. This implies that for such $c$,
$T^1_c$ must lie below the curve $p=V_c(q) (\eta\leq q\leq 1)$. This
is impossible since by the definition of $c_0$, for such $c$,
$P_c(q)>0$ in $[0,1)$, which leads to $0=V_c(\eta)\geq P_c(\eta)>0$.

For the case that $f$ is of combustion type, the arguments need to
be modified, since now $(\eta, 0)$ is an equilibrium point of
\eqref{q-p}. Choose $\epsilon>0$ small so that $\eta-\epsilon
c^{-1}>0$. Then $(\eta,\epsilon)$ is a regular point of \eqref{q-p},
and hence there is a unique trajectory $T_{\eta,c,\epsilon}$ passing
through it. Since $f(u)=0$ in $(0,\theta]$, we see that the
trajectory is a straight line with slope $c$ near $(\eta,\epsilon)$,
and it intersects the $q$-axis at $(\eta-\epsilon c^{-1}, 0)$. Much
as in the bistable case above a piece of $T_{\eta,c,\epsilon}$ in
$S$ can be expressed as $p=\hat V_c(q)$, and $p=\hat V_{c_0}(q)$
lies above $T^1_{c_0}$ with $\hat V_{c_0}(1)>0$. We can now derive a
contradiction in the same way as in the bistable case. Thus we must
have $q_{c_0}=0$.

If $f$ is of (f$_M$) type, then for $c\geq  c_0$, $P_c(0)\leq
P_{c_0}(0)=0$. On the other hand, since $q_c=0$ we know that
$P_c(q)>0$ for $q\in (0, 1)$. Thus for such $c$, $P_c(0)=0$ and
$P_c(q)>0$ in $(0,1)$.

If $f$ is of (f$_B$) type, then $(0,0)$ is a saddle equilibrium
point of \eqref{q-p} for all $c\geq 0$, and from the ODE theory we
find that there are exactly two trajectories of \eqref{q-p} that
approach $(0,0)$ from $q>0$, one denoted by $T^0_c$ has slope
$\frac{c+\sqrt{c^2-4f'(0)}}{2}>0$ at $(0,0)$, the other has slope
$\frac{c-\sqrt{c^2-4f'(0)}}{2}<0$ at (0,0). For such $f$, if there
exists $c>c_0$ such that $q_c=0$, then we must have $P_c(0)=0$ for
otherwise, $P_c(0)>0$ and by the monotonicity of $P_c(q)$ on $c$, we
deduce $c_0\geq c$, contradicting to the choice of $c$. Thus
$P_c(0)=0$, and $p=P_c(q)$, $q\in[0,1]$, represents a trajectory of
\eqref{q-p} that connects $(0,0)$ and $(1,0)$. So it must coincide
with $T^0_c$ and hence $P'_c(0)=\frac{c+\sqrt{c^2-4f'(0)}}{2}>0$.
For the same reason we have
$P'_{c_0}(0)=\frac{c_0+\sqrt{c_0^2-4f'(0)}}{2}>0$. It follows that
$P'_c(0)>P_{c_0}'(0)$. On the other hand,  from
\[
P'_c(1)=\frac{c-\sqrt{c^2-4f'(1)}}{2} \mbox{ and }
P'_{c_0}(1)=\frac{c_0-\sqrt{c_0^2-4f'(1)}}{2}
\]
we deduce $P_c'(1)>P_{c_0}'(1)$. Thus there exists $q_*\in (0,1)$
such that $P_c(q)>P_{c_0}(q)$ in $(0, q_*)$ and
$P_c(q_*)>P_{c_0}(q_*)$. It follows that $P_c'(q_*)\leq
P_{c_0}'(q_*)$. However, from the differential equations we deduce
$P'_c(q_*)-P'_{c_0}(q_*)=c-c_0>0$. This contradiction shows that we
must have $q_c>0$ for $c>c_0$ in the (f$_B$) case.

If $f$ is of (f$_C$) type, and if $q_c=0$ for some $c>c_0$, then we
have $0\leq P_c(0)\leq P_{c_0}(0)=0$, and thus $P_c(0)=0$. We notice
from the differential equation for $P_c(q)$ that $P'_c(q)=c$  in
$(0,\theta]$ and hence $P_c(q)=cq$ in this range. For the same
reason $P_{c_0}(q)=c_0q$ in $(0,\theta]$. Thus we again have
$P_c'(0)>P'_{c_0}(0)$. We can now derive a contradiction as in the
(f$_B$) case above.

The proof of the lemma is now complete.
\end{proof}

\begin{thm}
\label{waves} Let $c_0$ and $P_{c_0}(q)$ be defined as above. Then
the trajectory represented by $p=P_{c_0}(q)$, $q\in (0,1)$, gives
rise to a solution $q_0(z)$ of the problem
\begin{equation}
\label{tw-c_0} \left\{
\begin{array}{l}
q'' -cq' +f(q)=0 \quad \mbox{ for } z\in \R^1,\\
q(-\infty)=0, \ q(\infty)=1, \ q(z)>0\ \mbox{ for } z\in \R^1,
\end{array}
\right.
\end{equation}
with $c=c_0$. Moreover, $q_0(z)$ is unique up to translation of the
variable $z$. This problem has no solution for any other nonnegative
value of $c$ if $f$ is of $({\rm f}_B)$ or of $({\rm f}_C)$ type,
and when $f$ is of $({\rm f}_M)$ type, it has a unique solution (up
to translation) for every $c\geq c_0$, and has no solution for $c\in
[0, c_0)$.

For each $\mu>0$, there exists a unique $c^*=c^*_\mu\in (0, c_0)$
such that $P_{c^*}(0)=\frac{c^*}{\mu}$. Moreover,
\eqref{prob-q-infty} has a unique solution $(c, q)=(c^*, q^*)$ satisfying $q'(0)=c/\mu$,
and $c^*_\mu$ is increasing in $\mu$ with
\[
\lim_{\mu\to\infty}c^*_\mu=c_0.
\]
\end{thm}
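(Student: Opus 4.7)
The plan is to assemble Theorem~\ref{waves} by exploiting the phase-plane machinery already packaged in Lemma~\ref{p-c}, translating each of the four distinct assertions into statements about the family of curves $p=P_c(q)$.

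First, to produce the full-line traveling wave $q_0(z)$ for \eqref{tw-c_0} at $c=c_0$, I would start from the trajectory $\{p=P_{c_0}(q):q\in[0,1]\}$, which by Lemma~\ref{p-c} satisfies $P_{c_0}(0)=P_{c_0}(1)=0$ and $P_{c_0}>0$ on $(0,1)$. Define $q_0$ implicitly by
\[
z=\int_{1/2}^{q_0(z)}\frac{du}{P_{c_0}(u)}.
\]
The limits $q_0(\mp\infty)=0,1$ follow from divergence of this integral at both endpoints: near $u=1$, $P_{c_0}(u)\sim|P_{c_0}'(1)|(1-u)$; near $u=0$, the saddle/node structure in the (f$_M$) and (f$_B$) cases gives $P_{c_0}(u)=O(u)$, while in the (f$_C$) case the relation $P'=c_0$ on $[0,\theta]$ forces $P_{c_0}(u)=c_0 u$ on that interval, again $O(u)$. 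Uniqueness up to translation, and non-existence at $c\neq c_0$ in the (f$_B$), (f$_C$) cases, follow from the observation that any solution of \eqref{tw-c_0} corresponds to a heteroclinic orbit in $\{p>0\}$ from $(0,0)$ to $(1,0)$. Since $(1,0)$ is a saddle whose unique stable-manifold branch in $\{q<1,p>0\}$ is $T_c^1$, any such orbit must coincide with $T_c^1$; for it to originate at $(0,0)$ requires $P_c(0)=0$, which by Lemma~\ref{p-c} happens only at $c=c_0$. In the (f$_M$) case, existence for every $c\geq c_0$ and non-existence for $c<c_0$ I would invoke from the classical Aronson--Weinberger theory \cite{AW1}; note that a spiral at $(0,0)$ (which occurs when $c<2\sqrt{f'(0)}\leq c_0$) precludes a monotone wave.

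For the unique $c^*=c^*_\mu$ and the corresponding semi-wave, consider the function $\Phi(c):=P_c(0)-c/\mu$ on $[0,c_0]$. By Lemma~\ref{p-c}, $\Phi$ is continuous and strictly decreasing in $c$; the endpoint values $\Phi(0)=\omega^0>0$ and $\Phi(c_0)=-c_0/\mu<0$ yield a unique zero $c^*\in(0,c_0)$ satisfying $P_{c^*}(0)=c^*/\mu$. The semi-wave $q^*$ is then obtained by solving $q'=P_{c^*}(q)$ with $q(0)=0$; because $P_{c^*}(0)=c^*/\mu>0$ the Cauchy problem is non-singular at $z=0$, $q^*$ is strictly increasing, and the divergence of $\int^1 du/P_{c^*}(u)$ near $u=1$ gives $q^*(+\infty)=1$. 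By construction $\mu q^{*\prime}(0)=c^*$. Conversely, any $(c,q)$ solving \eqref{prob-q-infty} with $q'(0)=c/\mu$ has a trajectory that must coincide with $T_c^1$, forcing $P_c(0)=c/\mu$ and hence $c=c^*$ by uniqueness of the zero of $\Phi$.

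Finally, writing $\Phi_\mu(c)=P_c(0)-c/\mu$, for $\mu_1<\mu_2$ the identity $\Phi_{\mu_2}(c^*_{\mu_1})=\Phi_{\mu_1}(c^*_{\mu_1})+c^*_{\mu_1}(1/\mu_1-1/\mu_2)>0$ combined with strict monotonicity of $\Phi_{\mu_2}$ yields $c^*_{\mu_2}>c^*_{\mu_1}$. Since $c^*_\mu\leq c_0$ is bounded, $c^*_\mu/\mu\to0$ as $\mu\to\infty$, so $P_{c^*_\mu}(0)\to 0$; strict monotonicity of $c\mapsto P_c(0)$ on $[0,c_0]$ then forces $c^*_\mu\to c_0$. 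The principal obstacle I anticipate is the uniqueness/non-existence analysis for full-line waves at $c\neq c_0$ in the (f$_C$) case, where the segment $\{(q,0):0\leq q\leq\theta\}$ consists entirely of degenerate equilibria, so that the heteroclinic argument at $(0,0)$ requires a careful limiting analysis of the stable-manifold branch through this non-hyperbolic region, parallel to the argument already used in the proof of Lemma~\ref{p-c}.
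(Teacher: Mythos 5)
Your proposal is essentially the same phase-plane argument as the paper's: both reduce the traveling wave to the trajectory $p=P_c(q)$, use the facts collected in Lemma~\ref{p-c} about when this trajectory joins $(1,0)$ to $(0,0)$, define $\xi_\mu(c)=P_c(0)-c/\mu$ (your $\Phi$) and extract $c^*$ as its unique zero, and then read off monotonicity and the limit $c^*_\mu\to c_0$ from the monotone dependence of $P_c(0)$ on $c$. The one genuinely different step is the (f$_M$) case: you outsource existence for $c\geq c_0$ and non-existence for $c<c_0$ to Aronson--Weinberger, whereas the paper derives both directly from the last assertion of its own Lemma~\ref{p-c} (``$P_c(0)=0$, $P_c(q)>0$ in $(0,1)$ for all $c\geq c_0$'') together with $P_c(0)>0$ for $c<c_0$; the paper's route keeps the argument self-contained at no extra cost and avoids needing to verify that your spiral criterion $c<2\sqrt{f'(0)}$ actually covers all of $[0,c_0)$, which it need not for general monostable $f$ where $c_0$ can exceed $2\sqrt{f'(0)}$. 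Also, the (f$_C$) degeneracy you flag as a potential obstacle is already dispatched inside Lemma~\ref{p-c} (the ``$q_c>0$ for $c>c_0$'' clause is proved there by the argument with $P_c(q)=cq$ on $(0,\theta]$), so nothing further is needed at this stage. Your explicit formula $z=\int_{1/2}^{q_0(z)} du/P_{c_0}(u)$ with the endpoint-divergence analysis is a slightly more hands-on version of the paper's one-line claim that the trajectory ``gives rise to'' $q_0$, and your algebraic verification of $\mu\mapsto c_\mu^*$ monotonicity replaces the paper's geometric intersection picture; both are fine.
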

\begin{proof}
Let $(q_0(z), p_0(z))$, $z\in\R^1$, be the trajectory of \eqref{q-p}
corresponding to $p=P_{c_0}(q)$, $q\in (0,1)$. Then clearly $q_0(z)$
satisfies \eqref{tw-c_0} with $c=c_0$. Conversely, a solution of
\eqref{tw-c_0} gives rise to a function $P(q)$ satisfying \eqref{P}.
Thus $P(q)\equiv P_c(q)$. The conclusions about the existence and
nonexistence of  solutions to  \eqref{tw-c_0} now follow directly
from Lemma \ref{p-c}. The solution is unique up to translation
because the trajectory $T_{c_0}^1$ is the only one that approaches
$(1,0)$ from $q<1$ that has a negative slope there.

By Lemma \ref{p-c} and the definition of $c_0$, we find that for
each $c\in [0,c_0)$, $P_c(0)>0$ and it decreases continuously as $c$
increases in $[0, c_0]$. Moreover, $P_0(0)>0$ and $P_{c_0}(0)=0$. We
now consider the continuous function
\[
\xi(c)=\xi_\mu(c):=P_c(0)-\frac{c}{\mu},\; c\in [0,c_0].
\]
By the above discussion we know that $\xi(c)$ is strictly decreasing
in $[0, c_0]$. Moreover, $\xi(0)=P_0(0)>0$ and $\xi(c_0)=-c_0
/\mu<0$. Thus there exists a unique $c^*=c^*_\mu\in (0, c_0)$ such
that $\xi(c^*)=0$.

If we view $(c^*_\mu, c_\mu^*/\mu)$ as the unique intersection point
of the decreasing curve $y=P_c(0)$ with the increasing line
$y=c/\mu$ in the $cy$-plane, then it is clear that $c^*_\mu$
increases to $c_0$ as $\mu$ increases to $\infty$.

Finally the curve $p=P_{c^*}(q)$, $q\in [0,1)$, corresponds to a
trajectory of \eqref{q-p}, say $(q^*(z), p^*(z))$, $z\in
[0,\infty)$, that connects the regular point $(0, P_{c^*}(0))$ with
the equilibrium $(1,0)$. It follows from \eqref{q-p} with $c=c^*$
that $(c^*, q^*)$ solves \eqref{prob-q-infty} with $(q^*)'(0)=c^*/\mu$. If $(c,q)$ is another
solution of \eqref{prob-q-infty} satisfying $q'(0)=c/\mu$, then it corresponds to a
trajectory of \eqref{q-p} connecting $(0,c/\mu)$ and $(1,0)$ in the
set $S$. Since for each $c\geq 0$ there is only one such trajectory
joining $(1,0)$, it coincides with  $p=P_c(q)$, $q\in [0,1)$. Thus
we necessarily have $P_c(0)=c/\mu$ and hence $c=c^*$. It follows
that $q=q^*$.

The proof is complete.
\end{proof}

\smallskip

\begin{remark}
\label{wave-semiwave} The function $q_0(z)$ is usually called a
traveling wave with speed $c_0$. Its existence is well known. Our
proof of the existence of $(c_0, q_0(z))$ is somewhat different from
\cite{AW1, AW2}, so that our version of the proof can be easily used
to obtain the semi-wave $q^*(z)$ and to reveal the relationship
between $c^*$ and $c_0$. Since $(1,0)$ is always a saddle
equilibrium point of \eqref{q-p}, our construction of the connecting
orbit between $(0,0)$ and $(1,0)$,  based on the latter point, is
slightly simpler, compared with that in \cite{AW1, AW2}, where the
construction is based on $(0,0)$ instead.
\end{remark}

Proposition \ref{prop:semi-wave} clearly follows from Theorem
\ref{waves}.

\bigskip

Next we show how a suitable perturbation of the above setting can be
used to produce a semi-wave that can be used to construct upper
solutions for \eqref{p}. Let $\hat{\theta}\in (0,1)$ be the biggest
maximum point of $f$ in $(0,1)$. For  small $\varepsilon
>0$, let $f_\varepsilon (u)$ be a $C^1$ function obtained by modifying
$f(u)$ over $ [\hat{\theta} , 2]$ such that $f(u) \leq f_\varepsilon
(u)$ for $u\in \R^1$, $f_\varepsilon (u)$ has a unique zero
$1+\varepsilon$ in $[\hat{\theta}, 2]$, $f'_\varepsilon
(1+\varepsilon) <0$, and $f_\varepsilon$ decreases to $ f$ in the
$C^1$ norm over $[\hat\theta, 2]$ as $\varepsilon$ decreases to 0.

Replacing $f$ by $f_\varepsilon$, we have a parallel version of
Theorem \ref{waves}. We denote the corresponding wave and semi-wave
by $(c_0^\varepsilon, q_0^\varepsilon(z))$ and $(c^*_\varepsilon,
q^*_\varepsilon(z))$ respectively. We have the following result.

\begin{prop}
\label{q-perterb}
\[
c_0^\varepsilon\geq c_0,\; c^*_\varepsilon>c^*,\;
\lim_{\varepsilon\to 0}c_0^\varepsilon=c_0,\; \lim_{\varepsilon\to
0}c^*_\varepsilon=c^*.
\]
\end{prop}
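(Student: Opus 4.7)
The plan is to compare, in the $(q, p)$ phase plane, the two curves $p = P_c(q)$ and $p = P_c^\varepsilon(q)$ via a linear ODE argument, then read off the claimed inequalities from Theorem \ref{waves}. First I fix $c \in [0, \min(c_0, c_0^\varepsilon))$, so that by Lemma \ref{p-c} both $P_c > 0$ on $[0, 1)$ and $P_c^\varepsilon > 0$ on $[0, 1+\varepsilon)$. Since the orbit behind $P_c$ enters its saddle at $(1, 0)$ but the orbit behind $P_c^\varepsilon$ does not enter its saddle until $(1+\varepsilon, 0)$, the function $\psi := P_c^\varepsilon - P_c$ satisfies $\psi(1) = P_c^\varepsilon(1) > 0$. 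Subtracting \eqref{P-c-1} from the analogous ODE for $P_c^\varepsilon$ (with $f_\varepsilon$ in place of $f$), I compute
\[
\psi'(q) = \frac{f(q)}{P_c(q)\,P_c^\varepsilon(q)}\,\psi(q) + \frac{f(q) - f_\varepsilon(q)}{P_c^\varepsilon(q)},
\]
and set $s = 1-q$ so this becomes $\tilde\psi' + \tilde g(s)\tilde\psi = \tilde H(s)$ on $[0, 1]$ with $\tilde\psi(0) > 0$ and $\tilde H = (f_\varepsilon - f)/P_c^\varepsilon \geq 0$. The coefficient $\tilde g$ will stay bounded on $[0,1]$; the only concern is near $s = 0$, where the saddle-point linearization $P_c(1-s) \sim -P_c'(1)\,s$ and $f(1-s) \sim -f'(1)\,s$ keeps $f/P_c$ finite. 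Variation of parameters then delivers $\tilde\psi(s) > 0$ on $[0, 1]$, yielding
\[
P_c^\varepsilon(0) > P_c(0) \qquad \text{for every } c \in [0, \min(c_0, c_0^\varepsilon)).
\]

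From this strict inequality both monotonicity claims follow. If $c_0^\varepsilon < c_0$, then taking $c = c_0^\varepsilon$ would give $c < c_0$ and hence $P_c(0) > 0$ by Lemma \ref{p-c}, contradicting $P_c^\varepsilon(0) = 0$; so $c_0^\varepsilon \geq c_0$. For the semi-wave speed I use that $c^* < c_0 \leq c_0^\varepsilon$ and apply the strict inequality at $c = c^*$ to get $\xi_\mu^\varepsilon(c^*) := P_{c^*}^\varepsilon(0) - c^*/\mu > P_{c^*}(0) - c^*/\mu = 0$; since $\xi_\mu^\varepsilon$ is strictly decreasing in $c$ with unique zero at $c^*_\varepsilon$ (proof of Theorem \ref{waves}), this forces $c^*_\varepsilon > c^*$.

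For the limits I appeal to continuous dependence of the stable manifold on parameters: as $\varepsilon \to 0$, $f_\varepsilon \to f$ in $C^1([\hat\theta, 2])$ and the saddle $(1+\varepsilon, 0)$ moves to $(1, 0)$, so $P_c^\varepsilon \to P_c$ in $C^1([0, 1])$ uniformly for $c$ in compact subsets. Hence $\xi_\mu^\varepsilon \to \xi_\mu$ pointwise, and because $\xi_\mu$ crosses zero strictly decreasingly at $c^*$, the intermediate value theorem applied to $\xi_\mu^\varepsilon$ on $(c^*-\delta, c^*+\delta)$ will localize $c^*_\varepsilon$ to any such interval for $\varepsilon$ small, giving $c^*_\varepsilon \to c^*$. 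The convergence $c_0^\varepsilon \to c_0$ is the standard continuous dependence of the minimum (for (f$_M$)) or unique (for (f$_B$), (f$_C$)) wave speed on the nonlinearity under $C^1$ perturbation. The main technical hurdle will be keeping the coefficient $f/(P_c P_c^\varepsilon)$ bounded up to $q = 1$, which rests entirely on the saddle-point expansion noted above.
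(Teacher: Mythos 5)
Your central device — writing $\psi=P^{\varepsilon}_{c}-P_{c}$, changing variables to $s=1-q$, and reading positivity of $\tilde\psi$ off the variation-of-parameters formula — is correct and gives an alternative to the paper's argument. The paper instead proves $P^{\varepsilon}_{c}>P_{c}$ on $(q_c,1)$ by a purely geometric no-crossing argument in the $(q,p)$-plane: since $f_\varepsilon\geq f$, a touching point would force the wrong slope comparison, so the curve behind $P^{\varepsilon}_{c}$ (which enters its saddle at $(1+\varepsilon,0)$) must stay strictly above. Your linear ODE version achieves the same conclusion and makes the role of the saddle linearization (boundedness of $f/(P_cP_c^\varepsilon)$ near $q=1$, thanks to $P_c^\varepsilon(1)>0$) explicit; that is a clean reduction of the "touching" obstruction to integrability of a coefficient. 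One small point you gloss over: your comparison is established for $c<\min(c_0,c_0^\varepsilon)$, but the contradiction you draw at $c=c_0^\varepsilon$ to conclude $c_0^\varepsilon\geq c_0$ is at the endpoint; you need either to let $c\nearrow c_0^\varepsilon$ and invoke the continuity of $P_c(0)$ in $c$ (Lemma~\ref{p-c}), or to check the coefficient remains integrable when $P_c^\varepsilon(0)=0$. This is fixable but should be said. The argument for $c^*_\varepsilon>c^*$ via $\xi^\varepsilon_\mu(c^*)>0$ and for $c^*_\varepsilon\to c^*$ via the intermediate value theorem is exactly what the paper does.

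The real gap is the last sentence: ``$c_0^\varepsilon\to c_0$ is the standard continuous dependence of the minimal (resp.\ unique) wave speed on the nonlinearity.'' The paper does not treat this as standard, and for good reason: $c_0^\varepsilon$ is, by construction, the threshold at which $P_c^\varepsilon(0)$ transitions from positive to zero, and the monotone-in-$\varepsilon$ pointwise convergence $P_c^\varepsilon(0)\to P_c(0)$ does not by itself identify the limit $\hat c_0:=\lim_\varepsilon c_0^\varepsilon$ with $c_0$. The paper's proof of $\hat c_0=c_0$ is a genuine phase-plane argument carried out separately for the three types: in the (f$_B$)/(f$_C$) cases one passes the inequality $P_c^\varepsilon(q)\geq cq$ on $[0,\theta]$ to the limit and then invokes the uniqueness of the heteroclinic at $c=c_0$; in the (f$_M$) case one produces an intermediate speed $c_1\in(c_0,\hat c_0)$, uses that $f_\varepsilon\equiv f$ on $[0,\hat\theta]$ to trap $P_c^\varepsilon$ below $P_{c_1}$ on that interval, and derives $P_c^\varepsilon(0)=0$ contradicting $c<c_0^\varepsilon$. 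You should either reproduce such an argument or give a precise citation; invoking ``the standard continuous dependence'' leaves the most delicate limit unproved.
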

\begin{proof}
Let $P_c^\varepsilon(q)$ denote the correspondent of $P_c(q)$. Since
$f_\varepsilon \geq  f$,  as $q$ decreases from $1+\varepsilon$, the
curve $p=P_c^\varepsilon(q)$ cannot touch the curve $p=P_c(q)$ from
above. As before, it cannot touch $p=0$ before $q$ reaches
$\tilde\theta$. Therefore $P_c^\varepsilon(q)$ is positive over
$(\tilde\theta, 1+\varepsilon)$ and $P_c^\varepsilon(q)>P_c(q)$ in
$[q_c,1)$. Thus for $c\in (0, c_0)$, $P_c^\varepsilon(0)>P_c(0)>0$.
This implies that $c_0^\varepsilon\geq c_0$.

Using the monotonicity of $f_\varepsilon$ on $\varepsilon$, we
easily deduce that $P_c^\varepsilon(q)$ is non-decreasing in
$\varepsilon$. Thus $P_c^\varepsilon(q)$ converges to some $ R(q)$
as $\varepsilon\to 0$ uniformly in $[0,1]$. Since $p=R(q)$
represents a trajectory of \eqref{q-p} that approaches $(1,0)$ with
a non-positive slope at $(1,0)$, and there is only one such
trajectory, $R(q)$ must coincide with $P_c(q)$. In particular, we
have $P^\varepsilon_c(0)\to P_c(0)$ as $\varepsilon\to 0$.

In view of the definition of $c^\varepsilon_0$, the monotonicity of
$P_c^\varepsilon(q)$ on $\varepsilon$ implies that $c^\varepsilon_0$
is nondecreasing in $\varepsilon$. Therefore $\hat
c_0:=\lim_{\varepsilon\to 0} c_0^\varepsilon$ exists and $\hat
c_0\geq c_0$.

Suppose $\hat c_0>c_0$, we are going to deduce a contradiction.
Choose $c\in (c_0, \hat c_0)$ and consider $P_c^\varepsilon(q)$.
Since $c<c_0^\varepsilon$, we have $P_c^\varepsilon(q)>0$ in
$[0,1+\varepsilon)$ for all $\varepsilon$.

Then in the case that $f$ is of (f$_B$) type or of (f$_C$) type, we
have $ (P_c^\varepsilon(q))' \geq c$ in $(0, \theta]$ and hence
$P_c^\varepsilon(q)\geq cq$ in $[0,\theta]$. Letting $\varepsilon\to
0$, we deduce $P_c(q)\geq cq$ in $[0,\theta]$. We already know from
the proof of Lemma \ref{p-c} that $P_c(q)>0$ in $(q_c,1)\supset
(\theta, 1)$. Thus $P_c(q)>0$ in $(0,1)$. If $P_c(0)>0$ then by the
monotonicity in $c$ we have $P_{c'}(0)>0$ for all $c'\in (0, c]$ and
hence $c_0\geq c$, a contradiction to our choice of $c$. If
$P_c(0)=0$, then $p=P_c(q)$, $q\in (0,1)$, represents a trajectory
of \eqref{q-p} connecting $(0,0)$ and $(1,0)$. By Theorem
\ref{waves} such a trajectory exists only if $c=c_0$, so we again
reach a contradiction.

Thus we have proved that $\lim_{\varepsilon\to
0}c_0^\varepsilon=c_0$ when $f$ is of (f$_B$) type or of (f$_C$)
type.

We now consider the case that $f$ is of (f$_M$) type. Suppose that
$\hat c_0>c_0$ and fix $c\in (c_0,\hat c_0)$. Note that from the
monotonicity of $c^\varepsilon_0 $ in $\varepsilon$, we always have
$c^\varepsilon_0\geq \hat c_0
>c$. Moreover, from the differential equation we easily see that as $\varepsilon$ decreases to $0$,
$P_c^\varepsilon(q)$ decreases to $P_c(q)$ uniformly in $[0,1]$, and
$P_{c}(q)<P_{c_1}(q)$ in $(0,1)$ if $c>c_1>c_0$. We fix such a
$c_1$.  Thus for sufficiently small $\varepsilon>0$,
$P_{c}^\varepsilon(\hat\theta)<P_{c_1}(\hat\theta)$. We now consider
$P_c^\varepsilon(q)$ for $q\in [0,\hat\theta]$. We notice that in
this range $f_\varepsilon(q)=f(q)$, and thus $P_c^\varepsilon(q)$
satisfies
\[
P'=c-\frac{f(q)}{P}
\]
for $q\in (0,\hat\theta]$. Since
$P_{c}^\varepsilon(\hat\theta)<P_{c_1}(\hat\theta)$, the curve
$p=P_{c}^\varepsilon(q)$ remains below the curve $p=P_{c_1}(q)$ as
$q$ is decreased from $q=\hat\theta$. Thus, due to $P_{c_1}(0)=0$
(because $c_1>c_0$), we necessarily have $P_c^{\varepsilon}(0)=0$.
On the other hand, due to $c<c_0^\varepsilon$, we must have
$P_c^\varepsilon(0)>0$. This contradiction  shows that $\hat
c_0=c_0$ in the monostable case as well.

For $c\in (0, c_0)$, since $P_{c}^\varepsilon(0)>P_c(0)$, we have
$\xi_\varepsilon(c):=P_c^\varepsilon(0)-c/\mu>\xi(c):=P_c(0)-c/\mu$.
It follows that $\xi(c^*)=0<\xi_\varepsilon(c^*)$, which implies
$c^*_\varepsilon>c^*$ since $\xi_{\varepsilon}(c)$ is strictly
decreasing in $c$. Since  $P^\varepsilon_{c} (0)$ is non-decreasing
in $\varepsilon$, we  deduce that $c^*_\varepsilon$ is
non-decreasing in $\varepsilon$.  The fact that $c^*_\varepsilon\to
c^*$ as $\varepsilon\to 0$ now follows easily from the uniqueness of
$c^*$ as a solution of $\xi(c)=0$.

The proof is now complete.
\end{proof}

Finally we show how a semi-wave can be perturbed to give a wave of
finite length which is more convenient to use in the construction of
lower solutions for \eqref{p}. So let $(c^*, q^*)$ be the unique
solution to \eqref{prob-q-infty}. Denote $\omega^*:=c^*/\mu$ and for
each $c\in (0, c^*)$ consider the problem
\begin{equation}
\label{perturb-semiwave} P'=c-\frac{f(q)}{P},\; P(0)=\omega^*.
\end{equation}
Since $c<c^*$, we easily see that the unique solution $P^c(q)$ of
this problem stays below $P_{c^*}(q)$ as $q$ increases from 0.
Therefore there exists some $Q^c \in (0,1]$ such that $P^c(q)>0$ in
$[0,Q^c)$ and $P^c (Q^c) =0$. We must have $Q^c<1$ because otherwise
we would have $P^c(q)\equiv P_c(q)$ due to the uniqueness of the
trajectory of \eqref{q-p} that approaches $(1,0)$ from $q<1$ with a
non-positive slope there, but this is impossible since $P_c(0)
>P_{c^*}(0)=\omega^*=P^c(0)$. It is also easily seen that, as $c$
increases to $c^*$, $Q^c$ increases to 1 and $P^c(q)\to P_{c^*}(q)$
uniformly, in the sense that $\|P^c-P_{c^*}\|_{L^\infty([0,
Q^c])}\to 0$. Let $(q^c(z), p^c(z))$ denote the trajectory of
\eqref{q-p} represented by the curve $p=P^c(q)$, $q\in [0, Q^c]$,
with $(q^c(0), p^c(0))=(0,\omega^*)$ and $(q^c(z^c), p^c(z^c))=(Q^c,
0)$, then clearly $q^c(z)$ solves \eqref{prob-q} with $Z=z^c$.
Moreover,  we have
\begin{equation}\label{c<c*}
c <c^* =  \mu \omega^* = \mu \, (q^c)'(0)
\end{equation}
and
\begin{equation}
\label{q^c-q^*}\lim_{c\nearrow\, c^*} z^c=+\infty,\;
\lim_{c\nearrow\, c^*}\|q^c-q^*\|_{L^\infty([0, z^c])}=0.
\end{equation}

\subsection{Asymptotic spreading speed}

Let $(c^*, q^*(z))$ be given as in Theorem \ref{waves}. For $c\in
(0, c^*)$, let $q^c(z)$, $Q^c$ and $z^c$ be as in the previous
subsection. For $t\geq 0$ we define
\[
k(t)=k_c(t):=z^c+ct\] and
\[
w(t,x)=w_c(t,x):=\left\{\begin{array}{ll} q^c(k(t)-x), & x\in [ct, k(t)],\\
q^c(z^c), & x\in [-ct, ct],\\
q^c(k(t)+x), & x\in [-k(t), -ct].
\end{array}
\right.
\]
We will use $(w, -k, k)$ as a lower solution to \eqref{p} in the
proof of the following result.

\begin{lem}\label{lem:u-to-1} Let $(u, g, h)$ be a solution of
\eqref{p} for which spreading happens. Then for any $c\in (0,c^*)$
and any $\delta\in (0, -f'(1))$, there exist positive numbers $T_*$
and $ M$  such that for $t\geq T_*$,
\begin{itemize}
\item[\rm (i)]
$
[g(t), h(t)]\supset [-ct, ct];
$
\item[\rm (ii)]
$ u(t,x)\geq 1-Me^{-\tilde\delta t}\quad \mbox{for } x\in [-ct, ct] \mbox{ and some } \tilde\delta=\tilde\delta(c)\in (0,\delta); $
\item[\rm (iii)]
$ u(t,x) \leq 1 + M e^{-\delta t} \quad \mbox{for } x \in [g(t),
h(t)]. $
\end{itemize}
\end{lem}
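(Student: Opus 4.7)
The plan is to prove (iii), (i), (ii) in that order: (iii) will follow from a spatial-constant ODE comparison exploiting $f(u)<0$ for $u$ slightly larger than $1$; (i) will follow from using the finite-length wave $(w_c,-k_c,k_c)$ defined just above the statement as a lower sub-solution; and (ii) will be obtained by linearizing $f$ at $u=1$ on the region produced in (i) and comparing with an explicit exponential barrier.

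For (iii), let $\overline v(t)$ solve the ODE $\overline v'=f(\overline v)$ with $\overline v(0)=1+\|u_0\|_\infty$. Since $f(1)=0$, $f(u)<0$ for $u$ slightly above $1$, and $f'(1)<0$, $\overline v$ will decrease monotonically to $1$, and linearizing the ODE at $u=1$ will yield $\overline v(t)-1\leq Me^{-\delta t}$ for any prescribed $\delta\in(0,-f'(1))$. As the spatial-constant function $\overline v(t)$ is a classical super-solution of \eqref{p} with $\overline v(t)>0=u(t,g(t))=u(t,h(t))$, Lemma~\ref{lem:comp1} will give $u(t,x)\leq\overline v(t)\leq 1+Me^{-\delta t}$. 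For (i), fix $c'\in(c,c^*)$. Because spreading occurs, $u(T_0,\cdot)\to 1$ locally uniformly as $T_0\to\infty$, so for $T_0$ large enough $[g(T_0),h(T_0)]\supset[-z^{c'},z^{c'}]$ and $u(T_0,x)\geq Q^{c'}\geq w_{c'}(0,x)$ on $[-z^{c'},z^{c'}]$ (recall $w_{c'}(0,x)\leq q^{c'}(z^{c'})=Q^{c'}$). The construction just before the lemma, together with the remark following Theorem~\ref{thm:spreading} which notes that the jump of $w_{xx}$ at $x=\pm c't$ does not obstruct comparison, shows $(w_{c'},-k_{c'},k_{c'})$ is a genuine lower solution of \eqref{p}. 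The lower-solution analogue of Lemma~\ref{lem:comp1}, applied to $(u(\cdot+T_0,\cdot),g(\cdot+T_0),h(\cdot+T_0))$, will yield $h(t+T_0)\geq k_{c'}(t)=z^{c'}+c't$, $g(t+T_0)\leq-k_{c'}(t)$, and $u(t+T_0,x)\geq w_{c'}(t,x)$ on $[-k_{c'}(t),k_{c'}(t)]$. Since $c'>c$, $k_{c'}(t-T_0)\geq ct$ for all $t$ sufficiently large, which proves (i); moreover $u(t,x)\geq Q^{c'}$ on $|x|\leq c'(t-T_0)\supseteq[-ct,ct]$ for such $t$, and $Q^{c'}$ can be made as close to $1$ as desired by taking $c'$ close to $c^*$.

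For (ii), the plan is to upgrade this rough lower bound to exponential decay by linearizing $f$ at $u=1$. Given $\delta\in(0,-f'(1))$, fix $\delta_1\in(\delta,-f'(1))$ and $\eta_0\in(0,1)$ small enough that $-f(1-V)\leq -\delta_1 V$ for $V\in[0,\eta_0]$, which is possible since $-f(1-V)/V\to-f'(1)$ as $V\to 0^+$; then choose $c'\in(c,c^*)$ close enough to $c^*$ that $1-Q^{c'}\leq\eta_0$. By (i) there will exist $T_1\geq T_0$ with $V(t,x):=1-u(t,x)\in[0,\eta_0]$ on $\Omega:=\{t\geq T_1,\ |x|\leq c'(t-T_0)\}$, and on $\Omega$
\[
V_t-V_{xx}+\delta_1 V\leq 0.
\]
Construct the barrier
\[
W(t,x):=\eta_0\,e^{-\delta_2(t-T_1)}+\eta_0\,e^{\alpha(|x|-c'(t-T_0))},
\]
with $\delta_2\in(0,\min\{\delta,\delta_1\})$ and $\alpha>0$ small enough that $\alpha^2+\alpha c'<\delta_1$. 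A direct computation will give $W_t-W_{xx}+\delta_1 W\geq 0$ for $x\neq 0$ (the kink at $x=0$ only strengthens the inequality) and $W\geq\eta_0\geq V$ on the parabolic boundary of $\Omega$; the maximum principle will then yield $V\leq W$ on $\Omega$. Restricting to $|x|\leq ct$, which lies inside $\Omega$ for $t$ large, we have $|x|-c'(t-T_0)\leq -(c'-c)t+c'T_0$, and therefore
\[
V(t,x)\leq \eta_0\,e^{-\delta_2(t-T_1)}+\eta_0\,e^{\alpha c' T_0}\,e^{-\alpha(c'-c)t}\leq Me^{-\tilde\delta t},
\]
with $\tilde\delta:=\min\{\delta_2,\ \alpha(c'-c)\}\in(0,\delta)$, proving (ii). The main obstacle will be the joint tuning: $c'$ must be close enough to $c^*$ for the linearization to be valid on the whole of $\Omega$, while $\alpha$ and $\delta_2$ must then be coordinated so that the two terms of $W$ decay on $[-ct,ct]$ at a common positive rate $\tilde\delta$ still strictly less than the prescribed $\delta$.
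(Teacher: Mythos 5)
Parts (i) and (iii) of your proposal coincide with the paper's proof (the same finite-length wave $w_{c'}$ serves as lower solution for (i), and the same ODE super-solution $\eta'=f(\eta)$ gives (iii)). Part (ii) is where you deviate: the paper linearizes from below by solving $\psi_t=\psi_{xx}-\delta(\psi-1)$ on $(-cT,cT)$ with data $Q^c$, shows $Q^c\le\psi\le 1$ and $\psi\le u(\cdot+T,\cdot)$, and then estimates $\psi$ through an explicit Green's-function computation; you instead try a super-solution barrier for $V:=1-u$ on the cone $\Omega$. As written, however, your barrier argument has two genuine gaps.

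First, the inclusion $V(t,x)\in[0,\eta_0]$ on $\Omega$ is not justified. Part (i) only gives $u\ge Q^{c'}$, hence $V\le\eta_0$; the lower bound $V\ge 0$ is equivalent to $u\le 1$, which need not hold (part (iii) only gives $u\le 1+Me^{-\delta t}$, and nothing prevents $u$ from staying slightly above $1$). This is not cosmetic: on $\{u>1\}$ the differential inequality $V_t-V_{xx}+\delta_1 V\le 0$, i.e.\ $f(u)\ge\delta_1(1-u)$, \emph{fails}. Indeed, dividing by $1-u<0$ it becomes $f(u)/(1-u)\le\delta_1$, while $f(u)/(1-u)\to -f'(1)>\delta_1$ as $u\to 1^+$. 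The paper sidesteps this precisely by comparing against a $\psi$ that is trapped in $[Q^c,1]$ by the maximum principle, so only the one-sided linearization $f\ge\delta(1-\cdot)$ on $[1-\rho,1]$ is ever used; to run your argument you would need to replace $V$ by something like $(1-u)^+$ (a Kato-type weak subsolution) or first build a genuine subsolution lying in $[Q^{c'},1]$.

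Second, the barrier $W(t,x)=\eta_0 e^{-\delta_2(t-T_1)}+\eta_0\,e^{\alpha(|x|-c'(t-T_0))}$ is \emph{not} a super-solution across $x=0$. The map $x\mapsto e^{\alpha|x|}$ is convex, so the distributional $W_{xx}$ carries a positive Dirac mass along $\{x=0\}$ and $W_t-W_{xx}+\delta_1W$ carries a \emph{negative} one; your parenthetical that ``the kink at $x=0$ only strengthens the inequality'' has the sign backwards (a concave ``peak'' corner would help, a convex ``valley'' corner hurts, and $e^{\alpha|x|}$ is the latter). A simple counterexample to the intended comparison: $V\equiv\varepsilon$ is a subsolution of $-V''=0$, $W=|x|$ dominates $V$ on $\partial(-1,1)$, yet $W(0)=0<\varepsilon=V(0)$. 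The fix is standard -- replace $e^{\alpha|x|}$ by $2\cosh(\alpha x)$ (smooth, same growth, and $\cosh(y)e^{-y}\ge 1/2$ so boundary domination is retained) -- but as stated the comparison step does not go through. With both repairs the barrier route can be made to work and would give a cleaner, more elementary alternative to the paper's Green's-function computation, but the proposal as it stands is incorrect at these two points.
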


\begin{proof} (i) Fix $\hat c\in (c, c^*)$. Since spreading
happens we can find $T_1>0$ such that
\[
[g(T_1), h(T_1)]\supset [-k_{\hat c}(0), k_{\hat c}(0)] \mbox{ and }
u(T_1,x)>w_{\hat c}(0, x) \mbox{ in } [-k_{\hat c}(0), k_{\hat
c}(0)] .
\]
One then easily checks that $(w_{\hat c}(t-T_1,x),-k_{\hat
c}(t-T_1), k_{\hat c}(t-T_1))$ is a lower solution of \eqref{p} for
$t\geq T_1$. Hence for $t\geq T_2$ with some $T_2>T_1$,
\[
g(t)\leq -k_{\hat c}(t-T_1)<-\hat{c}(t-T_1)<-ct,\; h(t)\geq k_{\hat
c}(t-T_1)>\hat{c}(t-T_1)>ct
\]
 and
\[
u(t,x)\geq w_{\hat c}(t-T_1,x) \mbox{ for } x\in [-k_{\hat
c}(t-T_1), k_{\hat c}(t-T_1)]\supset [-ct, ct].
\]

(ii) Since $w_{\hat c}(t-T_1,x)\equiv q^{\hat c}(z^{\hat c})=Q^{\hat
c}>Q^c$ for $|x|\leq ct < \hat c(t-T_1)$ for all $t\geq T_2$, we
find from the above estimate for $u$ that
$$
u(t,x) \geq Q^{ c} \quad \mbox{for } -ct\leq x \leq ct,\; t\geq T_2.
$$

Since $f'(1) <0$, for any $\delta\in (0, -f'(1))$ we can find
$\rho=\rho(\delta)\in (0, 1)$ such that
\begin{equation}\label{delta}
f(u) \geq \delta (1-u) \ \ (u\in [1-\rho , 1]),\qquad f(u) \leq
\delta (1-u) \ \ (u\in [1, 1+\rho]).
\end{equation}
 Recall that
$Q^c\to 1$ as $c$ increases to $c^*$. Without loss of generality we
may assume that $c$ has been chosen so that $Q^c>1-\rho$.

Fix $T\geq T_2$ and let $\psi$ be the solution of
\begin{equation}\label{prob-psi-speed-1}
\left\{
 \begin{array}{ll}
  \psi_t = \psi_{xx} -\delta (\psi-1), & - cT <x< cT,\ t>0,\\
  \psi(t, \pm cT) \equiv Q^c, & t>0,\\
  \psi (0,x) \equiv Q^c, & - cT \leq x\leq cT.
  \end{array}
 \right.
\end{equation}
Since $\underline \psi\equiv Q^c$ is a lower solution of the
corresponding elliptic problem of \eqref{prob-psi-speed-1},  and
$\overline \psi\equiv 1$ is an upper solution, $\psi (t,x)$
increases in $t$ and $\psi \in [Q^c, 1]$. Moreover, $\psi$ is a
lower solution for the equation satisfied by $u(t+T,x)$ in the
region  $(t,x)\in [0,\infty)\times [-cT, cT]$, and so
\begin{equation}\label{psi-u}
\psi(t,x) \leq u(t+T,x) \quad \mbox{for } -cT\leq x\leq cT,\ t\geq
0.
\end{equation}

Set $\Psi := (\psi -Q^c) e^{\delta t}$, then
\begin{equation}\label{prob-Psi-speed}
\left\{
 \begin{array}{ll}
  \Psi_t = \Psi_{xx} +\delta (1-Q^c) e^{\delta t}, & - cT <x< cT,\ t>0,\\
  \Psi(t, \pm cT) \equiv 0, & t>0,\\
  \Psi (0,x) \equiv 0, & - cT\leq x\leq cT,
  \end{array}
 \right.
\end{equation}
The Green function of this problem can be expressed in the form (see page 84 of \cite{Fr})
$$
\widetilde{G}(t,x) = \sum\limits_{n\in \mathbb{Z}} (-1)^n G(t, x-2n
cT), $$ which yields
$$
 \widetilde{G}(t,x)\geq \widehat{G}(t,x) := G(t,x)-G(t, x-2 cT) -
G(t, x+2cT),
$$
where $G$ is the fundamental solution of the heat equation:
$$
G(t,x) = \frac{1}{\sqrt{4\pi t}} e^{-\frac{x^2}{4t}}.
$$
Hence
\begin{eqnarray*}
\Psi(t,x) & = & \int_0^t d\tau \int_{-cT}^{cT} \widetilde{G}
(t-\tau, x-\xi) \delta (1-Q^c) e^{\delta (t-\tau)} d\xi\\
& \geq & \delta (1-Q^c) \int_0^t e^{\delta (t- \tau)} d\tau
\int_{-cT}^{cT} \widehat{G}(t-\tau, x-\xi) d\xi
\end{eqnarray*}

For any $\epsilon \in (0,1)$, consider $(t,x)$ satisfying
\begin{equation}\label{t-x-range}
|x| \leq (1-\epsilon) cT, \quad 0<t \leq \frac{\epsilon^2 c^2 T}{4}.
\end{equation}
For such $(t,x)$ and any $\tau\in (0,t)$, we have
\begin{equation}\label{t-x-range-2}
\frac{cT \pm x}{2\sqrt{t-\tau}} \geq \frac{\epsilon
cT}{2\sqrt{t-\tau}} \geq \frac{\epsilon cT}{2\sqrt{t}} = \sqrt{T}
\cdot \frac{\epsilon c \sqrt{T}}{2\sqrt{t}} \geq \sqrt{T} \geq 1.
\end{equation}
So for $(t,x)$ satisfying \eqref{t-x-range} we have
$$
\int_{-cT}^{cT} G(t-\tau, x-\xi) d\xi = \left( \int_{-\infty}^\infty
- \int_{-\infty}^{-cT} - \int_{cT}^\infty \right) G(t-\tau,x-\xi)
d\xi = 1- I_1 -I_2
$$
where
$$
I_1 := \frac{1}{\sqrt{\pi}} \int_{-\infty}^{-\frac{cT
+x}{2\sqrt{t-\tau}}} e^{-r^2} dr,\; I_2 := \frac{1}{\sqrt{\pi}}
\int_{\frac{cT -x}{2\sqrt{t-\tau}}}^{\infty} e^{-r^2} dr.
$$
Using the elementary inequality
\[
\int_y^\infty e^{-r^2}dr\leq \int_y^\infty r e^{-r^2/2}dr=e^{-y^2/2}
\mbox{ for all } y\geq 1,
\]
 $(cT\pm x)\geq \epsilon c T$, and \eqref{t-x-range-2}, we deduce
\[
I_1,\; I_2\leq \frac{1}{\sqrt{\pi}}e^{-\frac{(\epsilon c
T)^2}{8(t-\tau)}}.
\]
But \eqref{t-x-range-2} also infers
\[
\frac{(\epsilon c T)^2}{8(t-\tau)}\geq T/2.
\]
Thus
\[
I_1,\; I_2\leq \frac{1}{\sqrt{\pi}}e^{-T/2},
\]
and
$$
\int_{-cT}^{cT} G(t-\tau, x-\xi) d\xi \geq 1-
\frac{2}{\sqrt{\pi}}e^{-T/2}.
$$
Similarly,
\begin{eqnarray*}
 \int_{-cT}^{cT} G(t-\tau, x-\xi - 2cT) d\xi  & = & \frac{1}{\sqrt{\pi}}
\int_{- cT}^{cT} \frac{1}{2\sqrt{t-\tau}} e^{-\frac{(x-\xi - 2 cT)^2}{4(t-\tau)}} d\xi \\
& \leq &  \frac{1}{\sqrt{\pi}} \int_{- cT}^\infty
\frac{1}{2\sqrt{t-\tau}} e^{-\frac{(x-\xi -2 cT)^2}{4(t-\tau)}} d\xi \\
& = &  \frac{1}{\sqrt{\pi}} \int_{\frac{cT
-x}{2\sqrt{t-\tau}}}^\infty e^{-r^2} d r  \leq \frac{1}{\sqrt{\pi}}
e^{- T/2}.
\end{eqnarray*}
Consequently, for $(t,x)$ satisfying \eqref{t-x-range}, we have
$$
\Psi(t,x) \geq \delta(1-Q^c) \int_0^t  e^{\delta (t-\tau)}  \left(
1- \frac{4}{\sqrt{\pi}} e^{-T/2} \right) d\tau = (1-Q^c) \left[ 1-
\frac{4}{\sqrt{\pi}} e^{-T/2} \right] (e^{\delta t } -1).
$$
This implies that, for such $(t,x)$,
$$
\psi (t,x) \geq  1- \frac{4}{\sqrt{\pi}} e^{-T/2} -e^{-\delta t}.
$$
 Taking $t=\frac{\epsilon^2c^2}{4}T$ we obtain
$$
\psi \Big( \frac{\epsilon^2 c^2}{4}{T},x \Big) \geq 1-
\frac{4}{\sqrt{\pi}} e^{-T/2} - e^{-\epsilon^2c^2\delta T/4}.
$$
We only focus on small $\epsilon>0$ such that $\epsilon^2 c^2 \delta
<2$, so
$$
\psi \Big( \frac{\epsilon^2 c^2}{4}{T},x \Big) \geq 1- M_0 e^{-
\epsilon^2c^2\delta T/4}\quad \mbox{ with } M_0:=
\frac{4}{\sqrt{\pi}}+1
$$
for $|x| \leq (1-\epsilon) cT$ and $ T \geq T_2$.

By \eqref{psi-u}, for such $T$ and $x$, we have
$$
u\Big( \frac{\epsilon^2 c^2}{4}{T} +T, x\Big) \geq 1-M_0e^{-
\epsilon^2c^2\delta T/4}.
$$
Finally, if we rewrite
$$
t = \frac{\epsilon^2 c^2}{4}{T}+T,
$$
then
$$
T = \left(1+\frac{\epsilon^2c^2}{4}\right)^{-1}t.
$$
Thus
$$
u(t,x) \geq 1- M_0  e^{-\tilde\delta t} \quad \mbox{for }
|x| \leq (1-\epsilon)\left(1+\frac{\epsilon^2c^2}{4}\right)^{-1}ct,\ t\geq T_3,
$$
where $\tilde\delta:=\frac{\epsilon^2c^2}{4}\left(1+\frac{\epsilon^2c^2}{4}\right)^{-1}\delta$ and $T_3 := \frac{\epsilon^2 c^2}{4}T_2+T_2$. Since this is true
for any $c\in (0, c^*)$ close to $c^*$, and any small $\epsilon>0$,
the above estimate implies the conclusion in (ii).

(iii) Consider the equation $\eta'(t) = f(\eta)$ with initial value
$\eta (0)=\|u_0\|_{L^\infty}+1$. Then $\eta$ is an upper solution of
\eqref{p}. So $u(t,x) \leq \eta(t)$ for all $t\geq 0$. Since
$f(u)<0$ for $u>1$, $\eta(t)$ is a decreasing function converging to
1 as $t\to\infty$. Hence there exists $T_4>0$ such that
$\eta(t)<1+\rho$ for $t\geq T_4$. Now, for $t\geq T_4$,  $\eta' (t)
=f(\eta) \leq \delta (1-\eta)$, and so
$$
u(t,x) \leq \eta (t) \leq 1 +\rho e^{-\delta (t-T_4)} \quad
\mbox{for } g(t)\leq x\leq h(t),\ t\geq T_4.
$$
This completes the proof. \end{proof}

\noindent
 {\bf Proof of Theorem \ref{thm:spreading speed}}. Assume
that spreading happens. Then for any given small $\varepsilon>0$, we
can apply Lemma \ref{lem:u-to-1} to obtain some $T_1>0$ large such
that for $t\geq T_1$,
\begin{equation}
\label{estimate-1} [g(t),h(t)]\supset [-(c^*-\varepsilon)t,
(c^*-\varepsilon)t] \mbox{ and } |u(t,x)-1|\leq Me^{-\delta t}
\mbox{ for } |x|\leq (c^*-\varepsilon)t.
\end{equation}

 We now make use of the perturbation method introduced in the previous subsection. For
  small
$\varepsilon_1
>0$, we  modify $f$ to obtain $f_{\varepsilon_1}$ and
$(c^*_{\varepsilon_1}, q^*_{\varepsilon_1})$ as described there.
Since $c^*_{\varepsilon_1}\to c^*$ as $\varepsilon_1\to 0$, we can
choose $\varepsilon_1>0$ small enough such that $c^*_{\varepsilon_1}
< c^* +\varepsilon$.

By Lemma \ref{lem:u-to-1} we see that for some large $T_2
>0$, $u(t ,x) < 1+ \varepsilon_1/2$ for $t\geq T_2$. We then choose $M'
>0$ large enough such that
\[
-c^*_{\varepsilon_1} T_2 - M' <g(T_2),\;\; c^*_{\varepsilon_1} T_2 +
M'
>h(T_2),
\]
 and
$$
 1+ \varepsilon_1/2  < q^*_{\varepsilon_1} (c^*_{\varepsilon_1}
T_2 + M' -x)\quad \mbox{for } x\in [g(T_2), h(T_2)].
$$
Therefore if we define
$$
k(t):=c^*_{\varepsilon_1} t +M',\; w(t,x):=q^*_{\varepsilon_1} (k(t)
-x),
$$
then $(w,g,k)$ is an upper solution of \eqref{p} for $t\geq T_2$,
and we can use Lemma \ref{lem:comp2} to deduce that
$$
h(t) \leq c^*_{\varepsilon_1} t + M' < (c^* +\varepsilon) t +M'
\quad \mbox{for } t\geq T_2.
$$

We can similarly show that
$$
g(t) \geq -(c^* +\varepsilon) t - M'   \quad \mbox{for } t\geq T_2.
$$
These estimates and \eqref{estimate-1} clearly imply
\[
\lim_{t\to\infty}\frac{-g(t)}{t}=\lim_{t\to\infty}\frac{h(t)}{t}=c^*.
\]
 This
completes the proof of Theorem \ref{thm:spreading speed}. \hfill
$\square$

\end{document}